\pdfoutput=1
\documentclass[11pt]{article}
\usepackage[margin=1in]{geometry}
\usepackage{amsmath,amssymb,amsthm,mathtools}
\usepackage{booktabs}
\usepackage[expansion=false]{microtype}
\usepackage[hidelinks]{hyperref}
\usepackage{enumitem}
\usepackage{xcolor}

\numberwithin{equation}{section}
\newtheorem{theorem}{Theorem}[section]
\newtheorem{proposition}[theorem]{Proposition}
\newtheorem{lemma}[theorem]{Lemma}
\newtheorem{corollary}[theorem]{Corollary}
\theoremstyle{definition}
\newtheorem{definition}[theorem]{Definition}
\newtheorem{example}[theorem]{Example}
\newtheorem{problem}[theorem]{Question}
\newtheorem{observation}[theorem]{Observation}
\theoremstyle{remark}
\newtheorem{remark}[theorem]{Remark}

\newcommand{\Z}{\mathbb Z}
\newcommand{\Q}{\mathbb Q}
\newcommand{\F}{\mathbb F}
\newcommand{\N}{\mathbb N}
\newcommand{\fall}[2]{(#1)_{#2}}
\newcommand{\coeff}[2]{[x^{#1}]\,#2}
\newcommand{\OEIS}[1]{\href{https://oeis.org/#1}{\texttt{#1}}}

\newcommand{\dens}{\operatorname{dens}}
\title{A $p$-adic Valuation Calculus for Permutations with Restricted Cycle Lengths}
\author{Ahaan Kallat\\Northeastern University\\\texttt{kallat.a@northeastern.edu}}
\hypersetup{pdftitle={A p-adic Valuation Calculus for Permutations with Restricted Cycle Lengths},pdfauthor={Ahaan Kallat}}
\date{October 2026}

\begin{document}
\maketitle

\begin{abstract}
Let $a_A(n)$ be the number of permutations of $\{1,\dots,n\}$ all of whose cycle lengths lie in a set $A$.  We develop a valuation calculus for these numbers.  M\"obius inversion writes the exponential generating function as an Euler product, and at a prime $p$ the factors with non-$p$-integral exponents, the blocks, determine a finite knapsack problem.  Its optimal value gives a lower bound for $v_p(a_A(n))$, and the bound is attained exactly when one residue modulo $p$, the sum of the terms of the optimal choices, does not vanish.  For a fixed set $A$ and prime $p$ the bounds and residues for all $n\le N$ cost $O(N^2)$ operations.  When one block is strictly the most efficient, the valuation inside each residue class is that of a locally analytic $p$-adic function.  We use the calculus to determine the $2$-adic and $3$-adic valuations for cycle lengths at most $3$, $4$ and $5$, to prove the divisibility conjectures recorded in \OEIS{A378495}, \OEIS{A079128}, \OEIS{A193437} and \OEIS{A193438} and the $3$-adic statements recorded in \OEIS{A057693} and \OEIS{A070945}, and to treat $99$ OEIS sequences in all.  We also prove sharp divisibility and gcd theorems, show that at $n=111$ the gcd of the counts over all sets of excluded cycle lengths containing $\{1,2,3\}$ is one third of the gcd over the sets $\{1,2,3,k\}$, and determine eventual divisibility and tail gcds modulo composite integers.  Part of the paper is formalized in Lean~4.
\end{abstract}

\section{Introduction}\label{sec:intro}

\subsection{Where the questions come from}

The derangement numbers count the permutations of $\{1,\dots,n\}$ with no fixed point.  For $n=0,1,\dots,7$ they are
\[
1,\ 0,\ 1,\ 2,\ 9,\ 44,\ 265,\ 1854.
\]
From $n=2$ on, the $n$th term is divisible by $n-1$, as in $1854=6\cdot309$.  There is a one-line reason, and it works for many other families of permutations.  Look at the cycle containing the point $n+1$.  If its length is $m$, there are $n(n-1)\cdots(n-m+2)$ ways to choose and order its other points, and the remaining $n+1-m$ points carry a derangement.  A derangement has no cycle of length $1$, so $m\ge2$, and every one of these products is divisible by $n$.  Hence $n$ divides the number of derangements of $n+1$ points.

To state the general principle, let $S$ be a set of positive integers and let $a_S(n)$ be the number of permutations of $\{1,\dots,n\}$ with no cycle length in $S$, with $a_S(0)\coloneqq1$.  Derangements are the case $S=\{1\}$.  Involutions are the case $S=\{m:m>2\}$, and the permutations with all cycles of length at least $k$ are the case $S=\{1,\dots,k-1\}$.  By the exponential formula~\cite{Stanley,Wilf},
\begin{equation}\label{eq:egf}
\sum_{n\ge0}a_S(n)\frac{x^n}{n!}=\exp\Bigl(\sum_{m\ge1,\ m\notin S}\frac{x^m}{m}\Bigr).
\end{equation}
The argument above is the cycle of the point $n+1$ again.  Write $\fall{n}{j}\coloneqq n(n-1)\cdots(n-j+1)$ for the falling factorial.  If the cycle has length $m\notin S$, there are $\fall{n}{m-1}$ ways to choose and order its other points, and the remaining $n+1-m$ points carry a permutation counted by $a_S(n+1-m)$.  Hence
\begin{equation}\label{eq:last0}
a_S(n+1)=\sum_{m\notin S}\fall{n}{m-1}\,a_S(n+1-m).
\end{equation}
If $\{1,\dots,r\}\subseteq S$ then every allowed $m$ exceeds $r$, so each $\fall{n}{m-1}$ is divisible by $\fall{n}{r}$, and we learn that $\fall{n}{r}$ divides $a_S(n+1)$.  This is the first statement of Theorem~\ref{thm:general} below.

Divisibility questions of this kind appear as conjectures in the On-Line Encyclopedia of Integer Sequences (OEIS)~\cite{OEIS}, and the answers to them are sharper than the argument above.  Take the triangle \OEIS{A378495}, where $T(n,k)=a_{\{1,k\}}(n)$ is the number of derangements of $\{1,\dots,n\}$ with no $k$-cycle ($2\le k\le n$, the entry's column $k=1$ being zero).  Row $7$ is $1140,1224,1434,1350,1854,1134$.  Every entry is divisible by $6=7-1$, and the quotients $190,204,239,225,309,189$ have gcd $1$, so $n-1$ is exactly the gcd of this row.

The same question can be asked for primes.  Write $v_p(m)$ for the exponent of the prime $p$ in $m$, and let $A_r(n)$ be the number of permutations of $\{1,\dots,n\}$ all of whose cycles have length at most $r$, which is $a_S(n)$ for $S=\{m:m>r\}$.  For $r=3$ (\OEIS{A057693}) the numbers are $A_3(n)=1,1,2,6,18,66,276,1212,5916,31068,\dots$, and their $2$-adic valuations for $n=2,\dots,13$ are $1,1,1,1,2,2,2,2,3,3,3,3$.  The values come in blocks of four, and each block is one larger than the last.  The $3$-adic valuations for $n=3,\dots,14$ are $1,2,1,1,1,1,2,2,2,3,3,3$, and the value $2$ at $n=4$ stands out.  Does the first pattern continue forever, and which $n$ break the second one?  Derangements already show what a good answer looks like for a prime.  Write $D(n)$ for the number of derangements of $\{1,\dots,n\}$.  Then $3\mid D(n)$ exactly when $n\equiv1\pmod3$, as in $D(4)=9$ and $D(7)=1854=3\cdot618$.  Section~\ref{sec:euler} explains why, for every set of allowed cycle lengths and every prime.

\subsection{Main results}\label{sec:main}

Throughout, $A\subseteq\N$ is a set of allowed cycle lengths and $a_A(n)$ is the number of permutations of $\{1,\dots,n\}$ all of whose cycle lengths lie in $A$, so that $a_A(n)=a_S(n)$ for $S=\N\setminus A$.  Its generating function is $F_A(x)=\sum_na_A(n)x^n/n!=\exp\bigl(\sum_{m\in A}x^m/m\bigr)$.  We state six results here, in the order of the methods, and list their consequences for OEIS entries in Section~\ref{sec:oeistable}.

\paragraph{The valuation calculus.}  The first theorem explains the divisibility of the derangement numbers by $3$, and it computes the $p$-adic valuation of $a_A(n)$ for every set $A$ and every prime $p$, apart from an explicit set of $n$.  M\"obius inversion factors the generating function as an Euler product $F_A=\prod_{m\ge1}(1-x^m)^{-c_m}$ with $c_m=\frac1m\sum_{d\mid m,\,d\in A}\mu(m/d)$.  Fix a prime $p$.  A factor whose exponent $c_m$ is a $p$-adic integer has $p$-integral coefficients and plays no role.  The other factors are called \emph{blocks}.  They have $c_m=\alpha'_m/p^{j_m}$ with $j_m\ge1$ and $\alpha'_m$ a $p$-adic unit, and the coefficient of $y^\lambda$ in $(1-y)^{-c_m}$ has valuation exactly $-j_m\lambda-v_p(\lambda!)$.  We call $j_m$ the \emph{depth} of the block $m$ and the number $j_m\lambda+v_p(\lambda!)$ the \emph{cost} of $\lambda$ copies of it.  The cost is the exponent of $p$ in the denominator that these copies contribute, and the copies use the degree $m\lambda$.  Let $\kappa_A(n)$ be the largest total cost of a choice of copies whose degrees add up to at most $n$, and call a choice \emph{optimal} when it attains $\kappa_A(n)$.  No term of the expansion of $a_A(n)/n!$ has a larger denominator, and the terms of the optimal choices are the only ones that can cancel at this level.  For derangements at $p=3$ the factor with exponent $c_3=\frac13$ is a block of depth $1$.  The coefficients of $y$, $y^2$ and $y^3$ in $(1-y)^{-1/3}$ are $\frac13$, $\frac29$ and $\frac{14}{81}$, with valuations $-1$, $-2$ and $-4=-3-v_3(3!)$.

\begin{theorem}[Valuation calculus]\label{thm:introcalc}
For every set $A$, every prime $p$ and every $n\ge0$,
\[
v_p\bigl(a_A(n)\bigr)\ \ge\ v_p(n!)-\kappa_A(n),
\]
with equality if and only if the terms of the optimal choices do not cancel modulo $p$.  Their sum, multiplied by $p^{\kappa_A(n)}$ and reduced modulo $p$, is a residue $\Xi_A(n)\in\F_p$, and equality means $\Xi_A(n)\ne0$.  Only finitely many blocks matter, and for fixed $A$ and $p$ the numbers $\kappa_A(n)$ and $\Xi_A(n)$ for all $n\le N$ are computed with $O(N^2)$ operations.
\end{theorem}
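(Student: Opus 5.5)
We start from the Euler product $F_A=\prod_{m\ge1}(1-x^m)^{-c_m}$, which follows from the exponential formula \eqref{eq:egf}. Taking logarithms, $\sum_m c_m\sum_{k}x^{mk}/k$ has coefficient $\frac1N\sum_{m\mid N} m c_m$ at $x^N$. This equals $\frac1N[N\in A]$ exactly when $\sum_{m\mid N} m c_m=[N\in A]$, and Möbius inversion gives the stated $c_m$. Fix $p$ and split the factors into those with $c_m\in\Z_p$ and the blocks. For $c\in\Z_p$, the series $(1-y)^{-c}=\sum_\lambda\binom{c+\lambda-1}{\lambda}y^\lambda$ has $p$-integral coefficients, since binomial polynomials map $\Z_p$ to $\Z_p$. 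So the product $G$ of the non-block factors lies in $\Z_p[[x]]$. For a block, write $c_m=\alpha'_m/p^{j}$ with $j\ge1$. Then
\[
\binom{c_m+\lambda-1}{\lambda}=\frac{1}{\lambda!}\prod_{i=0}^{\lambda-1}(c_m+i),
\]
and each factor has $v_p(c_m+i)=-j$, because $v_p(i)\ge0>-j$. Hence the valuation is exactly $-j\lambda-v_p(\lambda!)$, as claimed before the statement.

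Next we expand $a_A(n)/n!=[x^n]F_A$. It is a sum over choices $(\lambda_m)_{m\text{ block}}$ with $\sum m\lambda_m\le n$ of $\prod_m\binom{c_m+\lambda_m-1}{\lambda_m}\cdot[x^{n-\sum m\lambda_m}]G$. Each term has valuation at least $-\sum_m\bigl(j_m\lambda_m+v_p(\lambda_m!)\bigr)\ge-\kappa_A(n)$, so $v_p(a_A(n))\ge v_p(n!)-\kappa_A(n)$.

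Multiply by $p^{\kappa_A(n)}$ and reduce modulo $p$. Every non-optimal choice contributes a multiple of $p$, so $p^{\kappa_A(n)}a_A(n)/n!\equiv\Xi_A(n)\pmod p$. Here $\Xi_A(n)$ is the sum of the terms of the optimal choices, each including its $G$-coefficient. Equality in the bound therefore holds iff $\Xi_A(n)\ne0$. We must check that $\kappa_A(n)$ is finite: this needs only finitely many blocks of degree at most $n$, which is automatic.

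The main obstacle is the claim that only finitely many blocks matter uniformly in $n$, together with the $O(N^2)$ bound. If $p^j$ divides the denominator of $c_m$, then $p^j\mid m$, because $mc_m\in\Z$. So a block $m$ of depth $j_m$ has cost per unit degree about $\bigl(j_m+\frac1{p-1}\bigr)/m\le\bigl(v_p(m)+\frac1{p-1}\bigr)/m$, which tends to $0$ as $m\to\infty$. The plan is to show two things.
\begin{itemize}
\item[(i)] Either $A$ gives a block at some fixed small $m_0$, with cost rate $r_0>0$. Then any block $m$ whose total possible cost $\max_\lambda$ over degree $\le n$ is dominated can be replaced by copies of $m_0$ (plus slack) without loss, and this is sharp enough to discard it from optimal choices. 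Care is needed because replacement changes which choices are optimal; we only need that the optimal value is never achieved using such blocks, and this requires a strict inequality, since equality cases affect $\Xi$.
\item[(ii)] Or there are no blocks at all.
\end{itemize}
In the trouble case, where all blocks are large, we would instead bound by periodicity of $c_m$ data. If $A$ is infinite and arbitrary, ``finitely many'' may only hold for the relevant degrees $\le N$, and we would state it in that form if needed.

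Given a finite block list $B$, the computation is a knapsack dynamic program over degree $d\le N$. We maintain $\kappa(d)$ and the mod-$p$ sum $\Xi$ of scaled optimal terms for the partial product $\prod_{m\in B}(1-x^m)^{-c_m}\cdot G$. Processing blocks one at a time, each step convolves with the $\le N/m$ copy counts, with ties summed modulo $p$ and strictly lower costs discarded. The $G$ coefficients modulo $p$ come from the recurrence \eqref{eq:last0}-style convolution in $O(N^2)$. Summing $N\cdot N/m$ over the blocks gives $O(N^2\sum 1/m)=O(N^2)$ for a fixed finite $B$.
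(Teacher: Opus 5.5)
Your proof of the inequality and of the equality criterion is correct, and it is the paper's argument (Lemma~\ref{lem:blockexp} and Theorem~\ref{thm:calculus}). You expand $[x^n]F_A$ over block vectors, each term has valuation at least $-\mathrm{cost}(V)$, and after multiplication by $p^{\kappa_A(n)}$ only the optimal terms survive modulo $p$.

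\textbf{The gap: finitely many blocks.} You do not prove the last sentence of the statement, that only finitely many blocks matter. What you give is a plan with a ``trouble case where all blocks are large'' that does not exist. You also offer a fallback in which finiteness holds only below degree $N$, and that fallback does not give $O(N^2)$. Processing a block $m$ in your program costs about $N^2/m$. For derangements the blocks of degree at most $N$ are all the squarefree multiples of $p$, so the total is of order $N^2\log N$. Your final count assumes ``a fixed finite $B$'', which is exactly what is unproved.

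\textbf{The missing idea: an exact exchange.} What is needed is an exact exchange against the smallest block, not a comparison of the rates $(j_m+\frac1{p-1})/m$. That comparison is only asymptotic and does not give the strict inequality you rightly say is required. If $\mathcal B\ne\emptyset$, it has a minimum $m_0$, a constant depending only on $A$ and $p$, however large it is. Given $\lambda>0$ copies of a block $m$, put $a=\lfloor m/m_0\rfloor$ and replace them by $a\lambda$ further copies of $m_0$. The degree does not increase. Since $v_p((x+y)!)\ge v_p(x!)+v_p(y!)$ and $v_p((a\lambda)!)\ge v_p(\lambda!)$, the cost rises by at least $(j_{m_0}a-j_m)\lambda\ge(a-j_m)\lambda$, which is positive when $a>j_m$. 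So no maximiser uses a block outside $\mathcal E=\{m\in\mathcal B:\lfloor m/m_0\rfloor\le j_m\}$. This set is finite, because $j_m\le v_p(m)\le\log_pm$ while $\lfloor m/m_0\rfloor\to\infty$. This is Lemma~\ref{lem:essential}(b).

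\textbf{The integral factor.} Your program also needs $[x^d]G\bmod p$ for every $d\le N$. A recurrence like \eqref{eq:last0} computes $a_A(n)$, not these residues, and a logarithmic-derivative recurrence for $G$ breaks modulo $p$ at indices divisible by $p$. The paper never needs these residues. A maximiser has leftover $n-\deg V<m_0$: otherwise one more copy of $m_0$ fits and raises the cost by $j_{m_0}+v_p(\lambda_{m_0}+1)\ge1$ (Lemma~\ref{lem:essential}(a)). Hence only leftovers $\rho<m_0$ enter $\Xi_A(n)$. In that range $G$ agrees with $F_A$, so the residues are $a_A(\rho)/\rho!\bmod p$. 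With the finite list $\{(m,j_m,\alpha_m):m\in\mathcal E\}$ and these finitely many residues, your knapsack program does run in $O(N^2)$.
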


The residue $\Xi_A(n)$ is a sum over the optimal choices, built from the residues of the numbers $\alpha'_m$ and from the base-$p$ digits of the multiplicities (Theorem~\ref{thm:calculus}, Corollary~\ref{cor:calc} and Lemma~\ref{lem:essential}).  For derangements at $p=3$ the optimal choice is $\lfloor n/3\rfloor$ copies of the block $3$, and the theorem says that $3\mid D(n)$ exactly when $n\equiv1\pmod3$ (Example~\ref{ex:der}).  The same statement holds for the quotient $a_S(n)/\fall{n-1}{r}$ when $S\supseteq\{1,\dots,r\}$ (Theorem~\ref{thm:quotient}).  The set of $n$ where the inequality is strict is the \emph{exceptional set}.  In the simplest case one block is both the smallest and the most efficient, the optimal choice uses as many copies of it as fit, and the exceptional set is a union of residue classes modulo that block (Theorem~\ref{thm:dominant}).  For $(p,k)\ne(2,1)$ and $\max(1,p^k-p)\le r\le p^k-1$ the block $p^k$ plays this role for $A_r$, and the Harmonic Theorem names the exceptional classes when $p^k\le2r+2$.  They are the classes $\nu$ modulo $p^k$ with $r<\nu<p^k$ and $\sum_{r<i\le\nu}1/i\equiv1\pmod p$ (Theorem~\ref{thm:harmonic}).  The smallest instance is $p=3$ and $r=6$, where $v_3(A_6(n))=\lfloor n/3\rfloor-\lfloor n/9\rfloor$ for every $n$ except $n\equiv7\pmod9$ (Corollary~\ref{cor:plateau}).

\paragraph{Short cycles.}  For cycle lengths at most $3$, $4$ and $5$ two or three blocks compete, and the calculus becomes a small exact computation.  It answers the two questions about $A_3$ asked above.  The blocks of four continue for ever, and the $3$-adic pattern breaks exactly on the class $n\equiv4\pmod{27}$.

\begin{theorem}[Short cycles]\label{thm:introshort}
Let $e(n)=\lfloor n/9\rfloor+\lfloor(n+6)/9\rfloor$, and let $\gamma(0),\dots,\gamma(7)$ be $0,0,1,1,3,3,3,3$.
\begin{enumerate}[label=(\alph*),itemsep=0pt]
\item $v_2(A_3(n))=1+\lfloor(n-2)/4\rfloor$ for all $n\ge2$.
\item For $r\in\{3,4,5\}$ and all $n\ge0$ one has $v_3(A_r(n))\ge e(n)$, with equality except for $n\equiv4\pmod{27}$ when $r=3$ and for $n\equiv14,23\pmod{27}$ when $r=4$.  For $r=5$ equality holds for every $n$.
\item For $r\in\{4,5\}$ and $n=8K+\nu$ with $0\le\nu\le7$ one has $v_2(A_r(n))\ge4K+\gamma(\nu)$, with equality except for $n\equiv5,12\pmod{16}$ when $r=4$ and for $n\equiv12,13\pmod{16}$ when $r=5$.
\item $v_2\bigl(A_4(2n+1)-A_4(2n)\bigr)\ge n+1$ for every $n\ge1$.
\end{enumerate}
\end{theorem}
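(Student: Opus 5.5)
The plan is to apply the valuation calculus of Theorem~\ref{thm:introcalc} to $A=\{1,\dots,r\}$ and, in each of the four parts, compute the blocks, the knapsack optimum $\kappa_A(n)$ and the residue $\Xi_A(n)$.

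\textbf{Blocks.} For $A=\{1,\dots,r\}$ the Euler exponents are $c_m=\frac1m\sum_{d\mid m,\,d\le r}\mu(m/d)$. At $p=2$ and $p=3$ only finitely many $c_m$ fail to be $p$-integral, and I will list these blocks with their depths and unit parts $\alpha'_m$.
\begin{itemize}
\item For $r=3$, $p=2$: $c_2=-\frac12\cdot\frac12\cdot$(sign) comes out with depth $1$, and $c_4=\frac14(\mu(4)+\mu(2)\cdot1+\dots)$ has the appropriate $2$-power denominator.
\item For $p=3$: the relevant blocks are $m=3$ and $m=9$, with depth $1$ for $m=3$ and depth $2$ or $1$ for $m=9$, according to $r$.
\end{itemize}
The cost of $\lambda$ copies is $j\lambda+v_p(\lambda!)$. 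Comparing costs per unit degree will show which blocks are optimal.

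\textbf{Part (a).} This should be a dominant-block situation in the sense of Theorem~\ref{thm:dominant}. The block $4$ (or $2$) is strictly most efficient, so $\kappa(n)$ is as many copies as fit plus a remainder. Then $v_2(n!)-\kappa(n)$ simplifies to $1+\lfloor (n-2)/4\rfloor$. I then check that the single optimal choice has nonzero residue in each class $n\bmod 4$. Alternatively, the Harmonic Theorem~\ref{thm:harmonic} with $p^k=4$, $r=3$ applies, since $4\le 2r+2$, $\max(1,2)\le3\le3$ and $(p,k)=(2,2)\ne(2,1)$. It identifies the exceptional classes $\nu$ with $3<\nu<4$, and there are none. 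So (a) holds for all $n$, after matching the formula $v_2(n!)-\kappa$ with $1+\lfloor(n-2)/4\rfloor$ by Legendre.

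\textbf{Parts (b) and (c).} Two or three blocks compete, for example $3$ and $9$ at $p=3$, and $2,4,8$ at $p=2$. I will solve the knapsack exactly and show that $v_p(n!)-\kappa(n)$ equals $e(n)$, respectively $4K+\gamma(\nu)$. This is a periodic computation, because costs are linear in $\lambda$ plus $v_p(\lambda!)$. Maximising $v_p(\lambda!)$ favours multiplicities with carries, which is why the period is $27$ and $16$ rather than $9$ and $8$.

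The residue $\Xi(n)$ is a sum over the optimal choices of products of $\alpha'_m$ residues and Lucas-type digit factors (Lemma~\ref{lem:essential}). It therefore depends only on $n$ modulo $27$ (resp.\ $16$), the period within which the optimal choices shift, and finitely many classes need checking. I will tabulate $\Xi$ over one period and read off the zeros: $4$ for $r=3$; $14,23$ for $r=4$; none for $r=5$; and $5,12$ and $12,13$ at $p=2$.

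\textbf{Main obstacle.} The hard part is proving the periodicity of the optimal set, i.e.\ that the description of optimal choices genuinely repeats modulo $27$ (resp.\ $16$). Several choices tie, because $v_p(\lambda!)$ jumps irregularly, so one must show the tie structure and the digit factors repeat. I would first try an induction on $K$, showing that adding one period to $n$ adds a fixed number of copies of the top block. This increases the cost by exactly the right amount and multiplies each term's residue by a common unit.

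\textbf{Part (d).} I would consider the generating function of $A_4(2n+1)-A_4(2n)$, which is $(F'-F)$ restricted to odd-index shifts. Alternatively, I would use the recurrence \eqref{eq:last0}: $A_4(2n+1)-A_4(2n)=\sum_{m=2}^4 \fall{2n}{m-1}A_4(2n+1-m)-A_4(2n)$. The $m=1$ term cancels to leave $\sum_{m=2}^{4}\fall{2n}{m-1}A_4(2n+1-m)$. Each term then has valuation at least $n+1$ by the bounds in (c) together with $v_2(\fall{2n}{m-1})\ge1$ plus extra factors. I expect a few parity classes to need the exact $\gamma$ values rather than only the lower bounds.
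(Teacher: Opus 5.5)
Part (a) via the Harmonic Theorem with $p^k=4$, $r=3$ is exactly the paper's route and is fine. The problem is in (b) and (c): your list of blocks is wrong, and the error is not cosmetic. For $A=\{1,\dots,r\}$ one has $c_m=0$ for $2\le m\le r$ (Lemma~\ref{lem:weights}(a)). In particular $c_p=\frac1p(\mu(p)+\mu(1))=0$ whenever $p\le r$. So $3$ is not a block at $p=3$ for $r\in\{3,4,5\}$, neither $2$ nor $4$ is a block at $p=2$ for $r\in\{4,5\}$, and $2$ is not a block for $r=3$ either.

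Suppose a block $p$ of depth $1$ were present. Then $\lfloor n/p\rfloor$ copies of it would already cost $\lfloor n/p\rfloor+v_p(\lfloor n/p\rfloor!)=v_p(n!)$. Your knapsack would then give at best the trivial bound $v_p(A_r(n))\ge0$, and could never produce the linearly growing bounds $e(n)$ and $4K+\gamma(\nu)$. The vanishing of $c_p$ at primes $p\le r$ is precisely what makes these valuations grow.

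The blocks that actually compete are:
\begin{itemize}
\item at $p=3$: $6$ ($c_6=-1/6$, depth $1$) and $9$ ($c_9=-1/9$, depth $2$ for all three values of $r$);
\item at $p=2$: $6$ (depth $1$) and $8$ ($c_8=-1/8$, depth $3$), together with $10$ (depth $1$) for $r=5$.
\end{itemize}
There are also infinitely many other blocks, for instance $18$, which is a block at both primes. These must be excluded from the maximisers by an exchange argument (Lemma~\ref{lem:essential}, or Lemma~\ref{lem:red3}).

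Even with the correct blocks, the heart of (b) and (c), namely the exact set $\mathrm{Max}(n)$, is left open. The induction you propose does not work as stated, because adding one period does not add a fixed cost. For example, with $\ell_9(Y)=2Y+v_3(Y!)$ one has $\ell_9(Y+3)-\ell_9(Y)=7+v_3(\lfloor Y/3\rfloor+1)$. You would still have to show that no new maximiser appears and that the ties persist.

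The paper handles this directly in Lemma~\ref{lem:max3}:
\begin{itemize}
\item It fixes $L=2X+3Y$ and writes the solutions as $X=3X_1+a_0$, $Y=b_0-2X_1$.
\item Subadditivity of base-$3$ digit sums shows that for fixed $L$ the maximum is at $X_1=0$, with a second maximiser at $X_1=1$ exactly when $b_0\equiv2\pmod3$ and $b_0\ge2$.
\item It then compares the values $F(L)$ for $L\le\lfloor n/3\rfloor$.
\end{itemize}
This is where the dependence on $q\bmod3$, and hence the modulus $27$, comes from. The same device with $L=4X+3Y$ gives the dependence on the parity of $K$ at $p=2$. The exceptional classes then come from the leftover coefficients $A_r(\rho)/\rho!$ for $\rho<6$, such as $A_3(4)/4!=\frac34$ and $A_4(5)/5!=\frac45$.

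In (d), your claim that each of the three terms has valuation at least $n+1$ is false for $n\equiv1\pmod4$. It already fails at $n=1$, where $A_4(3)-A_4(2)=2+2$. In this class, $2n\,A_4(2n-1)$ and $2n(2n-1)A_4(2n-2)$ both have valuation exactly $n$. You need the exact valuations from (c) to see that their sum gains a factor $2$. The paper avoids this by applying the recurrence once more at index $2n-1$. That turns the bracket into $A_4(2n)-(2n-1)(2n-2)(2n-3)A_4(2n-4)$, which needs only the lower bound $v_2(A_4(2m))\ge m$.
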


Part~(a) is Corollary~\ref{cor:plateau}(a), the case $p^k=4$ of the Harmonic Theorem.  Part~(b) is Theorem~\ref{thm:p3}, part~(c) is Theorems~\ref{thm:p2r4} and~\ref{thm:p2r5}, and part~(d) is Corollary~\ref{cor:bala2}.  Together they prove the statements of Bala recorded in \OEIS{A057693} and \OEIS{A070945}, with the $3$-adic ones in the form of Corollary~\ref{cor:bala}, and part~(d) is the third inequality of \OEIS{A070945} with $a(2n)$ in place of the printed $a(n)$.  For $A_3$ one has $A_3(4)=18$, so $v_3(A_3(4))=2>e(4)=1$, and $n=4$ is the first member of the exceptional class modulo $27$.

\paragraph{Divisibility and gcds.}  The one-line argument for derangements at the start of this section is the case $r=1$ of part~(a) below, and part~(b) settles the row-gcd conjecture of the triangle \OEIS{A378495}.

\begin{theorem}[Divisibility and gcds]\label{thm:introgcd}
\begin{enumerate}[label=(\alph*),itemsep=0pt]
\item If $\{1,\dots,r\}\subseteq S$, then $(n-1)(n-2)\cdots(n-r)$ divides $a_S(n)$ for all $n\ge1$.
\item For $n\ge3$ the gcd of the numbers $a_{\{1,k\}}(n)$ with $2\le k\le n$ is $n-1$, and so is the gcd of the numbers $a_{\{1,\dots,k-1\}}(n)$ with $1\le k\le n$.
\item The gcd of the numbers $a_S(111)$ over all sets $S\supseteq\{1,2,3\}$ is one third of their gcd over the sets $S=\{1,2,3,k\}$.
\end{enumerate}
\end{theorem}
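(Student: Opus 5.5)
Part~(a) follows from the recursion \eqref{eq:last0}, applied at $n-1$ in place of $n$. For $n\ge1$,
\[
a_S(n)=\sum_{m\notin S}\fall{n-1}{m-1}\,a_S(n-m),
\]
and every allowed $m$ satisfies $m\ge r+1$. Hence each falling factorial $\fall{n-1}{m-1}$ contains $\fall{n-1}{r}=(n-1)(n-2)\cdots(n-r)$ as a factor. When $m-1>n-1$ the term is zero, which is harmless. This is exactly the argument of the introduction with the index shifted by one, so (a) needs nothing beyond \eqref{eq:last0}.

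For part~(b), part~(a) with $r=1$ already shows that $n-1$ divides each member of both families: $a_{\{1,k\}}(n)$ for $2\le k\le n$, and $a_{\{1,\dots,k-1\}}(n)$ for $k\ge2$. The case $k=1$ of the second family is $n!$, which is also a multiple of $n-1$. It remains to show the gcd is no larger, and here I would exhibit two members whose quotients by $n-1$ are coprime.

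\emph{Second family.} Take $k=n$. The permutations with all cycles of length at least $n$ are the $n$-cycles, so $a_{\{1,\dots,n-1\}}(n)=(n-1)!$. Take $k=n-1$. The permutations with all cycles of length at least $n-1$ are the $n$-cycles only, since an $(n-1)$-cycle would leave a fixed point; this again gives $(n-1)!$, so I need a different second member. I would use $k=2$, the derangements. From $D(n)=(n-1)(D(n-1)+D(n-2))$, the quotient is $q=D(n-1)+D(n-2)$. One has $D(n)=nD(n-1)+(-1)^n$, so $\gcd(D(n),n)=1$. Any prime $\ell$ dividing both $(n-2)!$ and $q$ has $\ell\le n-2$, and I would try to derive a contradiction by reducing $q$ modulo $\ell$ using the alternating-sum formula $D(m)=m!\sum_{i\le m}(-1)^i/i!$. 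If this fails, I would add a third member, such as $k=\lceil n/2\rceil$, whose count is an explicit $n$-cycle plus split term. I expect this coprimality step to be the main obstacle. The fallback is to use the calculus of Theorem~\ref{thm:introcalc} prime by prime, together with Theorem~\ref{thm:quotient}: for each prime $p\le n-2$, show that some $k$ makes the residue $\Xi$ of the quotient $a_S(n)/(n-1)$ nonzero.

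\emph{First family.} Here $k=n$ gives the derangements with no $n$-cycle, namely $D(n)-(n-1)!$, and $k=2$ gives derangements with no $2$-cycle. I would treat it the same way: for each prime $p\le n$, choose a $k$, such as $k$ equal to a power of $p$ or to $p$ itself, that removes the dominant block of Theorem~\ref{thm:dominant}. Removing that block raises the cost, which lowers $v_p$ of the quotient, so the quotient becomes a $p$-unit. For $n\equiv1\pmod p$ this needs care and a separate check of the residue $\Xi$.

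For part~(c), everything is a finite computation at $n=111$. By part~(a), $110\cdot109\cdot108$ divides all counts. I would compute $g_1=\gcd_k a_{\{1,2,3,k\}}(111)$ for $4\le k\le111$ exactly, via \eqref{eq:last0} in big integers. For the general $S\supseteq\{1,2,3\}$, the gcd over all $S$ divides $g_1$ and is a multiple of some $g$. To identify it, note that the gcd over all $S$ equals the gcd over the finitely many $S\cap\{4,\dots,111\}$. For each prime power $p^e$ exactly dividing $g_1$, I would decide whether every $a_S(111)$ is divisible by $p^e$ by finding the minimal $v_p$ over $S$ with the calculus: $\kappa$ is maximised by deleting lengths to create deep blocks. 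The claim amounts to showing that for $p=3$ some $S$ with two or more extra exclusions drops $v_3$ by exactly one, with $\Xi\ne0$, while no $S$ drops it further, and that for all other primes the minimum is already attained by some single $k$. Bounding $\kappa$ uniformly over all $S$, via the depths $j_m$ and the Möbius exponents $c_m$, is where I expect the real work to lie.
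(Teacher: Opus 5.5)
Part~(a) is correct and is the paper's argument (Theorem~\ref{thm:general}(i)). The gaps are in (b) and (c).

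\textbf{Part (b).} The entire content of (b) is the coprimality step, and none of your routes gets through it.

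\emph{Second family.} Your two members already fail at $n=7$. The quotients of $U(7,7)=720$ and $D(7)=1854$ by $6$ are $120$ and $309=3\cdot103$, so the gcd of the two members is $18$. Your third candidate $k=\lceil7/2\rceil=4$ gives $U(7,4)=720$ again. The column that detects $3$ is $k=3$, with $U(7,3)=1140$.

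\emph{First family.} Here your heuristic runs the wrong way. With $k=p$ both $1$ and $p$ lie in $S$, so $c_p=0$, and Lemma~\ref{lem:A} gives $p\mid a_{\{1,p\}}(n)$ for every $n\ge p$. Deleting the dominant block raises $v_p$, as in $v_3(T(7,3))=v_3(1224)=2>v_3(6)$ and $v_5(T(7,5))=v_5(1350)=2$. The fallback of finding a $k$ with $\Xi\ne0$ only restates the goal.

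\emph{The missing idea.} Fix one prime $q$ and use only columns with $q\notin S$. Then the sign rule $Q_S(n+q)\equiv-Q_S(n)\pmod q$ of Lemma~\ref{lem:quot} (or Lemma~\ref{lem:A} for $a_S$ itself) moves $n$ down to a small representative. There you compare two columns whose values differ by a factorial smaller than $q$.
\begin{itemize}
\item For $\{1,k\}$ (Proposition~\ref{prop:gcdq} with $r=1$), one column is $k=\infty$, present in the row as $k=n-1$ because $a_{\{1,n-1\}}(n)=D(n)$. The other is $k=q+1$ when $q\mid n-1$, where the quotients at $q+1$ differ by $(q-1)!$. When $\rho=n\bmod q\ge2$ it is $k=\rho$, where the counts at $\rho$ differ by $(\rho-1)!$. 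When $q\mid n$, one has $D(n)\equiv\pm1\pmod q$ directly.
\item For $\{1,\dots,k-1\}$ (Theorem~\ref{thm:A333726}), take $k=q$ when $q\mid n-1$. Then $U(q+1,q)=q!$ and Wilson's theorem make the quotient $\equiv\pm1$. Otherwise take $k=n\bmod q$, or $k=2$ if $q\mid n$. Only primes $q\le n-1$ matter, since $U(n,n)=(n-1)!$.
\end{itemize}

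\textbf{Part (c).} Your plan bounds $v_3(a_S(111))$ below by $v_3(111!)-\kappa_S(111)$ uniformly in $S$, but that bound is far from the truth.
\begin{itemize}
\item For the very set $S=\N\setminus\{4,9\}$ that realises the drop, $9$ is a block of depth $2$ ($c_9=\frac19$). Twelve copies have degree $108$ and cost $24+v_3(12!)=29$. So $v_3(111!)-\kappa(111)\le25$ there, while the true value is $27$.
\item Even the sharper cycle-type bound fails. The type $6^29^{11}$ has cost $2+22+v_3(11!)=28$, so $v_3(\Lambda_3(111))\le26$.
\end{itemize}

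The bound $v_3(g_3(111))\ge27$ therefore rests on cancellation. For example, for $S=\N\setminus\{6,9\}$ one has $a_S(111)=f_{\{6,9\}}(111)$. The types $6^29^{11}$ and $6^59^9$ both have $v_3(N(\lambda))=26$, yet the total has valuation $28$. Running the calculus set by set over $2^{108}$ sets gives no uniform control of such cancellation.

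The missing device is Proposition~\ref{prop:globalgcd}(i). One has $a_S(n)=\sum_{P\subseteq A}f_P(n)$ over exact part sets. Inclusion--exclusion on the Boolean lattice writes each $f_P(n)$ as an integer combination of numbers $a_S(n)$, so $g_3(111)=\gcd_Pf_P(111)$. Then only the five part sets carrying a type with $v_3(N(\lambda))<28$ need exact evaluation.

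You also still need three further pieces:
\begin{itemize}
\item An explicit witness: $S=\N\setminus\{4,9\}$, whose types $4^{21}9^3$, $4^{12}9^7$, $4^39^{11}$ have $v_3(N)=38,33,27$.
\item A separate argument at $2$: the paper uses $v_2(\Lambda_3(111))=30=v_2(G_3(111))$.
\item The primes above $3$, which come from Theorem~\ref{thm:gcdfamily}, that is, from the mechanism missing in (b).
\end{itemize}
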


Part~(a) is Theorem~\ref{thm:general}(i), and part~(b) is Theorems~\ref{thm:A378}(b) and~\ref{thm:A333726}.  Part~(c) is the surprising one.  One might expect the sets $\{1,\dots,r,k\}$ to determine the gcd over all sets containing $\{1,\dots,r\}$, and they do at every prime above $r$, for every $r$, and at every prime when $r\le2$ (Theorem~\ref{thm:gcdfamily}).  At the prime $3$ the set $S=\N\setminus\{4,9\}$, whose permutations have all cycles of length $4$ or $9$, gives a number $a_S(111)$ with one factor $3$ fewer than the gcd over the family, and $n=111$ is the smallest case found (Theorem~\ref{thm:counter} and Section~\ref{sec:globalgcd}).

\paragraph{Cycle gcd.}  Counting permutations by the gcd of their cycle lengths reduces, by M\"obius inversion over the divisors of $n$, to explicit products.  The permutations of $\{1,\dots,n\}$ with relatively prime cycle lengths number $1,1,4,15,96,455$ for $n=1,\dots,6$ (\OEIS{A079128}), and indeed $15=4^2-1$ and $455=13\cdot(6^2-1)$.

\begin{theorem}[Cycle gcd]\label{thm:introcyc}
Let $n\ge4$, let $g\mid n$ with $n/g\ge3$, and let $C(n,g)$ be the number of permutations of $\{1,\dots,n\}$ whose cycle lengths have gcd $g$.  Then $n^2-1$ divides $C(n,g)$.  Moreover $C(n,1)$ is prime to $n$.
\end{theorem}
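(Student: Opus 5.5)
The plan is to reduce to the Möbius inversion over divisors mentioned before the statement, with explicit products for the permutations all of whose cycle lengths are divisible by a fixed $d$. For $d\mid n$ and $m=n/d$, let $P_d(n)$ be the number of permutations of $\{1,\dots,n\}$ with every cycle length divisible by $d$. Its generating function is $\exp\bigl(\sum_k x^{dk}/(dk)\bigr)=(1-x^d)^{-1/d}$. Taking the coefficient of $x^n$ gives
\[
P_d(n)=N_d\prod_{j=0}^{m-1}(1+jd),\qquad N_d=\frac{n!}{d^m\,m!}=\prod_{j=0}^{m-1}\prod_{i=1}^{d-1}(jd+i)\in\Z .
\]
Möbius inversion over the divisors of $n/g$ then gives $C(n,g)=\sum_{e\mid n/g}\mu(e)\,P_{ge}(n)$. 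I will treat $n+1$ and $n-1$ separately and then combine them, and the factor $2$ shared by $n-1$ and $n+1$ needs separate care.

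\emph{The factor $n+1$.} Replacing each $1+jd$ by $1+jd-(1+dm)=-d(m-j)$ gives the exact integer congruence $\prod_{j<m}(1+jd)\equiv(-d)^m m!\pmod{n+1}$. Multiplying by $N_d$ yields $P_d(n)\equiv(-1)^m n!\pmod{n+1}$. Summing with $M=n/g$, we get $C(n,g)\equiv n!\sum_{e\mid M}\mu(e)(-1)^{M/e}\pmod{n+1}$. I claim the inner sum is $0$ whenever $M\ge3$. If $M$ is odd, it is $-\sum_{e\mid M}\mu(e)=0$. If $M=2^st$ with $t$ odd, the sign flips exactly when $2^s\mid e$. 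The correction term therefore vanishes unless $s=1$ and $t=1$, that is, unless $M=2$. This is where the hypothesis $n/g\ge3$ enters.

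\emph{The factor $n-1$.} Here $1+jd\equiv d(m+j)\pmod{n-1}$, so $P_d(n)\equiv n!\binom{2m-1}{m}\pmod{n-1}$. Since $n-1\mid n!$, every $P_d(n)$ is divisible by $n-1$. When $n$ is even, $\gcd(n-1,n+1)=1$ and we are done.

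\emph{The main obstacle: odd $n$.} For odd $n$ every divisor $d$ is odd, and I must show $v_2(C(n,g))\ge v_2(n-1)+v_2(n+1)$. One of these two valuations equals $1$.
\begin{itemize}
\item If $v_2(n+1)=1$, I would check directly from the product $\prod_j(jd+1)^2(jd+2)\cdots(jd+d-1)$ that each $P_d(n)$ is divisible by $2^{v_2(n-1)+1}$. For $d\ge3$ the product contains $n-1$ together with another even factor, and for $d=1$ the product is $n!$.
\item If $v_2(n-1)=1$, I would sharpen the congruence modulo $n+1$ to one modulo $2(n+1)$. The error term is $N_d\bigl[\prod_j(1+jd)-\prod_j(-d)(m-j)\bigr]$, and each factor differs from its partner by the even number $n+1$. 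A telescoping expansion writes this difference as $(n+1)\sum(\text{products of the other factors})$, and I expect each such product, or else $N_d$, to supply the extra factor $2$.
\end{itemize}
I expect this $2$-adic bookkeeping to be the delicate step, especially for small $m$.

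\emph{Coprimality of $C(n,1)$ with $n$.} Fix a prime $p\mid n$ and work modulo $p$ with $C(n,1)=\sum_{e\mid n}\mu(e)P_e(n)$, where $P_e(n)=\prod_j(je+1)^2(je+2)\cdots(je+e-1)$ as above.
\begin{itemize}
\item For $e=1$, $P_1(n)=n!$ is divisible by $p$.
\item If $p\nmid e$ and $e>1$, the product contains every integer below $n$ that is not a multiple of $e$. In particular it contains $p$, since $p\le n-1$ unless $n=p=e$, which is excluded because $p\nmid e$.
\item If $p\mid e$ and $e\ne p$, the factor $i=p\le e-1$ appears.
\end{itemize}
Hence only $e=p$ survives. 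For it, each factor satisfies $jp+i\equiv i\pmod p$, so
\[
P_p(n)\equiv\bigl((p-1)!\bigr)^{n/p}\equiv(-1)^{n/p}\pmod p
\]
by Wilson's theorem. Therefore $C(n,1)\equiv-(-1)^{n/p}\not\equiv0\pmod p$.
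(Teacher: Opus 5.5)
Your proposal is correct, and its skeleton is the paper's: the M\"obius inversion $C(n,g)=\sum_{e\mid n/g}\mu(e)P_{ge}(n)$, the explicit product for $P_d(n)$, the congruence $P_d(n)\equiv(-1)^{n/d}n!\pmod{n+1}$ obtained by shifting each factor $1+jd$ by $n+1$, and the evaluation of $\sum_{e\mid M}\mu(e)(-1)^{M/e}$ as $2[M=2]-[M=1]$.

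The differences are in the auxiliary steps.

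\emph{The factor $n-1$.} The paper uses that $\binom{1/d+m-1}{m}$ is $q$-integral for every prime $q\mid n-1$. Your congruence $1+jd\equiv d(m+j)\pmod{n-1}$ does the same job.

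\emph{The factor $2$ at odd $n$.} The paper needs no case split. Every $d\mid n$ is odd, so $1/d\in\Z_{(2)}$ and $v_2(P_d(n))\ge v_2(n!)$. It then only remains to check $v_2(n!)\ge v_2(n^2-1)$ for odd $n\ge5$. The step you left as an expectation does close, and without any telescoping:
\begin{itemize}
\item For $d\ge3$ the factor $N_d$ already contains $2$ (the term $j=0$, $i=2$), so $P_d(n)\equiv(-1)^mn!\pmod{2(n+1)}$ at once.
\item For $d=1$ the claim is $(n+1)\mid n!$. This holds because $n+1\ge6$ is even, hence composite and different from $4$.
\end{itemize}
This is exactly where the hypothesis $n\ge4$ enters; at $n=3$ the congruence fails for $d=1$.

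\emph{Coprimality.} The paper proves the exact valuation formula of Theorem~\ref{thm:cyclegcd}(d) by exhibiting a unique term of minimal $p$-adic valuation in the M\"obius sum. For $g=1$ that formula gives $0$ by Legendre's formula. Your computation modulo $p$ is more elementary: only $e=p$ survives, and Wilson's theorem gives the explicit residue $C(n,1)\equiv-(-1)^{n/p}\pmod p$. The paper's route costs more but yields exact valuations of $C(n,g)$ at every $p\mid n$ and for every $g$.
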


This proves the two conjectures recorded in the entry \OEIS{A079128} (Theorem~\ref{thm:cyclegcd} and Corollary~\ref{cor:A079128}).  The factor $n+1$ comes from Theorem~\ref{thm:cyclegcd}(b), which writes $C(n,g)$ modulo $n+1$ as $n!$ times a short M\"obius sum, and this sum vanishes when $n/g\ge3$.  For the permutations whose cycle lengths are all prime to $n$ (\OEIS{A335088}), the same idea reduces the count modulo a prime $n+1$ to a short prefix sum, with closed forms for Fermat primes and for safe primes, and $n+1$ divides the count for $n=708$ and $n=92202$ and for no other $n\le3\cdot10^6$ with $n+1$ prime (Theorem~\ref{thm:A335088} and Section~\ref{sec:coprime}).

\paragraph{Composite moduli.}  The calculus works one prime at a time, and a composite modulus combines primes of two kinds.  The residues of $A_3(n)$ modulo $5$ repeat $1,1,2,1,3$ for ever, so $5$ never divides $A_3(n)$, while every power of $2$ and of $3$ divides $A_3(n)$ for all large $n$.  The difference is that $2,3\le r$ and $5>r$.  Conjugating by a $p$-cycle explains this (Lemma~\ref{lem:A}).  It gives $a_A(n+p)\equiv([1\in A]-[p\in A])\,a_A(n)\pmod p$, where $[P]$ is $1$ if the statement $P$ holds and $0$ otherwise.  If $1$ and $p$ are both allowed or both forbidden, the factor is $0$ and $p$ divides $a_A(n)$ for every $n\ge p$.  Otherwise the residues repeat with period $p$ up to sign, as they do for $A_3$ modulo $5$.  Call a prime $p$ of \emph{type~I} for $A$ if $[1\in A]=[p\in A]$ and of \emph{type~II} otherwise, and write $m=m_{\mathrm I}m_{\mathrm{II}}$, where $m_{\mathrm I}$ collects the prime powers of $m$ at primes of type~I.  For $A=A_{\le r}$ the primes of type~I are the primes $p\le r$.

\begin{theorem}[Composite moduli]\label{thm:introcomp}
Let $A\subseteq\N$ and $m=m_{\mathrm I}m_{\mathrm{II}}$.  Then $m_{\mathrm I}$ divides $a_A(n)$ for all large $n$, the set $\{n:m_{\mathrm{II}}\mid a_A(n)\}$ is purely periodic, and $\{n:m\mid a_A(n)\}$ agrees with it for large $n$ and has density $\prod_{p^e\,\|\,m_{\mathrm{II}}}\dens\{n:p^e\mid a_A(n)\}$.  In particular $m\mid a_A(n)$ for all large $n$ if and only if $m_{\mathrm{II}}=1$.
\end{theorem}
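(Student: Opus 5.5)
The proof treats each prime separately and then combines the primes by the Chinese remainder theorem. The basic tool is Lemma~\ref{lem:A}, the congruence $a_A(n+p)\equiv([1\in A]-[p\in A])\,a_A(n)\pmod p$, lifted to prime powers. The plan has three steps, in this order.

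\emph{Type~I primes.} Fix a prime $p$ of type~I and an exponent $e$. We want $p^e\mid a_A(n)$ for all large $n$, and we get it by showing $v_p(a_A(n))\to\infty$.
\begin{itemize}[itemsep=0pt]
\item Because $[1\in A]=[p\in A]$, the Möbius exponent $c_p=\frac1p\bigl([p\in A]-[1\in A]\bigr)$ is $0$, so $p$ itself is not a block.
\item Theorem~\ref{thm:introcalc} gives $v_p(a_A(n))\ge v_p(n!)-\kappa_A(n)$. It therefore suffices to show $\kappa_A(n)\le(1-\delta)\,n/(p-1)+O(1)$ for some $\delta>0$.
\item A block $m$ of depth $j$ has $p^j\mid m$, since $mc_m$ is an integer. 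Each copy of it costs about $j+1/(p-1)$ while using degree $m\ge p^j$, so its cost per unit degree is at most $(j+1/(p-1))/p^j$.
\item This ratio is strictly below $1/(p-1)$ for $j\ge2$. For $j=1$ it equals $1/(p-1)$ only when $m=p$, which we have just excluded. Since only finitely many blocks matter, the efficiencies are uniformly bounded away from $1/(p-1)$, and we get the required $\delta$.
\item The $v_p(\lambda!)$ part of the cost adds at most $\lambda/(p-1)$, which is already absorbed in the per-copy bound above.
\end{itemize}
Hence $v_p(a_A(n))\ge \delta n/(p-1)-O(\log n)\to\infty$. Alternatively, iterating the $p$-cycle conjugation argument of Lemma~\ref{lem:A} on the cyclic group action gives a direct inductive proof; I would try the calculus route first, since it is already available.

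\emph{Type~II primes.} Fix a prime $p$ of type~II and $e\ge1$. We need the set $\{n:p^e\mid a_A(n)\}$ to be purely periodic in $n$.
\begin{itemize}[itemsep=0pt]
\item Here the factor in Lemma~\ref{lem:A} is $\pm1$, a unit, so $a_A(n)\bmod p$ is purely periodic up to sign with period $p$.
\item For higher powers, reduce the exponential-formula recurrence~\eqref{eq:last0} modulo $p^e$. I expect a congruence of the form $a_A(n+p^{e'}L)\equiv u\,a_A(n)\pmod{p^e}$ with $u$ a unit, for a suitable period. To obtain it, let the group generated by a product of disjoint cycles act on permutations. In the Euler product this corresponds to the block $p$ (with $c_p=\pm1/p$) making $F_A$ equal to $(1-x^p)^{\pm1/p}$ times a series whose coefficient behaviour is controlled.
\item Even without explicit periods, two facts combined by induction on $e$ give pure periodicity of divisibility: each $a_A(n)$ is nonzero modulo $p$ on at least one class, and the map $n\mapsto n+P$ multiplies $a_A(n)$ by a unit modulo $p^e$.
\end{itemize}
Establishing this unit-multiplier congruence modulo $p^e$ is the main obstacle. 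I would first try the group-action proof, in which $\Z/p^e$ acts by conjugation with a product of $p^{e-1}$ disjoint $p$-cycles and non-free orbits are counted recursively. If that fails, I would fall back on the Euler product together with Lucas-type congruences for $(1-y)^{\pm1/p}$.

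\emph{Combining.} By the Chinese remainder theorem, $\{n:m_{\mathrm{II}}\mid a_A(n)\}$ is the intersection of purely periodic sets, hence purely periodic. By the type~I step, $m_{\mathrm I}\mid a_A(n)$ for all large $n$, so $\{n:m\mid a_A(n)\}$ agrees with $\{n:m_{\mathrm{II}}\mid a_A(n)\}$ for large $n$.

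For the density, the periods at distinct primes are powers of those primes. This independence across primes has to be checked, and it should come out of the construction in the type~II step. Given it, the residue patterns are jointly equidistributed modulo the product of the periods, so the density factors as the product of the individual densities.

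Finally, the equivalence. If $m_{\mathrm{II}}=1$, then $m=m_{\mathrm I}$ and divisibility for all large $n$ follows from the type~I step. Conversely, if $p$ is a type~II prime dividing $m$, the unit multiplier gives $p\nmid a_A(n)$ on a full residue class, so $m$ cannot divide $a_A(n)$ for all large $n$.
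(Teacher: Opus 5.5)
\textbf{The gap is the type~II step for $e\ge2$.} You never show that $\{n:p^e\mid a_A(n)\}$ is purely periodic. You name the congruence $a_A(n+P)\equiv u\,a_A(n)\pmod{p^e}$ as the main obstacle and leave it open. The ``induction on $e$'' you offer as a substitute assumes exactly this unit-multiplier statement, so it is circular.

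The group-action sketch also fails as written. A product of $p^{e-1}$ disjoint $p$-cycles has order $p$, so it generates $\Z/p$, not $\Z/p^e$. With a genuine $p^e$-cycle acting on the set $X$ counted by $a_A(n+p^e)$, the orbit count only gives $|X|\equiv\sum_{i<e}p^iN_i\pmod{p^e}$, where $N_i$ is the number of orbits of size $p^i$. The orbits of intermediate size $p^i$, $0<i<e$, are not controlled by the fixed points. So this argument gets you nothing beyond $e=1$, which is Lemma~\ref{lem:A}.

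\textbf{The missing ingredient, which the paper imports, is the multiplicative shift congruence} $a_A(n+k)\equiv a_A(n)\,a_A(k)\pmod k$ for all $n,k$ \cite[Theorem~1]{KallatShift}. It is combined with the residue $a_A(p^e)\equiv\gamma^{p^{e-1}}\pmod{p^e}$, where $\gamma=[1\in A]+(p-1)[p\in A]$, for odd $p$ \cite[Corollary~46]{KallatShift}; for $p=2$ the residues of \cite[Theorem~48]{KallatShift} play the same role.

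At an odd type~II prime one has $\gamma\in\{1,p-1\}$ and $(p-1)^{p^{e-1}}\equiv-1\pmod{p^e}$. Hence $a_A(n+p^e)\equiv\pm a_A(n)\pmod{p^e}$, which is Lemma~\ref{lem:typeB}(a). This gives pure periodicity with period dividing $p^e$.

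This is also exactly what your density step needs. Once each $Z(p^{e})$ has a period that is a power of $p$, independence across distinct primes follows from the Chinese remainder bijection alone (Lemma~\ref{lem:crt}); nothing further has to come out of the construction. With an unspecified period $P$, as in your sketch, the product formula for the density would not follow.

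\textbf{The rest of your plan is sound.} Your type~I argument via $\kappa_A(n)\le\varepsilon_{\max}n$ with $\varepsilon_{\max}<\frac1{p-1}$ is correct. It is the paper's Lemma~\ref{lem:typeA}(b), whereas the proof of Theorem~\ref{thm:composite}(i) uses case~(A) of Theorem~\ref{thm:dichotomy}, which also yields the sharp threshold $t_p(e)$. Your route gives only ``for all large $n$'', but that is all the statement asks. Your alternative of iterating Lemma~\ref{lem:A} would give only $p\mid a_A(n)$, not $p^e$. The converse direction needs only $e=1$, namely $a_A(pq)\equiv\pm1\pmod p$.
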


The threshold from which $m_{\mathrm I}\mid a_A(n)$, the pre-period and the least period of $a_A(n)\bmod m$ are exact, and the integers that divide $a_A(n)$ for all $n\ge n_0$ are the divisors of a tail gcd, which a certified finite computation finds (Theorem~\ref{thm:composite} and Proposition~\ref{thm:compositeA}).  For \OEIS{A057693} this gives $\gcd_{n\ge n_0}A_3(n)=2^{1+\lfloor(n_0-2)/4\rfloor}3^{\lfloor n_0/9\rfloor+\lfloor(n_0+6)/9\rfloor}$ for every $n_0\ge0$, and $A_3(n)\bmod10$ is periodic from $n=2$ with period $5$ (Corollary~\ref{cor:A057693}).

\paragraph{Valuations inside a residue class.}  The calculus gives only a lower bound on the exceptional set.  When one block is strictly the most efficient, an analytic statement takes over.  The \emph{efficiency} of a block $m$ of depth $j_m$ is $\varepsilon_m=(j_m+\frac1{p-1})/m$.  By Legendre's formula $v_p(\lambda!)$ is about $\lambda/(p-1)$, so $\varepsilon_m$ is the cost per unit of degree of many copies of $m$.

\begin{theorem}[Valuations inside a residue class]\label{thm:introinterp}
Let $p$ be a prime and assume that $A$ has blocks at $p$ and that one block $m_*$, of depth $j$, satisfies $\varepsilon_{m_*}>\varepsilon_m$ for every other block $m$.  For each $0\le\rho<m_*$ there is a locally analytic function $F_\rho\colon\Z_p\to\Q_p$ such that
\[
v_p\bigl(a_A(n)\bigr)=v_p(n!)-jq-v_p(q!)+v_p\bigl(F_\rho(q)\bigr)\qquad\text{for every }n=m_*q+\rho\text{ with }q\ge0.
\]
In column $2$ of \OEIS{A378495} let $p$ be an odd prime, $Q_n=T(n,2)/((n-1)(n-2))$, and $0\le\nu<p$ with $v_p(Q_{\nu+3})=1$.  Then either there is a $p$-adic integer $q^*\notin\N$ with $v_p(Q_{\nu+3+pq})=1+v_p(q-q^*)$ for every $q\in\N$, or $v_p(Q_{\nu+3+pq})=1$ for every $q\in\N$, and an explicit number $\zeta$ decides which case occurs.
\end{theorem}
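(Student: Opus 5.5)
The approach is to write $a_A(n)/n!$ as a Cauchy product over the blocks and isolate the dominant block $m_*$. By the Euler product, $F_A=G\cdot\prod_{m\text{ block}}(1-x^m)^{-c_m}$, where $G$ has $p$-integral coefficients. Only finitely many blocks matter, by Theorem~\ref{thm:introcalc}. For $n=m_*q+\rho$ we get
\[
\frac{a_A(n)}{n!}=\sum_{\lambda\ge0}\binom{-c_{m_*}}{\lambda}(-1)^\lambda\, h\bigl(n-m_*\lambda\bigr),
\]
where $h$ collects the coefficients of $G$ times the other blocks. Reindexing by $\lambda=q-s$, with $s\ge0$ the number of degree units of $m_*$ left to the other factors, the term becomes $\binom{-c_{m_*}}{q-s}(-1)^{q-s}\,h(m_*s+\rho)$. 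Now factor out the leading coefficient $b_q:=\binom{-c_{m_*}}{q}(-1)^q$, whose valuation is $-jq-v_p(q!)$. The ratio $b_{q-s}/b_q$ is a rational function of $q$, namely $\prod_{i=0}^{s-1}\frac{q-i}{c_{m_*}+q-1-i}$, and it is a polynomial-type expression in $q$ with $p$-adic size about $p^{js}$ times $(q)_s$. We define
\[
F_\rho(q)=\sum_{s\ge0}h(m_*s+\rho)\,\frac{b_{q-s}}{b_q},
\]
and the claimed formula is then immediate from $a_A(n)=n!\,b_q F_\rho(q)$ at integer $q$.

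The main obstacle is showing that this series converges, and is locally analytic, for $q\in\Z_p$. The denominators $c_{m_*}+q-1-i$ have $c_{m_*}=\alpha'/p^j$ with $j\ge1$, so each has valuation exactly $-j$ and is a unit times $p^{-j}$ uniformly in $q\in\Z_p$. Thus $b_{q-s}/b_q=p^{js}\cdot(q)_s\cdot u_s(q)$, where $u_s$ is a product of $p$-adic units that is analytic in $q$: it is $\prod(1+p^j(q-1-i)/\alpha')^{-1}$ up to a constant. Meanwhile $v_p(h(m_*s+\rho))\ge-\kappa'(m_*s+\rho)$, where $\kappa'$ is the optimal cost using only the other blocks. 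The strict efficiency inequality $\varepsilon_{m_*}>\varepsilon_m$ gives $\kappa'(N)\le(\varepsilon_{m_*}-\delta)N+O(1)$. Combined with $v_p((q)_s)\ge0$ and the factor $p^{js}$, this yields term valuations of at least $js-(\varepsilon_{m_*}-\delta)m_*s-O(1)=s(\delta m_*-\tfrac1{p-1})-O(1)$. That is not obviously positive. I would therefore keep $(q)_s$ together with $\binom{\cdot}{\cdot}$, using $(q)_s/s!=\binom qs$, which is integral on $\Z_p$. The $v_p(s!)\approx s/(p-1)$ then restores exactly the $\frac1{p-1}$ in the efficiency, and the term valuation grows like $\delta m_* s$. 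This is the step where the definition of efficiency enters: the Mahler-type expansion $\sum_s C_s\binom qs$ with $v_p(C_s)\ge\delta' s-O(1)$, linear growth, is the standard criterion for local analyticity (Amice). I would first verify this bound on the other blocks' coefficients via Legendre, $v_p(\lambda!)\le\lambda/(p-1)$.

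For the column-$2$ application of \OEIS{A378495}, I would take $A=\N\setminus\{1,2\}$ at odd $p$. There $c_p=(1-[\ldots])/p$ gives the dominant block $m_*=p$ of depth $1$, and one checks the efficiency comparison against the other blocks. Next, I would pass to $Q_n$ via Theorem~\ref{thm:quotient}, which removes the factor $(n-1)(n-2)$, and apply the formula with residue $\rho=\nu+3$. The function $q\mapsto p^{-1}Q_{\nu+3+pq}$ times the unit normalisation is analytic on $\Z_p$, with $F(0)$ of valuation $1$ relative to the normalisation. By Weierstrass preparation on $\Z_p$, an analytic function whose Mahler coefficients beyond the constant have positive valuation has at most one zero $q^*$ in $\Z_p$, and it has $v_p(F(q))=v_p(F(0)\text{-level})+v_p(q-q^*)$ when the linear coefficient is a unit. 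The number $\zeta$ would be the reduction of the linear coefficient, together with the constant term, and this decides between a simple zero and no zero. Finally, $q^*\notin\N$ because a zero at integer $q$ would contradict $Q_n\ne0$.
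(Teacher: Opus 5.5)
Your treatment of the first part follows the paper: the same $F_\rho$, the same identity $[x^k]G=\beta_qF_\rho(q)$, and the same bound on $h$ from the efficiency gap (Lemmas~\ref{lem:ident} and~\ref{lem:hsize}). One step needs repair. The $s$-th term is $h_{\rho+sm_*}p^{js}s!\binom qs u_s(q)$, and $u_s$ depends on $q$. So the series is not a Mahler series, and Amice's theorem cannot be quoted as it stands. What works, and what the paper does, is the estimate that underlies Amice. On a disc $z_0+p^\ell\Z_p$ the factors $\alpha'+p^j(z-1-i)$ are units of the Tate algebra. The product $\prod_{i<s}(z-i)$ has Gauss valuation $\sum_{i<s}\min(v_p(z_0-i),\ell)\ge v_p(s!)-s/(p^\ell(p-1))$. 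For $\ell$ large this gives convergence on each such disc.

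The genuine gap is in the column-$2$ part. The first part only gives analyticity on discs of some radius $p^{-\ell}$. Near a zero it gives only $\mu\,v_p(q-z^*)+c$, with an unknown constant $c$. To get $v_p(Q_{\nu+3+pq})=1+v_p(q-q^*)$ for every $q\in\N$ you need two things: $F_\nu\in\Z_p\langle z\rangle$ on the whole disc $\Z_p$, and $F_\nu/p\equiv\xi+\zeta z\pmod{p\Z_p\langle z\rangle}$ as a power series.

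You assert the first without proof and replace the second by a condition on Mahler coefficients, which does not suffice. For example, $g(z)=1-z+p\binom zp$ has Mahler coefficients $1,-1,0,\dots,0,p$, yet its reduction in $\F_p[z]$ is $(1-z)^p$. Indeed $g(1)=0$ while $g(1+p)=p^2$, so $v_p(g(z))=v_p(z-1)$ fails.

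The missing estimate is specific to column~$2$. Every block other than $p$ has depth $1$ and degree at least $3p$, so $v_p(h_{\nu+tp})\ge-\frac p{p-1}\lfloor t/3\rfloor$. On the full disc, $T_t=\prod_{i<t}p(z-i)/(1+p(z-1-i))$ has Gauss valuation exactly $t$. Hence $w(h_{\nu+tp}T_t)\ge t/2$, and the series converges in $\Z_p\langle z\rangle$. The terms with $t\ge2$ have $w\ge2$: for $t=2$ because $h_{\nu+2p}\in\Z_p$, and for $t=3$ because $w\ge\frac32$ and $w$ is an integer. The terms $t=0,1$ give $p\xi+p\zeta z$ modulo $p^2$, with $\zeta=h_{\nu+p}=[x^{\nu+p}]G-c_p[x^\nu]G=Q_{\nu+p+3}/(\nu+p)!-\xi$.

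Only then does Weierstrass preparation give $F_\nu/p=(z-q^*)U$ with $U$ a unit of $\Z_p\langle z\rangle$, so that $|U(z)|=1$ at every point. When $p\mid\zeta$ it shows instead that $F_\nu/p$ is a unit. Finally, in the quotient form of Theorem~\ref{thm:quotient} the variable is $k=n-3$, so the residue is $\rho=\nu$, not $\nu+3$.
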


The first part is Theorem~\ref{thm:interp}, where $F_\rho$ is an explicit series in the coefficients of $F_A(1-x^{m_*})^{c_{m_*}}$.  The second part is Theorem~\ref{thm:qstar}, which extends the second-order Theorem~\ref{thm:col2} to every power of $p$.  At $p=19$ one has $Q_7=38=2\cdot19$, and $v_{19}(Q_{7+19q})\ge2$ exactly for $q\equiv9\pmod{19}$.

\subsection{Consequences for OEIS entries}\label{sec:oeistable}

The next table lists the entries whose recorded conjectures or statements the theorems prove, with the place of the proof.  The last column says what the entry itself records, and the name Bala refers to statements of Peter Bala in the entry.  The theorems apply to $99$ OEIS sequences in all, and the supplementary catalogue described in Section~\ref{sec:computations} records the statement for each of them.

\begin{center}\small
\renewcommand{\arraystretch}{1.15}
\begin{tabular}{@{}p{1.55cm}p{6.1cm}p{3.3cm}p{3.6cm}@{}}
\toprule
entry & what is proved & where & in the entry\\
\midrule
\OEIS{A378495} & row gcd $n-1$ for $n\ge3$, the differences $T(n,n-2)-T(n,n)$ and $T(n,n-1)-T(n,n-2)$, the $2$-adic valuations of the columns & Theorem~\ref{thm:A378} & the three statements as conjectures\\
\OEIS{A333726} & row gcd $n-1$ for $n\ge3$ & Theorem~\ref{thm:A333726} & ---\\
\OEIS{A079128} & $(n^2-1)\mid a(n)$ for $n\ge4$, and $\gcd(a(n),n)=1$ & Corollary~\ref{cor:A079128} & both as conjectures\\
\OEIS{A193437}, \OEIS{A193438} & $p^{\lfloor n/p\rfloor}\mid a(n)$ for every prime $p\equiv1\pmod3$, respectively $p\equiv1\pmod4$ & Theorem~\ref{thm:saturation} and the remark after it & both as conjectures\\
\OEIS{A335088} & $(n-1)\mid a(n)$, $\gcd(a(n),n)=1$, $(n+1)\mid a(n)$ for composite $n+1$, residue modulo a prime $n+1$ & Theorem~\ref{thm:A335088}, Section~\ref{sec:coprime} & ---\\
\OEIS{A057693} & $v_2(a(n))$ and $v_3(a(n))$ exactly, tail gcds, sharp thresholds for $2^i3^j\mid a(n)$, pre-period and period of $a(n)\bmod k$ & Corollaries~\ref{cor:plateau}, \ref{cor:bala} and~\ref{cor:A057693} & $v_2$ stated, $3$-adic values as conjectures, eventual periodicity (Bala)\\
\OEIS{A070945} & $v_2(a(n))$ and $v_3(a(n))$ exactly, $v_2(a(2n+1)-a(2n))\ge n+1$, tail gcds & Theorem~\ref{thm:p2r4}, Corollaries~\ref{cor:bala2}, \ref{cor:bala}, \ref{cor:A070945} & two inequalities with proof, a third misprinted, $3$-adic values as conjectures (Bala)\\
\OEIS{A070946} & $v_2$ and $v_3$ of $a(n)$, the $2$- and $3$-parts of the tail gcds, sharp thresholds for $2^i3^j\mid a(n)$ & Theorem~\ref{thm:p2r5}, Corollaries~\ref{cor:bala} and~\ref{cor:A070946} & eventual divisibility by $2^i3^j5^k$ (Bala)\\
\OEIS{A000085} & $v_2(a(n))$ exactly & Theorem~\ref{thm:invol} & known \cite{Ochiai,KimKim}\\
\OEIS{A038205} & $v_2(a(n))$ for all $n$, $v_p(a(n))$ at $p=3,5,7$, a $p$-adic integer $q^*$ in each class with $v_p(Q_{\nu+3})=1$ & Theorems~\ref{thm:A378}(f), \ref{thm:qstar}, Proposition~\ref{prop:col2} & ---\\
\bottomrule
\end{tabular}
\end{center}

\subsection{Overview of the proofs}\label{sec:overview}

Most of the ideas of the paper can already be seen on the derangement numbers $D(n)=1,0,1,2,9,44,265,1854$ ($n=0,\dots,7$) and on the triangle \OEIS{A378495}, so we follow these two examples through.

\emph{The last point.}  Removing the cycle through the last point gives the recurrence~\eqref{eq:last0}, and when $S\supseteq\{1,\dots,r\}$ every one of its terms is divisible by $\fall{n-1}{r}$ (Lemma~\ref{lem:rec} and Theorem~\ref{thm:general}(i)).  For derangements this is $n-1\mid D(n)$, as in $1854=6\cdot309$, and for column $2$ of the triangle it is $(n-1)(n-2)\mid T(n,2)$.  The quotient $Q_S(n)=a_S(n)/\fall{n-1}{r}$ is again a short sum over the cycle of the last point (Lemma~\ref{lem:quot}), and it carries the rest of the arithmetic.

\emph{A sign rule.}  Conjugation by a $q$-cycle on the last $q$ points permutes the permutations with no cycle length in $S$, and its orbits have size $1$ or $q$.  Counting the fixed points gives $a_S(n+q)\equiv-a_S(n)\pmod q$ for every prime $q\notin S$ when $1\in S$ (Lemma~\ref{lem:A}), and the same rule holds for the quotient (Lemma~\ref{lem:quot}).  Row $7$ of the triangle is $1140,1224,1434,1350,1854,1134$, all divisible by $6$, and the quotients $190,204,239,225,309,189$ have gcd $1$.  The sign rule applied to the quotients gives the gcd theorems, among them that $n-1$ is exactly the gcd of row $n$ (Theorem~\ref{thm:A378}(b)).  In column $2$ it gives $T(26,2)\equiv-T(7,2)\pmod{19}$, and $T(7,2)=1140=19\cdot60$, so $19$ divides $T(26,2)$ although it does not divide $25\cdot24$.

\emph{Blocks and the packing problem.}  For derangements the Euler exponents are $c_m=-\mu(m)/m$ for $m\ge2$.  At $p=3$ the factors whose exponent is not a $3$-adic integer are the squarefree multiples of $3$, all of depth $1$, and the smallest is the block $3$, with $c_3=\frac13$.  The coefficient of $x^{3\lambda}$ in $(1-x^3)^{-1/3}$ has valuation $-\lambda-v_3(\lambda!)$, the cost of $\lambda$ copies of $3$.  Trading a block $3d$ for $d$ copies of $3$ raises the cost, so for $n=3q+\nu$ with $0\le\nu\le2$ the only optimal choice is $q$ copies of $3$, and its leading coefficient is $D(\nu)/\nu!$ (Example~\ref{ex:der}).  Since $D(0),D(1),D(2)=1,0,1$, the residue vanishes exactly for $\nu=1$, and $3\mid D(n)$ exactly when $n\equiv1\pmod3$, as in $D(4)=9$ and $D(7)=1854=3\cdot618$.  In general the packing problem is a knapsack, which a dynamic program solves, and an exchange argument, trading a large block for several copies of the smallest one, shows that only finitely many blocks matter (Lemma~\ref{lem:essential}).

\emph{Competing blocks.}  When two or three blocks share the top of the packing problem, several optimal choices tie, and the residue $\Xi$ adds up their leading coefficients.  For $A_3$, $A_4$ and $A_5$ at $p=3$ the blocks that matter are $6$ and $9$, of depths $1$ and $2$, and a comparison of base-$3$ digit sums lists the optimal pairs (Lemma~\ref{lem:max3}).  For $A_3$ the residue vanishes exactly on the class $n\equiv4\pmod{27}$, which gives Theorem~\ref{thm:introshort}(b).

\emph{Column $2$.}  The sign rule gives $Q_{n+p}\equiv-Q_n\pmod p$ for the quotient $Q_n=T(n,2)/((n-1)(n-2))$ and every odd prime $p$, so $p$ divides no $Q_n$ as soon as it divides none of $Q_3,\dots,Q_{p+2}$ (Proposition~\ref{prop:col2}).  Here $Q_3,\dots,Q_7=1,1,2,8,38$, and $38=2\cdot19$, so the prime $19$ breaks the formula $v_p(T(n,2))=v_p((n-1)(n-2))$ (Example~\ref{ex:19}).  The $2$-adic valuations of $T(n,2)$ for $n=3,\dots,10$ are $1,1,3,5,2,2,5,7$, and they follow the four-class pattern of Theorem~\ref{thm:A378}(f).  The sign rule, the $2$-adic pattern and the growth of $Q_n$ together show that infinitely many odd primes divide some $Q_n$ (Theorem~\ref{thm:bad}).

\emph{Interpolation.}  The calculus applies to $Q_n$ as well, since $Q_{k+3}/k!$ is the coefficient of $x^k$ in $G=(1-x)^{-2}e^{-x-x^2/2}$ (Theorem~\ref{thm:quotient}).  At an odd prime $p$ the block $p$ has depth $1$, and every other block has depth $1$ and degree at least $3p$.  So in the class $n=\nu+3+pq$ only the copies of the block $p$ matter, and Theorem~\ref{thm:interp} writes $Q_n$ as $k!\,\beta_qF_\nu(q)$, where $k=\nu+pq$, $\beta_q$ is an explicit binomial coefficient and $F_\nu$ is locally analytic.  When $v_p(Q_{\nu+3})=1$ the reduction of $F_\nu/p$ modulo $p$ is a polynomial $\xi+\zeta z$ of degree at most one, and Weierstrass preparation lifts its root to a $p$-adic integer $q^*$ (Theorem~\ref{thm:qstar}).  For $p=19$ and $\nu=4$ the root is $q^*\equiv9\pmod{19}$, and the exact values give $q^*\equiv332\pmod{19^2}$ and $q^*\equiv5747\pmod{19^3}$.

\emph{Composite moduli.}  The sign rule also sorts the primes into two kinds.  If $1$ and $p$ are both allowed or both forbidden, conjugation by a $p$-cycle gives $p\mid a_A(n)$ for every $n\ge p$, and the shift congruences of~\cite{KallatShift} raise this to every power of $p$ for large $n$ (Theorem~\ref{thm:dichotomy}).  Otherwise $p$ is the single dominant block, and the residues of $a_A(n)$ modulo $p^e$ repeat up to sign with period $p^e$.  The Chinese remainder theorem combines the primes (Section~\ref{sec:composite}).  For derangements every prime is of the second kind, and at $p=3$ the class $n\equiv1\pmod3$ found above carries $v_3(D(n))\ge2$ throughout.

\emph{The global gcd and the cycle gcd.}  The counterexample at $n=111$ comes from a knapsack over cycle types, the same packing problem as above with all parts and not only the blocks (Proposition~\ref{prop:globalgcd}).  For the cycle gcd, M\"obius inversion writes every count as a sum of explicit products, whose residues modulo $n\pm1$ and whose valuations can be read off term by term (Section~\ref{sec:coprime}).

\subsection{Relation to earlier work}\label{sec:prior}

Recursions for the number of involutions, that is the solutions of $x^2=1$ in the symmetric group, go back to Chowla, Herstein and Moore~\cite{CHM}, and Chowla, Herstein and Scott~\cite{CHS} treat the solutions of $x^d=1$, which are the permutations whose cycle lengths divide $d$.  The shift congruence $a(n+k)\equiv a(n)a(k)\pmod k$ arose in the proof of Bala's conjecture on \OEIS{A028342} in~\cite{KallatBala}, and~\cite{KallatShift} proves it for arbitrary integer weights, and for rational weights modulo integers coprime to their denominators.  The proof in~\cite{KallatBala} (Proposition~2.2) lets $\Z/k$ act on coloured permutations.  For integer weights~\cite{KallatShift} proves the congruence with the modulus $\operatorname{lcm}(n,k)$ (Theorem~1 there), and it gives the exact modulus (Theorem~34), the lift at every prime (Theorem~39), and closed formulas modulo prime powers (Corollaries~46 and~50, Theorem~48, Corollary~51).\footnote{Numbers refer to the current version of~\cite{KallatShift}.}  We use these in Theorem~\ref{thm:dichotomy}, which restates them for $0/1$ weights, and Lemma~\ref{lem:A} is the case of a prime modulus, whose short proof we include for completeness.  The present paper continues the study of weighted permutation sequences begun in~\cite{KallatBala} and~\cite{KallatShift}, which prove congruences between the terms of such sequences.  Here we study the $p$-adic valuations of the counts of permutations with restricted cycle lengths, which we determine exactly outside explicit exceptional sets, and the shift congruences of~\cite{KallatShift} give the dichotomy at a prime.

The entries \OEIS{A057693}, \OEIS{A070945} and \OEIS{A070946} record statements of Bala on the $2$-adic and $3$-adic valuations of these sequences and on their eventual divisibility.  We describe them, together with what is proved here, after Corollary~\ref{cor:bala} and after Theorem~\ref{thm:p2r5}.

The $p$-adic valuations of the coefficients of exponentials of power series have been studied by Krattenthaler and M\"uller~\cite{KM}, whose truncated versions of Dwork's lemma give lower bounds and equality criteria.  Their Section~8 applies them to permutations with restricted cycle lengths, and for $r=p^k-1$ the lower bound in Part (a) of the Harmonic Theorem is the case of cycle lengths below $p^k$ of~\cite[Thm.~28 and the paragraph after it]{KM}.  For $p^k-p\le r\le p^k-1$ it is the case $\ell=k$ of~\cite[Cor.~4]{KM}, which gives the lower bound and an equality criterion at some $n$.  As far as we can tell, the new content here is the explicit base coefficient $1-H_r(\nu)$, and $1+H_r(\nu)$ for the complementary family $A_{>r}$, which identifies the exceptional classes, together with equality for every $n$.  In Corollary~\ref{cor:plateau}, the lower bounds in (a) and (b) are cases of~\cite[Cor.~4]{KM}, equality for every $n$ is proved here from the Harmonic Theorem, and the formula in (a) was stated by Bala on \OEIS{A057693}.  The exact $2$-adic valuation of the number of involutions is known~\cite{Ochiai,KimKim}, and Theorem~\ref{thm:invol} recovers it as the one-block instance of the calculus.  The $p$-adic behaviour of the derangement numbers in Example~\ref{ex:der} is treated in~\cite{Miska}.  In the dichotomy of Theorem~\ref{thm:dichotomy}, case (B) is the $p$-adic continuity statement of~\cite[Thm.~1.5]{ODR} for $0/1$ weights, the explicit period $p^e$ comes from the shift congruences of~\cite{KallatShift}, and the case $e=1$ appears in~\cite{KM} (the remark after Theorem~2 and Corollary~11).  The block decomposition of the Euler product and the resulting exact valuation criterion for arbitrary sets $A$ extend the one-level analysis of~\cite[Thm.~2]{KM}.  The $p$-adic valuations of the numbers of solutions of $x^p=1$ are studied further in~\cite{IOTY}, and those of the numbers of permutations whose cycles all have one given length $d$ in~\cite{MiskaUlas}.  Congruences for Bell polynomials and for the cycle indicator of the symmetric group are given in~\cite{Carlitz} and~\cite{BO}, and they are related to the case of a prime shift $k=p$ of the congruence above.

The entry \OEIS{A378495} states three conjectures, the row gcd and the identities (c) and (d) of Theorem~\ref{thm:A378}, where in (d) the index of \OEIS{A238474} is $n-3$ in place of the entry's $n-4$.  The row-gcd formula and the two neighbouring-entry identities for \OEIS{A378495} also appear in~\cite{Su}.

\subsection{Notation and organization}

Throughout, $p$ is a prime, $s_p(n)$ is the sum of the base-$p$ digits of $n$, $\coeff{n}{F}$ is the coefficient of $x^n$ in a power series $F$, and $a_j=0$ for $j<0$.  We write $A_{\le r}=\{1,\dots,r\}$ and $A_{>r}=\{m:m>r\}$, and $A_r(n)=a_{A_{\le r}}(n)$ is the number of permutations of $\{1,\dots,n\}$ all of whose cycles have length at most $r$.  Symbols that are specific to one argument are defined where they are first used.  A result whose proof includes a finite computation is called computer-assisted, and its scripts are described in Section~\ref{sec:computations}.

Sections~\ref{sec:tools} to~\ref{sec:dichotomy} prove the divisibility, gcd and cycle gcd theorems (Theorems~\ref{thm:introgcd} and~\ref{thm:introcyc}) and the dichotomy at a prime.  A reader interested in valuations can go directly to Section~\ref{sec:euler}, which uses only Lemmas~\ref{lem:quot} and~\ref{lem:binom} from these sections.  Section~\ref{sec:euler} develops the calculus (Theorem~\ref{thm:introcalc}) and the Harmonic Theorem, Sections~\ref{sec:p3} and~\ref{sec:p2} prove Theorem~\ref{thm:introshort}, and Section~\ref{sec:globalgcd} treats the global gcd.  Section~\ref{sec:interp} describes the valuations inside a residue class (Theorem~\ref{thm:introinterp}), Section~\ref{sec:composite} proves Theorem~\ref{thm:introcomp} and the tail gcds of the OEIS entries, Section~\ref{sec:open} treats column $2$ of \OEIS{A378495} and the open questions, and Section~\ref{sec:computations} describes the computations and the formalization.  The appendix, Sections~\ref{app:AH} to~\ref{app:data} at the end of the paper, contains the proofs not given in the main text and the computational details.

\section{Four tools}\label{sec:tools}

The divisibility theorems rest on two facts about a single point and a single cycle.  Removing the cycle through the last point gives a recurrence (Lemma~\ref{lem:rec}), and conjugating by a $q$-cycle gives a sign rule modulo $q$ (Lemma~\ref{lem:A}), which survives division by $\fall{n-1}{r}$ (Lemma~\ref{lem:quot}).  Lemma~\ref{lem:binom}, on binomial series, is the source of the costs of Section~\ref{sec:main}, and it is the engine of the valuation calculus of Section~\ref{sec:euler}.

\begin{lemma}[Recurrences]\label{lem:rec}
Write $a_n=a_S(n)$, with $a_j=0$ for $j<0$.  For every set $S$ and $n\ge0$,
\begin{equation}\label{eq:last}
a_{n+1}=\sum_{m\ge1,\ m\notin S}\fall{n}{m-1}\,a_{n+1-m}.
\end{equation}
Now let $r\ge1$ and $\{1,\dots,r\}\subseteq S$.  Let $\theta_i=1$ if $r+1+i\in S$ and $\theta_i=0$ otherwise, and put $\delta_i=[i=0]-\theta_i+\theta_{i-1}$ (with $\theta_{-1}=0$), so that $\delta_i$ is the coefficient of $x^{r+i}$ in $(1-x)\sum_{m\notin S}x^{m-1}$.  Then for all $n\ge0$
\begin{equation}\label{eq:rec}
a_{n+1}=n\,a_n+\sum_{i\ge0}\delta_i\,\fall{n}{r+i}\,a_{n-r-i}.
\end{equation}
For $S=\{1,k\}$ with $k\ge2$ and $n\ge0$ this reads
\begin{equation}\label{eq:Rk}
a_{n+1}=n\,a_n+n\,a_{n-1}-\fall{n}{k-1}a_{n-k+1}+\fall{n}{k}a_{n-k},
\end{equation}
and for $k=2$ it reads
\begin{equation}\label{eq:R2}
a_{n+1}=n\,a_n+n(n-1)\,a_{n-2}.
\end{equation}
For $S=\{1,2,3\}$ it reads
\begin{equation}\label{eq:R3}
a_{n+1}=n\,a_n+n(n-1)(n-2)\,a_{n-3}.
\end{equation}
\end{lemma}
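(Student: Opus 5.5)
The first recurrence~\eqref{eq:last} is the combinatorial argument of the introduction: classify the permutations of $\{1,\dots,n+1\}$ with no cycle length in $S$ by the length $m$ of the cycle containing $n+1$. There are $\fall{n}{m-1}$ ways to choose and order the other $m-1$ points of that cycle. The remaining $n+1-m$ points carry an arbitrary permutation counted by $a_{n+1-m}$, and the convention $a_j=0$ for $j<0$ kills the terms with $m>n+1$. The relation can also be checked by differentiating~\eqref{eq:egf}: $F'=F\cdot\sum_{m\notin S}x^{m-1}$, and comparing coefficients of $x^n/n!$ gives the same identity.

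For~\eqref{eq:rec} I would work with the generating function rather than the sum. Put $P(x)=\sum_{m\notin S}x^{m-1}$, so that $F'=FP$ with $F=\sum a_nx^n/n!$. Since $\{1,\dots,r\}\subseteq S$, every allowed $m$ exceeds $r$, so $P=\sum_{i\ge0}(1-\theta_i)x^{r+i}$. Then $(1-x)P=\sum_{i\ge0}\delta_ix^{r+i}$ with $\delta_i=(1-\theta_i)-(1-\theta_{i-1})+[i=0]\cdot 0$; more precisely,
\[
(1-\theta_i)-(1-\theta_{i-1})=\theta_{i-1}-\theta_i \quad\text{for } i\ge1,
\]
and the $i=0$ coefficient is $1-\theta_0$. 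Both cases match $\delta_i=[i=0]-\theta_i+\theta_{i-1}$ with $\theta_{-1}=0$. This is a purely formal check, so I would only verify this bookkeeping.

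Multiplying $F'=FP$ by $1-x$ gives $F'-xF'=F\cdot(1-x)P$. Extracting the coefficient of $x^n/n!$ gives $a_{n+1}-n\,a_n$ on the left. On the right, the coefficient of $x^n$ in $F\cdot x^{r+i}$ is $a_{n-r-i}/(n-r-i)!$, and multiplying by $n!$ yields $\fall{n}{r+i}\,a_{n-r-i}$, which is~\eqref{eq:rec}. (Alternatively, subtract $n$ times~\eqref{eq:last} at $n-1$ from~\eqref{eq:last} at $n$, using $n\,\fall{n-1}{m-1}=\fall{n}{m}$; this is the same computation.)

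The special cases come from computing $\delta$. For $S=\{1,k\}$ with $k\ge2$ we have $r=1$, $\theta_i=[i=k-2]$, and $(1-x)P=x-x^{k-1}+x^k$ when $k\ge3$ (the $x^1$ term with $\delta_0=1$), giving
\[
a_{n+1}=n\,a_n+n\,a_{n-1}-\fall{n}{k-1}a_{n-k+1}+\fall{n}{k}a_{n-k},
\]
which is~\eqref{eq:Rk}. For $k=2$, $P=x^2/(1-x)$, so $(1-x)P=x^2$ and the two middle terms of~\eqref{eq:Rk} cancel, leaving~\eqref{eq:R2}. For $S=\{1,2,3\}$, $(1-x)P=x^3$, giving~\eqref{eq:R3}. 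The only delicate point is the $k=2$ collision, where $\theta_0=1$ with $r=1$. To avoid an off-by-one, I would take $r=1$ uniformly for $\{1,k\}$ and let the formula handle it.
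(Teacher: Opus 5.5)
Your proposal is correct and follows the paper's proof: the last-point count for \eqref{eq:last}, then multiplying $F'=FP$ by $1-x$ and comparing coefficients of $x^n/n!$, with the special cases read off from $(1-x)P$. The paper reaches $(1-x)P=x^rD$ by splitting off $\sum_{m\in S}x^{m-1}$, while you expand $P$ over the allowed lengths directly, which is the same bookkeeping; the only blemish is your first expression for $\delta_i$, which gives $-\theta_0$ at $i=0$, but your ``more precisely'' line corrects it to $1-\theta_0$.
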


The idea is that $A=\exp(L)$ satisfies $A'=L'A$, and here $L'$ is a rational function whose denominator $1-x$ can be cleared.  The proof of Lemma~\ref{lem:rec} is in Appendix~\ref{app:moved}.

\begin{lemma}[Conjugation by a $q$-cycle]\label{lem:A}
Let $S$ be a set of positive integers, $q$ a prime, and $w_m=[m\notin S]$.  Then for all $n\ge0$
\[
a_S(n+q)\equiv\bigl(w_1+(q-1)w_q\bigr)\,a_S(n)\pmod q.
\]
In particular, if $1\in S$ and $q\notin S$, then $a_S(n+q)\equiv-a_S(n)\pmod q$, and if $1\in S$ and $q\in S$, then $a_S(n+q)\equiv0\pmod q$.  Consequently, for $n=tq+\rho$ with $t\ge0$ and $1\in S$, $q\notin S$, we have $a_S(n)\equiv(-1)^ta_S(\rho)\pmod q$.
\end{lemma}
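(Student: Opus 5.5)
We let the cyclic group generated by $\sigma=(n+1\ n+2\ \cdots\ n+q)$ act by conjugation on the set $P$ of permutations of $\{1,\dots,n+q\}$ with no cycle length in $S$. Conjugation preserves cycle type, so the action is well defined. Since $q$ is prime, every orbit has size $1$ or $q$. Hence $a_S(n+q)=|P|\equiv|P^{\sigma}|\pmod q$, where $P^{\sigma}$ is the set of permutations in $P$ that commute with $\sigma$. The whole proof then reduces to counting $P^{\sigma}$ exactly.

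A permutation $\pi$ commuting with $\sigma$ permutes the fixed-point set $F=\{1,\dots,n\}$ of $\sigma$ and also its support $T=\{n+1,\dots,n+q\}$. On $T$, the restriction $\pi|_T$ lies in the centralizer of a $q$-cycle in $\mathrm{Sym}(T)$, which is $\langle\sigma|_T\rangle$. So $\pi|_T=\sigma^i$ for some $0\le i\le q-1$. Conversely, any permutation of $F$ combined with any $\sigma^i$ on $T$ commutes with $\sigma$.

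It remains to impose the cycle-length condition. The restriction $\pi|_F$ is an arbitrary permutation of $n$ points with no cycle length in $S$, which gives the factor $a_S(n)$. For $i=0$ the part on $T$ consists of $q$ fixed points, allowed exactly when $1\notin S$, which gives $w_1$. For $1\le i\le q-1$, primality of $q$ makes $\sigma^i$ a single $q$-cycle, allowed exactly when $q\notin S$. This contributes $(q-1)w_q$. Therefore
\[
|P^{\sigma}|=\bigl(w_1+(q-1)w_q\bigr)\,a_S(n),
\]
which is the congruence.

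The special cases follow by substitution. If $1\in S$ and $q\notin S$, the factor is $q-1\equiv-1\pmod q$. If $1\in S$ and $q\in S$, the factor is $0$. The last statement follows by induction on $t$ from the first special case. The only step needing care is the centralizer identification, which is standard: an element commuting with a $q$-cycle $c$ satisfies $\pi c^k(x)=c^k\pi(x)$, so it is determined by $\pi(x)$ for a single point $x$ and equals some power of $c$. I expect no real obstacle.
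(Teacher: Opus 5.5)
Your proposal is correct and follows essentially the same route as the paper's proof: conjugation by a $q$-cycle on the last $q$ points, orbits of size $1$ or $q$, and an exact count of the fixed points as $s\,\chi^j$ with $0\le j<q$, giving the factor $w_1+(q-1)w_q$. Your one-line justification that the centralizer of a $q$-cycle in $\mathrm{Sym}(T)$ is $\langle\sigma|_T\rangle$ fills in a step the paper only cites as standard.
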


Fix a $q$-cycle $\chi$ on the last $q$ points of $\{1,\dots,n+q\}$.\footnote{This is the standard fixed-point argument for a group of prime order acting on a finite set, and it is the argument of~\cite[Proposition~2.2]{KallatBala}.}  Conjugating by $\chi$ preserves cycle type, so it permutes the permutations we count.  The group $\langle \chi\rangle$ has prime order, so every orbit has size $1$ or $q$, and the count is congruent mod $q$ to the number of fixed points.  Fixed points are permutations commuting with $\chi$, which are easy to list.  The proof of Lemma~\ref{lem:A} is in Appendix~\ref{app:moved}.

Every allowed cycle length $m$ of a set $S\supseteq\{1,\dots,r\}$ exceeds $r$, so $\fall{n-1}{m-1}=\fall{n-1}{r}\fall{n-r-1}{m-r-1}$, and dividing \eqref{eq:last}, with $n-1$ in place of $n$, by $\fall{n-1}{r}$ gives the following quotient.

\begin{lemma}[The quotient by $\fall{n-1}{r}$]\label{lem:quot}
Let $r\ge1$, let $S\supseteq\{1,\dots,r\}$, and write $a=a_S$.  For $n>r$ put
\[
Q_S(n)=\sum_{\substack{r<m\le n\\ m\notin S}}\fall{n-r-1}{m-r-1}\,a(n-m).
\]
Then $a(n)=\fall{n-1}{r}\,Q_S(n)$, so $Q_S(n)$ is an integer, and for every prime $q\notin S$ and every $n>r$,
\[
Q_S(n+q)\equiv-Q_S(n)\pmod q .
\]
\end{lemma}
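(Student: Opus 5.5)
The plan is to prove the identity directly from the recurrence and then get the congruence from a combinatorial reading of $Q_S(n)$ together with the conjugation argument of Lemma~\ref{lem:A}.

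\emph{The identity.}  Apply \eqref{eq:last} with $n-1$ in place of $n$.  Every allowed $m$ satisfies $m>r$, because $\{1,\dots,r\}\subseteq S$, so
\[
\fall{n-1}{m-1}=\fall{n-1}{r}\,\fall{n-r-1}{m-r-1}.
\]
Factoring out $\fall{n-1}{r}$ gives $a(n)=\fall{n-1}{r}Q_S(n)$.  Here $Q_S(n)$ is a sum of integers, so it is an integer.

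\emph{Combinatorial reading of $Q_S(n)$.}  I claim that $Q_S(n)$ counts the permutations $\sigma$ of $\{1,\dots,n\}$ with no cycle length in $S$ that contain the fixed path
\[
n\mapsto n-1\mapsto\cdots\mapsto n-r .
\]
To check this, note that the cycle through $n$ must have some length $m>r$ with $m\notin S$.  This cycle contains the $r+1$ prescribed points $n,n-1,\dots,n-r$ in that order.  Its other $m-r-1$ points are chosen and ordered from the remaining $n-r-1$ points, which gives $\fall{n-r-1}{m-r-1}$ choices.  The remaining $n-m$ points then carry an arbitrary permutation counted by $a(n-m)$.  Summing over $m$ reproduces the defining sum for $Q_S(n)$.

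\emph{The congruence.}  Now take $n+q$ points and let $\chi$ be the $q$-cycle $(1\,2\,\cdots\,q)$.  Since $n>r$, the path points $n+q,\dots,n+q-r$ all exceed $q$, so $\chi$ fixes every one of them.  Conjugation by $\chi$ therefore preserves both the cycle type and the prescribed path, and so it permutes the set counted by $Q_S(n+q)$.  The group $\langle\chi\rangle$ has prime order, so every orbit has size $1$ or $q$.  Hence $Q_S(n+q)$ is congruent mod $q$ to the number of permutations in this set that commute with $\chi$.

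The centralizer of $\chi$ in $S_{n+q}$ is $\langle\chi\rangle\times\mathrm{Sym}(\{q+1,\dots,n+q\})$.  So a fixed permutation acts on $\{1,\dots,q\}$ as $\chi^j$ for some $j$, and on the remaining $n$ points as an arbitrary permutation.
\begin{itemize}[itemsep=0pt]
\item The case $j=0$ creates fixed points.  These are forbidden because $1\in S$.
\item Each $j=1,\dots,q-1$ gives a single $q$-cycle.  This is allowed because $q\notin S$.
\end{itemize}
In each allowed case the action on the other $n$ points is a permutation with the shifted path $n\mapsto\cdots\mapsto n-r$ after relabelling, so there are $Q_S(n)$ of them.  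The fixed-point count is therefore $(q-1)Q_S(n)$, and
\[
Q_S(n+q)\equiv(q-1)\,Q_S(n)\equiv-Q_S(n)\pmod q .
\]

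The one step needing care is the combinatorial interpretation, in particular that the path lies outside the support of $\chi$.  This is exactly where the hypothesis $n>r$ enters.  Note also that $q\notin S$ forces $q>r$, although the argument does not need this.
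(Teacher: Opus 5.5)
Your proof is correct. The identity is obtained as in the paper, from the last-point recurrence and the factorization $\fall{n-1}{m-1}=\fall{n-1}{r}\fall{n-r-1}{m-r-1}$. Your proof of the congruence, however, takes a different route.

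The paper works algebraically with the defining sum of $Q_S(n+q)$, and treats the two kinds of terms separately:
\begin{itemize}
\item For $m\le n$ it reduces term by term. It uses $\fall{n+q-r-1}{m-r-1}\equiv\fall{n-r-1}{m-r-1}\pmod q$, as integer polynomials in $n$, together with $a(n+q-m)\equiv-a(n-m)$ from Lemma~\ref{lem:A}.
\item For $n<m\le n+q$ the terms vanish, because the falling factorial runs from $n+q-m+1\le q$ up to $n+q-r-1\ge q$ and so contains the factor $q$.
\end{itemize}

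You instead read $Q_S(n)$ as the number of allowed permutations whose cycle through $n$ begins $n\mapsto n-1\mapsto\cdots\mapsto n-r$. You then rerun the fixed-point argument directly on this set, with the $q$-cycle placed on $\{1,\dots,q\}$ so that it fixes the path; this is exactly where $n>r$ is used.

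What your route buys: it is self-contained and never invokes Lemma~\ref{lem:A}. It exhibits $Q_S(n)$ as a count, so integrality is visible without the identity. It also turns the sign rule for the quotient into the same factor $w_1+(q-1)w_q=q-1$ that Lemma~\ref{lem:A} produces for $a_S$.

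What the paper's route buys: it uses nothing about the quotient beyond the congruence for $a_S$ itself. That is why it can be repeated verbatim with $q$ replaced by $p^e$ in Theorem~\ref{thm:compquot}, with Theorem~\ref{thm:dichotomy}(B) as the input. Your orbit-counting argument yields only congruences modulo $p$ for a cyclic group of order $p^e$, since such a group has orbits of every size $p^i$.
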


The proof of Lemma~\ref{lem:quot} is in Appendix~\ref{app:moved}.

For $e$ in a field of characteristic $0$ put $(1-y)^{-e}\coloneqq\exp\bigl(-e\log(1-y)\bigr)$ as a formal power series.  Let $\Z_{(p)}$ be the rationals with denominator prime to $p$.

\begin{lemma}[Binomial series]\label{lem:binom}
For every $c\in\Q$ and $\lambda\ge0$, the coefficient of $y^\lambda$ in $(1-y)^{-c}$ is $\binom{c+\lambda-1}{\lambda}=c(c+1)\cdots(c+\lambda-1)/\lambda!$.  If $c\in\Z_{(p)}$ this number lies in $\Z_{(p)}$.  If $v_p(c)=-j<0$ then its valuation is exactly $-j\lambda-v_p(\lambda!)$, and writing $c=\alpha/p^j$ with $\alpha\in\Z_{(p)}^\times$, the quantity $p^{j\lambda}\lambda!\binom{c+\lambda-1}\lambda$ equals $\prod_{i<\lambda}(\alpha+p^ji)\equiv\alpha^\lambda\pmod p$.
\end{lemma}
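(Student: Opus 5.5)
The plan is to derive all three assertions from the explicit product formula.

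\emph{Coefficient formula.} From $-\log(1-y)=\sum_{k\ge1}y^k/k$ we get $\frac{d}{dy}(1-y)^{-c}=c(1-y)^{-c-1}$. Iterating this $\lambda$ times and evaluating at $y=0$ shows that the coefficient of $y^\lambda$ is $c(c+1)\cdots(c+\lambda-1)/\lambda!=\binom{c+\lambda-1}{\lambda}$. Equivalently, one can compare coefficients in the differential equation $(1-y)F'=cF$, which gives the recursion $(\lambda+1)f_{\lambda+1}=(c+\lambda)f_\lambda$.

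\emph{The case $c\in\Z_{(p)}$.} Here I would use $p$-adic continuity of the polynomial $P(t)=\binom{t+\lambda-1}{\lambda}\in\Q[t]$. This polynomial takes integer values at all integers $t\ge1$. Choose positive integers $t_N\equiv c\pmod{p^N}$. For $N$ large, $v_p(P(c)-P(t_N))\ge N-v_p(\lambda!)>0$, because the numerator of $P$ is a polynomial with integer coefficients and the denominator is $\lambda!$. Hence $P(c)\in\Z_{(p)}$.

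Alternatively, one can argue directly. Modulo $p^{v_p(\lambda!)+1}$, the product $c(c+1)\cdots(c+\lambda-1)$ agrees with a product of $\lambda$ consecutive integers, and that product is divisible by $\lambda!$.

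\emph{The case $v_p(c)=-j<0$.} Write $c=\alpha/p^j$ with $\alpha$ a unit. Then
\[
p^{j\lambda}\,c(c+1)\cdots(c+\lambda-1)=\prod_{i<\lambda}(\alpha+p^ji).
\]
Each factor $\alpha+p^ji$ is congruent to $\alpha$ modulo $p$ because $j\ge1$, so each factor is a $p$-adic unit. Consequently the product is $\equiv\alpha^\lambda\pmod p$ and has valuation $0$. Multiplying through by $\lambda!$ shows that $p^{j\lambda}\lambda!\binom{c+\lambda-1}{\lambda}$ equals this product. Therefore $v_p\binom{c+\lambda-1}{\lambda}=-j\lambda-v_p(\lambda!)$.

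The only step needing any care is the integrality claim in the second paragraph; the continuity argument handles it cleanly.
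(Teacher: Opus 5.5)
Your proof is correct and matches the paper's argument: integrality comes from $p$-adic continuity of the integer-valued polynomial $\binom{t+\lambda-1}{\lambda}$, and the exact valuation comes from each factor $\alpha+p^ji$ being a unit congruent to $\alpha$. The only difference is the coefficient formula, which you derive from the differential equation $(1-y)F'=cF$, whereas the paper observes that both sides are polynomials in $c$ that agree at every positive integer $c$; either argument works.
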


\begin{proof}
The coefficient of $y^\lambda$ in $\exp(-c\log(1-y))$ and the number $c(c+1)\cdots(c+\lambda-1)/\lambda!$ are polynomials in $c$ with rational coefficients, and they agree for $c=1,2,3,\dots$, where $(1-y)^{-c}$ is the $c$-th power of the geometric series.  So they are equal.  The polynomial $f(c)=\binom{c+\lambda-1}{\lambda}$ takes integer values at positive integers, and every $c\in\Z_{(p)}$ is a $p$-adic limit of positive integers $c_i$ while $f$ is continuous, so $f(c)=\lim f(c_i)\in\Z_p\cap\Q=\Z_{(p)}$.  For the last statement, $c+i=(\alpha+p^ji)/p^j$ and $\alpha+p^ji$ is a unit for every $i$.
\end{proof}

\section{Divisibility for general excluded sets}\label{sec:general}

This section proves the following theorem, whose part~(i) is Theorem~\ref{thm:introgcd}(a).

\begin{theorem}[Divisibility for excluded sets]\label{thm:general}
Let $S$ be a set of positive integers and $a(n)=a_S(n)$.
\begin{enumerate}[label=(\roman*),itemsep=0pt]
\item \emph{(Falling factorials)} If $\{1,\dots,r\}\subseteq S$, then $(n-1)(n-2)\cdots(n-r)\mid a(n)$ for all $n\ge1$.
\item \emph{(Congruence modulo $t!\,(n-1)$)} If $t\ge2$, $1\in S$ and $S\cap\{2,\dots,t+1\}=\emptyset$, then $a(n)\equiv D(n)\pmod{t!\,(n-1)}$ for all $n\ge2$.  In particular, if $1\in S$ and $2,3\notin S$, then $a(n)/(n-1)$ is odd for all $n\ge2$.
\item \emph{(Bound at a prime $p$)} If $\{1,\dots,p\}\subseteq S$, then for all $n\ge p+1$
\[
v_p\bigl(a(n)\bigr)\ \ge\ v_p\bigl((n-1)(n-2)\cdots(n-p)\bigr)+\Bigl\lfloor\frac{n-1}{2p}\Bigr\rfloor .
\]
Consequently $p^{\lceil n/(2p)\rceil}\mid a(n)$ for all $n\ge1$, and if $\{1,2\}\subseteq S$ then $2^{\lfloor(n-1)/4\rfloor}(n-1)(n-2)\mid a(n)$ for all $n\ge3$.
\end{enumerate}
\end{theorem}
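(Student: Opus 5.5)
For part (i) I would use Lemma~\ref{lem:quot} directly. For $n>r$ it gives $a(n)=\fall{n-1}{r}\,Q_S(n)$ with $Q_S(n)\in\Z$. For $1\le n\le r$ every permutation of $\{1,\dots,n\}$ has all its cycle lengths in $\{1,\dots,r\}\subseteq S$, so $a(n)=0$. In that range the product $(n-1)\cdots(n-r)$ contains the factor $0$, and the divisibility holds trivially, reading it as $0\mid 0$.

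For part (ii) I would take $r=1$ in Lemma~\ref{lem:quot} for both $S$ and $\{1\}$. This writes $a(n)=(n-1)Q_S(n)$ and $D(n)=(n-1)Q_{\{1\}}(n)$, so it suffices to prove $Q_S(n)\equiv Q_{\{1\}}(n)\pmod{t!}$. I would argue by strong induction on $n$. In
\[
Q_S(n)=\sum_{m\ge2,\,m\notin S}\fall{n-2}{m-2}\,a(n-m),
\]
every term with $m\ge t+2$ carries a product of $m-2\ge t$ consecutive integers, so it vanishes modulo $t!$. The remaining terms have $2\le m\le t+1$, and none of these $m$ lies in $S$. So the surviving terms are $\sum_{m=2}^{t+1}\fall{n-2}{m-2}\,a(n-m)$ for $S$ and the identical expression with $D$ for $\{1\}$.

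To match these two sums I need $a(k)\equiv D(k)\pmod{t!}$ for $k<n$. For $k\ge2$ this follows from the induction hypothesis, which gives the congruence modulo $t!(k-1)$. For $k=0,1$ the values agree exactly, both being $1$ and $0$.

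For the oddness statement I take $t=2$. Then $a(n)/(n-1)\equiv D(n)/(n-1)=D(n-1)+D(n-2)\pmod 2$. Since $D(k)\equiv k+1\pmod 2$, by induction from $D(n)=(n-1)(D(n-1)+D(n-2))$, two consecutive derangement numbers have opposite parity, so their sum is odd.

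For part (iii) I would induct on $n$ using the quotient of Lemma~\ref{lem:quot} with $r=p$:
\[
Q_S(n)=\sum_{m\ge p+1,\,m\notin S}\fall{n-p-1}{m-p-1}\,a(n-m).
\]
Since $a(n)=\fall{n-1}{p}Q_S(n)$, the claim is that every term has valuation at least $\lfloor (n-1)/(2p)\rfloor$. The terms with $1\le n-m\le p$ vanish, because $a(n-m)=0$ there.

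The term with $m=n$ equals $(n-p-1)!$. By Legendre's formula its valuation is at least $\lfloor (n-p-1)/p\rfloor$. Writing $k=n-p-1$, this is $\ge\lfloor (k+p)/(2p)\rfloor=\lfloor (n-1)/(2p)\rfloor$: for $k\ge p$ this follows from $k/p\ge(k+p)/(2p)$, and for $k<p$ both sides are $0$.

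For the terms with $n-m\ge p+1$, the induction hypothesis gives $v_p(a(n-m))\ge 1+\lfloor (n-m-1)/(2p)\rfloor$, since $p$ consecutive integers contain a multiple of $p$. The factor $\fall{n-p-1}{m-p-1}$ is a product of $m-p-1$ consecutive integers, so it contributes at least $\lfloor (m-p-1)/p\rfloor$. It remains to check
\[
\Bigl\lfloor\tfrac{m-p-1}{p}\Bigr\rfloor+1+\Bigl\lfloor\tfrac{n-m-1}{2p}\Bigr\rfloor\ \ge\ \Bigl\lfloor\tfrac{n-1}{2p}\Bigr\rfloor .
\]
I would do this by writing $m-p-1=ap+b$ with $0\le b<p$, which gives $m-1\le (a+2)p-1<(2a+2)p$. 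Then the left side is $a+1+\lfloor (n-m-1)/(2p)\rfloor\ge\lfloor (n-m-1+(2a+2)p)/(2p)\rfloor\ge\lfloor (n-1)/(2p)\rfloor$.

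This floor bookkeeping, including the boundary cases $m=n$ and small $n$, is the step I expect to need the most care.

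The consequences then follow. For $n\ge p+1$ we have $v_p(\fall{n-1}{p})\ge1$, and $1+\lfloor (n-1)/(2p)\rfloor\ge\lceil n/(2p)\rceil$. For $1\le n\le p$ we have $a(n)=0$. For $p=2$ and $\{1,2\}\subseteq S$, part (i) gives $(n-1)(n-2)\mid a(n)$, and the bound in (iii) adds the extra factor $2^{\lfloor (n-1)/4\rfloor}$ on top of $v_2((n-1)(n-2))$. Together these give $2^{\lfloor (n-1)/4\rfloor}(n-1)(n-2)\mid a(n)$ for $n\ge3$.
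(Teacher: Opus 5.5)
Your proposal is correct. Parts (i) and (iii) follow the paper's proof. In (i) the paper applies the last-point recurrence \eqref{eq:last} directly, which is the same computation as Lemma~\ref{lem:quot}. In (iii) it runs the same induction on the quotient with $r=p$, splitting the terms in the same three ways. Your floor estimate via $m-p-1=ap+b$ is an equivalent variant of the paper's bookkeeping with $\lfloor y-z\rfloor\ge\lfloor y\rfloor-\lceil z\rceil$.

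Part (ii) is where your route genuinely differs. The paper uses the recurrence \eqref{eq:rec} of Lemma~\ref{lem:rec} with $r=1$. It observes that $\delta_1=\dots=\delta_{t-1}=0$ and that the correction terms with index at least $t$ vanish modulo $t!$. From this it shows that $a(n)/(n-1)$ satisfies the derangement recurrence $Q_{n+1}\equiv(n-1)Q_n+(n-2)Q_{n-1}\pmod{t!}$, and then checks the two initial values $Q_2=Q_3=1$.

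You instead compare the last-point sums for $Q_S(n)$ and $Q_{\{1\}}(n)$ from Lemma~\ref{lem:quot} term by term. Modulo $t!$ both sums truncate to the window $2\le m\le t+1$, where $S$ and $\{1\}$ allow exactly the same cycle lengths. A strong induction over the previous $t$ values then closes the argument, seeded by $a(0)=D(0)$ and $a(1)=D(1)$.

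Your version needs only the combinatorial quotient lemma, with no generating-function recurrence and no initial-value check. The paper's version gives the slightly more structural statement that $a(n)/(n-1)$ obeys the three-term derangement recurrence modulo $t!$, which is what it uses again for the parity consequence.
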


The hypothesis $1\in S$ cannot be dropped from (i), since $S=\{2\}$ gives $a(3)=3$, which is not divisible by $n-1=2$.  The bound in (iii) is sharp for $p=2$, as the end of this section shows.

\subsection{Falling factorials}

\begin{proof}[Proof of Theorem~\ref{thm:general}(i)]
Let $n\ge r$ and apply \eqref{eq:last}.  Every $m\notin S$ satisfies $m\ge r+1$, so $\fall{n}{m-1}=\fall{n}{r}\fall{n-r}{m-1-r}$ is an integer multiple of $\fall{n}{r}=n(n-1)\cdots(n-r+1)$.  Hence $\fall{n}{r}\mid a_S(n+1)$, which is the claim for $n+1>r$.  For $1\le n\le r$ the statement is $a_S(n)=0$, which holds because every cycle length at most $n$ lies in $S$.  This completes the proof of Theorem~\ref{thm:general}(i).
\end{proof}

For $r=1$ and $r=2$, (i) also follows from the shift congruence of~\cite{KallatShift}, with $k=n-1$, $a(1)=0$, respectively $k=n-2$, $a(2)=0$ and the coprime moduli $n-1$, $n-2$.  For $r\ge3$ the lcm of the moduli $n-1,\dots,n-r$ can be smaller than their product, as for $r=3$ and $n=5$, where $24\mid a_S(5)$ is the claim and the lcm is $12$.

\subsection{Congruence modulo \texorpdfstring{$t!\,(n-1)$}{t!(n-1)}}

If $1\in S$, then \eqref{eq:rec} with $r=1$ says that $a_S(n+1)/n$ is an integer given by the previous two values and corrections $\delta_i\fall{n-1}{i}a_{n-1-i}$.  If $S$ avoids $\{2,\dots,t+1\}$, the corrections only come from $m\ge t+2$ and carry a factor $\fall{n-1}{i}$ with $i\ge t$ consecutive integers, so they vanish modulo $t!$, and what remains is the recurrence of the derangement numbers.  The proof of Theorem~\ref{thm:general}(ii) is in Appendix~\ref{app:moved}.

\begin{remark}\label{rem:t}
The hypotheses of (ii) cannot be weakened.  For $S=\{1,3\}$ ($3\in S$) the congruence fails at $n=3$ already for $t=2$, since $a_S(3)=0$ and $D(3)=2$ while $2(n-1)=4$.  For $S=\{1,4\}$ it holds for $t=2$ but not for $t=3$, at $n=4$, where $a_S(4)=3$, $D(4)=9$ and $3!(n-1)=18$.
\end{remark}

\subsection{The bound at a prime \texorpdfstring{$p$}{p}}

The same last-point recurrence as in Lemma~\ref{lem:quot} gives a bound that is uniform in the prime, by an induction on $n$ in which the term with the longest cycle supplies the factorial.  The proof of Theorem~\ref{thm:general}(iii) is in Appendix~\ref{app:moved}.

For $p=2$ the bound is attained at $n=4k$ with $S$ the set of all positive integers except $4$, for which $a_S(4k)=(4k)!/(4^kk!)$ has $2$-adic valuation $(4k-s_2(k))-2k-(k-s_2(k))=k$.  Theorem~\ref{thm:gcdfamily} shows that the bound for $S\supseteq\{1,2\}$ is also attained by a set of the form $\{1,2,k\}$ at every $n\ge4$.

\section{The triangle of derangements without a \texorpdfstring{$k$}{k}-cycle}\label{sec:A378}

This section proves the following theorem on the triangle \OEIS{A378495} of Section~\ref{sec:intro}, where $T(n,k)=a_{\{1,k\}}(n)$ counts the derangements of $\{1,\dots,n\}$ with no $k$-cycle.

\begin{theorem}[Derangements with no $k$-cycle, \OEIS{A378495}]\label{thm:A378}
\begin{enumerate}[label=(\alph*),itemsep=0pt]
\item For $2\le k\le n$, $(n-1)\mid T(n,k)$.
\item For $n\ge3$, $\gcd\bigl(T(n,2),\dots,T(n,n)\bigr)=n-1$.
\item For $n\ge5$, $T(n,n-2)-T(n,n)=\tfrac12(n-3)!\,(n-4)(n-1)$.
\item For $n\ge5$, $T(n,n-1)-T(n,n-2)=\dfrac{n!}{2(n-2)}$.
\item For every $k\ge4$ and $n\ge k$, $v_2\bigl(T(n,k)\bigr)=v_2(n-1)$.
\item For every $m\ge0$, the $2$-adic valuations of $T(n,2)$ at $n=4m+3,\,4m+4,\,4m+5,\,4m+6$ are
\[
m+1,\qquad m+1,\qquad m+3+v_2(m+1),\qquad m+5+v_2(m+1).
\]
\end{enumerate}
\end{theorem}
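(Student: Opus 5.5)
For part~(a) I apply Theorem~\ref{thm:general}(i) with $r=1$, since $1\in S=\{1,k\}$.

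For parts (c) and (d) I compare $T(n,k)$ with $D(n)$ by counting the derangements that do contain a $k$-cycle, for $k$ close to $n$.
\begin{itemize}
\item For $k=n$ these are the $(n-1)!$ full cycles, so $T(n,n)=D(n)-(n-1)!$.
\item For $k=n-1$ the complement of an $(n-1)$-cycle is a fixed point, which a derangement cannot have. Hence $T(n,n-1)=D(n)$ for $n\ge3$.
\item For $k=n-2$ and $n\ge5$ the complement must be a $2$-cycle, and $2\ne n-2$. There are $\binom n2(n-3)!$ such derangements, so $T(n,n-2)=D(n)-\tfrac12 n(n-1)(n-3)!$.
\end{itemize}
Subtracting gives (c): $(n-1)!-\tfrac12 n(n-1)(n-3)!=\tfrac12(n-1)(n-3)!\,(n-4)$. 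It also gives (d): $\tfrac12 n(n-1)(n-3)!=n!/(2(n-2))$.

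For part~(e), $k\ge4$ means $1\in S$ and $2,3\notin S$. Theorem~\ref{thm:general}(ii) then says that $T(n,k)/(n-1)$ is odd, which is exactly the claim.

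For part~(b), (a) shows that $n-1$ divides the gcd. It remains to show that, for every prime $q$, some quotient $Q_{\{1,k\}}(n)=T(n,k)/(n-1)$ is prime to $q$.

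By the formulas above, $Q_{\{1,n-1\}}(n)=D(n)/(n-1)$ and $Q_{\{1,n\}}(n)=D(n)/(n-1)-(n-2)!$. Their difference is $(n-2)!$, so only primes $q\le n-2$ need attention. For such $q$ the set $S=\{1,n\}$ does not contain $q$. Write $n=tq+m$ with $2\le m\le q+1$. The sign rule of Lemma~\ref{lem:quot}, iterated, gives $Q_{\{1,n\}}(n)\equiv\pm Q_{\{1,n\}}(m)\pmod q$. Since $m<n$, the forbidden length $n$ plays no role at size $m$, so $Q_{\{1,n\}}(m)=D(m)/(m-1)$. I then split according to $m$.
\begin{itemize}
\item If $m=q$, Lemma~\ref{lem:A} gives $D(q)\equiv-D(0)=-1\pmod q$. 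Hence $D(q)/(q-1)\equiv1$, which is nonzero.
\item If $m\ne q$, then $q\notin\{1,m\}$, so the same reduction applies to the column $k=m\le n$. It gives $Q_{\{1,m\}}(n)\equiv\pm Q_{\{1,m\}}(m)=\pm\bigl(D(m)/(m-1)-(m-2)!\bigr)$. The two residues for $k=n$ and $k=m$ differ by $\pm(m-2)!$, which is a unit mod $q$ because $m-2<q$. So they cannot both vanish.
\end{itemize}
The case $q=2$ is covered, since $D(2)/1=D(3)/2=1$.

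Part~(f) is the main obstacle. Since $v_2((n-1)(n-2))$ equals $1,1,2+v_2(m+1),2+v_2(m+1)$ at $n=4m+3,\dots,4m+6$, the claim is equivalent to a statement about $Q_n=T(n,2)/((n-1)(n-2))$. It says that $v_2(Q_{k+3})=m,m,m+1,m+3$ for $k=4m,4m+1,4m+2,4m+3$.

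I would get this from the valuation calculus applied to $Q_{k+3}=k!\,\coeff{k}{G}$ with $G=(1-x)^{-2}e^{-x-x^2/2}$ (Theorem~\ref{thm:quotient}). The steps are:
\begin{itemize}
\item Compute the Euler exponents of $e^{-x-x^2/2}$ at $p=2$. I expect the block $4$, of depth $1$, to be the dominant one, which would give the lower bound of Theorem~\ref{thm:dominant} in each class mod $4$.
\item Check that the residue $\Xi$ is nonzero in the classes $k\equiv0,1,2$.
\item For $k\equiv3\pmod 4$ the residue vanishes, and the extra two factors of $2$ need a second-order argument.
\end{itemize}
For that second-order step I would try an induction on $m$ using the recurrence \eqref{eq:R2} divided by $(n-1)(n-2)$, which relates $Q_{4m+6}$ to $Q_{4m+5}$ and $Q_{4m+3}$ with explicit even factors. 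If that becomes too delicate, I would instead expand the leading optimal term one level further, as in Theorem~\ref{thm:interp}.
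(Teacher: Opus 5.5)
Parts (a) to (e) are correct and follow the paper. Your proof of (b) is the case $r=1$ of Proposition~\ref{prop:gcdq}, with the column $k=n$ playing the role of the column $\infty$. Your cases $m=q$, $m=q+1$ and $2\le m\le q-1$ are the paper's $\rho=0$, $\rho=1$ and $2\le\rho\le q-1$.

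The gap is (f). You do not prove it, and the first step of your plan is mispredicted. For $A=A_{>2}$ one has $c_4=\frac14$, so the block $4$ has depth $2$, not $1$. By Lemma~\ref{lem:blockfacts} the blocks are the same as for involutions, namely the $4w$ with $w$ odd and squarefree, all of depth $2$. With depth $1$ the calculus would force $v_2(Q_{k+3})\ge2m$, which contradicts the claim.

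With depth $2$, put $\Gamma(q)=2q+v_2(q!)$. Theorem~\ref{thm:dominant} then settles $k\equiv0,1,2\pmod4$, since $[x^\nu]G=1,1,1$. For $k=4q+3$, however, it gives only $v_2(Q_{k+3})\ge q+2$, because $[x^3]G=\frac43$. The whole content of the fourth class is the upper bound $v_2(Q_{k+3})\le q+3$, and you leave this to an unspecified second-order argument.

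Within your own framework the missing step goes as follows.
\begin{enumerate}[itemsep=0pt]
\item The term of $qe_4$ is $\frac43\beta_q$, of valuation exactly $2-\Gamma(q)$.
\item Every vector other than $qe_4$ and $(q-1)e_4$ has cost at most $\Gamma(q)-4$, as in the proof of Theorem~\ref{thm:invol}.
\item The vector $(q-1)e_4$ has cost $\Gamma(q)-2-v_2(q)$ and leftover $7$. Its coefficient is $[x^7]G-c_4[x^3]G=\frac{703}{315}-\frac13=\frac{598}{315}$, which has $2$-adic valuation $1$. So its term has valuation at least $3-\Gamma(q)$.
\end{enumerate}
Hence $v_2(Q_{k+3})=q+3$. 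Had that coefficient been odd, the two terms would cancel for odd $q$, so this computation is the heart of (f), and it is absent.

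Your fallback does not supply it either. In $Q_{4m+6}=(4m+3)Q_{4m+5}+(4m+2)(4m+1)Q_{4m+3}$ both terms have valuation $m+1$, while the sum has valuation $m+3$. The valuations of the inputs therefore cannot decide the answer.

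The paper instead works with $a_n=T(n,2)$ and carries all four classes in the induction hypothesis. It breaks the tie with the identity \eqref{eq:tie}, re-expanding with \eqref{eq:R2}. Then $v_2(a_{4m+4}+a_{4m+3})=m+3$ follows because exactly one of $m$ and $m+1$ is even.
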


Part (b) is the row-gcd conjecture of the \OEIS{A378495} entry, and (c) and (d) are two further conjectures of that entry.\footnote{The entry states (d) as $|\text{A238474}(n-4)|$ for $n\ge4$.  This is correct only at $n=4$, where the difference is $3$.  For $n\ge5$ the identity holds with the index $n-3$, since $n!/(2(n-2))=|\text{A238474}(n-3)|$, and this is why (d) is stated for $n\ge5$.  For example, at $n=5$ we have $T(5,4)-T(5,3)=44-24=20=|\text{A238474}(2)|$, while $|\text{A238474}(1)|=6$.}  Parts (a) and (e) are the case $S=\{1,k\}$ of Theorem~\ref{thm:general}, which concerns arbitrary excluded sets.

We prove Theorem~\ref{thm:A378}, using the recurrences of Lemma~\ref{lem:rec} and Theorems~\ref{thm:general} and~\ref{thm:gcdfamily}.  We use the series~\eqref{eq:egf} with $S=\{1,k\}$ for all $n\ge0$ and $k\ge2$, which gives $T(0,k)=1$ and $T(n,k)=D(n)$ if $n<k$, and which agrees with the triangle for $k\le n$.

\subsection{Parts (a), (b), (c), (d), (e)}

Part (a) is Theorem~\ref{thm:general}(i) with $r=1$, and part (b) is the case $r=1$ of Theorem~\ref{thm:gcdfamily}, proved in \S\ref{sec:gcd}.  Parts (c) and (d) follow by counting the derangements with a cycle of length $n$, $n-1$ or $n-2$, and their proofs are in Appendix~\ref{app:moved}.

At $n=4$ the difference in (c) is $3$, not $0$, so $n\ge5$ is needed.

Part (e) is Theorem~\ref{thm:general}(ii) for $S=\{1,k\}$, $k\ge4$, with $t=2$, where $S\cap\{2,3\}=\emptyset$, so that $T(n,k)\equiv D(n)\pmod{2(n-1)}$ and $T(n,k)/(n-1)\equiv D(n)/(n-1)\pmod 2$, which is odd.

\subsection{The \texorpdfstring{$2$}{2}-adic valuation of column \texorpdfstring{$2$}{2}}\label{sec:f}

As an example, $a_n=T(n,2)$ is $1,0,0,2,6,24,160,1140,8988,80864,809856,\dots$ for $n=0,1,\dots$, with $2$-adic valuations $1,1,3,5,2,2,5,7,3$ for $n=3,\dots,11$.  In groups of four, the valuations for $m=0$ are $1,1,3,5$ and those for $m=1$ are $2,2,5,7$, which the formula gives as $2,2,1+3+1,1+5+1$.  Part (f) is an induction on $m$ using \eqref{eq:R2}, one residue class of $n$ at a time.  In the first three classes the two terms of the recurrence have different valuations, so the smaller wins, and the fourth class is a tie that is broken by computing the valuation of $a_{4m+4}+a_{4m+3}$ independently.  The proof of part (f) is in Appendix~\ref{app:f}.

\section{Row gcds}\label{sec:gcd}

We prove Theorems~\ref{thm:gcdfamily} and~\ref{thm:A333726}.  In both cases the gcd is at least the expected divisor by Theorem~\ref{thm:general}(i), and for each prime $q$ we exhibit a column in which $q$ divides $a_S(n)$ exactly as often as it divides the expected divisor.  Lemmas~\ref{lem:A} and~\ref{lem:quot} reduce $n$ to a small value, where two columns differ by a factorial.

The row gcd of \OEIS{A378495} is the case $r=1$ of the following theorem, which compares $a_S(n)$ over the family $S_k=\{1,\dots,r,k\}$ with $k>r$.  Let $S_\infty=\{1,\dots,r\}$, so that $a_{S_k}(n)=a_{S_\infty}(n)$ for $k>n$.  For $n\ge r+2$ let
\[
G_r(n)=\gcd_{r<k\le n}a_{S_k}(n)
\]
be the gcd over the family, and let $g_r(n)$ be the gcd of $a_S(n)$ over all sets $S\supseteq\{1,\dots,r\}$.  By Theorem~\ref{thm:general}(i), $\fall{n-1}{r}$ divides $g_r(n)$, and $g_r(n)$ divides $G_r(n)$ because the family is among the sets.

\begin{theorem}[Gcd over the family $\{1,\dots,r,k\}$]\label{thm:gcdfamily}
Let $r\ge1$ and $n\ge r+2$.  Then
\[
G_r(n)=\fall{n-1}{r}\,m_r(n)
\]
where $m_r(n)$ is a positive integer all of whose prime factors are at most $r$.  Consequently
\[
\fall{n-1}{r}\ \Big|\ g_r(n)\ \Big|\ G_r(n),
\]
and $g_r(n)$ and $G_r(n)$ have the same valuation at every prime larger than $r$.  For $r=1$ we have $m_1(n)=1$, so $g_1(n)=G_1(n)=n-1$ for $n\ge3$, which is Theorem~\ref{thm:A378}(b).  For $r=2$ and every $n\ge4$
\[
g_2(n)=G_2(n)=2^{\lfloor(n-1)/4\rfloor}(n-1)(n-2).
\]
\end{theorem}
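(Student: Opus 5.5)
The plan is to show, for each prime $q>r$, that some member of the family has $q$-adic valuation exactly $v_q(\fall{n-1}{r})$. By Theorem~\ref{thm:general}(i), $\fall{n-1}{r}$ divides every $a_{S_k}(n)$. So $G_r(n)=\fall{n-1}{r}\,m_r(n)$ with $m_r(n)=\gcd_k Q_{S_k}(n)$, where $Q_{S_k}$ is the quotient of Lemma~\ref{lem:quot}. It therefore suffices to find, for each prime $q>r$, a column $k\in(r,n]$ with $q\nmid Q_{S_k}(n)$. The divisibility chain and the agreement of $g_r$ and $G_r$ at primes above $r$ then follow formally. For $r=1$ there is no prime $\le r$, so $m_1=1$.

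\emph{Reduction to a small argument.} Fix $q>r$. Every column $k\ne q$ has $q\notin S_k$, so Lemma~\ref{lem:quot} gives $Q_{S_k}(n)\equiv\pm Q_{S_k}(n')\pmod q$. Here $n'$ is the unique integer with $r<n'\le r+q$ and $n'\equiv n\pmod q$. Note that $n'\le n$.

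\emph{Comparing two columns at $n'$.} If $n'<n$, I compare the column $k=n'$ with the column $k=n$, choosing the one of them that differs from $q$ (or, if $n'=q$, any column $k\in(n',n]$ other than $q$, which handles that case separately as below). For $k>n'$, the value $Q_{S_k}(n')$ equals $Q_{S_\infty}(n')$, since no $k$-cycle fits. For $k=n'$, the two sets differ only by the permutations of $n'$ points that consist of a single $n'$-cycle. Hence
\[
Q_{S_\infty}(n')-Q_{S_{n'}}(n')=\frac{(n'-1)!}{\fall{n'-1}{r}}=(n'-1-r)!,
\]
which is a $q$-unit because $n'-1-r<q$. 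Consequently at least one of the two columns is nonzero modulo $q$ at $n'$, and by the reduction step it stays nonzero at $n$.

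\emph{The case $n'=q$.} Here I avoid the column $q$ altogether. Lemma~\ref{lem:A}, applied to $S_\infty$ with $n=0$, gives $a_{S_\infty}(q)\equiv w_1+(q-1)w_q=-1\pmod q$. Since $\fall{q-1}{r}$ is a $q$-unit, $Q_{S_\infty}(q)\not\equiv0$. Any column $k\in(q,n]$ then agrees with $S_\infty$ at $q$, and reduction transports the nonvanishing to $n$.

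\emph{The case $n'=n$.} This means $n\le r+q$, and we simply take $k=n$ (or use the $S_\infty$ argument just given). Some small edge cases need care, such as $n=q$ with no column in $(q,n]$ available. There I would take $k=n-1$ instead, using that at most one of the two columns equals $q$ and checking the resulting difference directly.

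\emph{The formula for $r=2$.} Theorem~\ref{thm:general}(iii) with $p=2$ gives $2^{\lfloor(n-1)/4\rfloor}(n-1)(n-2)\mid g_2(n)$. Since $g_2\mid G_2$ and the odd parts already agree, it remains to exhibit a single column $\{1,2,k\}$ whose $2$-adic valuation equals the lower bound. This is the step I expect to be the main obstacle. My first attempt would be the column $k=4$ for $n\equiv0,1\pmod4$, with the other classes handled by a different small $k$ chosen by class. I would run a $2$-adic induction on the recurrence~\eqref{eq:rec}, in the style of the proof of Theorem~\ref{thm:A378}(f): in each residue class modulo $4$ one term of the recurrence has strictly smallest valuation. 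The extremal example $\N\setminus\{4\}$ at $n=4k$ guides which column should be tight. If that induction stalls, the fallback is to use the valuation calculus of Section~\ref{sec:euler} on the block $4$ of $\{1,2,k\}$.
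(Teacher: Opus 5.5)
\paragraph{The primes $q>r$.} This part is essentially the paper's Proposition~\ref{prop:gcdq}, organized by the representative $n'\in(r,r+q]$ instead of by $n\bmod q$. Using column $n$ as a stand-in for $S_\infty$ at $n'<n$ is a valid substitute for the paper's column $\infty$.

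There is a hole in the case $n'=n$, that is $r<n\le r+q$. Simply taking $k=n$ can fail. For $r=1$, $n=6$, $q=29$ one gets $Q_{S_6}(6)=T(6,6)/5=145/5=29$. Your $S_\infty$ argument only covers $n=q$. What you need is a column that agrees with $S_\infty$ at $n$ itself, namely $k=n-1$. For $n\ge r+2$ an $(n-1)$-cycle leaves one point, which would have to lie in a cycle of length at most $r$. Hence $a_{S_{n-1}}(n)=a_{S_\infty}(n)$, which is \eqref{eq:sameg}. Columns $n-1$ and $n$ then differ at $n$ by $\fall{n-1}{r}(n-r-1)!$. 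Since $n-r-1<q$, one of them works. No reduction is involved here, so whether a column equals $q$ is irrelevant.

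\paragraph{The gap: $r=2$.} You correctly reduce to exhibiting one column $k\in[3,n]$ with $v_2(a_{S_k}(n))=\beta(n)=\lfloor(n-1)/4\rfloor+v_2((n-1)(n-2))$. However, you do not supply such a column, and your first candidate fails:
\begin{itemize}
\item $a_{\{1,2,4\}}(4)=0$;
\item $a_{\{1,2,4\}}(8)=7!+8!/15=7728=2^4\cdot483$, against $\beta(8)=2$;
\item $a_{\{1,2,4\}}(9)=62720=2^8\cdot245$, against $\beta(9)=5$.
\end{itemize}
The paper uses $k=3$. Proposition~\ref{prop:123} shows $v_2(a_{\{1,2,3\}}(n))=\beta(n)$ for $n\not\equiv3\pmod 4$, by a $2$-adic induction on \eqref{eq:R3}.

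In the class $n\equiv3\pmod4$ that column is not tight either: $a_{\{1,2,3\}}(7)=6!$ has $v_2=4>\beta(7)=2$. There the paper switches to $k=n$ and writes $a_{S_n}(n)=a_{\{1,2\}}(n)-(n-1)!$. By Theorem~\ref{thm:A378}(f), $v_2(a_{\{1,2\}}(4m+3))=m+1=\beta(4m+3)$, while $v_2((n-1)!)\ge2m+1>m+1$ for $m\ge1$.

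The missing idea is this class-by-class choice of a tight column, together with a proof of its exact valuation. Without it, the formula $g_2(n)=G_2(n)=2^{\lfloor(n-1)/4\rfloor}(n-1)(n-2)$ is not established. Your fallback of the valuation calculus on the block $4$ is too unspecific to count as a proof.
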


For $r=1$ the statement says that the gcd of row $n$ of \OEIS{A378495} is $n-1$.  For $r=2$ it says that the exponent $\lfloor(n-1)/4\rfloor$ in Theorem~\ref{thm:general}(iii) cannot be improved, even for the family $\{1,2,k\}$.  For $r\ge3$ the sandwich is what holds in general, because the two gcds can differ.  One might expect $g_r(n)=G_r(n)$ for every $r$, since the family already contains sets that detect each prime above $r$.  At the prime $3$ the family is not rich enough, as Theorem~\ref{thm:counter} in Section~\ref{sec:globalgcd} shows.

\begin{example}[Column $2$ alone does not give the row gcd]\label{ex:830}
For $n=830$ we have $829^2\mid T(830,2)$, whereas $829\,\|\,\gcd_kT(830,k)$ by Theorem~\ref{thm:A378}(b).  The reason is explained after Proposition~\ref{prop:col2}.  So the column used to detect a prime $q\mid n-1$ must depend on $q$.  Each of the columns $3$ and $4$ alone already certifies $829\,\|\,\gcd_kT(830,k)$, since $829\,\|\,T(830,3)$ and $829\,\|\,T(830,4)$.
\end{example}

\subsection{The family \texorpdfstring{$\{1,\dots,r,k\}$}{{1,...,r,k}}}

Fix $r\ge1$, write $S_\infty=\{1,\dots,r\}$ and $S_k=\{1,\dots,r,k\}$ for $k>r$, and put $a_k=a_{S_k}$ and $Q_k=Q_{S_k}$, with the notation of Lemma~\ref{lem:quot}.  For $m>r$, a permutation counted by $a_\infty(m)$ that contains an $m$-cycle is an $m$-cycle, and $m\notin S_\infty$, so
\begin{equation}\label{eq:Qdiff}
a_m(m)=a_\infty(m)-(m-1)!,\qquad Q_m(m)=Q_\infty(m)-(m-r-1)!\qquad(m>r).
\end{equation}
If $k>m$ there is no $k$-cycle, so $a_k(m)=a_\infty(m)$.  If $r<k<m$ and $m-k\le r$, a permutation with a $k$-cycle has the remaining $m-k\in[1,r]$ points in cycles of length at most $r$, which are excluded, so again $a_k(m)=a_\infty(m)$.  In particular $a_{n-1}(n)=a_\infty(n)$ when $n\ge r+2$, and therefore
\begin{equation}\label{eq:sameg}
\gcd_{r<k\le n}a_k(n)=\gcd_{k>r}a_k(n)=\gcd\bigl(a_\infty(n),\gcd_{r<k\le n}a_k(n)\bigr)\qquad(n\ge r+2).
\end{equation}

\begin{proposition}\label{prop:gcdq}
Let $r\ge1$, $n\ge r+2$ and let $q>r$ be a prime.  Then there is $k\in\{\infty\}\cup(r,n]$ with $q\nmid Q_k(n)$.  Consequently $v_q(a_k(n))=v_q(\fall{n-1}{r})$ for this $k$.
\end{proposition}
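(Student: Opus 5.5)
The plan is to use the sign rule of Lemma~\ref{lem:quot} to reduce $n$ modulo $q$ to a small representative $n_0$. At $n_0$ either the column $\infty$ already works, or the identity~\eqref{eq:Qdiff} shows that the column $k=n_0$ works.

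\textbf{Setting up the reduction.} Since $q>r$, the prime $q$ does not lie in $S_\infty=\{1,\dots,r\}$. It also does not lie in $S_k$ for any $k\ne q$. Lemma~\ref{lem:quot} therefore gives $Q_k(n'+q)\equiv-Q_k(n')\pmod q$ for every $n'>r$, for $k=\infty$ and for every $k\in(r,n]$ with $k\ne q$. Let $n_0$ be the unique integer with $r<n_0\le r+q$ and $n_0\equiv n\pmod q$. Then $n_0\le n$, and iterating the sign rule gives $Q_k(n)\equiv\pm Q_k(n_0)\pmod q$ for all such $k$. So it suffices to find an admissible $k$ with $q\nmid Q_k(n_0)$ and $k\le n_0$. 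The bound $k\le n_0$ keeps $k$ in $(r,n]$.

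\textbf{Case $n_0\ne q$.} If $q\nmid Q_\infty(n_0)$, take $k=\infty$. Otherwise take $k=n_0$, which lies in $(r,n]$ and differs from $q$, so the reduction applies to this column. By~\eqref{eq:Qdiff},
\[
Q_{n_0}(n_0)=Q_\infty(n_0)-(n_0-r-1)!\equiv-(n_0-r-1)!\pmod q .
\]
Since $0\le n_0-r-1\le q-1$, this factorial is prime to $q$, so $q\nmid Q_{n_0}(n_0)$.

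\textbf{Case $n_0=q$.} This is the step I expect to be the main obstacle, since the column $k=q$ contains $q$ in its excluded set and the sign rule fails there. I would avoid that column and show directly that $k=\infty$ works. Lemma~\ref{lem:A} applied to $S_\infty$ with $n=0$ gives $a_\infty(q)\equiv(w_1+(q-1)w_q)\,a_\infty(0)=-1\pmod q$, because $1\in S_\infty$ and $q\notin S_\infty$. By Lemma~\ref{lem:quot}, $a_\infty(q)=\fall{q-1}{r}\,Q_\infty(q)$. The factor $\fall{q-1}{r}$ is a product of integers in $[q-r,q-1]$, hence prime to $q$. So $q\nmid Q_\infty(q)$, and the reduction transfers this to $n$.

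\textbf{Conclusion.} In every case some $k$ has $q\nmid Q_k(n)$. Since $a_k(n)=\fall{n-1}{r}\,Q_k(n)$, this gives $v_q(a_k(n))=v_q(\fall{n-1}{r})$.
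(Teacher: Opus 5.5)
Your proof is correct and is essentially the paper's argument: you reduce modulo $q$ with the sign rule, use \eqref{eq:Qdiff} to see that the columns $\infty$ and $n_0$ cannot both vanish modulo $q$, and handle $q\mid n$ by Lemma~\ref{lem:A} with $a_\infty(0)=1$. The only difference is bookkeeping. The paper splits by $\rho=n\bmod q$ into three cases, and for $r<\rho<q$ it applies Lemma~\ref{lem:A} to $a_k$ and then uses $\fall{n-1}{r}\equiv\fall{\rho-1}{r}\not\equiv0$. Your single representative $n_0\in(r,r+q]$, with Lemma~\ref{lem:quot} used throughout, merges the cases $1\le\rho\le r$ and $r<\rho<q$.
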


The idea is to lower $n$ by multiples of $q$ with the sign rule of Lemma~\ref{lem:quot}, until two columns differ by a factorial that is smaller than $q$.  The proof of Proposition~\ref{prop:gcdq} is in Appendix~\ref{app:moved}.

For $r=2$ the exponent of $2$ needs a separate computation.

\begin{proposition}\label{prop:123}
Let $z(n)=a_{\{1,2,3\}}(n)$, the number of permutations of $\{1,\dots,n\}$ all of whose cycles have length at least $4$, and let $\beta(n)=\lfloor(n-1)/4\rfloor+v_2\bigl((n-1)(n-2)\bigr)$.  Then $v_2(z(n))=\beta(n)$ for all $n\ge4$ with $n\not\equiv3\pmod 4$.  For $n=4m+7$ one has $v_2(z(n))\ge m+3$.
\end{proposition}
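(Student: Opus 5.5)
The plan is to run the same kind of induction as for column $2$ of \OEIS{A378495} in Theorem~\ref{thm:A378}(f). We go one residue class of $n$ modulo $4$ at a time, using the recurrence~\eqref{eq:R3} with $n-1$ in place of $n$:
\[
z(n)=(n-1)\,z(n-1)+(n-1)(n-2)(n-3)\,z(n-4).
\]
Write $n=4m+\rho$. The induction hypothesis is the full statement for all smaller arguments: equality $v_2(z(k))=\beta(k)$ for $k\not\equiv3\pmod4$, and the lower bound $v_2(z(k))\ge m'+3$ for $k=4m'+7$. In each class we compare the $2$-adic valuations of the two terms. When they differ, the smaller one gives $v_2(z(n))$ exactly. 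When only a lower bound is available for one term, we need it to be strictly larger than the exact valuation of the other.

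The base cases are $z(0),\dots,z(11)$, computed directly. They are $1,0,0,0,6,24,120,720,5064,\dots$, so $z(4)=6$, $z(5)=24$, $z(6)=120$ and $z(7)=720$, and these cover the first few $n$ in each class. The classes then go as follows.

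\textbf{Class $n\equiv0$.} Here $n-1$ and $n-3$ are odd and $v_2(n-2)=1$. The second term has valuation $1+\beta(n-4)=1+(m-1)=m=\beta(n)$. The first term is $(n-1)z(n-1)$ with $n-1=4(m-2)+7$, so by hypothesis its valuation is at least $m+1$. The second term wins strictly.

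\textbf{Class $n\equiv1$.} The factor $n-1$ carries the valuation $v_2(n-1)$ in both terms. In the second term it is multiplied by $(n-2)(n-3)$, and $v_2(n-3)=1$, together with $z(n-4)$, whose valuation is $\beta(n-4)$. In the first term it is multiplied by $z(n-1)$, which lies in class $0$ and whose valuation is known exactly. We compare $\beta(n-1)$ with $1+\beta(n-4)$; the $\lfloor\cdot/4\rfloor$ parts and the $v_2$ parts of $\beta$ should separate them by at least one.

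\textbf{Class $n\equiv2$.} This class is handled the same way. The term $z(n-1)$ lies in class $1$, and $z(n-4)$ lies in class $2$, where $v_2((n-5)(n-6))$ involves $v_2(m)$-type quantities, so the comparison has to track $v_2(n-2)$ against $v_2(n-6)$. This is the step I expect to be the main obstacle. Both terms contain large powers of $2$ coming from $n-2$ and $n-6$, and strictness must hold uniformly in $v_2(m)$.

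If this comparison is not strict, I would first try to replace $z$ by the quotient $Q_S(n)=z(n)/\fall{n-1}{3}$ of Lemma~\ref{lem:quot}, which removes the large factors $v_2(n-1)$, $v_2(n-2)$ and $v_2(n-3)$. I would then redo the comparison for $Q_S$, where the recurrence coefficients become small falling factorials such as $\fall{n-4}{0}$ and $\fall{n-4}{3}$.

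\textbf{Class $n\equiv3$.} Here $n=4m+7$ and we only claim a lower bound. It follows by taking the minimum of the lower bounds for the two terms. Those bounds are $v_2(n-1)+\beta(n-1)$ and $v_2((n-1)(n-3))+v_2(z(n-4))$, and each should be at least $m+3$. The two terms can tie, so no equality is asserted in this class. This matches the statement, and it is also why the lower bound for this class is built into the induction hypothesis.
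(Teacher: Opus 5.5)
Your plan is the paper's proof: an induction through the classes $n\equiv0,1,2,3\pmod4$ using the recurrence \eqref{eq:R3}, carrying only a lower bound in the class $n\equiv3$. Your treatment of $n\equiv0$ is exactly the paper's first step.

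The comparisons you left open do go through. The class $n\equiv2$ that you flagged as the main obstacle is not one. The quantity $v_2(n-2)$ enters both terms and cancels, so the second term exceeds the first by
$\lfloor(n-5)/4\rfloor-\lfloor(n-2)/4\rfloor+v_2(n-6)=v_2(n-6)-1\ge1$, because $4\mid n-6$.
Concretely:
\begin{itemize}
\item for $n=4m+2$ the two valuations are $m+2+v_2(m)$ and $m+3+v_2(m)+v_2(m-1)$;
\item for $n=4m+1$ they are $m+2+v_2(m)$ and $m+4+v_2(m)+v_2(m-1)$.
\end{itemize}
In both cases the first term is strictly smaller and equals $\beta(n)$, so the detour through $Q_S$ is unnecessary.

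There are two slips.
\begin{itemize}
\item $z(8)=7!+\tfrac12\binom84(3!)^2=6300$, not $5064$. The latter has $v_2=3\ne\beta(8)=2$, so a direct check with it would seem to refute the statement. In any case only $z(4),\dots,z(7)$ are needed as base cases.
\item For $n=4m+7$ the two terms never tie. Their valuations are $m+4+v_2(m+1)$ and at least $m+5+v_2(m+1)$. So your induction actually gives the exact value $v_2(z(4m+7))=m+4+v_2(m+1)$.
\end{itemize}
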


The proof of Proposition~\ref{prop:123} is in Appendix~\ref{app:123}.  It is the induction of Theorem~\ref{thm:A378}(f) again, applied to \eqref{eq:R3}, with the class $n\equiv3\pmod4$ in which only a lower bound survives.

\subsection{The triangle \texorpdfstring{\OEIS{A333726}}{A333726}}\label{sec:A333726}

The triangle \OEIS{A333726} has $U(n,k)=a_{\{1,\dots,k-1\}}(n)$, the number of permutations of $\{1,\dots,n\}$ all of whose cycles have length at least $k$ ($1\le k\le n$).

\begin{theorem}[The triangle \OEIS{A333726}]\label{thm:A333726}
For $n\ge3$, $\gcd\bigl(U(n,1),\dots,U(n,n)\bigr)=n-1$.  More precisely, for every prime $q\le n-1$ the valuation $v_q\bigl(U(n,k)\bigr)$ equals $v_q(n-1)$ for $k=q$ if $q\mid n-1$, and for $k=n\bmod q$ (or $k=2$ if $q\mid n$) if $q\nmid n-1$.
\end{theorem}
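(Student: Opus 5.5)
The plan is to get the lower bound $n-1\mid\gcd$ from Theorem~\ref{thm:general}(i), and then, for each prime $q\le n-1$, to use the sign rules of Lemmas~\ref{lem:A} and~\ref{lem:quot} to reduce $n$ modulo $q$ to a small value where the column in question is a single factorial.

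\emph{Divisibility.} For $k\ge2$ the excluded set $\{1,\dots,k-1\}$ contains $1$, so Theorem~\ref{thm:general}(i) with $r=1$ gives $(n-1)\mid U(n,k)$. For $k=1$ we have $U(n,1)=n!$, which is also divisible by $n-1$. Hence $n-1$ divides the row gcd. A prime $q>n-1$ does not divide $n-1$ and does not divide $U(n,n)=(n-1)!$, so it does not divide the gcd. It therefore suffices to prove the valuation statement for each prime $q\le n-1$.

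\emph{Case $q\mid n-1$.} Take $k=q\le n-1$ and $S=\{1,\dots,q-1\}$. Then $1\in S$ (as $q\ge2$) and $q\notin S$. Lemma~\ref{lem:quot} with $r=1$ gives $U(n,q)=(n-1)\,Q_S(n)$ and $Q_S(n+q)\equiv-Q_S(n)\pmod q$ for $n>1$. Since $n\equiv1\pmod q$ and $n\ge q+1$, repeated application reduces $n$ to $q+1$ up to sign. A permutation of $q+1$ points with all cycles of length at least $q$ must be a single $(q+1)$-cycle, because a $q$-cycle would leave a fixed point. Hence $a_S(q+1)=q!$ and $Q_S(q+1)=q!/q=(q-1)!$, which is prime to $q$ (for instance by Wilson's theorem). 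So $q\nmid Q_S(n)$, and $v_q(U(n,q))=v_q(n-1)$.

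\emph{Case $q\nmid n-1$.} Here $v_q(n-1)=0$, so it suffices to find a column with $q\nmid U(n,k)$. Write $n=tq+\rho$ with $0\le\rho<q$, where $\rho\ne1$.
\begin{itemize}
\item If $\rho=0$, take $k=2$, that is derangements with $S=\{1\}$. Lemma~\ref{lem:A} gives $D(n)\equiv(-1)^tD(0)=\pm1\pmod q$.
\item If $2\le\rho<q$, take $k=\rho$, a legitimate column since $2\le\rho\le n$. Here $S=\{1,\dots,\rho-1\}$ contains $1$ and not $q$. Lemma~\ref{lem:A} gives $U(n,\rho)\equiv(-1)^tU(\rho,\rho)\pmod q$. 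The only permutations of $\rho$ points with all cycles of length at least $\rho$ are the $\rho$-cycles, so $U(\rho,\rho)=(\rho-1)!$, which is prime to $q$ because $\rho<q$.
\end{itemize}

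The argument is short, and the only point that needs care is the bookkeeping of the hypotheses. The sign rules need $1\in S$ and $q\notin S$, the chosen column must satisfy $1\le k\le n$ (this is where $n\ge3$ guarantees that column $2$ exists), and in the case $q\mid n-1$ the reduction must stop at $q+1$ rather than at $1$. Stopping at $q+1$ is what produces the factor $(q-1)!$ and explains why one divides by $n-1$ first. Combining the three cases gives $v_q(\gcd)=v_q(n-1)$ for every prime $q$, which proves the theorem.
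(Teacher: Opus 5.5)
Your proof is correct and follows the paper's proof step for step. Divisibility comes from Theorem~\ref{thm:general}(i), and the gcd is bounded using $U(n,n)=(n-1)!$. When $q\mid n-1$, the sign rule of Lemma~\ref{lem:quot} reduces to $Q_S(q+1)=(q-1)!$; otherwise Lemma~\ref{lem:A} reduces to $U(0,2)=1$ or $U(\rho,\rho)=(\rho-1)!$.
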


We prove Theorem~\ref{thm:A333726} with $S=\{1,\dots,k-1\}$.  The point is that here the column $k=q$ is a convenient column for the prime $q$, because for $S=\{1,\dots,q-1\}$ the prime $q$ lies outside $S$, so the sign rules of the lemmas hold, and $U(q+1,q)=q!$ is explicit.  The proof of Theorem~\ref{thm:A333726} is in Appendix~\ref{app:moved}.

\section{Permutations by cycle gcd}\label{sec:coprime}

For a permutation $\sigma$ let $\gcd(\sigma)$ be the gcd of its cycle lengths.  For $g\mid n$ let $C(n,g)$ be the number of permutations of $\{1,\dots,n\}$ with $\gcd(\sigma)=g$ (\OEIS{A346085}), and for a function $\varphi:\N\to\Z$ let $C_\varphi(n)=\sum_\sigma\varphi(\gcd(\sigma))$.  The choice $\varphi=[g=1]$ gives the number $a(n)=C(n,1)$ of permutations with relatively prime cycle lengths (\OEIS{A079128}), with values $1,1,4,15,96,455$ for $n=1,\dots,6$, and indeed $15=4^2-1$ and $455=13\cdot35$.  The choices $\varphi=[g\ge2]$ and $\varphi=1_{\text{prime}}$ give \OEIS{A226388} and \OEIS{A359951}.  The entry \OEIS{A079128}, by Vladeta Jovovic and Vladimir Baltic, records two conjectures, namely $(n^2-1)\mid a(n)$ for $n>3$ and $\gcd(a(n),n)=1$.  Both are special cases of the following theorem.

\begin{theorem}[Cycle gcd]\label{thm:cyclegcd}
Let $n\ge2$, let $g\mid n$ and $k=n/g$.
\begin{enumerate}[label=(\alph*),itemsep=0pt]
\item $(n-1)\mid C_\varphi(n)$ for every function $\varphi:\N\to\Z$.
\item $C_\varphi(n)\equiv n!\sum_{d\mid n}\psi(d)(-1)^{n/d}\pmod{n+1}$ with $\psi=\mu*\varphi$.  In particular $C(n,g)\equiv n!\,(2[k=2]-[k=1])\pmod{n+1}$, so $(n+1)\mid C(n,g)$ whenever $k\ge3$.
\item $(n^2-1)\mid C(n,g)$ whenever $k\ge3$ and $n\ge4$.
\item For every prime $p\mid n$,
\[
v_p\bigl(C(n,g)\bigr)=\begin{cases}v_p(n!)-k\,v_p(g)-v_p(k!)&\text{if }p\mid g,\\ v_p(n!)-k/p-v_p((k/p)!)&\text{if }p\nmid g.\end{cases}
\]
\end{enumerate}
\end{theorem}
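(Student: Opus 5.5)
The plan is to write every $C_\varphi(n)$ as a M\"obius combination of the explicit counts
\[
D_d(n)=a_{d\N}(n)\qquad(d\mid n),
\]
where $D_d(n)$ is the number of permutations all of whose cycle lengths are divisible by $d$.  Then all four parts are read off term by term.

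\textbf{Reduction to the explicit counts.}  Since $\varphi(\gcd\sigma)=\sum_{d\mid\gcd\sigma}\psi(d)$ with $\psi=\mu*\varphi$, summing over $\sigma$ gives
\[
C_\varphi(n)=\sum_{d\mid n}\psi(d)\,D_d(n).
\]
For $C(n,g)$ we have $\psi(d)=[g\mid d]\,\mu(d/g)$.  By \eqref{eq:egf} the generating function of $D_d$ is $\exp\bigl(\sum_m x^{dm}/(dm)\bigr)=(1-x^d)^{-1/d}$.  Lemma~\ref{lem:binom} then gives, with $k_d=n/d$,
\[
D_d(n)=\frac{n!}{d^{k_d}k_d!}\prod_{i=0}^{k_d-1}(1+di).
\]
The prefactor $n!/(d^{k_d}k_d!)$ is an integer, since it counts partitions of $\{1,\dots,n\}$ into blocks of size $d$, each carrying a cyclic order.

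\textbf{Part (a).}  For $d\ge2$ the allowed set $d\N$ excludes $1$, so Theorem~\ref{thm:general}(i) with $r=1$ gives $(n-1)\mid D_d(n)$.  For $d=1$ we have $D_1(n)=n!$, which is also divisible by $n-1$.

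\textbf{Part (b).}  Since $d k_d=n$, each factor satisfies $1+di=(n+1)-d(k_d-i)\equiv-d(k_d-i)\pmod{n+1}$.  Hence $\prod_{i<k_d}(1+di)\equiv(-d)^{k_d}k_d!$.  Multiplying this by the integer prefactor is legitimate and gives $D_d(n)\equiv(-1)^{n/d}\,n!\pmod{n+1}$, for every $d\mid n$ including $d=1$.  This is the first formula of (b).  For $C(n,g)$ the sum becomes $\sum_{e\mid k}\mu(e)(-1)^{k/e}$.  Write $(-1)^m=-1+2[2\mid m]$.  The function whose divisor sums are $-1$ is $-[\,\cdot=1]$, and the function whose divisor sums are $[2\mid m]$ is $[\,\cdot=2]$.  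So the sum is $2[k=2]-[k=1]$, which vanishes for $k\ge3$.

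\textbf{Part (c).}  If $n$ is even, then $n-1$ and $n+1$ are coprime and (a), (b) suffice.  If $n$ is odd, the odd parts of $n\pm1$ are coprime and are handled the same way, so only the full $2$-part of $n^2-1$ is missing.  Here I would bound each term directly.  All divisors $d$ of $n$ are odd, so the product $\prod_i(1+di)$ is irrelevant to the bound and $v_2(D_d(n))\ge v_2(n!)-v_2(k_d!)$.  Since $d\ge g$ and $n/g\ge3$, we have $k_d\le n/3$, and so $v_2(n!)-v_2(k_d!)\ge v_2(n!)-v_2(\lfloor n/3\rfloor!)$.  This exceeds $v_2(n^2-1)\le\log_2(n+1)+1$ for $n\ge5$ by Legendre's formula.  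Hence every term, and therefore $C(n,g)$, is divisible by the $2$-part of $n^2-1$.  I expect this $2$-adic bookkeeping, together with the check of small odd $n$, to be the only delicate point.

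\textbf{Part (d).}  Here $p\mid n$, and the aim is to show that a single term of $\sum_{e\mid k}\mu(e)D_{ge}(n)$ has strictly the smallest valuation.
\begin{itemize}[itemsep=0pt]
\item If $p\mid d$, the product $\prod_i(1+di)$ consists of $p$-units, so $v_p(D_d(n))=v_p(n!)-k_dv_p(d)-v_p(k_d!)$.
\item If $p\nmid d$, Lemma~\ref{lem:binom} gives $v_p(D_d(n))\ge v_p(n!)$.
\end{itemize}
When $p\mid g$, the dominant term is $e=1$.  For squarefree $e>1$ I must show that $(k/e)(v_p(g)+v_p(e))+v_p((k/e)!)<k\,v_p(g)+v_p(k!)$.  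This follows from $v_p(k!)-v_p((k/e)!)\ge(k/e)v_p(e)$, which holds because $k!/(k/e)!$ contains the multiples of $p$ up to $k$ that exceed $k/e$.  When $p\nmid g$, the terms with $p\nmid e$ have valuation at least $v_p(n!)$.  Among the terms with $p\mid e$, the term $e=p$ has valuation $v_p(n!)-k/p-v_p((k/p)!)$.  The same Legendre comparison shows this is strictly below every $e=pe'$ with $e'>1$ and $p\nmid e'$.

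\textbf{The final claim.}  Taking $g=1$ in (d) shows that $C(n,1)$ has valuation $0$ at every $p\mid n$.  Indeed the cost in the $p\nmid g$ case is $n/p+v_p((n/p)!)=v_p(n!)$, so $\gcd(C(n,1),n)=1$.
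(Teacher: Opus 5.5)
\textbf{Verdict.} Your proposal is correct in outline except for the $2$-adic step of (c), which is wrong as written and needs the repair described below.

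\textbf{Parts (a), (b), (d).} Your decomposition into the counts $D_d(n)$ (the paper's $b_d(n)$) is the paper's, and these three parts go through.
\begin{itemize}
\item For (a) you use Theorem~\ref{thm:general}(i) with $r=1$ for $d\ge2$. The paper instead uses $q$-integrality of $\binom{1/d+k_d-1}{k_d}$ at each prime $q\mid n-1$. Both routes are valid.
\item Your Dirichlet-convolution evaluation of $\sum_{e\mid k}\mu(e)(-1)^{k/e}$ is cleaner than the paper's case split.
\item In (d) you split $c(g)-c(ge)$ into the strictly positive $(k/e)(e-1)v_p(g)$ plus a non-negative remainder. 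This avoids the paper's special case $p=e=2$, $v_2(g)=1$.
\item One justification in (d) is loose. The multiples of $p$ in $(k/e,k]$ number only $\lfloor k/p\rfloor-\lfloor k/(ep)\rfloor$, which is less than $k/e$ when $e=p$ and $k/e\ge p$. For example, at $p=e=3$, $k=9$ you get two multiples against $k/e=3$. The inequality you need is still true: for $p\mid e$ and $u=k/e$ one has $v_p(k!)\ge v_p((pu)!)=u+v_p(u!)$.
\item In the case $p\nmid g$ you should also state that $p\mid k$, so that $e=p$ actually occurs.
\end{itemize}

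\textbf{The gap in (c).} From $d\ge g$ you only get $k_d\le k$, not $k_d\le n/3$.
\begin{itemize}
\item For $g=1$ the term $d=1$ has $k_d=n$, and your bound $v_2(n!)-v_2(k_d!)$ collapses to $0$.
\item Even restricted to $d\ge3$, the uniform estimate $v_2(n!)-v_2(\lfloor n/3\rfloor!)\ge v_2(n^2-1)$ fails at $n=7$, where it reads $3\ge4$.
\end{itemize}

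Both defects come from discarding the product. For odd $d$ the whole factor $\prod_{i<k_d}(1+di)/(d^{k_d}k_d!)=\binom{1/d+k_d-1}{k_d}$ lies in $\Z_{(2)}$ by Lemma~\ref{lem:binom}. Hence $v_2(D_d(n))\ge v_2(n!)$ for every $d\mid n$, including $d=1$.

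It then remains to show $v_2(n!)\ge v_2(n^2-1)$ for odd $n\ge5$. This holds with equality at $n=5,7$. For $n\ge9$ it follows from $v_2(n^2-1)\le1+\log_2(n+1)$ and $v_2(n!)=n-s_2(n)\ge n-\log_2n-1$. This is the paper's argument.

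Alternatively, you can keep your weaker bound. Then treat $d=1$ through $D_1(n)=n!$, which again needs $v_2(n!)\ge v_2(n^2-1)$. Check $n=7$ by hand, where the only other divisor is $d=7$ with $k_d=1$.
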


\begin{corollary}[\OEIS{A079128}]\label{cor:A079128}
For $a(n)=C(n,1)$ one has $(n-1)\mid a(n)$ for $n\ge2$, $(n+1)\mid a(n)$ for $n\ge3$, $(n^2-1)\mid a(n)$ for $n\ge4$, and $\gcd(a(n),n)=1$.  Moreover $v_2(C(n,2))=0$ for even $n$ and $\gcd(C(n,2),n)$ is the odd part of $n$.
\end{corollary}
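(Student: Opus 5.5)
The corollary is Theorem~\ref{thm:cyclegcd} specialized to $g=1$ and $g=2$, so the plan is to read each claim off parts (a)--(d) with $k=n/g$.

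For $a(n)=C(n,1)$ we have $k=n$, so $\varphi=[g=1]$ in part~(a) gives $(n-1)\mid a(n)$ for $n\ge2$. For $n\ge3$ we have $k=n\ge3$, and part~(b) gives $a(n)\equiv n!\,(2[n=2]-[n=1])=0\pmod{n+1}$. Part~(c) then gives $(n^2-1)\mid a(n)$ for $n\ge4$. When $n+1$ is not prime this is not implied by (a) and (b) alone, since $n-1$ and $n+1$ may share the factor $2$, so we cite~(c) directly.

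For $\gcd(a(n),n)=1$, take any prime $p\mid n$ and apply part~(d) in the case $p\nmid g=1$, with $k=n$. It gives
\[
v_p(a(n))=v_p(n!)-n/p-v_p((n/p)!).
\]
Legendre's formula yields
\[
v_p(n!)=\frac{n-s_p(n)}{p-1},\qquad v_p((n/p)!)=\frac{n/p-s_p(n)}{p-1},
\]
using $s_p(n/p)=s_p(n)$ because $p\mid n$. Subtracting,
\[
v_p(n!)-v_p((n/p)!)=\frac{n-n/p}{p-1}=\frac{n}{p}.
\]
Hence $v_p(a(n))=0$ for every prime $p\mid n$, which is the gcd claim. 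The case $n=1$ is trivial.

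For $C(n,2)$ with $n$ even, take $g=2$ and $k=n/2$. At $p=2$ we have $p\mid g$, so part~(d) gives
\[
v_2(C(n,2))=v_2(n!)-k-v_2(k!)=(n-s_2(n))-k-(k-s_2(k)).
\]
This equals $0$, because $n=2k$ and $s_2(n)=s_2(k)$. At an odd prime $p\mid n$ we have $p\nmid g$, and (d) gives $v_p(C(n,2))=v_p(n!)-n/p-v_p((n/p)!)$. By the Legendre computation above this equals
\[
\frac{s_p(n)-s_p(n/p)}{p-1}+\Bigl(\frac{n}{p}-\frac{n}{p}\Bigr)=0,
\]
wait — this should be rechecked, since for $g=2$ the exponent in the case $p\nmid g$ should involve $n/(pg)$-type quantities. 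The expected answer is that $\gcd(C(n,2),n)$ equals the odd part of $n$, so I expect $v_p(C(n,2))\ge v_p(n)$ there. This is the step to verify most carefully: reread part~(d) for $p\nmid g$ with $k=n/g$. My guess is that it reads $v_p(n!)-k/p-v_p((k/p)!)$, which at $k=n/2$ gives
\[
\frac{n-s_p(n)}{p-1}-\frac{n}{2p}-\frac{n/(2p)-s_p(n)}{p-1}=\frac{n}{2p}\ge v_p(n).
\]
This needs checking against small cases such as $n=6$. The only real obstacle is this bookkeeping. The arithmetic identity $s_p(m)=s_p(m/p)$ for $p\mid m$ does all the work.
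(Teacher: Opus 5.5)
Everything up to and including $v_2(C(n,2))=0$ is correct and is the paper's argument: parts (a)--(c) with $g=1$, then part (d) with Legendre's formula. Your identity $s_p(n/p)=s_p(n)$ is the same as the paper's $v_p(n!)=n/p+v_p((n/p)!)$.

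The gap is the odd-prime half of the claim about $C(n,2)$.

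\begin{itemize}
\item \emph{Your reading of (d) is right.} For an odd prime $p\mid n$ and $k=n/2$ it gives $v_p(C(n,2))=v_p(n!)-k/p-v_p((k/p)!)$.
\item \emph{The evaluation uses a false digit-sum identity.} You replaced $s_p(n/(2p))$ by $s_p(n)$. The identity $s_p(m/p)=s_p(m)$ concerns division by $p$ only, and $s_p(k)\ne s_p(2k)$ in general.
\item \emph{The algebra does not give $n/(2p)$ either.} Even granting that substitution, your displayed expression simplifies to $n/(2(p-1))$, not to $n/(2p)$.
\item \emph{The claimed equality is false.} With $u=k/p$ the true value is $v_p(C(n,2))=\bigl(pu+s_p(u)-s_p(2u)\bigr)/(p-1)$. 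For $n=18$ and $p=3$ this is $4$, while $n/(2p)=3$. Your proposed check at $n=6$ would not catch this, since there both values happen to be $1$.
\item \emph{The last step is unproved.} The inequality $n/(2p)\ge v_p(n)$ that you then need is asserted without argument.
\end{itemize}

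The repair is the paper's. Legendre's formula in the form $v_p(n!)=\lfloor n/p\rfloor+v_p(\lfloor n/p\rfloor!)=2u+v_p((2u)!)$ gives
\[
v_p(C(n,2))=u+v_p((2u)!)-v_p(u!)\ \ge\ u .
\]
Since $p$ is odd, $v_p(n)=v_p(k)$, and $p^{v_p(k)-1}$ divides $u=k/p$. Hence $u\ge p^{v_p(k)-1}\ge v_p(k)=v_p(n)$. Together with $v_2(C(n,2))=0$, this shows that $\gcd(C(n,2),n)$ is the odd part of $n$.
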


For $n=2,3$ one has $a(2)=1$ and $a(3)=4$, so the hypothesis $n\ge4$ is needed in the second conjecture.  For $k=2$ part (b) says that $n+1=2g+1$ divides $C(n,g)$ if and only if it divides $2\,n!$.  A related count is the number $\operatorname{cop}(n)$ of permutations of $\{1,\dots,n\}$ all of whose cycle lengths are prime to $n$ (\OEIS{A335088}).

\begin{theorem}[\OEIS{A335088}]\label{thm:A335088}
For $n\ge2$, $(n-1)\mid \operatorname{cop}(n)$, and $\gcd(\operatorname{cop}(n),n)=1$.  If $n+1$ is composite then $(n+1)\mid \operatorname{cop}(n)$.  If $n+1$ is prime, the divisibility $(n+1)\mid\operatorname{cop}(n)$ holds for $n=708$ and $n=92202$ and fails for every other $n\le3\cdot10^6$ with $n+1$ prime.
\end{theorem}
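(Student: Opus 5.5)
The plan treats the four assertions separately, using the Möbius description of the count: $\operatorname{cop}(n)=a_A(n)$ with $A=\{m:\gcd(m,n)=1\}$.

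\emph{Divisibility by $n-1$.} Since $1\in A$, Theorem~\ref{thm:general}(i) does not apply directly. Instead I use $1\in A$ and $n-1\in A$ (as $\gcd(n-1,n)=1$). For each prime power $q^e\,\|\,n-1$ I apply the shift congruence of~\cite{KallatShift} with shift $k=n-1$: $a_A(1+k)\equiv a_A(1)a_A(k)\pmod{k}$ in the form proved there for $0/1$ weights. Here $a_A(1)=1$ and $a_A(n-1)$ counts permutations of $n-1$ points with cycle lengths prime to $n$. This does not immediately vanish, so a direct argument seems better. Let the cyclic group generated by an $(n-1)$-cycle $\chi$ on $\{1,\dots,n-1\}$ act by conjugation on the counted permutations of $\{1,\dots,n\}$. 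The orbit of $\sigma$ has size $(n-1)/|\mathrm{Stab}|$. I would instead use the last-point recurrence \eqref{eq:last} in the form
\[
\operatorname{cop}(n)=\sum_{m\in A,\ m\le n}\fall{n-1}{m-1}\,a_A(n-m).
\]
The term $m=1$ is $a_A(n-1)$, and for $m\ge2$ each term carries the factor $n-1$. So it suffices that $(n-1)\mid a_A(n-1)$ with $A$ defined relative to $n$. For that I use the generating function modulo $x^{n}$: $\sum_{m\in A}x^m/m$ agrees, up to degree $n-1$, with the series for the set of integers prime to $n$. By the Möbius/Euler product for $A$, $F_A=\prod_{d\mid n}(1-x^d)^{\mu(d)/d}$ times a factor with no terms below degree $n$. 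I expect an explicit formula $a_A(n-1)=(n-1)!\,\coeff{n-1}{\prod_{d\mid n}(1-x^d)^{\mu(d)/d}}$, and then the mod $n-1$ claim becomes the statement that this coefficient has denominator prime to each $q\mid n-1$. Since $q\nmid n$, the exponents $\mu(d)/d$ are $q$-integral, so Lemma~\ref{lem:binom} makes the coefficient $q$-integral and $(n-1)!$ supplies the $q$-power. This gives $v_q(a_A(n-1))\ge v_q((n-1)!)\ge v_q(n-1)$.

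\emph{Coprimality with $n$.} Fix $p\mid n$. Then $p\notin A$ and $1\in A$, so $p$ is of type II and Lemma~\ref{lem:A} gives $\operatorname{cop}(n)\equiv\pm a_A(\rho)$ with $\rho=n\bmod p=0$, hence $\equiv\pm1\pmod p$.

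\emph{Composite $n+1$.} Reduce modulo $n+1$ using the same truncated product:
\[
\operatorname{cop}(n)=n!\,\coeff{n}{\textstyle\prod_{d\mid n}(1-x^d)^{\mu(d)/d}}+n!\,\coeff{n}{x^n/n\text{-term corrections}}.
\]
Since $n+1$ is composite and greater than $4$, we have $(n+1)\mid n!$. It remains to check that every prime $q\mid n+1$ appears in the coefficient's denominator less often than in $n!$; as $q\nmid n$, Lemma~\ref{lem:binom} gives $q$-integrality again. The case $n+1=4$ is checked directly.

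\emph{Prime $n+1$.} This is a finite computation, by evaluating $n!\cdot(\text{coefficient})\bmod (n+1)$ as a short prefix sum via Wilson's theorem. I expect the main obstacle to be making the first step rigorous: the precise identification of $\operatorname{cop}(n)$ with a coefficient of the Möbius product, including the correction from the allowed length $n$ itself, which is excluded since $\gcd(n,n)\neq1$ for $n\ge2$, so the truncation is actually exact.
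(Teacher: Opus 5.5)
Your proposal is correct and follows essentially the paper's proof. It uses $q$-integrality of $\coeff{n}{F_{A_n}}$ at primes $q\nmid n$ via Lemma~\ref{lem:binom}, for the factor $n-1$ and for composite $n+1$ (where $(n+1)\mid n!$ does the work), Lemma~\ref{lem:A} with $w_1=1$, $w_p=0$ to get $\operatorname{cop}(n)\equiv a_{A_n}(0)=1\pmod p$ for $p\mid n$, and a finite computation modulo a prime $n+1$. Two small remarks: the Euler exponent is $-\mu(d)/d$, not $\mu(d)/d$, and the product is exact by Lemma~\ref{lem:weights}(c), so there are no correction terms; and the detour through $a_{A_n}(n-1)$ is unnecessary, since $v_q(\operatorname{cop}(n))\ge v_q(n!)\ge v_q(n-1)$ directly.
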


When $n+1=N$ is prime, the residue of $\operatorname{cop}(n)$ modulo $N$ is, up to sign, the coefficient of $x^n$ in the integer power series $\prod_{d\mid n}(1-x^d)^{\mu(d)n/d}$ (Proposition~\ref{prop:cop}), and this coefficient does vanish modulo $N$ for $n=708$, where $\operatorname{cop}(708)$ has $1711$ digits and is divisible by $709$.

The permutations with $d\mid\gcd(\sigma)$ are those whose cycle lengths are all divisible by $d$, and their number has an explicit product formula. Every function of $\gcd(\sigma)$ is a M\"obius combination of these numbers, so the divisibility questions reduce to the numbers $b_d(n)$ below. For $d\mid n$ let $b_d(n)$ be the number of permutations of $\{1,\dots,n\}$ all of whose cycle lengths are divisible by $d$, and put $k=n/d$.

\begin{lemma}\label{lem:bd}
For $d\mid n$,
\[
b_d(n)=\frac{n!}{d^k\,k!}\prod_{j=0}^{k-1}(dj+1),
\]
and $n!/(d^kk!)$ is an integer.  If $p$ is a prime dividing $d$, then $v_p(b_d(n))=v_p(n!)-k\,v_p(d)-v_p(k!)$.  If $\varphi:\N\to\Z$ is any function and $\psi(d)=\sum_{e\mid d}\mu(d/e)\varphi(e)$, then
\[
C_\varphi(n)=\sum_{d\mid n}\psi(d)\,b_d(n),\qquad C(n,g)=\sum_{e\mid n/g}\mu(e)\,b_{ge}(n).
\]
\end{lemma}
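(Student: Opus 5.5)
The product formula for $b_d(n)$ comes from the exponential formula~\eqref{eq:egf} with allowed set $A=d\N$, followed by an application of Lemma~\ref{lem:binom}; the remaining claims are then read off from this product and from M\"obius inversion.

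\emph{Product formula.} With $A=d\N$,
\[
\sum_{m\ge1}\frac{x^{dm}}{dm}=-\frac1d\log(1-x^d),
\]
so the generating function is $F_A(x)=(1-x^d)^{-1/d}$. By Lemma~\ref{lem:binom} with $c=1/d$, the coefficient of $x^{dk}$ is
\[
\binom{1/d+k-1}{k}=\frac{\prod_{j<k}(1/d+j)}{k!}=\frac{\prod_{j<k}(dj+1)}{d^k\,k!}.
\]
Multiplying by $n!$ with $n=dk$ gives the stated formula for $b_d(n)$.

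\emph{Integrality.} The number $n!/(d^k k!)$ counts the set partitions of $\{1,\dots,n\}$ into $k$ blocks of size $d$, times $((d-1)!)^k$. Equivalently, it equals the multinomial coefficient $n!/((d!)^k k!)$ multiplied by $\bigl(d!/d\bigr)^k=((d-1)!)^k$, which is an integer.

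\emph{Valuation.} Let $p\mid d$. Then every factor $dj+1$ is $\equiv1\pmod p$, so the product $\prod_{j<k}(dj+1)$ is a $p$-adic unit. The valuation of $b_d(n)$ is therefore that of $n!/(d^kk!)$, namely $v_p(n!)-k\,v_p(d)-v_p(k!)$. This is also the case $v_p(1/d)=-v_p(d)$ of the valuation statement in Lemma~\ref{lem:binom}.

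\emph{M\"obius identities.} A permutation has $d\mid\gcd(\sigma)$ exactly when all of its cycle lengths are divisible by $d$. Hence $b_d(n)=\sum_{d\mid e\mid n}C(n,e)$.

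For $C_\varphi$, write $\varphi(g)=\sum_{d\mid g}\psi(d)$; this holds by M\"obius inversion since $\psi=\mu*\varphi$. Then
\[
C_\varphi(n)=\sum_g C(n,g)\sum_{d\mid g}\psi(d)=\sum_{d\mid n}\psi(d)\sum_{d\mid g\mid n}C(n,g)=\sum_{d\mid n}\psi(d)\,b_d(n).
\]

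For $C(n,g)$, take $\varphi=[\,\cdot=g\,]$. Then $\psi(d)=\mu(d/g)$ when $g\mid d$ and $0$ otherwise. Writing $d=ge$ with $e\mid n/g$ gives $C(n,g)=\sum_{e\mid n/g}\mu(e)\,b_{ge}(n)$. Alternatively, one can apply M\"obius inversion directly on the divisor lattice of $n/g$.

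None of these steps is a real obstacle. The point needing the most care is the identification of the coefficient with $\binom{1/d+k-1}{k}$, and Lemma~\ref{lem:binom} already supplies it.
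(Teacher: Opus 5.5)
Your proof is correct and follows the paper's argument step for step. The steps are the exponential formula giving $(1-x^d)^{-1/d}$, Lemma~\ref{lem:binom} for the coefficient of $x^{dk}$, the unit product $\prod_{j<k}(dj+1)$ for the valuation, the factorization $\bigl((dk)!/((d!)^kk!)\bigr)((d-1)!)^k$ for integrality, and M\"obius inversion $\varphi=\sum_{d\mid\cdot}\psi(d)$ with the choice $\varphi=[\,\cdot=g\,]$. The only cosmetic difference is that you group the sum by the numbers $C(n,g)$ before exchanging the order of summation, whereas the paper sums directly over the permutations $\sigma$.
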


The idea of the proof of Theorem~\ref{thm:cyclegcd} is that every term $b_d(n)$ of the M\"obius sum is an explicit product, so divisibility by $n-1$ and the residue modulo $n+1$ can be read off term by term, and for the valuations one term is strictly the smallest.  The proofs of Lemma~\ref{lem:bd} and Theorem~\ref{thm:cyclegcd} are in Appendix~\ref{app:moved}.

The congruence in (b) is where the factor $n+1$ comes from. For $C(n,g)$ it reads $2[k=2]-[k=1]$, and the second sum in the proof of Theorem~\ref{thm:cyclegcd} (Appendix~\ref{app:moved}) is a M\"obius sum that vanishes unless $k\le2$.

\begin{proof}[Proof of Corollary~\ref{cor:A079128}]
Let $a(n)=C(n,1)$. The three divisibilities are Theorem~\ref{thm:cyclegcd}(a), (b), (c) with $g=1$, $k=n$. For $p\mid n$ and $g=1$, Theorem~\ref{thm:cyclegcd}(d) gives $v_p(a(n))=v_p(n!)-n/p-v_p((n/p)!)=0$ by Legendre's formula $v_p(n!)=n/p+v_p((n/p)!)$, so $\gcd(a(n),n)=1$. For $g=2$ and $n=2k$ even, $v_2(C(n,2))=v_2(n!)-k-v_2(k!)=(2k-s_2(k))-k-(k-s_2(k))=0$, and for an odd prime $p\mid n$, with $p\mid k$ and $u=k/p$, one has $v_p(C(n,2))=v_p(n!)-u-v_p(u!)=2u+v_p((2u)!)-u-v_p(u!)\ge u\ge v_p(k)=v_p(n)$, since $u=k/p\ge p^{v_p(k)-1}\ge v_p(k)$. Hence $\gcd(C(n,2),n)$ is the odd part of $n$.
\end{proof}

For $n=p$ prime, $b_1(p)=p!$ and $b_p(p)=(p-1)!$, so $a(p)=p!-(p-1)!=(p-1)(p-1)!$, in accordance with the entry, and the comment $a(p)=(p-1)!$ in \OEIS{A226388} and \OEIS{A359951} is the case $k=1$ of $C(n,g)$.

\subsection{Cycle lengths prime to \texorpdfstring{$n$}{n}}

As in Theorem~\ref{thm:A335088}, $\operatorname{cop}(n)$ counts the permutations of $\{1,\dots,n\}$ all of whose cycle lengths are prime to $n$, so $\operatorname{cop}(n)=n![x^n]F_{A_n}$ with $A_n=\{m:\gcd(m,n)=1\}$. By Lemma~\ref{lem:weights}(c), $F_{A_n}=\prod_{d\mid n}(1-x^d)^{-\mu(d)/d}$. The first values are $1,1,4,9,96,145,4320$.  The proof of Theorem~\ref{thm:A335088} is in Appendix~\ref{app:moved}.

When $N=n+1$ is prime the argument of (c) gives nothing, since $v_N(n!)=0$.  What replaces it is a congruence that reduces $\operatorname{cop}(n)$ modulo $N$ to a single coefficient with integer exponents.

\begin{proposition}\label{prop:cop}
Let $n\ge2$ and suppose that $N=n+1$ is prime.  Then
\[
\operatorname{cop}(n)\equiv-\coeff{n}{\prod_{d\mid n}(1-x^d)^{\mu(d)\,n/d}}\pmod N,
\]
where the product is over the squarefree divisors of $n$ and is expanded as a power series with integer coefficients.  In particular $N\mid\operatorname{cop}(n)$ if and only if $N$ divides that coefficient.  The residue can be computed from the recurrence $k f_k=\sum_{d\mid n}\mu(d)\sum_{j\ge1}f_{k-dj}$ with $f_0=1$, in $O(n\,2^{\omega(n)})$ operations modulo $N$, and then $\operatorname{cop}(n)\equiv-f_n\pmod N$.
\end{proposition}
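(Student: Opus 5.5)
The plan is to compare the two power series coefficientwise modulo $N$ in the ring $\Z_{(N)}$. Three ingredients do the work: Wilson's theorem, the congruence $n\equiv-1\pmod N$, and Lemma~\ref{lem:binom}.

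\emph{Setting up.} By definition $\operatorname{cop}(n)=n!\,\coeff{n}{F_{A_n}}$ with $F_{A_n}=\prod_{d\mid n}(1-x^d)^{-\mu(d)/d}$, and only squarefree $d$ contribute. Wilson's theorem gives $n!=(N-1)!\equiv-1\pmod N$. Every exponent $-\mu(d)/d$ has denominator dividing $n$, which is prime to $N$, so it lies in $\Z_{(N)}$. By Lemma~\ref{lem:binom} all coefficients of $F_{A_n}$ therefore lie in $\Z_{(N)}$. Hence $\operatorname{cop}(n)\equiv-\coeff{n}{F_{A_n}}\pmod N$, and it remains to show
\[
\coeff{n}{F_{A_n}}\equiv\coeff{n}{\textstyle\prod_{d\mid n}(1-x^d)^{\mu(d)n/d}}\pmod N.
\]

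\emph{Key step: the two exponents are congruent.} Write $c_d=\mu(d)/d$ and $c'_d=-\mu(d)n/d$, so that the two factors are $(1-x^d)^{-c_d}$ and $(1-x^d)^{-c'_d}$. Their difference is
\[
c_d-c'_d=\mu(d)(1+n)/d=\mu(d)N/d\equiv0\pmod N \quad\text{in }\Z_{(N)}.
\]
By Lemma~\ref{lem:binom}, the coefficient of $y^\lambda$ in $(1-y)^{-c}$ is $c(c+1)\cdots(c+\lambda-1)/\lambda!$. Up to degree $n$ only $\lambda\le n/d<N$ occurs, so $\lambda!$ is a unit mod $N$. The coefficient is then a polynomial in $c$ with $\Z_{(N)}$ coefficients, and it takes congruent values at $c_d$ and $c'_d$. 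Products of coefficientwise congruent series are coefficientwise congruent, which gives the displayed congruence. The target series has integer exponents, so its coefficients are integers, and the equivalence ``$N\mid\operatorname{cop}(n)$ iff $N$ divides the coefficient'' follows at once.

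\emph{The recurrence.} Put $G=\prod_{d\mid n}(1-x^d)^{\mu(d)n/d}=\sum f_kx^k$. Taking the logarithmic derivative gives
\[
xG'/G=-n\sum_{d\mid n}\mu(d)\sum_{j\ge1}x^{dj}\equiv\sum_{d\mid n}\mu(d)\sum_{j\ge1}x^{dj}\pmod N,
\]
and comparing coefficients yields $kf_k\equiv\sum_{d}\mu(d)\sum_{j\ge1}f_{k-dj}$. Since $k\le n<N$, the factor $k$ is invertible mod $N$, so this determines $f_k$ modulo $N$ step by step.

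For the cost bound, keep for each squarefree $d\mid n$ a running prefix sum $P_d(k)=\sum_{j\ge1}f_{k-dj}$, updated by $P_d(k)=f_{k-d}+P_d(k-d)$. Each step then costs $O(2^{\omega(n)})$ operations, giving $O(n\,2^{\omega(n)})$ in total.

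I expect no real obstacle here. The one point needing care is that Lemma~\ref{lem:binom}'s polynomial in $c$ has $N$-integral coefficients only because $\lambda<N$; this is exactly why truncation at degree $n=N-1$ matters.
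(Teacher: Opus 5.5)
Your proof is correct and follows the paper's argument: the exponents $\mu(d)/d$ and $-\mu(d)n/d$ are congruent modulo $N$ in $\Z_{(N)}$, the binomial coefficients of degree $\lambda<N$ are $N$-integral polynomials in the exponent, and Wilson's theorem supplies the factor $-1$. The only difference is in the recurrence. The paper obtains it as an exact identity for the coefficients of $F_{A_n}$, from $xF_{A_n}'=F_{A_n}\sum_{\gcd(m,n)=1}x^m$ and M\"obius inversion of the coprimality indicator. You derive it as a congruence for the integer coefficients of the product, via the logarithmic derivative and $n\equiv-1\pmod N$. Since every $k\le n$ is invertible modulo $N$, both versions determine the same residues.
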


The proof of Proposition~\ref{prop:cop} is in Appendix~\ref{app:moved}.

The factor $d=1$ of the product in Proposition~\ref{prop:cop} is $(1-x)^n=(1-x)^N/(1-x)\equiv1/(1-x)$ modulo $(N,x^N)$, and multiplying by $1/(1-x)$ only forms prefix sums.  This gives a form of the residue that is cheaper to compute, and for two families of primes it can be summed exactly.

\begin{proposition}[Prefix-sum form]\label{prop:copprefix}
Let $N=n+1$ be an odd prime and
\[
G(x)=\prod_{\substack{d\mid n,\ d>1\\ d\text{ squarefree}}}(1-x^d)^{\mu(d)\,n/d}=\sum_{k\ge0}g_kx^k.
\]
Then $\operatorname{cop}(n)\equiv-(g_0+g_1+\dots+g_n)\pmod N$, and modulo $N$ the coefficients satisfy $k\,g_k\equiv\sum_{d\mid n,\,d>1}\mu(d)\sum_{i\ge1}g_{k-di}$ with $g_0=1$.
\end{proposition}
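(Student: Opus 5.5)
Starting from Proposition~\ref{prop:cop}, we take $n$ even and $N=n+1$ an odd prime, and split off the factor $d=1$ of the integer power series $P(x)=\prod_{d\mid n}(1-x^d)^{\mu(d)n/d}$, namely $(1-x)^n$; the remaining factors form $G(x)$.  Every exponent $\mu(d)n/d$ is an integer, so $P$ and $G$ have integer coefficients, and $\operatorname{cop}(n)\equiv-[x^n]P\pmod N$.  Working in $\F_N[[x]]$ modulo $x^N$, Frobenius gives $(1-x)^N\equiv1-x^N\equiv1\pmod{(N,x^N)}$, hence $(1-x)^n\equiv(1-x)^{-1}=\sum_{i\ge0}x^i$ there.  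Since $n<N$, the coefficient $[x^n]P$ modulo $N$ equals $[x^n]\bigl(G(x)\sum_i x^i\bigr)=g_0+\dots+g_n$.  This gives the first claim.

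For the recurrence we take the logarithmic derivative.  We have
\[
xG'(x)/G(x)=\sum_{d\mid n,\,d>1}\mu(d)\frac nd\cdot\frac{-d\,x^d}{1-x^d}=-n\sum_{d}\mu(d)\sum_{i\ge1}x^{di},
\]
where the sum runs over squarefree $d$, which is automatic because $\mu(d)=0$ otherwise.  Comparing coefficients of $x^k$ in $xG'=G\cdot(xG'/G)$ yields $k g_k=-n\sum_{d>1}\mu(d)\sum_{i\ge1}g_{k-di}$.  Finally $-n=1-N\equiv1\pmod N$, which gives the stated congruence, with $g_0=1$ since every factor of $G$ has constant term $1$.

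The only delicate point is reducing modulo $(N,x^N)$ inside a product with possibly negative integer exponents.  This is legitimate because all factors are units in $\Z[[x]]$ and reduction $\Z[[x]]\to\F_N[[x]]/(x^N)$ is a ring homomorphism.  Note also that the recurrence only determines $g_k$ modulo $N$ for $k<N$, since $k$ must be invertible.  This suffices because we need only $k\le n$.
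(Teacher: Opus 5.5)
Your proof is correct and follows the paper's argument essentially step for step. You split off the factor $(1-x)^n\equiv(1-x)^{-1}$ modulo $(N,x^N)$ to reduce the coefficient to a prefix sum, then take the logarithmic derivative of $G$ and use $-n\equiv1\pmod N$ to get the recurrence, noting as the paper does that only divisions by integers $k\le n<N$ occur.
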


\begin{proposition}[Fermat primes and $3$]\label{prop:copfermat}
If $N=n+1$ is a Fermat prime $2^a+1$ (this includes $N=3$), then $\operatorname{cop}(n)\equiv-\binom{n}{n/2}\equiv-(-1)^{n/2}\pmod N$.  In particular $N\nmid\operatorname{cop}(n)$.
\end{proposition}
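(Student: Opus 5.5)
The plan is to apply Proposition~\ref{prop:copprefix} directly and observe that for a Fermat prime the product $G$ collapses to a single binomial series. The prefix sum of that series is then one binomial coefficient, given by the hockey-stick identity, and that coefficient reduces modulo $N$ by the standard congruence $\binom{N-1}{k}\equiv(-1)^k$.

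First I would identify $G$. If $N=n+1=2^a+1$ is prime, then $n=2^a$ with $a\ge1$, and $N$ is odd, so Proposition~\ref{prop:copprefix} applies. The only squarefree divisor $d>1$ of $n$ is $d=2$, and $\mu(2)=-1$. Hence
\[
G(x)=(1-x^2)^{-n/2}=\sum_{j\ge0}\binom{n/2+j-1}{j}x^{2j}
\]
by Lemma~\ref{lem:binom} with the integer exponent $c=n/2$. So $g_{2j}=\binom{n/2+j-1}{j}$, $g_{2j+1}=0$, and all coefficients are genuine integers.

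Next I would evaluate the prefix sum. Since $n$ is even,
\[
g_0+\dots+g_n=\sum_{j=0}^{n/2}\binom{n/2-1+j}{j}=\binom{n/2+n/2}{n/2}=\binom{n}{n/2},
\]
by the hockey-stick identity $\sum_{j=0}^{J}\binom{m+j}{j}=\binom{m+J+1}{J}$ with $m=n/2-1$ and $J=n/2$. Proposition~\ref{prop:copprefix} then gives $\operatorname{cop}(n)\equiv-\binom{n}{n/2}\pmod N$.

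Finally I would reduce modulo $N$. Since $n\equiv-1\pmod N$ and $k!$ is invertible modulo $N$ for $k<N$,
\[
\binom{n}{k}=\frac{n(n-1)\cdots(n-k+1)}{k!}\equiv\frac{(-1)(-2)\cdots(-k)}{k!}=(-1)^k\pmod N .
\]
With $k=n/2$ this gives $\operatorname{cop}(n)\equiv-(-1)^{n/2}\pmod N$, which is $\pm1$ and hence nonzero modulo $N$. This proves $N\nmid\operatorname{cop}(n)$.

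As a check, the case $N=3$ has $n=2$ and $G=(1-x^2)^{-1}$, so the prefix sum is $1+0+1=2=\binom21$ and the formula gives $\operatorname{cop}(2)\equiv-2\equiv1\pmod 3$, matching $\operatorname{cop}(2)=1$. There is no real obstacle here. The only points needing care are that $d=2$ is the sole contributing divisor and that the prefix sum stops at index $n$, which is even, so it ends exactly at $j=n/2$.
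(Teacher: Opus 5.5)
Your proposal is correct and follows essentially the same route as the paper's proof. Both apply Proposition~\ref{prop:copprefix}, observe that $d=2$ is the only squarefree divisor above $1$ so that $G=(1-x^2)^{-n/2}$, sum the prefix by the hockey-stick identity to get $\binom{n}{n/2}$, and finish with $\binom{N-1}{k}\equiv(-1)^k\pmod N$.
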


\begin{proposition}[Safe primes]\label{prop:copsafe}
Let $N=2q+1$ with $q$ an odd prime and $n=2q$.  Then $\operatorname{cop}(n)\equiv-1-2\binom{(3q-1)/2}{(q-1)/2}\pmod N$, so $N\mid\operatorname{cop}(n)$ if and only if $2\binom{(3q-1)/2}{(q-1)/2}\equiv-1\pmod N$.
\end{proposition}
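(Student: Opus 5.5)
The plan is to apply Proposition~\ref{prop:copprefix} directly and evaluate the prefix sum $g_0+\dots+g_n$ in closed form. For $n=2q$ with $q$ an odd prime, the squarefree divisors $d>1$ of $n$ are $2$, $q$ and $2q$. Their M\"obius values are $-1,-1,+1$, so the exponents $\mu(d)\,n/d$ are $-q$, $-2$ and $+1$. Hence
\[
G(x)=(1-x^2)^{-q}\,(1-x^q)^{-2}\,(1-x^{2q}),
\]
a power series with integer coefficients. We need only its coefficients up to degree $2q$, and we can compute with them exactly over $\Z$ before reducing modulo $N$.

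Up to degree $2q$ the last two factors are easy to expand. We have $(1-x^q)^{-2}=1+2x^q+3x^{2q}+\cdots$, and the factor $1-x^{2q}$ only subtracts $x^{2q}$ times the constant term of the rest, which is $1$. Put $S(m)=\sum_{0\le j\le m/2}\binom{q+j-1}{j}$, the sum of the coefficients of $(1-y^2)^{-q}$ up to degree $m$; by Lemma~\ref{lem:binom} the coefficient of $y^{2j}$ is $\binom{q+j-1}{j}$. Then
\[
\sum_{k=0}^{2q}g_k=S(2q)+2S(q)+3S(0)-1 .
\]
By the hockey-stick identity, $\sum_{j=0}^{J}\binom{q-1+j}{j}=\binom{q+J}{J}$. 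This gives $S(2q)=\binom{2q}{q}$, $S(q)=\binom{(3q-1)/2}{(q-1)/2}$ (using $J=(q-1)/2$, since $q$ is odd), and $S(0)=1$.

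It remains to reduce modulo $N=2q+1$. Since $\binom{N-1}{i}\equiv(-1)^i\pmod N$ and $q$ is odd, we get $\binom{2q}{q}\equiv-1$. So the prefix sum is congruent to $-1+2\binom{(3q-1)/2}{(q-1)/2}+2=1+2\binom{(3q-1)/2}{(q-1)/2}$. Proposition~\ref{prop:copprefix} then gives
\[
\operatorname{cop}(n)\equiv-1-2\binom{(3q-1)/2}{(q-1)/2}\pmod N,
\]
and the divisibility criterion follows at once.

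The only delicate point is bookkeeping: we must check that no other products of terms from the three factors land in degree at most $2q$. This holds because the second and third factors contribute only degrees $0$, $q$ and $2q$. We must also make sure the cross term $-x^{2q}\cdot1$ is counted exactly once. No genuine obstacle is expected.
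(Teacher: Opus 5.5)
Your proof is correct and is essentially the paper's argument. Both apply Proposition~\ref{prop:copprefix} and observe that modulo $x^{2q+1}$ the factor $(1-x^q)^{-2}(1-x^{2q})$ reduces to $1+2x^q+2x^{2q}$; the paper writes it as $(1+x^q)/(1-x^q)$, and your $3S(0)-1$ is the same as $2S(0)=2$. Both then evaluate the prefix sums of $(1-x^2)^{-q}$ by the hockey-stick identity and use $\binom{2q}{q}=\binom{N-1}{q}\equiv-1\pmod N$.
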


For example $N=5$ gives $\operatorname{cop}(4)=9\equiv-\binom42=-6\pmod5$, and $N=7$ gives $\operatorname{cop}(6)=145\equiv-1-2\binom41=-9\pmod7$.

The safe primes are the case $n=2q$ of a family in which $n$ has exactly two prime factors.  When $n=2^aq^b$ the product of Proposition~\ref{prop:copprefix} has only the three factors $d=2,q,2q$, the factors $d=q$ and $d=2q$ combine into a power of $(1+x^q)/(1-x^q)$, and the factor $d=2$ becomes a square root modulo $N$, whose coefficients are central binomial coefficients.

\begin{proposition}[Two prime factors]\label{prop:cop2q}
Let $N=n+1$ be prime with $n=2^aq^b$, where $a,b\ge1$ and $q$ is an odd prime.  Put
\[
\gamma_j=[y^j]\Bigl(\frac{1+y}{1-y}\Bigr)^{n/(2q)},\qquad \vartheta(m)=\binom{2\lfloor m/2\rfloor}{\lfloor m/2\rfloor}4^{-\lfloor m/2\rfloor}.
\]
Then
\[
\operatorname{cop}(n)\equiv-\sum_{j=0}^{n/q}\gamma_j\,\vartheta(n-qj)\pmod N .
\]
\end{proposition}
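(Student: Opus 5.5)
The plan is to start from the prefix-sum form of Proposition~\ref{prop:copprefix} and identify the three factors of $G$ explicitly. Since $n=2^aq^b$ with $a,b\ge1$, the squarefree divisors $d>1$ of $n$ are $2$, $q$ and $2q$, with $\mu(2)=\mu(q)=-1$ and $\mu(2q)=1$. Hence
\[
G(x)=(1-x^2)^{-n/2}\,(1-x^q)^{-n/q}\,(1-x^{2q})^{n/(2q)}.
\]
The exponents $n/2$, $n/q$ and $n/(2q)$ are all integers, so this is an identity of integer power series. Writing $1-x^{2q}=(1-x^q)(1+x^q)$, the last two factors combine exactly, with no reduction modulo $N$ yet, into
\[
\Bigl(\frac{1+x^q}{1-x^q}\Bigr)^{n/(2q)}=\sum_j\gamma_j x^{qj}.
\]
So $G(x)=(1-x^2)^{-n/2}\sum_j\gamma_jx^{qj}$.

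Next I rewrite the prefix sum as a coefficient. By Proposition~\ref{prop:copprefix},
\[
\operatorname{cop}(n)\equiv-\coeff{n}{\frac{G(x)}{1-x}}\pmod N .
\]
Only the degree-$\le n$ part of $(1-x^2)^{-n/2}/(1-x)$ matters. I will show that, modulo $N$ and in degrees below $N$, this factor agrees with $(1+x)(1-x^2)^{-1/2}$.

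The key step is the reduction of the exponent. The coefficient of $x^{2i}$ in $(1-x^2)^{-n/2}$ is $\binom{n/2+i-1}{i}$, which is a polynomial in $c=n/2$ of the form $c(c+1)\cdots(c+i-1)/i!$ (Lemma~\ref{lem:binom}). For $i\le n/2<N$ the denominator $i!$ is prime to $N$. Since $n/2=(N-1)/2\equiv-\tfrac12\pmod N$, this coefficient is congruent modulo $N$ to the value at $c=-\tfrac12$. So, in degrees $\le n$,
\[
(1-x^2)^{-n/2}\equiv(1-x^2)^{1/2}\pmod N .
\]
Then
\[
\frac{(1-x^2)^{1/2}}{1-x}=\frac{1-x^2}{(1-x)(1-x^2)^{1/2}}=(1+x)(1-x^2)^{-1/2}.
\]
The coefficient of $x^{2i}$ in $(1-x^2)^{-1/2}$ is $\binom{2i}{i}4^{-i}$, so the coefficient of $x^m$ in $(1+x)(1-x^2)^{-1/2}$ is $\vartheta(m)$: the even part contributes for $m=2i$, and the factor $x$ contributes for $m=2i+1$. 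All these rationals have denominators powers of $2$, which are invertible modulo $N$.

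Finally, extracting the coefficient of $x^n$ in the product $\bigl(\sum_j\gamma_jx^{qj}\bigr)\cdot(1+x)(1-x^2)^{-1/2}$ gives $\sum_{j\le n/q}\gamma_j\,\vartheta(n-qj)$, and the claimed congruence follows with the minus sign from Proposition~\ref{prop:copprefix}.

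The main obstacle is bookkeeping in this reduction step. Every congruence must be applied only to coefficients in degrees $<N$, where the denominators $i!$ and $4^i$ are units modulo $N$. I also need to check that passing to the residue at $c=-\tfrac12$ commutes with multiplying by the $N$-integral series $\Gamma(x^q)$ and by $1/(1-x)$ when working modulo $(N,x^{N})$. I expect this to be routine, since all the series involved lie in $\Z_{(N)}[[x]]$.
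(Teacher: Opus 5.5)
Your proof is correct and follows essentially the same route as the paper. You combine the factors $d=q,2q$ into $\bigl((1+x^q)/(1-x^q)\bigr)^{n/(2q)}$, replace $(1-x^2)^{-n/2}$ by $(1-x^2)^{1/2}$ modulo $(N,x^N)$ using $n/2\equiv-1/2$, and read off $\vartheta$ from $(1+x)(1-x^2)^{-1/2}$. The compatibility you flag at the end is indeed routine: $\sum_j\gamma_jx^{qj}$ and $1/(1-x)$ have integer coefficients, and congruences modulo $(N,x^N)$ in $\Z_{(N)}[[x]]$ are preserved under multiplication.
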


For $a=b=1$ this is equivalent to Proposition~\ref{prop:copsafe}.  The Fermat case of Proposition~\ref{prop:copfermat} is not a case of it, since for $n=2^a$ the only squarefree divisor above $1$ is $2$.  For example $N=13$, $n=12=2^2\cdot3$, gives $\gamma_0,\dots,\gamma_4=1,4,8,12,16$ and $\operatorname{cop}(12)\equiv-(\vartheta(12)+4\vartheta(9)+8\vartheta(6)+12\vartheta(3)+16\vartheta(0))\pmod{13}$.  The formula agrees with the recurrence of Proposition~\ref{prop:cop} for all $454$ primes $N<20000$ of this form and with the exact values of $\operatorname{cop}(n)$ for $N<400$, and the residue is non-zero for all of them.  The proofs of Propositions~\ref{prop:copprefix} to~\ref{prop:cop2q} are in Appendix~\ref{app:cop}.  When $n$ has three or more distinct prime factors, as $708=2^2\cdot3\cdot59$ and $92202=2\cdot3\cdot11^2\cdot127$ do, the product has at least seven factors.

\begin{proposition}\label{prop:cop708}
The prime $709$ divides $\operatorname{cop}(708)$ and the prime $92203$ divides $\operatorname{cop}(92202)$.  For every other $n\le3\cdot10^6$ with $n+1$ prime, $(n+1)\nmid\operatorname{cop}(n)$.  In particular $n=708$ is the smallest $n$ for which $n+1$ is prime and divides $\operatorname{cop}(n)$.
\end{proposition}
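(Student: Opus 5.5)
This is a computer-assisted statement, so the plan is to reduce it to a finite computation with a certified algorithm.

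\textbf{Reduction.} For each $n\le3\cdot10^6$ with $N=n+1$ prime, Proposition~\ref{prop:cop} reduces the question $N\mid\operatorname{cop}(n)$ to the vanishing modulo $N$ of one coefficient of an integer power series. Proposition~\ref{prop:copprefix} rewrites that coefficient as the prefix sum $g_0+\dots+g_n$ of the product $G$ over the squarefree divisors $d>1$ of $n$. The first step is to dispose cheaply of the families with closed forms. Proposition~\ref{prop:copfermat} covers Fermat primes and $N=3$, where the residue is $\mp1$. Proposition~\ref{prop:cop2q} covers $n=2^aq^b$, including the safe primes of Proposition~\ref{prop:copsafe}, where a short sum of central binomial coefficients is evaluated directly. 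These closed forms also serve as cross-checks of the general routine.

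\textbf{General case.} For all remaining primes $N$ I would run the recurrence
\[
k\,g_k\equiv\sum_{d\mid n,\,d>1}\mu(d)\sum_{i\ge1}g_{k-di}\pmod N,
\]
using the modular inverse of $k$, which exists because $k\le n<N$. The inner sums are kept as running sums along residue classes modulo $d$, so that each $g_k$ costs $O(2^{\omega(n)})$ operations. One then checks whether $\sum_{k\le n}g_k\equiv0$. The computation confirms that this happens exactly for $N=709$ and $N=92203$.

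\textbf{Independent checks.} For $n=708$ I would also verify directly that $709\mid\operatorname{cop}(708)$. The exact integer can be computed from the exponential generating function $F_{A_n}=\prod_{d\mid n}(1-x^d)^{-\mu(d)/d}$ in rational arithmetic, or from the last-point recurrence \eqref{eq:last} with allowed lengths coprime to $708$. This check is independent of the congruence machinery. The ``smallest'' clause then follows because every prime $N<709$ was tested and failed.

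\textbf{Main obstacle.} The obstacle is purely the cost and reliability of the search. There are about $2.2\cdot10^5$ primes below $3\cdot10^6$, and each needs an $O(n\,2^{\omega(n)})$ pass, for roughly $10^{12}$ modular operations in total. I would therefore parallelize over $N$ and use 64-bit modular arithmetic. To certify the search, I would rerun a random subset with a second implementation, for instance direct power-series multiplication of the factors $(1-x^d)^{\mu(d)n/d}$ truncated at $x^{n+1}$. I would also compare against exact $\operatorname{cop}(n)$ for small $N$, as was done for the two-prime-factor formula.
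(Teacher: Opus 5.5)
Your proposal is correct and is essentially the paper's proof. The claim is a finite computation with the recurrences of Propositions~\ref{prop:cop} and~\ref{prop:copprefix} run modulo each prime $N=n+1$ with $n\le3\cdot10^6$, together with an exact big-integer evaluation of $\operatorname{cop}(708)$ from the last-point recurrence to confirm $709\mid\operatorname{cop}(708)$ independently. The only difference is in how the search is certified. The paper repeats the entire search with a second algorithm and cross-checks against the cycle recurrence modulo $N$. You instead prune with the closed forms and spot-check a random subset, which gives a weaker guarantee for the negative part of the statement.
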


The proof of Proposition~\ref{prop:cop708} is in Appendix~\ref{app:moved}.

\begin{observation}\label{obs:cop}
For the $9537$ primes $N=n+1$ with $260<N<10^5$ the residue $\operatorname{cop}(n)\bmod N$ is consistent with the uniform distribution in every test we ran (Appendix~\ref{app:data}).  If it behaved like a uniformly random element of $\Z/N$, one would expect $\sum1/N$ zeros, which is $0.98$ for $260<N<3\cdot10^6$, against the observed count $2$.  Since $\sum_N1/N$ diverges, this heuristic predicts infinitely many $n$ with $n+1$ prime and $(n+1)\mid\operatorname{cop}(n)$, but only about $0.6$ further zeros with $N\le10^{12}$.
\end{observation}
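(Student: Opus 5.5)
The observation has two parts: a computational claim and a heuristic count. I would treat them separately and make only the second one rigorous.

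\emph{The computational part.} For every prime $N=n+1$ with $260<N<10^5$ I would compute the residue $\operatorname{cop}(n)\bmod N$. I would not work with the huge integers $\operatorname{cop}(n)$ themselves. Instead I would use the recurrence of Proposition~\ref{prop:copprefix}, $k\,g_k\equiv\sum_{d\mid n,\,d>1}\mu(d)\sum_{i\ge1}g_{k-di}$ modulo $N$, followed by the prefix sum $-(g_0+\dots+g_n)$. The inverse of $k$ modulo $N$ exists because $k\le n<N$. The inner sums $\sum_{i\ge1}g_{k-di}$ can be kept as running sums along each residue class modulo $d$, so each prime costs $O(n\,2^{\omega(n)})$ operations.

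I would cross-check this computation in three ways:
\begin{itemize}[itemsep=0pt]
\item against Proposition~\ref{prop:cop} for small $N$;
\item against the closed forms of Propositions~\ref{prop:copfermat}, \ref{prop:copsafe} and~\ref{prop:cop2q} where they apply;
\item against the exact values of $\operatorname{cop}(n)$ for $N<400$.
\end{itemize}
On the resulting residues, normalized as $r_N/N\in[0,1)$, I would run standard uniformity tests: a $\chi^2$ test on bins, a Kolmogorov--Smirnov test, and counts of small residues and of $\pm1$. I would record that none of them rejects uniformity at the usual levels. This is exactly the content deferred to Appendix~\ref{app:data}, so this part is a report of computation rather than a theorem.

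\emph{The heuristic count.} Under the model in which the residue is uniform in $\Z/N$, the expected number of zeros over a range of primes is $\sum 1/N$. Mertens' second theorem gives $\sum_{N\le x}1/N=\log\log x+M+O(1/\log x)$. Hence the expectation over an interval $(x,y]$ of primes is $\log\log y-\log\log x$, up to a negligible error.

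\begin{itemize}[itemsep=0pt]
\item For $260<N<3\cdot10^6$ this is $\log(\log 3\cdot10^6)-\log(\log 260)\approx 2.702-1.716\approx0.99$. I would confirm the value $0.98$ by summing $1/N$ exactly over the primes.
\item For $3\cdot10^6<N\le10^{12}$ it is $\log(27.63)-\log(14.91)\approx0.62$, which gives the stated ``about $0.6$''.
\item Divergence of $\sum_N1/N$ (Euler) makes the model predict infinitely many zeros.
\end{itemize}

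The observed count $2$ is the pair $n=708,92202$ of Proposition~\ref{prop:cop708}.

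\emph{Caveat and main obstacle.} The lower cutoff $260$ excludes the small primes. For these the structured families of Propositions~\ref{prop:copfermat}--\ref{prop:cop2q} are provably nonzero, so they would bias the count. The main obstacle is that nothing here can be proved about the true distribution of the residues. The statement is deliberately phrased as consistency with uniformity, and I would not attempt more than the computation and the Mertens estimate above.
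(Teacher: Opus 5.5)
Your plan is essentially the paper's own: residues from the prefix-sum recurrence of Proposition~\ref{prop:copprefix} with cross-checks, $\chi^2$ uniformity tests (the paper uses 20 bins and, separately, the classes of $N$ modulo $3,4,5,8,12$), and a direct evaluation of $\sum1/N$, which your Mertens estimate reproduces (the exact sum over $260<N<3\cdot10^6$ is about $0.976$, and the sum over $(3\cdot10^6,10^{12}]$ is about $0.62$). The one loose point is your explanation of the cutoff $260$: the paper gives none, and your reason does not fit, since the Fermat prime $65537$ and many safe primes lie above $260$, but nothing in the argument depends on it.
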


\section{Dichotomy at a prime}\label{sec:dichotomy}

The next result is a consequence of the shift congruences of~\cite{KallatShift}.  For a set $S$ put $w_m=1$ if $m\notin S$ and $w_m=0$ if $m\in S$.

\begin{theorem}[Dichotomy at a prime]\label{thm:dichotomy}
Let $p$ be a prime and $e\ge1$.  Then exactly one of the following holds.
\begin{enumerate}[label=(\Alph*),itemsep=0pt]
\item $w_1=w_p$ (both $1,p\in S$ or both $1,p\notin S$).  Then $a_S(n)\equiv0\pmod{p^e}$ for all $n\ge p^{e}$ if $p$ is odd, and for all $n\ge2^{\max(e,4)}$ if $p=2$.
\item $w_1\ne w_p$.  If $p$ is odd, then $a_S(n+p^e)\equiv a_S(n)\pmod{p^e}$ for all $n\ge0$ when $1\notin S$, $p\in S$, and $a_S(n+p^e)\equiv-a_S(n)\pmod{p^e}$ when $1\in S$, $p\notin S$.  If $p=2$, then $a_S(n+2^e)\equiv a_S(n)\pmod{2^e}$ for $e=1,3$ and all $e\ge4$, and $a_S(n+4)\equiv(-1)^{w_2+w_4}a_S(n)\pmod 4$.
\end{enumerate}
In particular $p^e\mid a_S(n)$ for all large $n$ if and only if $w_1=w_p$, for every $e\ge1$.
\end{theorem}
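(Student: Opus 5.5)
The plan is to derive both cases from the shift congruences of~\cite{KallatShift}, specialised to the weights $w_m\in\{0,1\}$.  Case (B) comes from the period-$p^e$ congruence.  Case (A) comes from the same congruence with a vanishing multiplier, which kills all large $n$.

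Write $a(n)=a_S(n)$.  By \cite{KallatShift} (Theorem~1 with the modulus $\operatorname{lcm}$, and its lift at a prime, Theorem~39) we have
\[
a(n+p^e)\equiv a(n)\,a(p^e)\pmod{p^e}\qquad(n\ge0).
\]
The closed formulas modulo prime powers (Corollaries~46 and~50 there) then determine $a(p^e)\bmod p^e$.  For $e=1$ we already have this from Lemma~\ref{lem:A}: the multiplier is $w_1+(p-1)w_p\equiv w_1-w_p\pmod p$.

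First step: compute $a(p^e)\bmod p^e$ in terms of $w_1,w_p$ alone.  For odd $p$ I expect the answer $a(p^e)\equiv w_1-w_p\pmod{p^e}$.  The heuristic is the action of a $p^e$-cycle $\chi$ by conjugation on the permutations of $\{1,\dots,p^e\}$.  Orbits of size $p^i$ with $i<e$ come from permutations commuting with $\chi^{p^i}$.  The counts of these, weighted appropriately, should cancel modulo $p^e$, leaving only the contributions of the identity (weight $w_1^{p^e}=w_1$) and of the powers of $\chi$ of order $p$.  Those powers are products of $p^{e-1}$ disjoint $p$-cycles, and their contribution gives $-w_p$.

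Rather than rederive this, I would quote the closed formula of \cite{KallatShift} and only check that for $0/1$ weights it reduces to $w_1-w_p$.  For $p=2$ the formula carries correction terms from $w_2$ and $w_4$.  Those give the exceptional sign $(-1)^{w_2+w_4}$ modulo $4$ and the stated behaviour for $e=1,3$ and $e\ge4$.  This reduction for $p=2$, reading off which small $e$ carry a sign, is the step I expect to be the main obstacle.  I would check it against direct computation of $a(4)$ and $a(8)$ modulo $4$ and $8$ for all $0/1$ patterns of $w_1,w_2,w_4,w_8$.

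Case (B) for odd $p$ is then immediate.  If $w_1=1$ and $w_p=0$ the multiplier is $1$.  If $w_1=0$ and $w_p=1$ it is $-1$.

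For case (A) the multiplier is $0$ modulo $p$ only, so I would iterate.  Write $n=tp^{e}+\rho$ and apply the congruence at successive levels.  The plan is to show that $a(n+p^j)\equiv0$ modulo a higher power of $p$ whenever $a(n)$ is already divisible by $p^{j-1}$.  It is cleaner to cite the $p$-adic statement of \cite{KallatShift}, Theorem~34, which gives the exact modulus: for $w_1=w_p$ we get $p^j\mid a(n)$ for all $n\ge p^j$ when $p$ is odd, and the threshold $2^{\max(e,4)}$ when $p=2$.  As a sanity check at $e=1$, Lemma~\ref{lem:A} gives $a(n)\equiv0\pmod p$ for $n\ge p$.

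Finally, the ``in particular'' statement follows from the two cases.  In case (A), $p^e$ divides $a(n)$ for all large $n$ by the above.  In case (B), take $e=1$: by Lemma~\ref{lem:A}, $a(tp+\rho)\equiv\pm a(\rho)\pmod p$.  Since $a(0)=1$, the integers $n\equiv0\pmod p$ give $p\nmid a(n)$ infinitely often, so $p^e\nmid a(n)$ for infinitely many $n$.
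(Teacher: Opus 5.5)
Your route is the paper's: the shift congruence $a(n+k)\equiv a(n)a(k)\pmod k$ at $k=p^e$, combined with the closed formulas for $a(p^e)\bmod p^e$. Your treatment of case (B) for odd $p$ and of the final assertion is correct.

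The gap is case (A). You say the multiplier vanishes only modulo $p$. You then fall back on an iteration whose inductive step is never justified, or on a citation of Theorem~34 for the conclusion itself. But the multiplier for a shift by $p^e$ is $a(p^e)$, and your own first step already says it is $w_1-w_p\equiv0\pmod{p^e}$. Indeed the residue formula is $a(p^e)\equiv\gamma^{p^{e-1}}\pmod{p^e}$ with $\gamma=w_1+(p-1)w_p$. When $w_1=w_p$ one has $\gamma=pw_1$, so $\gamma^{p^{e-1}}\equiv0\pmod{p^e}$ because $p^{e-1}\ge e$. Hence $a(n+p^e)\equiv a(n)\,a(p^e)\equiv0\pmod{p^e}$ for every $n\ge0$. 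This is case (A) for odd $p$ in one line, with no iteration needed.

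For $p=2$ you leave the threshold $2^{\max(e,4)}$ to the citation, but it carries real content: the naive threshold $2^e$ is false for $e=2,3$. In case (A), $\alpha=w_1+w_2$ is even, and the formulas give $a(4)\equiv2\beta\pmod4$ and $a(8)\equiv4\beta^2\pmod8$ with $\beta=w_2+w_4$. These need not vanish: for $S=\{1,2\}$ one has $a(4)=6$ and $a(8)=8988\equiv4\pmod8$. What works is $a(2^r)\equiv\alpha^{2^{r-1}}\equiv0\pmod{2^r}$ for $r\ge4$, applied with the shift $k=2^r$ and $r=\max(e,4)$.

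The case (B) signs at $p=2$ likewise come straight from the formulas with $\alpha=1$:
\begin{itemize}
\item $a(4)\equiv1+2\beta\equiv(-1)^{w_2+w_4}\pmod4$;
\item $a(8)\equiv1+4\beta(\beta+1)\equiv1\pmod8$;
\item $a(2^r)\equiv1\pmod{2^r}$ for $r\ge4$.
\end{itemize}
Your proposed brute-force check cannot replace this for $e\ge4$. Also, restricting it to the patterns of $w_1,w_2,w_4,w_8$ already presupposes that $a(8)\bmod8$ does not depend on $w_3,w_5,w_6,w_7$, which is part of what the closed formula asserts.
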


The dichotomy is the sign rule of Lemma~\ref{lem:A} lifted from $p$ to $p^e$.  For example, for permutations with all cycles of length at most $r$, case (A) holds exactly for $p\le r$.  For $r=3$ this gives the eventual divisibility of \OEIS{A057693} by every number $2^i3^j$, and for $r=5$ the eventual divisibility of \OEIS{A070946} by every number $2^i3^j5^k$.

We prove Theorem~\ref{thm:dichotomy} from the shift congruences of~\cite{KallatShift}, which treat arbitrary integer weights, although we only need $w_m\in\{0,1\}$.\footnote{With $w_m=[m\notin S]$ one has $a_S(n)=A_w(n)$ in the notation of~\cite{KallatShift}.}  Write $A(n)\coloneqq a_S(n)$ and $\gamma=w_1+(p-1)w_p$.  The shift congruence $A(n+k)\equiv A(n)A(k)\pmod k$ for all $n\ge0$, $k\ge1$~\cite[Theorem~1]{KallatShift} reduces the statement to the residues of $A(p^e)$ modulo $p^e$.  For odd $p$ these are $A(p^e)\equiv\gamma^{p^{e-1}}\pmod{p^e}$~\cite[Corollary~46]{KallatShift}.  For $p=2$, with $\alpha=w_1+w_2$ and $\beta=w_2+w_4$, they are $A(2)\equiv\alpha\pmod2$, $A(4)\equiv\alpha^2+2\beta\pmod4$, $A(8)\equiv\alpha^4+4\alpha^2\beta+4\beta^2\pmod8$ and $A(2^r)\equiv\alpha^{2^{r-1}}\pmod{2^r}$ for $r\ge4$~\cite[Theorem~48]{KallatShift}.  The proof of Theorem~\ref{thm:dichotomy} is in Appendix~\ref{app:moved}.

\begin{example}\label{ex:dich}
For derangement-type sets with $1\in S$, case (A) holds exactly for the primes $p\in S$, and for $S=\{1,k\}$ this says $T(n,p)\equiv0\pmod{p^e}$ for $n\ge p^e$ when $p$ is an odd prime, while the other columns are antiperiodic modulo $p$ (compare Lemma~\ref{lem:A}).  For $S=\{m:m>r\}$, case (A) holds exactly for $p\le r$ and the sequences are purely periodic modulo $p^e$ for $p>r$.  For the order-dividing sets $S=\{m:m\nmid N\}$ (permutations of order dividing $N$), case (A) holds exactly for $p\mid N$.
\end{example}

\section{The valuation calculus}\label{sec:euler}

The coefficient of $x^n$ in $F=\sum a_A(n)x^n/n!$ is a sum with many terms, and the valuation of $a_A(n)$ is decided by the terms with the largest power of $p$ in the denominator. This section turns that remark into a theorem that works for every set $A$ and every prime $p$. For fixed $A$ and $p$ we first factor $F$ into binomial series, which isolates the few factors that are not $p$-integral (\S\ref{sec:blocks}). The valuation of the coefficient is then governed by a knapsack problem over these factors, and the ties of the knapsack are decided by one residue modulo $p$ (\S\ref{sec:calc}). Applications follow, namely a quotient form for excluded sets containing $\{1,\dots,r\}$, an integrality criterion, and the case of one dominant block, which contains the plateau theorem and the Harmonic Theorem (\S\ref{sec:quot}--\ref{sec:harm}).

Throughout, $A\subseteq\N$ is arbitrary and
\[
F_A(x)=\sum_{n\ge0}a_A(n)\frac{x^n}{n!}=\exp\Bigl(\sum_{m\in A}\frac{x^m}m\Bigr),
\]
so that $a_A(n)=a_S(n)$ for $S=\N\setminus A$. We write $A_{\le r}=\{1,\dots,r\}$ and $A_{>r}=\{m:m>r\}$, so $A_r(n)=a_{A_{\le r}}(n)$, and for $A=A_{\le r}$ we abbreviate $F=F_{A}$.

\subsection{Euler products and blocks}\label{sec:blocks}

\begin{lemma}[Euler product]\label{lem:eulerprod}
For every $A\subseteq\N$ the series $F_A(x)$ equals $\prod_{n\ge1}(1-x^n)^{-c_n}$ in $\Q[[x]]$, where
\[
c_n=c_n(A)=\frac1n\sum_{\substack{d\mid n\\ d\in A}}\mu(n/d).
\]
These are the only rational numbers with this property.
\end{lemma}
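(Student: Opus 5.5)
The plan is to take logarithms and compare coefficients. Since $F_A$ has constant term $1$, the formal logarithm is well defined, and $\log F_A=\sum_{m\in A}x^m/m$ by definition. For the product side, each factor $(1-x^n)^{-c_n}$ is defined, as in the setup before Lemma~\ref{lem:binom}, as $\exp(-c_n\log(1-x^n))$. The infinite product converges $x$-adically, because the $n$-th factor is $1+O(x^n)$. Its logarithm is therefore
\[
\sum_{n\ge1}-c_n\log(1-x^n)=\sum_{n\ge1}c_n\sum_{k\ge1}\frac{x^{nk}}{k}=\sum_{N\ge1}x^N\sum_{n\mid N}\frac{c_n}{N/n}.
\]
Here I will use that $\exp$ and $\log$ are mutually inverse bijections between $x\Q[[x]]$ and $1+x\Q[[x]]$. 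I will also use that $\log$ turns $x$-adically convergent products into convergent sums, which follows from the finite case $\log(GH)=\log G+\log H$ by truncating modulo $x^{M}$.

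The identity $F_A=\prod(1-x^n)^{-c_n}$ is then equivalent to the system
\[
\frac{[N\in A]}{N}=\sum_{n\mid N}\frac{n\,c_n}{N}\qquad(N\ge1),
\]
that is, $[N\in A]=\sum_{n\mid N}n c_n$. By Möbius inversion, $N c_N=\sum_{d\mid N}\mu(N/d)[d\in A]$. This is exactly the stated formula $c_N=\frac1N\sum_{d\mid N,\,d\in A}\mu(N/d)$.

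For uniqueness, the map from a sequence $(c_n)$ to the product is injective. If two rational sequences give the same series, their logarithms agree, so the convolution identities $\sum_{n\mid N}nc_n$ coincide for every $N$. Möbius inversion, or an induction on $N$ in which only the term $n=N$ is new, then forces $c_N=c'_N$.

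The only step needing care is justifying $\log\prod=\sum\log$ for the infinite product with rational exponents. I would handle it by working modulo $x^{M+1}$, where only the factors with $n\le M$ matter and the identity is the finite one. Everything else is routine.
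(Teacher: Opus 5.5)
Your proposal is correct and follows the paper's proof: take logarithms, reduce the identity to $\sum_{n\mid N}nc_n=[N\in A]$, and compare coefficients one at a time for uniqueness. The only differences are that you obtain the formula for $c_N$ by M\"obius inversion where the paper substitutes it and swaps the sums, and that you spell out the truncation modulo $x^{M+1}$ that justifies $\log\prod=\sum\log$, which the paper leaves implicit.
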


\begin{proof}
Since $(1-x^n)^{-c}=\exp\bigl(c\sum_{k\ge1}x^{nk}/k\bigr)$, the logarithm of the product is $\sum_{m\ge1}\frac{x^m}m\sum_{n\mid m}nc_n$, and
\[
\sum_{n\mid m}nc_n=\sum_{\substack{d\mid m\\ d\in A}}\ \sum_{e\mid m/d}\mu(e)=\sum_{\substack{d\mid m\\ d\in A}}[d=m]=[m\in A].
\]
Uniqueness follows by comparing the coefficients of $x^1,x^2,\dots$ of the logarithm in turn.
\end{proof}

The exponents for the three families we need are as follows.

\begin{lemma}[Three families of weights]\label{lem:weights}
\begin{enumerate}[label=(\alph*),itemsep=0pt]
\item For $A_{\le r}$ one has $c_1=1$, $c_n=0$ for $2\le n\le r$, and for $n>r$
\[
c_n=-\frac1n\sum_{\substack{e\mid n\\ e<n/r}}\mu(e).
\]
If $p\mid n$ and $n=p^kw$ with $p\nmid w$, the sum equals $\sum_{e'\mid w,\ n/(pr)\le e'<n/r}\mu(e')$.
\item For $A_{>r}$ one has $c_1=0$ and $c_n(A_{>r})=-c_n(A_{\le r})$ for $n\ge2$.
\item For $A_M=\{m:\gcd(m,M)=1\}$ one has $F_{A_M}=\prod_{d\mid M}(1-x^d)^{-\mu(d)/d}$, a finite product over the squarefree divisors of $M$.
\end{enumerate}
\end{lemma}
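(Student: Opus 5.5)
All three parts come from substituting the indicator of the set into the formula of Lemma~\ref{lem:eulerprod}, $c_n(A)=\frac1n\sum_{d\mid n,\,d\in A}\mu(n/d)$, and using $\sum_{e\mid n}\mu(e)=[n=1]$.

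\emph{Part (a).} For $A=A_{\le r}$ the inner sum runs over the divisors $d\mid n$ with $d\le r$. For $n=1$ it is $\mu(1)=1$. For $2\le n\le r$ every divisor of $n$ lies in $A$, so the sum is the full M\"obius sum $\sum_{e\mid n}\mu(e)=0$, and $c_n=0$. For $n>r$ substitute $e=n/d$. The condition $d\le r$ becomes $e\ge n/r$, so
\[
\sum_{\substack{e\mid n\\ e\ge n/r}}\mu(e)=-\sum_{\substack{e\mid n\\ e<n/r}}\mu(e),
\]
again because the full sum vanishes for $n>1$. This gives the displayed formula.

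For the refinement with $n=p^kw$, $p\nmid w$ and $k\ge1$, only squarefree $e$ contribute, and such $e$ have the form $e'$ or $pe'$ with $e'\mid w$. Pair each $e'\mid w$ with $pe'$. Their terms $\mu(e')$ and $-\mu(e')$ cancel exactly when both $e'$ and $pe'$ satisfy the bound $<n/r$. If neither does, neither appears. Only $e'<n/r\le pe'$ survives, that is $n/(pr)\le e'<n/r$, with contribution $\mu(e')$. Hence the sum is $\sum_{e'\mid w,\ n/(pr)\le e'<n/r}\mu(e')$.

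\emph{Part (b).} The indicators satisfy $[d\in A_{>r}]=1-[d\in A_{\le r}]$. So
\[
n\,c_n(A_{>r})=\sum_{d\mid n}\mu(n/d)-n\,c_n(A_{\le r}),
\]
and for $n\ge2$ the first sum is $0$, which gives $c_n(A_{>r})=-c_n(A_{\le r})$. For $n=1$ we have $1\notin A_{>r}$ when $r\ge1$, so $c_1=0$.

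\emph{Part (c).} I would check this identity on the level of logarithms rather than through the $c_n$. We have
\[
\log\prod_{d\mid M}(1-x^d)^{-\mu(d)/d}=\sum_{d\mid M}\frac{\mu(d)}d\sum_{k\ge1}\frac{x^{dk}}k=\sum_{m\ge1}\frac{x^m}m\sum_{d\mid\gcd(m,M)}\mu(d),
\]
where $m=dk$, so that $\frac{1}{dk}=\frac1m$. The inner sum is $[\gcd(m,M)=1]$, so the logarithm equals $\sum_{m\in A_M}x^m/m$. Only squarefree $d$ have $\mu(d)\ne0$, so the product is finite. Since the factorization in Lemma~\ref{lem:eulerprod} is unique, these are the exponents $c_n(A_M)$.

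No step is a real obstacle. The one place needing care is the boundary bookkeeping in the pairing argument of (a): the strict versus non-strict inequalities at $n/r$ and $n/(pr)$.
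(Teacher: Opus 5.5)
Your proposal is correct and follows the paper's proof essentially step by step: the substitution $e=n/d$ with the vanishing full M\"obius sum and the $e'/pe'$ cancellation in (a), and the logarithm comparison plus uniqueness in (c). The only difference is cosmetic and lies in (b): you use additivity of the formula for $c_n$ in the indicator of $A$, whereas the paper uses $F_{A_{\le r}}F_{A_{>r}}=(1-x)^{-1}$ and uniqueness of the exponents, which amounts to the same fact.
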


The proof of Lemma~\ref{lem:weights} is in Appendix~\ref{app:moved}.

Fix a prime $p$. Since $mc_m\in\Z$, every $c_m$ has valuation at least $-v_p(m)$.

\begin{definition}[Blocks]\label{def:blocks}
A \emph{block} is an integer $m\ge2$ with $v_p(c_m)<0$. Write $\mathcal B=\mathcal B_p(A)$ for the set of blocks. For $m\in\mathcal B$ the \emph{depth} is $j_m=-v_p(c_m)\ge1$, and we write $c_m=\alpha'_m/p^{j_m}$ with $\alpha'_m\in\Z_{(p)}^\times$ and $\alpha_m\in\F_p^\times$ for the residue of $\alpha'_m$ modulo $p$. Here $\F_p=\Z/p$.
\end{definition}

The integer $1$ is never a block because $c_1=[1\in A]$.

\begin{lemma}[Where blocks live]\label{lem:blockfacts}
Every block $m$ satisfies $p\mid m$ and $j_m\le v_p(m)$. For $A_{\le r}$ and for $A_{>r}$ every block also satisfies $m>r$ and $v_p(m)\le\delta_p(r)$, where $\delta_p(r)=\lfloor\log_pr\rfloor+1$ is the number of base-$p$ digits of $r$. The blocks of $A_{\le r}$ and of $A_{>r}$ are the same integers with the same depths and opposite residues.
\end{lemma}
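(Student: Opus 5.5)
The plan is to read everything off the formula $c_m=\frac1m\sum_{d\mid m,\,d\in A}\mu(m/d)$ of Lemma~\ref{lem:eulerprod} and the explicit weights of Lemma~\ref{lem:weights}. No knapsack reasoning is involved; it is bookkeeping of $p$-adic valuations.

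\emph{General facts.} The numerator $mc_m=\sum_{d\mid m,\,d\in A}\mu(m/d)$ is an integer, so $v_p(c_m)\ge -v_p(m)$. If $p\nmid m$ this gives $v_p(c_m)\ge0$, so $m$ is not a block. Hence every block satisfies $p\mid m$, and its depth satisfies $j_m=-v_p(c_m)\le v_p(m)$.

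\emph{The family $A_{\le r}$.} By Lemma~\ref{lem:weights}(a), $c_1=1$ and $c_n=0$ for $2\le n\le r$, so no $n\le r$ is a block, and every block exceeds $r$. For the bound on $v_p(m)$, write a block as $n=p^kw$ with $p\nmid w$ and $k\ge1$. Lemma~\ref{lem:weights}(a) expresses $-nc_n$ as $\sum\mu(e')$ over the divisors $e'\mid w$ with $n/(pr)\le e'<n/r$. This comes from splitting squarefree $e\mid n$ into $e'$ and $pe'$ with $e'\mid w$; I would recheck that identity in passing.

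Now let $\delta=\delta_p(r)$, so that $p^{\delta-1}\le r<p^{\delta}$, and suppose $k\ge\delta+1$. Then $p^k\ge p^{\delta+1}>pr$, hence $n/(pr)=p^kw/(pr)>w$. Every divisor $e'$ of $w$ is at most $w$, so the range is empty. Thus $c_n=0$ and $n$ is not a block. It follows that every block has $v_p(m)\le\delta_p(r)$.

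\emph{The family $A_{>r}$.} By Lemma~\ref{lem:weights}(b), $c_1(A_{>r})=0$ and $c_n(A_{>r})=-c_n(A_{\le r})$ for $n\ge2$. Since blocks are integers $m\ge2$ by definition, the two families have exactly the same blocks, with the same valuations and therefore the same depths. Negation replaces $\alpha'_m$ by $-\alpha'_m$, so the residues are $\alpha_m$ and $-\alpha_m$. This also transfers the bounds $m>r$ and $v_p(m)\le\delta_p(r)$ to $A_{>r}$.

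The only step needing care is the inequality $n/(pr)>w$ that empties the Möbius range. It relies on the strict inequality $r<p^{\delta}$, and the boundary case $k=\delta$ has to be allowed to produce blocks.
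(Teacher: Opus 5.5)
Your proof is correct and follows the paper's argument. Integrality of $mc_m$ gives $p\mid m$ and $j_m\le v_p(m)$, the vanishing of $c_n$ for $2\le n\le r$ together with the window sum of Lemma~\ref{lem:weights}(a) gives $m>r$ and $v_p(m)\le\delta_p(r)$, and Lemma~\ref{lem:weights}(b) transfers all of this to $A_{>r}$ with opposite residues. The only difference is cosmetic: you argue the contrapositive, showing that for $k\ge\delta_p(r)+1$ the range $[n/(pr),n/r)$ contains no divisor of $w$, whereas the paper takes a divisor $e'$ in the non-empty window and reads off $p^{k-1}\le r$.
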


\begin{proof}
Since $mc_m\in\Z$ we get $j_m\le v_p(m)$, and in particular $p\mid m$. For $A_{\le r}$, $c_m=0$ when $2\le m\le r$, so blocks exceed $r$. If $m=p^kw$ with $p\nmid w$ is a block, the window sum of Lemma~\ref{lem:weights}(a) is non-empty, so some divisor $e'$ of $w$ lies in $[m/(pr),m/r)$. Then $m/(pr)\le e'\le w=m/p^k$, which gives $p^{k-1}\le r$ and $k\le\delta_p(r)$. The last claim is Lemma~\ref{lem:weights}(b).
\end{proof}

Write $\tilde\omega(\lambda)\in\F_p^\times$ for the residue of $\lambda!/p^{v_p(\lambda!)}$. By Lemma~\ref{lem:binom}, if $m$ is a block then the coefficient of $y^\lambda$ in $(1-y)^{-c_m}$ is $p^{-j_m\lambda-v_p(\lambda!)}u$ with $u$ a $p$-adic unit and
\begin{equation}\label{eq:unit}
u\equiv\alpha_m^{\lambda}\,\tilde\omega(\lambda)^{-1}\pmod p.
\end{equation}

\begin{lemma}[Digit formula]\label{lem:digits}
If $\lambda=\sum_{i\ge0}d_ip^i$ with $0\le d_i<p$, then $\tilde\omega(\lambda)\equiv(-1)^{v_p(\lambda!)}\prod_id_i!\pmod p$.
\end{lemma}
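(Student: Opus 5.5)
The proof is an induction on $\lambda$ that peels off the lowest base-$p$ digit. It is the classical generalized Wilson theorem, in the form of Legendre's argument. We write $\lambda=p\lambda'+d_0$ with $0\le d_0<p$, so that $\lambda'=\lfloor\lambda/p\rfloor$ has the digits $d_1,d_2,\dots$. The case $\lambda=0$ is trivial: $\tilde\omega(0)=1$ and the empty product is $1$.

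First, split $\lambda!$ into the factors divisible by $p$ and the rest. The multiples of $p$ up to $\lambda$ contribute $p^{\lambda'}\lambda'!$. The remaining factors are, block by block, the numbers $ip+1,\dots,ip+(p-1)$ for $0\le i<\lambda'$, followed by the tail $\lambda'p+1,\dots,\lambda'p+d_0$. Modulo $p$ each full block is congruent to $(p-1)!\equiv-1$ by Wilson's theorem, and the tail is congruent to $d_0!$. This gives
\[
\frac{\lambda!}{p^{\lambda'}}=\lambda'!\cdot u,\qquad u\equiv(-1)^{\lambda'}d_0!\pmod p,
\]
with $u$ a $p$-adic unit.

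Second, Legendre's formula gives $v_p(\lambda!)=\lambda'+v_p(\lambda'!)$. Dividing by the remaining power of $p$ therefore yields
\[
\tilde\omega(\lambda)\equiv(-1)^{\lambda'}d_0!\,\tilde\omega(\lambda')\pmod p.
\]
By the induction hypothesis $\tilde\omega(\lambda')\equiv(-1)^{v_p(\lambda'!)}\prod_{i\ge1}d_i!$, since the digits of $\lambda'$ are $d_1,d_2,\dots$. Combining the signs gives the exponent $\lambda'+v_p(\lambda'!)=v_p(\lambda!)$, which is the claim. For $p=2$ all signs are $\pm1\equiv1$, so there is nothing extra to check.

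There is no real obstacle. The only points needing care are the bookkeeping of the Wilson signs, one per complete block of $p-1$ units, and the fact that the sign exponents telescope exactly to $v_p(\lambda!)$ through Legendre's formula.
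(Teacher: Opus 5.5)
Your proposal is correct and follows the paper's proof: write $\lambda=p\mu+d_0$, use Wilson's theorem on the $\mu$ complete blocks of units and the tail $d_0!$, use $v_p(\lambda!)=\mu+v_p(\mu!)$ to obtain $\tilde\omega(\lambda)\equiv(-1)^{\mu}d_0!\,\tilde\omega(\mu)$, and induct. The sign exponents then add up to $v_p(\lambda!)$.
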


\begin{proof}
Write $\lambda=p\mu+d_0$. The integers $i\le\lambda$ prime to $p$ form $\mu$ complete residue systems modulo $p$ followed by $1,\dots,d_0$, so by Wilson's theorem their product is congruent to $(-1)^\mu d_0!$. Hence $\lambda!=\bigl(\prod_{i\le\lambda,\,p\nmid i}i\bigr)p^\mu\mu!$ and $v_p(\lambda!)=\mu+v_p(\mu!)$, so $\tilde\omega(\lambda)\equiv(-1)^\mu d_0!\,\tilde\omega(\mu)$. Induction on $\mu$ finishes the proof.
\end{proof}

Thus the unit part of a binomial block is a product with one factor for each base-$p$ digit of $\lambda$, which is why exceptional classes are governed by digits.

\subsection{The calculus}\label{sec:calc}

Let $W\in\Z_{(p)}[[x]]$ be a twist, which we take to be $1$ unless stated otherwise, and put $G=F_AW$. The twist allows the same statement to cover the quotients that appear for excluded sets containing $\{1,\dots,r\}$ in \S\ref{sec:quot}.

\begin{definition}\label{def:calc}
A \emph{block vector} is a finitely supported $V=(\lambda_m)_{m\in\mathcal B}$ of non-negative integers, with
\[
\deg V=\sum_mm\lambda_m,\qquad \mathrm{cost}(V)=\sum_m\bigl(j_m\lambda_m+v_p(\lambda_m!)\bigr),\qquad\tau(V)=\prod_m\alpha_m^{\lambda_m}\tilde\omega(\lambda_m)^{-1}\in\F_p^\times .
\]
Let $I_W=W\prod_{n\notin\mathcal B}(1-x^n)^{-c_n}$, which lies in $\Z_{(p)}[[x]]$ by Lemma~\ref{lem:binom}, and let $\iota_\rho\in\F_p$ be the residue of $[x^\rho]I_W$. For $A=A_{\le r}$ and $W=1$ we write $I=I_1$. For $k\ge0$ put
\[
\kappa(k)=\max_{\deg V\le k}\mathrm{cost}(V),\qquad \mathrm{Max}(k)=\{V:\deg V\le k,\ \mathrm{cost}(V)=\kappa(k)\},
\]
\[
\Xi(k)=\sum_{V\in\mathrm{Max}(k)}\iota_{k-\deg V}\,\tau(V)\in\F_p .
\]
\end{definition}

For derangements and $p=3$ the block $3$ has depth $1$ and $\alpha_3=1$, and below degree $3$ the series $I_1$ agrees with $F_A$, so $\iota_\rho=D(\rho)/\rho!$ for $\rho<3$.  At $k=6$ the only optimal vector is two copies of $3$, with cost $2+v_3(2!)=2$, leftover $0$ and $\tau=\alpha_3^2\,\tilde\omega(2)^{-1}=2$ in $\F_3$.  So $\Xi(6)=\iota_0\cdot2=2\ne0$ and $v_3(D(6))=v_3(6!)-2=0$.  At $k=7$ the leftover is $1$ and $\iota_1=D(1)=0$, so $\Xi(7)=0$, and indeed $3\mid D(7)$.  Example~\ref{ex:der} treats every $n$.

\begin{lemma}[Block expansion]\label{lem:blockexp}
For every $k\ge0$,
\[
[x^k]G=\sum_{V,\ \deg V\le k}\bigl([x^{k-\deg V}]I_W\bigr)\prod_{m\in\mathcal B}\tbinom{c_m+\lambda_m-1}{\lambda_m},
\]
and the term of $V$ equals $p^{-\mathrm{cost}(V)}\,b\,u(V)$ with $b=[x^{k-\deg V}]I_W\in\Z_{(p)}$ and $u(V)\in\Z_{(p)}^\times$, $u(V)\equiv\tau(V)\pmod p$.
\end{lemma}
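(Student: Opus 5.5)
The plan is to split $G$ into the block factors and the $p$-integral remainder, expand the product, and then read off each term with Lemma~\ref{lem:binom}.

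First, by Lemma~\ref{lem:eulerprod}, $F_A=\prod_{n\ge1}(1-x^n)^{-c_n}$. Separating the factors with $n\in\mathcal B$ from the rest gives
\[
G=F_AW=I_W\cdot\prod_{m\in\mathcal B}(1-x^m)^{-c_m}.
\]
This rearrangement of an infinite product is legitimate in $\Q[[x]]$. Each factor is $1+O(x^n)$, so modulo $x^{k+1}$ only the finitely many factors with $n\le k$ matter, and every identity can be checked on these truncations.

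Second, by Lemma~\ref{lem:binom}, for each block $m$
\[
(1-x^m)^{-c_m}=\sum_{\lambda\ge0}\tbinom{c_m+\lambda-1}{\lambda}x^{m\lambda}.
\]
Multiplying out the finitely many block factors with $m\le k$, and giving multiplicity $0$ to every other block, the coefficient of $x^k$ in their product is the sum, over block vectors $V$ with $\deg V=d\le k$, of $\prod_m\binom{c_m+\lambda_m-1}{\lambda_m}$. Convolving with $I_W$ then gives the displayed formula, with $[x^{k-\deg V}]I_W$ as the companion coefficient.

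Third, integrality of $I_W$. Every $c_n$ with $n\notin\mathcal B$ lies in $\Z_{(p)}$: for $n\ge2$ this is the definition of a block, and for $n=1$ we have $c_1=[1\in A]$. So by Lemma~\ref{lem:binom} each factor $(1-x^n)^{-c_n}$ has coefficients in $\Z_{(p)}$. Since $W\in\Z_{(p)}[[x]]$ and $\Z_{(p)}[[x]]$ is a ring closed under these locally finite products, $I_W\in\Z_{(p)}[[x]]$. Hence $b=[x^{k-\deg V}]I_W\in\Z_{(p)}$.

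Finally, the valuation and unit part of each term. By Lemma~\ref{lem:binom} and \eqref{eq:unit}, each factor with $\lambda_m>0$ equals $p^{-j_m\lambda_m-v_p(\lambda_m!)}u_m$, where $u_m\in\Z_{(p)}^\times$ and $u_m\equiv\alpha_m^{\lambda_m}\tilde\omega(\lambda_m)^{-1}\pmod p$. Concretely, $u_m=\prod_{i<\lambda_m}(\alpha'_m+p^{j_m}i)$ divided by the unit part of $\lambda_m!$. Factors with $\lambda_m=0$ equal $1$, consistent with cost $0$ and $\tau$-factor $1$. Taking the product over $m$ gives $p^{-\mathrm{cost}(V)}u(V)$ with $u(V)=\prod_mu_m\equiv\tau(V)\pmod p$.

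No step is a real obstacle. The only care needed is in the bookkeeping of the infinite products, which is handled by truncating modulo $x^{k+1}$, and in checking that $\tilde\omega$ and the $\alpha_m$ combine exactly as in the definition of $\tau$.
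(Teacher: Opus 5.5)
Your proposal is correct and follows the paper's proof: you factor $G=I_W\prod_{m\in\mathcal B}(1-x^m)^{-c_m}$ via Lemma~\ref{lem:eulerprod}, expand each block factor with Lemma~\ref{lem:binom}, and read off the unit part from \eqref{eq:unit}. The only difference is that you write out the truncation modulo $x^{k+1}$ and the integrality of $I_W$, which the paper passes over in one line (the integrality is asserted in Definition~\ref{def:calc}).
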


\begin{proof}
Only blocks of degree at most $k$ affect $[x^k]G$, and $G=I_W\prod_{m\in\mathcal B}(1-x^m)^{-c_m}$ by Lemma~\ref{lem:eulerprod}. Expanding each factor with Lemma~\ref{lem:binom} and using \eqref{eq:unit} gives the claim.
\end{proof}

\begin{theorem}[Valuation calculus]\label{thm:calculus}
For every prime $p$, every $A\subseteq\N$, every $W\in\Z_{(p)}[[x]]$ and every $k\ge0$,
\[
p^{\kappa(k)}[x^k]G\in\Z_{(p)}\qquad\text{and}\qquad p^{\kappa(k)}[x^k]G\equiv \Xi(k)\pmod p.
\]
Hence $v_p([x^k]G)\ge-\kappa(k)$, with equality if and only if $\Xi(k)\ne0$ in $\F_p$.
\end{theorem}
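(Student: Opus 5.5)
The plan is to start from the block expansion of Lemma~\ref{lem:blockexp} and simply multiply through by $p^{\kappa(k)}$.  By that lemma,
\[
p^{\kappa(k)}[x^k]G=\sum_{\deg V\le k}p^{\kappa(k)-\mathrm{cost}(V)}\,b_V\,u(V),
\]
where $b_V=[x^{k-\deg V}]I_W$ lies in $\Z_{(p)}$ and $u(V)$ is a unit congruent to $\tau(V)$ modulo $p$.

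Two preliminary points need checking.

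\begin{itemize}
\item $I_W$ lies in $\Z_{(p)}[[x]]$.  The twist $W$ does by hypothesis.  Each non-block factor $(1-x^n)^{-c_n}$ has $c_n\in\Z_{(p)}$, so its coefficients are $p$-integral by Lemma~\ref{lem:binom}.  Only finitely many factors affect a given coefficient, since the factor for $n$ starts at degree $n$.  Hence the product is $p$-integral.
\item The sum is finite.  Only vectors with $\deg V\le k$ occur, and since every block satisfies $m\ge2$, each $\lambda_m$ is bounded by $k/m$ and only blocks $m\le k$ can appear.
\end{itemize}

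With these in hand, $\kappa(k)$ is a maximum over a finite nonempty set; the zero vector is always admissible.  So every exponent $\kappa(k)-\mathrm{cost}(V)$ is a nonnegative integer, and the whole sum lies in $\Z_{(p)}$.  This gives the first claim.

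For the congruence, reduce modulo $p$.  Every $V$ with $\mathrm{cost}(V)<\kappa(k)$ contributes a multiple of $p$ and drops out.  The remaining terms are exactly those with $V\in\mathrm{Max}(k)$, and each reduces to $\iota_{k-\deg V}\tau(V)$, using $u(V)\equiv\tau(V)$ from Lemma~\ref{lem:blockexp}.  Their sum is $\Xi(k)$ by Definition~\ref{def:calc}.

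The valuation statement then follows at once.  Since $p^{\kappa(k)}[x^k]G\in\Z_{(p)}$, we get $v_p([x^k]G)\ge-\kappa(k)$.  Equality holds exactly when $p^{\kappa(k)}[x^k]G$ is a unit, that is, when its residue $\Xi(k)$ is nonzero in $\F_p$.

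There is no real obstacle here.  The only points needing care are the integrality of $I_W$ and the finiteness of the sum, both handled above.  All the substance sits in Lemma~\ref{lem:binom}, which gives the exact valuation and the unit residue of the binomial coefficients, and that lemma is already available.
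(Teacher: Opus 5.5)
Your proof is correct and matches the paper's argument: you expand $[x^k]G$ by Lemma~\ref{lem:blockexp}, note that after scaling by $p^{\kappa(k)}$ the terms with $\mathrm{cost}(V)<\kappa(k)$ vanish modulo $p$, and check that the terms of $\mathrm{Max}(k)$ reduce to $\iota_{k-\deg V}\tau(V)$. You verify the integrality of $I_W$ and the finiteness of the sum explicitly, whereas the paper takes these from Definition~\ref{def:calc} and Lemma~\ref{lem:blockexp}.
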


The idea is that only the terms of maximal cost survive modulo $p$.

\begin{proof}
By Lemma~\ref{lem:blockexp} every term of $[x^k]G$ has valuation at least $-\mathrm{cost}(V)\ge-\kappa(k)$. The terms with $\mathrm{cost}(V)<\kappa(k)$ have valuation at least $-\kappa(k)+1$, and the terms with $\mathrm{cost}(V)=\kappa(k)$ equal $p^{-\kappa(k)}\iota_{k-\deg V}\tau(V)$ modulo $p^{-\kappa(k)+1}\Z_{(p)}$. Summing gives the congruence.
\end{proof}

\begin{corollary}[Cycle counts]\label{cor:calc}
For $W=1$ and $k=n$ one has $v_p(a_A(n))\ge v_p(n!)-\kappa(n)$, with equality if and only if $\Xi(n)\ne0$. The set of $n$ at which the inequality is strict is the \emph{exceptional set}.
\end{corollary}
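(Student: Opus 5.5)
The corollary is the specialization $W=1$, $k=n$ of Theorem~\ref{thm:calculus}. With $W=1$ we have $G=F_A$. By the definition of the exponential generating function, $[x^n]G=a_A(n)/n!$.

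Theorem~\ref{thm:calculus} then gives $p^{\kappa(n)}a_A(n)/n!\in\Z_{(p)}$ and the congruence $p^{\kappa(n)}a_A(n)/n!\equiv\Xi(n)\pmod p$. Taking valuations, the first statement reads $v_p(a_A(n))-v_p(n!)\ge-\kappa(n)$, which is the asserted inequality $v_p(a_A(n))\ge v_p(n!)-\kappa(n)$.

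For the equality criterion, the congruence shows that $p^{\kappa(n)}a_A(n)/n!$ is a $p$-adic unit exactly when its residue $\Xi(n)$ is nonzero in $\F_p$. So $v_p(a_A(n))=v_p(n!)-\kappa(n)$ if and only if $\Xi(n)\ne0$.

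One edge case needs a check: $a_A(n)=0$, for example $n=1$ with $1\notin A$. Then $v_p(a_A(n))=\infty$, so the inequality is strict. This is consistent, because the congruence forces $\Xi(n)=0$. Hence the statement holds with the convention $v_p(0)=\infty$.

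The "exceptional set" clause is a definition and needs no proof. There is no real obstacle here: everything substantive was done in Lemma~\ref{lem:blockexp} and Theorem~\ref{thm:calculus}. The only care needed is to note that $\kappa$ and $\Xi$ are computed with $\iota_\rho$ taken from $I_1$, so that they match the quantities $\kappa_A(n)$ and $\Xi_A(n)$ of Theorem~\ref{thm:introcalc}.
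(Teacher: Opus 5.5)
Your proof is correct and is exactly the paper's argument: the corollary is Theorem~\ref{thm:calculus} with $W=1$ and $G=F_A$, so that $[x^n]G=a_A(n)/n!$, multiplied through by $n!$. Your check of the case $a_A(n)=0$ is a harmless extra remark, and it is consistent with the convention $v_p(0)=\infty$.
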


\begin{proof}
This is Theorem~\ref{thm:calculus} multiplied by $n!$.
\end{proof}

Only finitely many blocks matter. Let $m_0=\min\mathcal B$ and $j_0=j_{m_0}$.

\begin{lemma}[Maximisers are small and use few blocks]\label{lem:essential}
\begin{enumerate}[label=(\alph*),itemsep=0pt]
\item Every $V\in\mathrm{Max}(k)$ has leftover $k-\deg V<m_0$.
\item Let $\mathcal E=\{m\in\mathcal B:\lfloor m/m_0\rfloor\le j_m\}$. Then $\mathcal E$ is finite and no $V\in\mathrm{Max}(k)$ has $\lambda_m>0$ for a block $m\notin\mathcal E$.
\end{enumerate}
Hence $\kappa$, $\mathrm{Max}$ and $\Xi$ are determined by the finite list $\{(m,j_m,\alpha_m):m\in\mathcal E\}$ and by the residues $\iota_\rho$ with $\rho<m_0$, and $\iota_\rho=[x^\rho]G\bmod p$ for $\rho<m_0$.
\end{lemma}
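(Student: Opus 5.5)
Both parts are exchange arguments: if a block vector is wasteful, we modify it and show the cost strictly increases, contradicting maximality.

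For (a), take $V\in\mathrm{Max}(k)$ with leftover $k-\deg V\ge m_0$. Let $V'$ be $V$ with $\lambda_{m_0}$ raised by one. Then $\deg V'\le k$. The cost changes by
\[
j_0+v_p((\lambda_{m_0}+1)!)-v_p(\lambda_{m_0}!)\ \ge\ j_0\ \ge\ 1,
\]
so $\mathrm{cost}(V')>\kappa(k)$, which is impossible.

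For (b), take a block $m$ with $\lfloor m/m_0\rfloor=t>j_m$, and suppose some $V\in\mathrm{Max}(k)$ has $\lambda_m\ge1$. Let $V'$ be obtained by removing one copy of $m$ and adding $t$ copies of $m_0$. The degree changes by $tm_0-m\le0$, so $V'$ is still admissible.

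\emph{Loss.} Removing the copy of $m$ costs $j_m+v_p(\lambda_m!)-v_p((\lambda_m-1)!)=j_m+v_p(\lambda_m)$. Here $\lambda_m\le k/m$ is unbounded, so this loss is not bounded by $j_m$ alone, and this is the point that needs care.

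\emph{Gain.} Adding $t$ copies of $m_0$ gives $tj_0+v_p\bigl((\lambda_{m_0}+t)!/\lambda_{m_0}!\bigr)$.

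To handle the unbounded $v_p(\lambda_m)$, compare with subtracting a single copy at a time. Instead of removing a single copy, remove all $\lambda_m$ copies of $m$ at once and add $t\lambda_m$ copies of $m_0$.
\begin{itemize}
\item The loss is $j_m\lambda_m+v_p(\lambda_m!)$.
\item The gain is at least $tj_0\lambda_m+v_p\bigl((\lambda_{m_0}+t\lambda_m)!/\lambda_{m_0}!\bigr)$.
\end{itemize}
The factorial term in the gain dominates: a product of $t\lambda_m$ consecutive integers is divisible by $(t\lambda_m)!$, and $v_p((t\lambda_m)!)\ge v_p(\lambda_m!)$ since $t\ge1$. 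The linear terms satisfy $tj_0\lambda_m\ge t\lambda_m>j_m\lambda_m$, because $t\ge j_m+1$ and $j_0\ge1$. Hence the cost rises strictly, contradicting maximality. So no maximiser uses a block outside $\mathcal E$.

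Finiteness of $\mathcal E$ follows from Lemma~\ref{lem:blockfacts}: $j_m\le v_p(m)\le\log_pm$, while $\lfloor m/m_0\rfloor$ grows linearly in $m$. So $\lfloor m/m_0\rfloor\le j_m$ forces $m$ to be bounded.

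For the final assertions, (a) and (b) restrict $\mathrm{Max}(k)$ to vectors supported on $\mathcal E$ with leftover below $m_0$. Moreover $\kappa(k)$ is attained by such a vector, and no other vector attains it. Hence $\kappa$, $\mathrm{Max}$ and $\Xi$ depend only on the data $(m,j_m,\alpha_m)$ for $m\in\mathcal E$ and on $\iota_\rho$ for $\rho<m_0$. For the last claim, all blocks have degree at least $m_0$, so $G\equiv I_W$ modulo $x^{m_0}$. Therefore $\iota_\rho=[x^\rho]G\bmod p$ for $\rho<m_0$, and $[x^\rho]G$ is $p$-integral there.
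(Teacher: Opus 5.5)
Your proposal is correct and follows the paper's proof essentially step by step. Part (a) adds one copy of $m_0$. Part (b) replaces all $\lambda_m$ copies of $m$ by $\lfloor m/m_0\rfloor\lambda_m$ copies of $m_0$, using superadditivity of $v_p((\cdot)!)$ and $v_p((t\lambda)!)\ge v_p(\lambda!)$; you were right to abandon the single-copy exchange, which cannot control $v_p(\lambda_m)$. The finiteness of $\mathcal E$ and the congruence $G\equiv I_W \bmod x^{m_0}$ are argued exactly as in the paper.
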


The idea is that a large block can always be traded for several copies of the smallest block with a gain in cost.

\begin{proof}
(a) If the leftover is at least $m_0$, then $V+e_{m_0}$ has degree at most $k$ and larger cost, since the cost increases by $j_0+v_p(\lambda_{m_0}+1)\ge1$.

(b) Let $m\in\mathcal B\setminus\mathcal E$ and $a=\lfloor m/m_0\rfloor>j_m$. Let $V$ have $\lambda_m=\lambda>0$ and let $V'$ be obtained by replacing the $\lambda$ blocks $m$ by $a\lambda$ blocks $m_0$. Then $\deg V'\le\deg V$, and since $v_p((x+y)!)\ge v_p(x!)+v_p(y!)$ and $v_p((a\lambda)!)\ge v_p(\lambda!)$,
\[
\mathrm{cost}(V')-\mathrm{cost}(V)\ge j_0a\lambda-j_m\lambda>0.
\]
So $V$ is not a maximiser. The set $\mathcal E$ is finite because $j_m\le v_p(m)\le\log_pm$ while $\lfloor m/m_0\rfloor\to\infty$. Below degree $m_0$ the series $G$ equals $I_W$, which gives the last claim.
\end{proof}

\begin{corollary}[Two kinds of exceptions]\label{cor:split}
If $\Xi(k)=0$, then either $\mathrm{Max}(k)$ consists of a single vector $V$ and $p$ divides the small coefficient $[x^\rho]G$ with $\rho=k-\deg V<m_0$, or $\mathrm{Max}(k)$ has at least two vectors whose leading residues cancel.
\end{corollary}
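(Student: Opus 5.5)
The plan is to argue directly from the definition of $\Xi(k)$ in Definition~\ref{def:calc} and split according to the size of $\mathrm{Max}(k)$. The set $\mathrm{Max}(k)$ is non-empty, because the zero vector has degree $0\le k$ and only finitely many block vectors have degree at most $k$. It is also finite, for the same reason. So $\Xi(k)$ is a finite sum over a non-empty index set, and the two alternatives in the statement correspond to $|\mathrm{Max}(k)|=1$ and $|\mathrm{Max}(k)|\ge2$.

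First I treat the case where $\mathrm{Max}(k)=\{V\}$ is a singleton. Then $\Xi(k)=\iota_{k-\deg V}\,\tau(V)$. Since $\tau(V)$ is a product of the units $\alpha_m^{\lambda_m}\tilde\omega(\lambda_m)^{-1}$, it lies in $\F_p^\times$. So $\Xi(k)=0$ forces $\iota_\rho=0$, where $\rho=k-\deg V$.
\begin{itemize}
\item By Lemma~\ref{lem:essential}(a), $\rho<m_0$.
\item By the last clause of Lemma~\ref{lem:essential}, $\iota_\rho$ is the residue of $[x^\rho]G$ for $\rho<m_0$.
\end{itemize}
Hence $p$ divides the small coefficient $[x^\rho]G$, which is the first alternative.

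Next, if $|\mathrm{Max}(k)|\ge2$, the vanishing of $\Xi(k)$ is by definition a cancellation among the leading residues $\iota_{k-\deg V}\tau(V)$ of at least two maximal vectors. This is the second alternative. If one prefers the stronger reading, in which at least two of these terms are non-zero, I would remove the vectors with $\iota_{k-\deg V}=0$. If exactly one non-zero term remained, the sum would be a unit, not zero. So either at least two non-zero terms cancel, or every term has $\iota_\rho=0$, and then each maximal vector falls under the divisibility description of the first case.

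There is no real obstacle here. The only point needing care is the justification that $\tau(V)\neq0$ and that the leftover is below $m_0$, so that $\iota_\rho$ really is the reduction of a coefficient of $G$ itself rather than of $I_W$. Both are supplied by \eqref{eq:unit} and Lemma~\ref{lem:essential}.
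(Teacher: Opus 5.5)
Your proposal is correct and matches the paper's proof: with a single maximiser, $\Xi(k)=\iota_\rho\tau(V)$ and $\tau(V)\in\F_p^\times$, and Lemma~\ref{lem:essential} gives both $\rho<m_0$ and $\iota_\rho=[x^\rho]G\bmod p$. The case of two or more maximisers follows directly from the definition. Your extra remarks, on why $\mathrm{Max}(k)$ is non-empty and on the stronger reading with at least two non-zero cancelling terms, are correct refinements that the paper leaves implicit.
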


\begin{proof}
For a single maximiser $\Xi(k)=\iota_\rho\tau(V)$ with $\tau(V)\ne0$, and $\iota_\rho=[x^\rho]G\bmod p$ by Lemma~\ref{lem:essential}.
\end{proof}

\subsection*{The algorithm}
List the finite set $\mathcal E$ and run a dynamic program over the degree $d$, keeping for each $d$ the pair in $\Z\times\F_p$ formed by the largest cost of a vector of degree $d$ and the sum of the residues $\tau(V)$ of the vectors attaining it.  Processing the block $m$ updates the state at $d$ from the states at $d-m\lambda$, with cost increased by $j_m\lambda+v_p(\lambda!)$ and residue multiplied by $\alpha_m^\lambda\tilde\omega(\lambda)^{-1}$, keeping the larger cost and adding the residues when the costs agree, where the residue $0$ is kept since such a vector still counts for $\kappa$.  At the end $\kappa(k)$ is the largest cost among the degrees $d\le k$, and $\Xi(k)$ is the sum of $\iota_{k-d}$ times the residue at $d$ over the degrees $d\le k$ with that cost.  This takes $O(N^2)$ operations for all $k\le N$ at each prime when $|\mathcal E|$ is regarded as fixed, and the ancillary program implements it and compares it with exact valuations.

\begin{example}[Derangements]\label{ex:der}
Take $A=\{m\ge2\}$, so $a_A(n)=D(n)$, and $p=3$. By Lemma~\ref{lem:eulerprod}, $c_m=-\mu(m)/m$ for $m\ge2$. The blocks are the squarefree multiples $m$ of $3$, all of depth $1$, with $c_3=1/3$, so $\alpha_3=1$, and the next blocks are $6,15,21,\dots$ Every block other than $3$ is at least $6$, and replacing one block $m=3d$ by $d$ blocks $3$ costs $1$ for the block and gains $d\ge2$. So for $n=3q+\nu$ with $\nu\in\{0,1,2\}$ the unique maximiser is $q$ blocks $3$, with cost $q+v_3(q!)$ and leftover $\nu$, and $\Xi(n)=\iota_\nu\,\tilde\omega(q)^{-1}$ with $\iota_\nu=[x^\nu]F_A=D(\nu)/\nu!$. Hence $v_3(D(n))=v_3(n!)-q-v_3(q!)=0$ unless $3\mid D(\nu)$, which happens only for $\nu=1$ since $D(0),D(1),D(2)=1,0,1$. So $3\mid D(n)$ exactly when $n\equiv1\pmod3$, as for $D(4)=9$ and $D(7)=1854=3\cdot618$. The same argument works for every prime $p$ (Theorem~\ref{thm:dominant}).
\end{example}

\subsection{The quotient form}\label{sec:quot}

If the excluded set $S$ contains $\{1,\dots,r\}$, then $\fall{n-1}{r}$ divides $a_S(n)$ (Lemma~\ref{lem:quot}), and the interesting valuations sit in the quotient. The quotient is a coefficient of a twisted exponential, so the calculus applies to it directly.

\begin{theorem}[Quotient form]\label{thm:quotient}
Let $A\subseteq A_{>r}$, $S=\N\setminus A\supseteq\{1,\dots,r\}$, $W_A=\sum_{m\in A}x^{m-r-1}$ and $Q_A(n)=a_A(n)/\fall{n-1}{r}$ for $n>r$. Then $Q_A(n)\in\Z$ and
\[
\sum_{k\ge0}Q_A(k+r+1)\frac{x^k}{k!}=F_A(x)W_A(x).
\]
Consequently, for $n=k+r+1$,
\[
v_p(a_S(n))\ \ge\ v_p\bigl(\fall{n-1}{r}\bigr)+v_p(k!)-\kappa(k),
\]
with equality if and only if $\Xi(k)\ne0$, where $\kappa$ and $\Xi$ are computed for $G=F_AW_A$.
\end{theorem}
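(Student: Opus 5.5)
The plan is to derive the generating-function identity first; the valuation statement then follows from Theorem~\ref{thm:calculus}. Integrality of $Q_A(n)$ is already in Lemma~\ref{lem:quot}, applied with $S=\N\setminus A\supseteq\{1,\dots,r\}$. That lemma also gives the explicit formula
\[
Q_A(n)=\sum_{\substack{r<m\le n\\ m\in A}}\fall{n-r-1}{m-r-1}\,a_A(n-m).
\]

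Put $n=k+r+1$ and $m=i+r+1$ with $i\ge0$. Then $\fall{k}{i}=k!/(k-i)!$, and the formula becomes
\[
\frac{Q_A(k+r+1)}{k!}=\sum_{\substack{i\ge0\\ i+r+1\in A}}\frac{a_A(k-i)}{(k-i)!}.
\]
The right-hand side is the coefficient of $x^k$ in $F_A(x)\sum_{m\in A}x^{m-r-1}=F_AW_A$. The exponents $m-r-1$ are non-negative because $A\subseteq A_{>r}$. This is the identity.

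As an alternative check, $W_A=x^{-r}\cdot\frac{d}{dx}\bigl(\sum_{m\in A}x^m/m\bigr)$, so $F_AW_A=x^{-r}F_A'$. That says $Q_A(k+r+1)/k!=a_A(k+r+1)/(k+r)!$, which is the definition of $Q_A$ since $(k+r)!/k!=\fall{n-1}{r}$. This derivative route is cleaner and I would present it as the main argument. It also gives integrality of $Q_A$ independently, because the recurrence in Lemma~\ref{lem:quot} expresses $Q_A$ as an integer sum.

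For the valuation statement, note that $W_A$ has $0/1$ coefficients, so $W_A\in\Z_{(p)}[[x]]$ is an admissible twist. Theorem~\ref{thm:calculus} then gives $v_p(Q_A(k+r+1)/k!)\ge-\kappa(k)$, with equality if and only if $\Xi(k)\ne0$, where $\kappa$ and $\Xi$ are computed for $G=F_AW_A$. Multiplying by $k!$ and adding $v_p(\fall{n-1}{r})$ gives the displayed inequality and the equality criterion, since $a_S(n)=a_A(n)=\fall{n-1}{r}Q_A(n)$.

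The only care needed is bookkeeping: the blocks and $\kappa$ depend only on $F_A$, while the twist enters only through $\iota_\rho$. Also $W_A$ may be an infinite series, which the lemma allows. I do not expect a real obstacle; the main point to check is the index shift $n\mapsto k=n-r-1$, so that the factorials match.
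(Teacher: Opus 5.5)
Your proof is correct and your main (derivative) argument is the paper's: $F_A'/x^r=F_AW_A$, then a coefficient comparison using $\fall{n-1}{r}=(n-1)!/(n-1-r)!$, with integrality of $Q_A$ taken from Lemma~\ref{lem:quot} (the derivative identity does not itself give integrality). The valuation statement is then Theorem~\ref{thm:calculus} with twist $W_A$, multiplied by $k!$; your alternative coefficient check through the explicit sum of Lemma~\ref{lem:quot} is also valid.
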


\begin{proof}
That $Q_A(n)$ is an integer is Lemma~\ref{lem:quot}. From $F_A'=F_A\sum_{m\in A}x^{m-1}$ and $A\subseteq A_{>r}$ we get $F_A'/x^r=F_AW_A$, a power series. The coefficient of $x^{n-1-r}$ in $F_A'/x^r$ is the coefficient of $x^{n-1}$ in $F_A'$, which is $a_A(n)/(n-1)!$. Since $\fall{n-1}{r}=(n-1)!/(n-1-r)!$, this equals $Q_A(n)/(n-1-r)!$, which is the generating function identity. The valuation statement is Theorem~\ref{thm:calculus} for $G=F_AW_A$, multiplied by $k!$.
\end{proof}

When $S=\{1,\dots,r\}$ one has $A=A_{>r}$ and $W_A=1/(1-x)$. By Lemma~\ref{lem:weights}(b) the blocks, depths and therefore the function $\kappa$ are the same as for $A_{\le r}$, and only the residues $\alpha_m$ change sign and the integral part $I_W$ changes.

\subsection{Integrality}

\begin{definition}
A set $A$ is \emph{$p$-saturated} if $[p^iw\in A]=[w\in A]$ for all $w$ prime to $p$ and all $i\ge0$.
\end{definition}

\begin{theorem}[Integrality criterion]\label{thm:saturation}
The following are equivalent. (i) $A$ is $p$-saturated. (ii) $\mathcal B_p(A)=\emptyset$. (iii) $F_A\in\Z_{(p)}[[x]]$. When they hold, $v_p(a_A(n))\ge v_p(n!)$ for all $n$, with equality exactly when the $n$-th coefficient of $F_A$ is non-zero modulo $p$.
\end{theorem}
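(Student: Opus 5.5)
We prove (i)$\Rightarrow$(ii)$\Rightarrow$(iii)$\Rightarrow$(i), and then read off the valuation statement from Theorem~\ref{thm:calculus}.

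For (i)$\Rightarrow$(ii) we compute $c_m$ directly from Lemma~\ref{lem:eulerprod}. Write $m=p^kw$ with $p\nmid w$. The divisors of $m$ are $d=p^iw'$ with $i\le k$ and $w'\mid w$. By saturation $[p^iw'\in A]=[w'\in A]$, so
\[
mc_m=\sum_{w'\mid w,\,w'\in A}\ \sum_{i=0}^{k}\mu\bigl(p^{k-i}w/w'\bigr).
\]
If $k\ge1$, the inner sum is $\mu(w/w')\bigl(1-1\bigr)=0$, because only $i=k$ and $i=k-1$ contribute. So $c_m=0$ whenever $p\mid m$. If $p\nmid m$, then $v_p(c_m)=v_p(mc_m)\ge0$. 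Hence no $c_m$ has negative valuation. Since Lemma~\ref{lem:blockfacts} already forces $p\mid m$ for a block, there are no blocks.

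For (ii)$\Rightarrow$(iii): if there are no blocks, then every factor $(1-x^m)^{-c_m}$ has $c_m\in\Z_{(p)}$. By Lemma~\ref{lem:binom} each such factor lies in $\Z_{(p)}[[x]]$. Each coefficient of the product involves only finitely many factors, so $F_A\in\Z_{(p)}[[x]]$.

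For (iii)$\Rightarrow$(i), the main step, we argue by contraposition. Suppose $A$ is not saturated. Then there is a block by uniqueness in Lemma~\ref{lem:eulerprod}: if all $c_m$ were $p$-integral, we would be able to show saturation. Concretely, we take the least $m=p^kw$, $k\ge1$, with $[m\in A]\ne[m/p\in A]$. The Möbius computation above, applied to the difference $[p^iw'\in A]-[p^{i-1}w'\in A]$, shows that $mc_m$ picks up a term $\pm1$ not cancelled at the level of $p^k$. We have to check this carefully with the minimality choice, by taking $w$ minimal as well, so that $mc_m\not\equiv0\pmod{p^k}$. Hence $v_p(c_m)<0$, and $m$ is a block.

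Once blocks exist, Theorem~\ref{thm:calculus} applies with $W=1$ at $k=m_0=\min\mathcal B$. Here $\kappa(m_0)=j_0\ge1$, and the maximisers are the vectors of cost $j_0$ and degree at most $m_0$. By Lemma~\ref{lem:essential}(a) the leftover of a maximiser is below $m_0$, so the only maximiser is the single copy of $m_0$, possibly together with other blocks of degree $m_0$ — but there are none besides $m_0$ itself. Thus $\Xi(m_0)=\iota_0\alpha_{m_0}=\alpha_{m_0}\ne0$. The coefficient $[x^{m_0}]F_A$ therefore has valuation $-j_0<0$, and $F_A\notin\Z_{(p)}[[x]]$.

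Finally, when the conditions hold we have $\kappa\equiv0$ and $\Xi(n)=\iota_n=[x^n]F_A\bmod p$. So Corollary~\ref{cor:calc}, or simply $a_A(n)=n!\,[x^n]F_A$, gives $v_p(a_A(n))\ge v_p(n!)$, with equality exactly when that coefficient is nonzero modulo $p$.

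The main obstacle is the Möbius bookkeeping in the step (iii)$\Rightarrow$(i), namely showing that a failure of saturation produces a non-integral $c_m$. The alternative I would try first is an induction on $m$ in lexicographic order by $(w,k)$, using $\sum_{n\mid m}nc_n=[m\in A]$. Assuming $c_n$ is $p$-integral for all $n$ smaller in this order, and that saturation holds below $m$, we can isolate $p^kc_{p^k w}$ modulo $p^k$ as $[m\in A]-[m/p\in A]$ plus terms already known to vanish.
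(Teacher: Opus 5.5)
Your proof is correct and follows the paper's argument: the same M\"obius identity $mc_m=\sum_{w'\mid w}\mu(w/w')\bigl([p^kw'\in A]-[p^{k-1}w'\in A]\bigr)$ combined with a minimal choice of $m=p^kw$, which gives exactly $mc_m=\pm1$ and hence $v_p(c_m)=-k$ (state this identity and the value $\pm1$ explicitly instead of hedging), and the same observation that the coefficient of $x^{m_0}$ at the smallest block is not $p$-integral. The only cosmetic difference is that you obtain the latter through Theorem~\ref{thm:calculus} with $\mathrm{Max}(m_0)=\{e_{m_0}\}$, whereas the paper reads it off directly as $[x^{m_0}]F_A=[x^{m_0}]I_1+c_{m_0}$.
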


The proof of Theorem~\ref{thm:saturation} is in Appendix~\ref{app:moved}.

\begin{remark}
For $0/1$ weights this is the Dieudonn\'e--Dwork criterion in the form of~\cite[Lemma~1]{KM}. If $p\equiv1\pmod M$ then every residue class modulo $M$ is $p$-saturated, so $v_p(a(n))\ge v_p(n!)$ when all cycle lengths lie in one class modulo $M$. The sets $A_M$ of cycle lengths prime to $M$ are $p$-saturated for $p\nmid M$.
\end{remark}

\subsection{A dominant minimal block}\label{sec:dom}

The simplest packing problem has one block that is both the smallest and the most efficient, so that the optimal way to fill degree $k$ is to use as many copies of it as possible.

\begin{theorem}[Dominant minimal block]\label{thm:dominant}
Let $m_*=\min\mathcal B$ with depth $j_*$ and assume
\[
j_m<j_*\lfloor m/m_*\rfloor\qquad\text{for every block }m\ne m_*.\tag{$\star$}
\]
Write $k=m_*q+\nu$ with $0\le\nu<m_*$ and put $\Gamma(q)=j_*q+v_p(q!)$. Then $[x^\nu]G\in\Z_{(p)}$ and
\[
v_p([x^k]G)\ \ge\ -\Gamma(q),
\]
with equality if and only if $p\nmid[x^\nu]G$. In particular the exceptional set is the union of the residue classes $\nu$ modulo $m_*$ with $p\mid[x^\nu]G$.
\end{theorem}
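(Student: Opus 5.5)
The plan is to reduce everything to Theorem~\ref{thm:calculus}. That requires two facts: under $(\star)$ the set $\mathrm{Max}(k)$ is the single vector $V_q=q\,e_{m_*}$, and the leftover coefficient is $\iota_\nu$. The claim $[x^\nu]G\in\Z_{(p)}$ is immediate, because $\nu<m_*=\min\mathcal B$, so below degree $m_*$ the series $G$ agrees with $I_W\in\Z_{(p)}[[x]]$. This is the last assertion of Lemma~\ref{lem:essential}, which also gives $\iota_\nu=[x^\nu]G\bmod p$.

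The main step is uniqueness of the maximiser. Let $V$ be any block vector with $\deg V\le k$, and write $V=\lambda_*e_{m_*}+\sum_{m\ne m_*}\lambda_m e_m$. Replace every copy of a block $m\ne m_*$ by $a_m=\lfloor m/m_*\rfloor$ copies of $m_*$, producing $V'=(\lambda_*+\sum_m a_m\lambda_m)e_{m_*}$. Then $\deg V'\le\deg V\le k$. Using superadditivity of $v_p(\cdot!)$ exactly as in the proof of Lemma~\ref{lem:essential}(b), we get
\[
\mathrm{cost}(V')-\mathrm{cost}(V)\ \ge\ \sum_{m\ne m_*}\bigl(j_*a_m-j_m\bigr)\lambda_m ,
\]
and this is strictly positive by $(\star)$ unless $V$ is already a multiple of $e_{m_*}$. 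The one delicate point is the factorial terms: we need $v_p\bigl((\lambda_*+\sum a_m\lambda_m)!\bigr)\ge v_p(\lambda_*!)+\sum_m v_p(\lambda_m!)$. This follows from $v_p((x+y)!)\ge v_p(x!)+v_p(y!)$ together with $v_p((a\lambda)!)\ge v_p(\lambda!)$ for $a\ge1$. Here $a_m\ge1$ holds because $m\ge m_*$.

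So every maximiser is a multiple $\lambda e_{m_*}$. Since $\Gamma$ is strictly increasing in $\lambda$, and $\lambda\le\lfloor k/m_*\rfloor=q$, the maximum is attained only at $\lambda=q$. Hence $\kappa(k)=\Gamma(q)$ and $\mathrm{Max}(k)=\{V_q\}$, with leftover $k-m_*q=\nu$.

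Theorem~\ref{thm:calculus} then gives $v_p([x^k]G)\ge-\Gamma(q)$ together with the residue formula
\[
\Xi(k)=\iota_\nu\,\alpha_{m_*}^q\,\tilde\omega(q)^{-1}.
\]
The last two factors are units in $\F_p$, so equality holds if and only if $\iota_\nu\ne0$, that is, if and only if $p\nmid[x^\nu]G$. This condition depends only on $\nu=k\bmod m_*$, which gives the description of the exceptional set after multiplying by $k!$ as in Corollary~\ref{cor:calc}.

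The only real obstacle is the exchange inequality, in particular checking that the factorial cost does not drop when several different block types are merged into one. Superadditivity handles this, so I expect no further difficulty.
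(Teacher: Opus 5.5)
Your proposal is correct and is essentially the paper's own proof. The paper bounds $\mathrm{cost}(V)\le\Gamma(Q')-\sum_i\lambda_i$, where $Q'$ is exactly the number of copies of $m_*$ in your merged vector $V'$, so this is your exchange inequality written as a direct bound. It then reads off $\mathrm{Max}(k)=\{q\,e_{m_*}\}$ and $\Xi(k)=\iota_\nu\alpha_*^q\tilde\omega(q)^{-1}$ and applies Theorem~\ref{thm:calculus}, just as you do.
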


Under ($\star$) the minimal block $m_*$ is also strictly the most efficient one, so the notation agrees with that of Section~\ref{sec:eff}.  The proof of Theorem~\ref{thm:dominant} is in Appendix~\ref{app:moved}.

\begin{corollary}[Cycle lengths prime to $M$]\label{cor:coprimeM}
Let $p\mid M$ and let $a_M(n)$ count the permutations of $\{1,\dots,n\}$ all of whose cycle lengths are prime to $M$. Then $v_p(a_M(n))=0$ unless $p\mid a_M(n\bmod p)$, and the set of exceptional $n$ is a union of classes modulo $p$.
\end{corollary}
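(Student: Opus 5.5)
The plan is to apply Theorem~\ref{thm:dominant} with $W=1$ and $A=A_M=\{m:\gcd(m,M)=1\}$, after identifying the blocks at $p$. By Lemma~\ref{lem:weights}(c), $F_{A_M}=\prod_{d\mid M}(1-x^d)^{-\mu(d)/d}$, a finite product over the squarefree divisors $d$ of $M$. Lemma~\ref{lem:eulerprod} says the Euler exponents are unique, so $c_d=\mu(d)/d$ for squarefree $d\mid M$ and $c_n=0$ otherwise. Hence the blocks at $p$ are exactly the squarefree $d\mid M$ with $p\mid d$. Each has $v_p(c_d)=-1$, so every block has depth $1$, and the smallest block is $m_*=p$ with $c_p=-1/p$.

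Next I check condition $(\star)$. Every other block is $d=pd'$ with $d'>1$ squarefree and prime to $p$, so $d'\ge2$. Then $j_d=1<\lfloor d/p\rfloor=d'$, which is $(\star)$ with $j_*=1$. Theorem~\ref{thm:dominant} now gives, for $n=pq+\nu$ with $0\le\nu<p$,
\[
v_p\bigl([x^n]F_{A_M}\bigr)\ \ge\ -q-v_p(q!),
\]
with equality if and only if $p\nmid[x^\nu]F_{A_M}$.

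Multiplying by $n!$ turns this into a statement about $a_M(n)$. Since $\nu<p$, Legendre's formula gives $v_p(n!)=q+v_p(q!)$. So $v_p(a_M(n))\ge0$, with equality exactly when $[x^\nu]F_{A_M}=a_M(\nu)/\nu!$ is a $p$-adic unit. Because $\nu!$ is prime to $p$, this happens exactly when $p\nmid a_M(\nu)$, and $\nu=n\bmod p$. Therefore $v_p(a_M(n))=0$ unless $p\mid a_M(n\bmod p)$. The exceptional set is then the union of the classes $\nu$ modulo $p$ with $p\mid a_M(\nu)$, as the last sentence of Theorem~\ref{thm:dominant} already states.

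The only point needing care is the identification of the blocks through uniqueness of the Euler exponents, so that no hidden factor with a non-$p$-integral exponent occurs. This is immediate from Lemma~\ref{lem:eulerprod}, and I expect no real obstacle.
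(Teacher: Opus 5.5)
Your proposal is correct and follows essentially the same route as the paper. You read off the blocks (the squarefree $d\mid M$ with $p\mid d$, all of depth $1$) from Lemma~\ref{lem:weights}(c), check $(\star)$ with $m_*=p$ via $\lfloor d/p\rfloor=d'\ge2$, apply Theorem~\ref{thm:dominant}, and cancel $v_p(n!)=q+v_p(q!)$ by Legendre's formula.
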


\begin{proof}
By Lemma~\ref{lem:weights}(c) the blocks are the squarefree divisors $d$ of $M$ with $p\mid d$, all of depth $1$. The smallest is $m_*=p$, and every other block $d=pd''$ has $d''\ge2$, so $1<\lfloor d/p\rfloor$. Theorem~\ref{thm:dominant} gives $v_p(a_M(n))=v_p(n!)-q-v_p(q!)=v_p(\nu!)=0$ unless $p\mid a_M(\nu)/\nu!$, where $n=pq+\nu$ and $\nu<p$.
\end{proof}

The exceptional class does occur. For $M=138=2\cdot3\cdot23$ and $p=23$ one finds $23\mid a_M(10)$, where $a_M(10)$ counts the permutations of $10$ points with all cycle lengths in $\{1,5,7,11,\dots\}$, and hence $23\mid a_M(n)$ for all $n\equiv10\pmod{23}$.

\subsection{Involutions}\label{sec:invol}

\begin{theorem}[Involutions, \OEIS{A000085}]\label{thm:invol}
Let $I(n)=A_2(n)$ be the number of involutions in the symmetric group $S_n$ and write $n=4q+\nu$ with $0\le\nu\le3$.  Then $v_2\bigl(I(n)\bigr)=q+\epsilon_\nu$ for $n\ge0$, where $(\epsilon_0,\epsilon_1,\epsilon_2,\epsilon_3)=(0,0,1,2)$.
\end{theorem}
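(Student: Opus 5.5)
The plan is to apply the dominant-block Theorem~\ref{thm:dominant} with $A=A_{\le2}$, $p=2$ and $W=1$. This settles three of the four residue classes modulo $4$. The exceptional class $\nu=3$ is then handled by an induction on the involution recurrence $I(n)=I(n-1)+(n-1)I(n-2)$, which is \eqref{eq:last} for $S=\{m:m>2\}$.

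\emph{Step 1: blocks.} By Lemma~\ref{lem:weights}(a) with $r=2$ we have $c_1=1$, $c_2=0$ and, for $n>2$, $c_n=-\frac1n\sum_{e\mid n,\,e<n/2}\mu(e)$. Lemma~\ref{lem:blockfacts} says every block $m$ has $v_2(m)\in\{1,2\}$. We handle the two cases using the window form of Lemma~\ref{lem:weights}(a).
\begin{itemize}
\item For $m=2w$ with $w$ odd, the window $w/2\le e'<w$ contains no divisor of $w$, so $c_{2w}=0$.
\item For $m=4w$ with $w$ odd, the window $w\le e'<2w$ contains only $e'=w$, so $c_{4w}=-\mu(w)/(4w)$.
\end{itemize}
Hence the blocks are the integers $4w$ with $w$ odd and squarefree, all of depth $2$. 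The minimal block is $m_*=4$ with $j_*=2$. Condition $(\star)$ reads $2<2\lfloor 4w/4\rfloor=2w$, which holds for every $w\ge3$.

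\emph{Step 2: the three regular classes.} Theorem~\ref{thm:dominant} with $G=F$ and $n=4q+\nu$ gives
\[
v_2(I(n))\ge v_2(n!)-2q-v_2(q!),
\]
with equality if and only if $[x^\nu]F=I(\nu)/\nu!$ is a $2$-adic unit. Legendre's formula gives $v_2(n!)=n-s_2(q)-s_2(\nu)$ and $v_2(q!)=q-s_2(q)$. So the bound equals $q+\nu-s_2(\nu)$, which is $q,q,q+1,q+1$ for $\nu=0,1,2,3$. The values $I(\nu)/\nu!$ are $1,1,1,\tfrac23$. The first three are units, so $v_2(I(n))=q+\epsilon_\nu$ for $\nu\le2$. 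For $\nu=3$ we only obtain the lower bound $v_2(I(4q+3))\ge q+1$, and we must improve it to $q+2$.

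\emph{Step 3: the class $\nu=3$ (the main obstacle).} A direct use of the recurrence gives a tie, since
\[
I(4q+3)=I(4q+2)+(4q+2)I(4q+1),
\]
and both terms have valuation $q+1$. Instead we unfold the recurrence twice more:
\begin{align*}
I(4q+3)&=(4q+3)I(4q+1)+(4q+1)I(4q),\\
I(4q+1)&=I(4q)+4q\,I(4q-1).
\end{align*}
Substituting gives
\[
I(4q+3)=(8q+4)\,I(4q)+4q(4q+3)\,I(4q-1).
\]
We argue by induction on $q$, with base case $I(3)=4$. By Step 2 the first term has valuation exactly $2+q$. For the second term, $4q-1=4(q-1)+3$, so the induction hypothesis gives $v_2(I(4q-1))=(q-1)+2$. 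The second term therefore has valuation at least $2+v_2(q)+q+1>q+2$. Hence $v_2(I(4q+3))=q+2$, which completes the proof. The only delicate point is choosing this rearrangement, in which the $I(4q)$ term is isolated with an exact valuation.
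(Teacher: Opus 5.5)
Your proof is correct, but it handles the class $\nu=3$ by a different route from the paper's. The paper stays entirely inside the calculus. It shows that every block vector other than $q$ copies of the block $4$ has cost at most $\Gamma(q)-2$. A vector using some block $4w$ with $w\ge3$ has $t\le q-2$ and cost at most $\Gamma(q-2)\le\Gamma(q)-4$, and $b<q$ copies of $4$ cost at most $\Gamma(q-1)\le\Gamma(q)-2$. Meanwhile the leading term $[x^\nu]F\binom{c_4+q-1}{q}$ has valuation $-\Gamma(q)+\delta_\nu$ with $\delta_\nu\le1$, because $v_2(\tfrac23)=1$. So the exceptional residue is decided at second order, and all four classes come out exact at once.

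You instead use only the first-order Theorem~\ref{thm:dominant}. You then settle $\nu=3$ by rewriting the recurrence as $I(4q+3)=(8q+4)I(4q)+4q(4q+3)I(4q-1)$, which isolates a term of exact valuation $q+2$ coming from the already-settled class $\nu=0$. The paper's route illustrates the general principle that a cost gap larger than the valuation of the small coefficient decides an exceptional class, as in Theorem~\ref{thm:second}, and it needs no recurrence. Yours is more elementary for the exceptional class, and it is close in spirit to the Lean formalization, which the paper says proves this theorem by an induction on the recurrence.

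One small simplification is available. Theorem~\ref{thm:dominant} already gives strict inequality for $\nu=3$, since $2\mid\tfrac23$, so in fact $v_2(I(4q+3))\ge q+2$, not merely $\ge q+1$ as you wrote. Applying this lower bound to $I(4q-1)$ gives your second term valuation at least $q+3$ directly, so the induction on $q$ is unnecessary.
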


One block that is both the smallest and the most efficient also gives the exact valuation for involutions, because the competing vectors lose at least $2$ in cost while the leading coefficient loses at most $1$.  The proof of Theorem~\ref{thm:invol} is in Appendix~\ref{app:moved}.

\subsection{The plateau and the Harmonic Theorem}\label{sec:harm}

We now take $A=A_{\le r}$ with $r$ just below a power of $p$, and the complementary family $A_{>r}$ with the twist $W=1/(1-x)$ of Theorem~\ref{thm:quotient}.

For permutations with all cycles of length at most $r$ the exact valuation is determined by a single dominant block when $r$ lies just below a power of $p$.  The theorem is named after the harmonic sum $H_r(\nu)=\sum_{m=r+1}^{\nu}1/m$, which is $p$-integral for $r\ge p^k-p$ and $\nu<p^k$ because $(r,p^k)$ contains no multiple of $p$.

\begin{theorem}[Harmonic Theorem]\label{thm:harmonic}
Let $p$ be a prime and $k\ge1$ with $(p,k)\ne(2,1)$, and let $r$ satisfy $\max(1,p^k-p)\le r\le p^k-1$.  Write $n=p^kq+\nu$ with $0\le\nu<p^k$.
\begin{enumerate}[label=(\alph*),itemsep=0pt]
\item $v_p\bigl(A_r(n)\bigr)\ge v_p(n!)-kq-v_p(q!)=\bigl(\tfrac{p^k-1}{p-1}-k\bigr)q+v_p(\nu!)$, with equality if and only if $v_p(A_r(\nu))=v_p(\nu!)$, in particular whenever $\nu\le r$.  If $r=p^k-1$ then equality holds for all $n\ge0$.
\item If in addition $p^k\le2r+2$, which is automatic for $k\ge2$, then the inequality in (a) is strict exactly when $r<\nu<p^k$ and $H_r(\nu)\equiv1\pmod p$.
\item Under the same hypothesis, for $n\ge r+1$ write $n-r-1=p^kq'+\nu'$ with $0\le\nu'<p^k$.  Here $a_{\{1,\dots,r\}}(n)$ counts the permutations of $\{1,\dots,n\}$ all of whose cycles are longer than $r$.  Then $v_p\bigl(a_{\{1,\dots,r\}}(n)\bigr)\ge v_p(\fall{n-1}{r})+v_p((n-r-1)!)-kq'-v_p(q'!)$, with strict inequality exactly when $r<\nu'<p^k$ and $H_r(\nu')\equiv-1\pmod p$.
\item If $r=p^k-1$ there are no exceptions in (b) and (c), and for $n\ge p^k$ and $q'=\lfloor n/p^k\rfloor-1$ one has $v_p\bigl(a_{\{1,\dots,p^k-1\}}(n)\bigr)=v_p((n-1)!)-kq'-v_p(q'!)$.
\end{enumerate}
\end{theorem}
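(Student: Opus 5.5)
The plan is to obtain every part from the dominant-block Theorem~\ref{thm:dominant}, applied to $G=F_{A_{\le r}}$ for (a), (b) and to $G=F_{A_{>r}}W$ with $W=1/(1-x)$ for (c), (d) via Theorem~\ref{thm:quotient}. The first step is to identify the blocks of $A_{\le r}$ at $p$. The hypothesis $p^k-p\le r\le p^k-1$ gives $p^{k-1}\le r<p^k$ (for $k\ge2$ because $p^k-p\ge p^{k-1}$, and for $k=1$ trivially), so $\delta_p(r)=k$. Lemma~\ref{lem:blockfacts} then says every block $m$ satisfies $m>r$, $p\mid m$ and $j_m\le v_p(m)\le k$. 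The interval $(r,p^k)$ contains no multiple of $p$, so every block is at least $p^k$.

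Next I check that $p^k$ itself is a block of depth $k$, using Lemma~\ref{lem:weights}(a). The divisors $e$ of $p^k$ with $e<p^k/r$ are only $e=1$, because $p^k/r\le p$, with equality only when $k=1,r=1$, which still admits only $e=1$. Hence $c_{p^k}=-1/p^k$, so $j_{p^k}=k$ and $\alpha_{p^k}=-1$, and $m_*=p^k$ with $j_*=k$.

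Condition $(\star)$ then needs $j_m<k\lfloor m/p^k\rfloor$ for every other block $m$. If $\lfloor m/p^k\rfloor\ge2$ this holds since $j_m\le k<2k$. If $\lfloor m/p^k\rfloor=1$, then $p^k<m<2p^k$, so $p^k\nmid m$ and $j_m\le v_p(m)\le k-1$. This verification of the block structure is where I expect the only real care to be needed. I will check the edge cases $k=1$, and $(p,k)=(2,1)$ excluded, against Lemma~\ref{lem:weights}.

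Theorem~\ref{thm:dominant} now gives $v_p([x^n]F)\ge -kq-v_p(q!)$, with equality iff $p\nmid [x^\nu]F=A_r(\nu)/\nu!$. Multiplying by $n!$ gives (a). The closed form follows from Legendre: $v_p(n!)=\frac{p^k-1}{p-1}q+v_p(q!)+v_p(\nu!)$, because $n=p^kq+\nu$ with $\nu<p^k$. For $\nu\le r$ we have $A_r(\nu)=\nu!$, and if $r=p^k-1$ this covers all $\nu$.

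For (b) I write $F=(1-x)^{-1}\exp\bigl(-\sum_{m>r}x^m/m\bigr)$. For $\nu<p^k\le2r+2$, every product of two or more terms $x^m$ with $m>r$ has degree at least $2r+2>\nu$. Hence $[x^\nu]F=1-H_r(\nu)$, which is $p$-integral, and $p$ divides it exactly when $H_r(\nu)\equiv1$.

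For (c) I use Theorem~\ref{thm:quotient} with $A=A_{>r}$, $W=1/(1-x)$ and $k$ replaced by $n-r-1=p^kq'+\nu'$. By Lemma~\ref{lem:weights}(b) the blocks and depths are unchanged, so $(\star)$ still holds. Applying Theorem~\ref{thm:dominant} to $G=F_{A_{>r}}/(1-x)$, the same degree argument gives $[x^{\nu'}]G=1+H_r(\nu')$, which is $1$ for $\nu'\le r$. Strictness therefore occurs exactly when $H_r(\nu')\equiv-1$.

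For (d), with $r=p^k-1$ no $\nu'$ lies in $(r,p^k)$, so there are no exceptions. Moreover $v_p(\fall{n-1}{r})+v_p((n-r-1)!)=v_p((n-1)!)$, and $q'=\lfloor (n-p^k)/p^k\rfloor=\lfloor n/p^k\rfloor-1$, which gives the stated formula.
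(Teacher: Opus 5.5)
Your proposal is correct and follows the paper's proof. The paper uses Lemma~\ref{lem:belowpower} to establish $\min\mathcal B=p^k$ with depth $k$, $c_{p^k}=-1/p^k$, and $(\star)$ for both $A_{\le r}$ and $A_{>r}$. It then applies Theorem~\ref{thm:dominant} to $F_{A_{\le r}}$ and, via Theorem~\ref{thm:quotient}, to $F_{A_{>r}}/(1-x)$, which reduces everything to the small coefficients $1\mp H_r(\nu)$.

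The differences are minor. You obtain $(\star)$ from the digit bound $v_p(m)\le\delta_p(r)=k$ of Lemma~\ref{lem:blockfacts}, which rules out $v_p(m)>k$ outright. So your argument never uses $(p,k)\ne(2,1)$, whereas the paper treats $v_p(m)=k+t$ directly, and that is where the exclusion enters. You also read off $1\mp H_r(\nu)$ from the generating function rather than by counting permutations with one cycle longer than $r$. One small slip: your remark that $p^k/r=p$ only for $k=r=1$ misses $(p,k,r)=(2,2,2)$. This is harmless, since $e<p^k/r\le p$ already forces $e=1$.
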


The hypothesis $r\ge p^k-p$ cannot be dropped, since for $(p,k,r)=(3,3,22)$ the inequality in (a) fails for $42$ values of $n\le400$.  The case $r=p^k-1$ gives the plateau formulas of the next corollary.

\begin{corollary}\label{cor:plateau}
\begin{enumerate}[label=(\alph*),itemsep=0pt]
\item For permutations with all cycles of length at most $3$ (\OEIS{A057693}), $v_2\bigl(A_3(n)\bigr)=1+\lfloor(n-2)/4\rfloor$ for all $n\ge2$.
\item For $r=2^k-1$ with $k\ge2$, $v_2(A_r(n))=(2^k-1-k)\lfloor n/2^k\rfloor+v_2((n\bmod2^k)!)$.
\item For $r\in\{7,8\}$, $v_3(A_r(n))=\lfloor n/3\rfloor-\lfloor n/9\rfloor$ for all $n$.  For $r=6$ the same holds unless $n\equiv7\pmod9$, where $v_3(A_6(n))\ge\lfloor n/3\rfloor-\lfloor n/9\rfloor+1$.
\item For every prime $p$, the numbers $A_{p-1}(n)$ are never divisible by $p$.
\end{enumerate}
\end{corollary}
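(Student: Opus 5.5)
The plan is to deduce every part of the corollary from the Harmonic Theorem (Theorem~\ref{thm:harmonic}) with $r=p^k-1$, where part (a) of that theorem gives equality for all $n\ge0$, and then to evaluate the closed form $\bigl(\tfrac{p^k-1}{p-1}-k\bigr)q+v_p(\nu!)$ in each case. Nothing beyond the Harmonic Theorem and Legendre's formula should be needed, except for the case $r=8$ and the exceptional class for $r=6$.

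For (b), take $p=2$ and $k\ge2$, so $(p,k)\ne(2,1)$ and $r=2^k-1$ satisfies the hypothesis. The coefficient $\tfrac{2^k-1}{1}-k=2^k-1-k$ gives the formula directly, with $q=\lfloor n/2^k\rfloor$ and $\nu=n\bmod 2^k$. Part (a) is the case $k=2$, $r=3$, where the formula reads $v_2(A_3(n))=q+v_2(\nu!)$ for $n=4q+\nu$. Checking $\nu=0,1,2,3$ gives $q,q,q+1,q+1$, and this agrees with $1+\lfloor(n-2)/4\rfloor$ in each class; for $\nu\in\{0,1\}$ the right side is $1+(q-1)=q$, which requires $n\ge2$. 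For (d), take $k=1$ and $r=p-1$ with $p$ odd, so the coefficient is $1-1=0$ and $v_p(A_{p-1}(n))=v_p(\nu!)=0$. For $p=2$ the set is $A_{\le1}$, so $A_1(n)=1$ is odd, and this case is handled directly.

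For (c) with $p=3$, take $k=2$ and $r=8$. The coefficient is $4-2=2$, so the valuation is $2q+v_3(\nu!)$ with $n=9q+\nu$. Since $\lfloor n/3\rfloor-\lfloor n/9\rfloor=3q+\lfloor\nu/3\rfloor-q=2q+v_3(\nu!)$ for $\nu<9$, this matches. For $r\in\{6,7\}$ with $k=2$ the range $p^k-p=6\le r\le8$ still holds, so part (a) of the Harmonic Theorem gives the same lower bound. Part (b) applies because $9\le2r+2$, and it says that equality holds except when $r<\nu<9$ and $H_r(\nu)\equiv1\pmod3$.

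For $r=7$ the only candidate is $\nu=8$, with $H_7(8)=1/8\equiv2\pmod 3$, so there is no exception. For $r=6$ the candidates are $H_6(7)=1/7\equiv1$, which is an exception, and $H_6(8)=1/7+1/8\equiv1+2=0$, which is not. This gives the exceptional class $n\equiv7\pmod9$, where the inequality is strict and hence the valuation is at least the stated value $+1$.

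The only real obstacle is bookkeeping: confirming the hypothesis ranges, especially $r=6=p^k-p$ at the boundary, and the small-$n$ edge conditions such as $n\ge2$ in (a). The harmonic residues are one-line computations.
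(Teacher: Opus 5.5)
Your proposal is correct and matches the paper's proof. Parts (a), (b), (d) and the case $r=8$ of (c) are Theorem~\ref{thm:harmonic}(a) with $r=p^k-1$, and the cases $r=6,7$ of (c) come from Theorem~\ref{thm:harmonic}(b) via $H_7(8)\equiv2$, $H_6(7)\equiv1$ and $H_6(8)\equiv0\pmod 3$, exactly as you compute. There are two harmless slips: in your opening sentence it is $r=6,7$, not $r=8$, that need more than part (a); and the identity $q+v_2(\nu!)=1+\lfloor(n-2)/4\rfloor$ also holds for $n=0,1$, so it does not ``require'' $n\ge2$.
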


Part (a) is the formula for $v_2$ stated in the entry \OEIS{A057693}.  For $p=3$, $k=2$ and $r=6$ the class $n\equiv7\pmod9$ is exceptional for $A_6$, because $H_6(7)=1/7\equiv1\pmod 3$ (see the example after Lemma~\ref{lem:belowpower}).  The theorem has a sharp version when $r$ sits lower in the plateau and several blocks compete, which happens for the $3$-adic valuations of $A_3,A_4,A_5$.

\begin{lemma}[Blocks below a prime power]\label{lem:belowpower}
Let $k\ge1$ with $(p,k)\ne(2,1)$ and $\max(1,p^k-p)\le r\le p^k-1$. Then $\min\mathcal B=p^k$ with depth $k$, $c_{p^k}=-1/p^k$ for $A_{\le r}$, and ($\star$) holds with $m_*=p^k$ and $j_*=k$, for both $A_{\le r}$ and $A_{>r}$.
\end{lemma}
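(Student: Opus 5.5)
The plan is to read everything off the explicit exponents of Lemma~\ref{lem:weights}(a) together with the constraints of Lemma~\ref{lem:blockfacts}. For $A_{>r}$ nothing extra is needed, since Lemma~\ref{lem:blockfacts} says the blocks and depths coincide with those of $A_{\le r}$. Condition ($\star$) involves only the list of blocks and their depths, so it transfers verbatim, and $c_{p^k}(A_{>r})=+1/p^k$ has the same depth.

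\emph{Step 1: no block below $p^k$.} By Lemma~\ref{lem:blockfacts} every block $m$ satisfies $m>r$ and $p\mid m$. The hypothesis $r\ge p^k-p$ means the open interval $(r,p^k)$ contains no multiple of $p$, so every block is at least $p^k$.

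\emph{Step 2: $p^k$ is a block, with $c_{p^k}=-1/p^k$.} I would evaluate the sum in Lemma~\ref{lem:weights}(a) for $n=p^k$. Its divisors are $e=p^i$, and only the terms with $e<p^k/r$ count.
\begin{itemize}[itemsep=0pt]
\item Since $r<p^k$, the term $e=1$ always counts.
\item The term $e=p$ would require $r<p^{k-1}$. For $k\ge2$ this is excluded because $r\ge p^k-p\ge p^{k-1}$. For $k=1$ it would require $r<1$, which is impossible.
\item The other terms have $\mu(p^i)=0$.
\end{itemize}
Hence the sum is $\mu(1)=1$, so $c_{p^k}=-1/p^k$ and the depth is $k$. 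Thus $\min\mathcal B=p^k$ with $j_*=k$.

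\emph{Step 3: condition ($\star$).} Let $m\ne p^k$ be a block, so $m>p^k$. Lemma~\ref{lem:blockfacts} gives $j_m\le v_p(m)\le\delta_p(r)$. Since $p^{k-1}\le r<p^k$ (using $r\ge1$ when $k=1$), we have $\delta_p(r)=k$. I would split into two cases.
\begin{itemize}[itemsep=0pt]
\item If $m\ge2p^k$, then $k\lfloor m/p^k\rfloor\ge2k>k\ge j_m$.
\item If $p^k<m<2p^k$, then $\lfloor m/p^k\rfloor=1$, and we need $j_m<k$. If $v_p(m)\ge k$, then $m$ would be a multiple $tp^k$ with $1<t<2$, which is impossible. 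So $j_m\le v_p(m)\le k-1$.
\end{itemize}
This gives ($\star$) for $A_{\le r}$, and by the transfer above it also holds for $A_{>r}$.

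I expect no real obstacle. The only delicate point is the boundary bookkeeping in Step 2: checking that $e=p$ is excluded uniformly in $k$, including $k=1$ with $r=\max(1,p^k-p)$. The hypothesis $(p,k)\ne(2,1)$ does not seem to be needed for these three claims; it is presumably there for the later use in Theorem~\ref{thm:harmonic}.
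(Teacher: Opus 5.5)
Your proof is correct. Steps~1 and~2 match the paper's argument exactly; the two proofs differ only in Step~3.

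\textbf{The paper's Step~3.} The paper splits a block $m\ne p^k$ according to $e=v_p(m)$.
\begin{itemize}
\item For $e<k$ it uses $k\lfloor m/p^k\rfloor\ge k>e\ge j_m$.
\item For $e=k$ it uses $m=p^kw$ with $w\ge2$.
\item For $e=k+t$ with $t\ge1$ it uses $\lfloor m/p^k\rfloor\ge p^t$ and the inequality $kp^t>k+t$. This inequality fails only for $(p,k,t)=(2,1,1)$.
\end{itemize}
That last case is where the hypothesis $(p,k)\ne(2,1)$ enters. So in the paper the hypothesis is used inside this very proof, not only in Theorem~\ref{thm:harmonic}.

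\textbf{Your Step~3.} You invoke the second half of Lemma~\ref{lem:blockfacts}, $v_p(m)\le\delta_p(r)=k$. This rules out $e>k$ altogether, and you then split only by whether $m<2p^k$.

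\textbf{What each route buys.} Your route is a little shorter. It also confirms your remark that the exclusion of $(p,k)=(2,1)$ is superfluous for this lemma. Indeed, for $p=2$ and $r=1$ the blocks are the integers $2w$ with $w$ odd and squarefree, all of depth $1$, and ($\star$) holds. The paper's route uses only $j_m\le v_p(m)$ and not the window-sum bound on $v_p(m)$, at the cost of that one exceptional case.
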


Theorem~\ref{thm:harmonic} follows from Lemma~\ref{lem:belowpower} and Theorem~\ref{thm:dominant}, and the coefficients $[x^\nu]G$ with $r<\nu<p^k$ can be computed since a permutation of $\nu\le2r+1$ points has at most one cycle longer than $r$.

\begin{example}
Let $p=3$ and $k=2$. For $r=6$ the class $n\equiv7\pmod9$ is exceptional for $A_{\le6}$ because $H_6(7)=1/7\equiv1$, while $H_6(8)=1/7+1/8=15/56\equiv0$. The complementary family has no exceptional class for $r=6$ because neither value is $-1$. For $r=7$ the only candidate is $\nu=8$, and $H_7(8)=1/8\equiv-1$, so $A_{>7}$ has the class $n-8\equiv8\pmod9$ and $A_{\le7}$ has none. For $r=p^k-2$ the only candidate is $\nu=p^k-1$ with $H=1/(p^k-1)\equiv-1$, so the complementary family is always exceptional there, and $A_{\le r}$ is exceptional only for $p=2$. The theorem contains the plateau formulas $v_2(A_3(n))=1+\lfloor(n-2)/4\rfloor$ and $v_3(A_r(n))=\lfloor n/3\rfloor-\lfloor n/9\rfloor$ for $r=7,8$.
\end{example}

The proofs of Lemma~\ref{lem:belowpower}, Theorem~\ref{thm:harmonic} and Corollary~\ref{cor:plateau} are in Appendix~\ref{app:moved}.

\subsection{The efficiency of a block}\label{sec:eff}

The cost of $\lambda$ copies of a block $m$ is $j_m\lambda+v_p(\lambda!)$.  By Legendre's formula $v_p(\lambda!)=(\lambda-s_p(\lambda))/(p-1)$, this equals
\begin{equation}\label{eq:legcost}
e_m\lambda-\frac{s_p(\lambda)}{p-1},\qquad e_m:=j_m+\frac1{p-1}.
\end{equation}
So $e_m$ is the cost per copy up to a digit sum correction, and we call $\varepsilon_m=e_m/m$ the \emph{efficiency} of the block $m$, the cost per unit of degree.  A block $m_*$ is \emph{strictly the most efficient} if $\varepsilon_{m_*}>\varepsilon_m$ for every other block $m$.  Then the other blocks contribute less cost per unit of degree than $m_*$, and Section~\ref{sec:interp} describes the valuations inside each residue class modulo $m_*$.  We use two elementary facts, $s_p(x+y)\le s_p(x)+s_p(y)$ and $s_p(a)\le(p-1)(\lfloor\log_pa\rfloor+1)$ for $a\ge1$.  Write $v=v_p$ and $s=s_p$, and let $\mathcal E$ be the set of essential blocks of Lemma~\ref{lem:essential}.

\section{The \texorpdfstring{$3$}{3}-adic valuations for cycle lengths at most \texorpdfstring{$3$, $4$ and $5$}{3, 4 and 5}}\label{sec:p3}

\begin{theorem}[$3$-adic valuations for cycle lengths at most $3$, $4$, $5$]\label{thm:p3}
Let $r\in\{3,4,5\}$ and write $e(n)=\lfloor n/9\rfloor+\lfloor(n+6)/9\rfloor$, that is $e(9q+\nu)=2q+[\nu\ge3]$ for $0\le\nu\le8$.  Then $v_3(A_r(n))\ge e(n)$ for all $n\ge0$, with equality for all $n$ except in the following cases, where $v_3(A_r(n))>e(n)$.
\begin{itemize}[itemsep=0pt]
\item $r=3$ and $n\equiv4\pmod{27}$.
\item $r=4$ and $n\equiv14,23\pmod{27}$.
\item There are no exceptions for $r=5$.
\end{itemize}
\end{theorem}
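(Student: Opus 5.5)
The proof applies the valuation calculus (Corollary~\ref{cor:calc}) at $p=3$ to $A=A_{\le r}$ for $r\in\{3,4,5\}$. The argument has three steps: find the blocks that matter, solve the resulting two-block knapsack, and evaluate the residue $\Xi(n)$ class by class.

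\emph{Step 1: the essential blocks.} By Lemma~\ref{lem:blockfacts}, every block $m$ satisfies $m>r$, $3\mid m$ and $v_3(m)\le\delta_3(r)=2$. Lemma~\ref{lem:weights}(a) should show that $6$ is a block of depth $1$ and $9$ a block of depth $2$ for all three values of $r$; the residues $\alpha_6,\alpha_9\in\F_3^\times$ are read off from the window sums. Then $m_0=6$, and the set $\mathcal E$ of Lemma~\ref{lem:essential}(b) is $\{6,9\}$:
\begin{itemize}
\item every other block of depth $1$ is at least $12$, so $\lfloor m/6\rfloor\ge2>1$;
\item every other block of depth $2$ is at least $18$, so $\lfloor m/6\rfloor\ge3>2$.
\end{itemize}
By Lemma~\ref{lem:essential}(a), the leftover of a maximiser is at most $5$. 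The residues $\iota_\rho=A_r(\rho)/\rho!\bmod 3$ for $\rho\le5$ are explicit small numbers, and they are where the three values of $r$ differ, since $A_3,A_4,A_5$ agree below degree $4$.

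\emph{Step 2: the knapsack.} I would show that
\[
\kappa(n)=\max_{6a+9b\le n}\bigl(a+v_3(a!)+2b+v_3(b!)\bigr)
\]
satisfies $v_3(n!)-\kappa(n)=e(n)$. I would also list $\mathrm{Max}(n)$ explicitly in terms of the base-$3$ digits; this is the role of Lemma~\ref{lem:max3}. Using \eqref{eq:legcost}, the cost equals $\tfrac32a+\tfrac52b-\tfrac12\bigl(s_3(a)+s_3(b)\bigr)$. So the maximisers use $b$ close to $\lfloor n/9\rfloor$, pushed down slightly to trade copies of $9$ for copies of $6$ when this lowers a digit sum. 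I would parametrise $n=9q+\nu$ and compare the few candidates $b=q,q-1,q-2$ (with $a$ filling the rest) through $s_3$ of $q$, $q-1$, $q-2$ and of small $a$. The lower bound $v_3(A_r(n))\ge e(n)$ then follows immediately from Corollary~\ref{cor:calc}.

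\emph{Step 3: the residue.} By Lemma~\ref{lem:digits}, each maximiser contributes
\[
\iota_\rho\,\alpha_6^a\alpha_9^b\,\tilde\omega(a)^{-1}\tilde\omega(b)^{-1},
\]
and this depends only on the digits of $a$ and $b$. Ties between $(a,b)$ and $(a+3,b-2)$ arise exactly from carries in $b$. The aim is to show that after the digit formula the tied terms combine so that the vanishing of $\Xi(n)$ depends only on $n\bmod 27$. The mechanism I expect is that the high digits of $b$ factor out as a common nonzero unit, leaving a finite sum indexed by $\nu$ and the lowest digit of $q$. Then a finite check of the $27$ classes, for each $r$, lists where $\Xi=0$: the class $4$ for $r=3$, the classes $14$ and $23$ for $r=4$, and none for $r=5$.

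This factoring step is the main obstacle. Carries from the lowest digit of $q$ into higher digits can change both $\mathrm{Max}(n)$ and the factors $\tilde\omega$. I would first try to prove that when $b$ is lowered by $1$ or $2$, only the last one or two digits of $b$ change unless a borrow propagates. When a borrow does propagate, I would show that the $\tilde\omega$ ratios of the competing vectors equal the same sign as in the non-borrow case, using $\tilde\omega(\lambda)\equiv(-1)^{v_3(\lambda!)}\prod d_i!$. As a consistency check, I would confirm numerically that the predicted exceptional classes agree with exact valuations for $n$ up to several hundred.
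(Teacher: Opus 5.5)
Your architecture is the paper's. Only the blocks $6$ and $9$ matter, the knapsack is solved with base-$3$ digit sums, and the residue is evaluated with the digit formula and a finite check modulo $27$. Your use of Lemma~\ref{lem:essential}(b) with $\mathcal E=\{6,9\}$ is a valid replacement for the paper's Lemma~\ref{lem:red3}.

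\emph{The missing step.} The step that carries the proof is only asserted, and your effort goes to an obstacle that does not exist. The heart of the argument is the exact list of maximisers, Lemma~\ref{lem:max3}. Saying you will ``compare the candidates $b=q,q-1,q-2$'' presupposes that list. You still have to show that every $b\le q-3$ loses, and exactly when the candidates tie. The paper does this in three moves:
\begin{enumerate}
\item It fixes $L=2X+3Y\le\lfloor n/3\rfloor$ and writes $X=3X_1+a_0$.
\item Using subadditivity of $s_3$ and $\psi(x)=x+s(x)-s(2x)\ge2$ for $x\ge2$, it shows that only $X_1\in\{0,1\}$ can be optimal for fixed $L$.
\item It compares the values $F(L)$ through $\ell_9(K)-\ell_9(K-1)=2+v_3(K)$.
\end{enumerate}

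\emph{Why your obstacle disappears, and why your fix would fail.} Lowering $b$ costs exactly the $3$-adic valuation of the removed factors:
\[
f(2,q-1)-f(0,q)=-v_3(q),\qquad f(3,q-2)-f(0,q)=f(4,q-2)-f(1,q)=-v_3\bigl(q(q-1)\bigr).
\]
So a vector with lowered $b$ ties only when the subtraction causes no borrow, and a borrow makes it strictly worse. Thus $(2,q-1)$ ties only for $3\nmid q$, and $(3,q-2)$, $(4,q-2)$ tie only for $q\equiv2\pmod3$. Your sentence that ties ``arise exactly from carries in $b$'' has this backwards.

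In the tie cases the ratios of the $\tilde\omega$-factors come from $\tilde\omega(3k+1)=\tilde\omega(3k)$ and $\tilde\omega(3k+2)=-\tilde\omega(3k)$ alone. That is why $\Xi(n)$ depends only on $\nu$ and $q\bmod 3$, hence on $n\bmod 27$.

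Your fallback claim, that with a borrow the ratio has the same sign as without, is false. Indeed $\tilde\omega(b)/\tilde\omega(b-1)$ is the lowest nonzero base-$3$ digit of $b$, which for $3\mid b$ depends on the higher digits. Fortunately the claim is never needed, since no borrow case produces a maximiser.

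\emph{The tie that matters.} The trade you single out, $(a,b)$ versus $(a+3,b-2)$, keeps the same leftover. The exceptions for $r=4$ also need the tie between $(0,q)$ and $(2,q-1)$, whose leftovers $\nu$ and $\nu-3$ differ. Up to a common unit, the residue is $\iota_\nu+\iota_{\nu-3}$ for $q\equiv1$ and $2\iota_\nu-\iota_{\nu-3}$ for $q\equiv2$. Both vanish at $\nu=5$, because $\iota_5=A_4(5)/5!\equiv2$ and $\iota_2=1$.
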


\begin{corollary}[The $3$-adic statements for \OEIS{A057693}, \OEIS{A070945}, \OEIS{A070946}]\label{cor:bala}
Let $a(n)=A_3(n)$ (\OEIS{A057693}).  For $m\ge0$, $v_3(a(6m))=v_3(a(6m+2))=m+\lfloor(m+1)/3\rfloor$ and $v_3(a(6m+3))=v_3(a(6m+5))=m+\lfloor m/3\rfloor+1$, while $v_3(a(6m+1))\ge m+\lfloor(m+1)/3\rfloor$ and $v_3(a(6m+4))\ge m+\lfloor m/3\rfloor+1$.  For $a(n)=A_4(n)$ (\OEIS{A070945}), $v_3(a(6m))=v_3(a(6m+1))=m+\lfloor(m+1)/3\rfloor$ and $v_3(a(6m+3))=v_3(a(6m+4))=m+\lfloor m/3\rfloor+1$, while $v_3(a(6m+2))\ge m+\lfloor(m+1)/3\rfloor$ and $v_3(a(6m+5))\ge m+\lfloor m/3\rfloor+1$.  For $A_5(n)$ (\OEIS{A070946}) all six classes $6m+i$ have the exact valuation $m+\lfloor(m+1)/3\rfloor$ for $i=0,1,2$ and $m+\lfloor m/3\rfloor+1$ for $i=3,4,5$.
\end{corollary}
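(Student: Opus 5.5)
The corollary should follow from Theorem~\ref{thm:p3} by rewriting $e(n)$ in terms of residues modulo $6$. There is no new arithmetic; all the work is bookkeeping of residue classes. I would carry it out in three steps.

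\emph{Step 1: evaluate $e$ on the six classes.} Write $n=6m+i$ with $0\le i\le5$ and compute $e(6m+i)=\lfloor(6m+i)/9\rfloor+\lfloor(6m+i+6)/9\rfloor$. Splitting $m$ by its residue modulo $3$, say $m=3t+s$, gives $n=18t+6s+i$, so
\[
e(n)=4t+\lfloor(6s+i)/9\rfloor+\lfloor(6s+i+6)/9\rfloor .
\]
This leaves a finite table over $s\in\{0,1,2\}$ and $i\in\{0,\dots,5\}$. From it one reads off:
\begin{itemize}
\item $e(6m+i)=m+\lfloor(m+1)/3\rfloor$ for $i=0,1,2$;
\item $e(6m+i)=m+\lfloor m/3\rfloor+1$ for $i=3,4,5$.
\end{itemize}
For example, with $s=0$, $i=0$ one gets $0=0+0$, and with $s=2$, $i=5$ one gets $\lfloor17/9\rfloor+\lfloor23/9\rfloor=3=2+0+1$. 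This also matches the compact form $e(9q+\nu)=2q+[\nu\ge3]$ stated in Theorem~\ref{thm:p3}.

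\emph{Step 2: place the exceptional classes of Theorem~\ref{thm:p3} modulo $6$.}
\begin{itemize}
\item For $r=3$ the exceptions are $n\equiv4\pmod{27}$. Since $27\equiv3\pmod6$, these are $n=4+27j$, which lie in the classes $4$ and $1$ modulo $6$ according to the parity of $j$. So only the classes $6m+1$ and $6m+4$ can fail equality, and there only the inequality $v_3\ge e(n)$ is kept.
\item For $r=4$ the exceptions are $n\equiv14,23\pmod{27}$. Here $14+27j$ lies in the classes $2$ and $5$ modulo $6$, and $23+27j$ also lies in the classes $5$ and $2$. So the classes $6m+2$ and $6m+5$ carry only inequalities, and the classes $0,1,3,4$ are exact.
\item For $r=5$ there are no exceptions, so all six classes are exact.
\end{itemize}

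\emph{Step 3: combine.} Insert the formulas of Step~1 into the equality and inequality statements of Theorem~\ref{thm:p3}; this gives each line of the corollary.

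The only step that needs care is the finite table in Step~1, in particular the classes where $6s+i$ crosses $3$ or $9$ (for instance $s=1$, $i=3$, where $6s+i=9$). I would check all $18$ cases explicitly rather than argue by pattern.
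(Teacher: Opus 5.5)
Your proposal is correct and is essentially the paper's proof. The paper likewise reduces to $m=0,1,2$ via $e(n+18)=e(n)+4$ and $e_i(m+3)=e_i(m)+4$, and checks the values $e(0),\dots,e(17)$. It then places the exceptional residues modulo $27$ in the classes $6m+1,6m+4$ (for $r=3$) and $6m+2,6m+5$ (for $r=4$) by reducing modulo $3$, which amounts to your parity-of-$j$ argument.
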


The formula entry of Bala on \OEIS{A057693} states, without proof on the page, that $v_2(A_3(n))=1+\lfloor(n-2)/4\rfloor$, which is Corollary~\ref{cor:plateau}(a), and it records four $3$-adic equalities and two inequalities as conjectural values checked up to $n=1000$.  The entry \OEIS{A070945} records the analogous $3$-adic statements in the same way.  Corollary~\ref{cor:bala} proves all of these $3$-adic statements, and it also gives the $3$-adic valuations of \OEIS{A070946}, which as far as we can tell from the entry are new here.

Now let $p=3$ and $r\in\{3,4,5\}$ (\OEIS{A057693}, \OEIS{A070945}, \OEIS{A070946}).  Here two blocks compete, of degrees $6$ and $9$, and ties occur.  The proof of Theorem~\ref{thm:p3} has four steps, namely the classification of the blocks, the reduction to the blocks $6$ and $9$, the maximisation of the cost, and the computation of the leading coefficients modulo $3$.  Let $s=s_3$, $\ell_6(X)=X+v_3(X!)=(3X-s(X))/2$ and $\ell_9(Y)=2Y+v_3(Y!)=(5Y-s(Y))/2$.

\begin{lemma}[Blocks for $p=3$, $3\le r\le5$]\label{lem:blocks3}
Every block $n\in\mathcal B$ is a multiple of $3$ with $n\ge6$, and $j_n=1$ if $v_3(n)=1$ and $j_n=2$ if $v_3(n)=2$, and $v_3(n)\le2$.  Moreover $6\in\mathcal B$ with $c_6=-1/6$ and $9\in\mathcal B$ with $c_9=-1/9$.
\end{lemma}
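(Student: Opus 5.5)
The plan is to read everything off Lemma~\ref{lem:blockfacts} and the explicit window formula of Lemma~\ref{lem:weights}(a), with $p=3$ and $A=A_{\le r}$, $r\in\{3,4,5\}$.

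\emph{Location of the blocks.} By Lemma~\ref{lem:blockfacts}, every block $n$ satisfies $3\mid n$, $n>r$ and $v_3(n)\le\delta_3(r)$. For $r\in\{3,4,5\}$ the base-$3$ expansion of $r$ has two digits, so $\delta_3(r)=2$ and hence $v_3(n)\le2$. A multiple of $3$ exceeding $r\ge3$ is at least $6$. Lemma~\ref{lem:blockfacts} also gives $j_n\le v_3(n)$, and $j_n\ge1$ by definition, so $j_n=1$ whenever $v_3(n)=1$.

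\emph{Depth when $v_3(n)=2$.} This is the only step that needs a computation. Write $n=9w$ with $3\nmid w$. By Lemma~\ref{lem:weights}(a),
\[
nc_n=-\sum_{\substack{e'\mid w\\ 3w/r\le e'<9w/r}}\mu(e').
\]
Since $r\le5$, the lower end satisfies $3w/r\ge 3w/5>w/2$. The only divisor of $w$ that exceeds $w/2$ is $w$ itself. Since $r\ge3$, we also have $w<9w/r$, and for $r=3$ the lower end is exactly $w$, which still admits $e'=w$. So the window contains exactly the divisor $w$, and $nc_n=-\mu(w)$. If $w$ is not squarefree, then $c_n=0$ and $n$ is not a block. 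Otherwise $c_n=\mp1/n$ has $3$-adic valuation $-2$, so the depth is $2=v_3(n)$. This proves the depth statement and the bound $v_3(n)\le2$ for every block.

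\emph{The two explicit exponents.} For $n=6$ and $n=9$ we evaluate $c_n=-\frac1n\sum_{e\mid n,\,e<n/r}\mu(e)$ directly.
\begin{itemize}[itemsep=0pt]
\item For $n=6$ we have $6/r\le2$, so only $e=1$ qualifies; for $r=3$ the bound $e<2$ excludes $e=2$. Hence $c_6=-1/6$, which has $3$-adic valuation $-1$, and $6$ is a block.
\item For $n=9$ we have $9/r\le3$, so the divisor $3$ of $9$ is excluded (strictly, when $r=3$), and only $e=1$ remains. Hence $c_9=-1/9$, and $9$ is a block of depth $2$.
\end{itemize}

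No step is a real obstacle. The only point needing care is the endpoint $r=3$, where the window bounds are attained exactly and one must check that the inequalities are on the correct sides.
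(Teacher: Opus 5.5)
Your proof is correct and follows essentially the paper's route. Both arguments compute $nc_n$ from the M\"obius formula and split by $v_3(n)$, and in the case $v_3(n)=2$ both reach $nc_n=-\mu(w)$; you get there through the window form of Lemma~\ref{lem:weights}(a) and cite Lemma~\ref{lem:blockfacts} for $n>r$, $j_n\le v_3(n)$ and $v_3(n)\le2$, whereas the paper works directly with $S_n=\sum_{d\mid n,\,d\le r}\mu(n/d)$, where only $d=3$ survives when $v_3(n)=2$ and no term survives when $v_3(n)\ge3$.
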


\begin{proof}
Let $n\in\mathcal B$ and $e=v_3(n)\ge1$, and recall $S_n=\sum_{d\mid n,d\le r}\mu(n/d)$ with $d\le5$.  If $e\ge3$ then $v_3(n/d)\ge2$ for every $d\le5$, so $S_n=0$ and $n\notin\mathcal B$.  If $e=2$ then only $d=3$ gives $\mu(n/d)\ne0$, so $S_n\in\{0,\pm1\}$ and $n\in\mathcal B$ forces $j_n=2$.  If $e=1$ then $j_n=1-v_3(S_n)$ and $j_n\ge1$ forces $j_n=1$.  Also $c_3=0$ since $3\le r$, so $n\ne3$ and thus $n\ge6$.  Finally $S_6=\mu(6)+\mu(3)+\mu(2)=-1$ and $S_9=\mu(3)=-1$ for $r=3,4,5$.
\end{proof}

\begin{lemma}[Reduction to the blocks $6$ and $9$]\label{lem:red3}
For $n\ge0$ put $\kappa(n)=\max\{\ell_6(X)+\ell_9(Y):X,Y\ge0,\ 6X+9Y\le n\}$.  If $V$ is a vector containing a block other than $6$ and $9$, and $\deg V\le n$, then $\mathrm{cost}(V)\le\kappa(n)-1$.
\end{lemma}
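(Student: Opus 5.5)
Starting from a vector $V$ with $\deg V\le n$ and some $\lambda_m>0$ for a block $m\notin\{6,9\}$, I will build a vector $V'$ that uses only the blocks $6$ and $9$, has $\deg V'\le\deg V\le n$, and has $\mathrm{cost}(V')\ge\mathrm{cost}(V)+1$. Since $\mathrm{cost}(V')\le\kappa(n)$ by the definition of $\kappa$, this gives $\mathrm{cost}(V)\le\kappa(n)-1$. By Lemma~\ref{lem:blocks3}, every block $m\notin\{6,9\}$ has $m\ge12$, with depth $1$ when $v_3(m)=1$ and depth $2$ when $v_3(m)=2$; in the second case $m\ge18$, since $m$ is a multiple of $9$ other than $9$.

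\emph{The exchange.} I replace each extra block by copies of $6$ or $9$. A depth-$1$ block $m\ge12$ becomes $\lfloor m/6\rfloor\ge2$ copies of $6$. A depth-$2$ block $m\ge18$ becomes $m/9\ge2$ copies of $9$. Let $X_0,Y_0$ be the multiplicities of $6$ and $9$ in $V$. Let $U$ be the total number of depth-$1$ extra copies and $Z$ the number of depth-$2$ extra copies, counted with multiplicity. After the exchange we have $X'\ge X_0+2U$ and $Y'\ge Y_0+2Z$, and the degree does not increase.

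\emph{Costs.} The cost of $V$ is $\ell_6(X_0)+\ell_9(Y_0)$ plus the contribution $\sum_m(j_m\lambda_m+v_3(\lambda_m!))$ of the extra blocks. I bound each $v_3(\lambda_m!)$ by $\lambda_m/2$, which is at most $\lambda_m$. This bounds the extra contribution by $\tfrac32U+\tfrac52Z$.

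On the other side, $\ell_6$ is superadditive, because $v_3((x+y)!)\ge v_3(x!)+v_3(y!)$, and $\ell_6(x)\ge x$. Hence
\[
\ell_6(X')\ \ge\ \ell_6(X_0)+\ell_6(2U)\ \ge\ \ell_6(X_0)+2U+v_3((2U)!).
\]
In the same way,
\[
\ell_9(Y')\ \ge\ \ell_9(Y_0)+4Z+v_3((2Z)!).
\]
The gain is therefore at least $2U-\tfrac32U+4Z-\tfrac52Z=\tfrac12U+\tfrac32Z$. That is positive, but it is not yet $\ge1$ when $U=1$.

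\emph{The integrality step.} I expect this to be the only delicate point, and it is resolved by integrality. Costs are integers, so a strictly positive gain is automatically at least $1$. It remains to check that the gain is strictly positive. This follows from the cruder bound: each depth-$1$ copy loses $1$ plus its share of $v_3(\lambda_m!)$ and gains $2$. A cleaner way to see it is that for $\lambda$ copies the loss is $\lambda+v_3(\lambda!)$ while the gain is at least $2\lambda+v_3((2\lambda)!)$. This is strictly larger, because $v_3((2\lambda)!)\ge v_3(\lambda!)$ and $\lambda\ge1$. The depth-$2$ case works the same way: the loss is $2\lambda+v_3(\lambda!)$ and the gain is at least $4\lambda+v_3((2\lambda)!)$.

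So I process the extra blocks one at a time, each time using superadditivity to merge the new copies into the existing $6$- or $9$-multiplicity. Every step strictly increases the cost, by an integer amount, and never increases the degree. This proves the lemma.
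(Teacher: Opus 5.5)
Your proof is correct and follows the paper's route. You exchange every other block for copies of $6$ or $9$ without increasing the degree, merge them into the existing multiplicities using superadditivity of $\ell_6$ and $\ell_9$, and compare with the cost of the removed copies. The differences are cosmetic: the paper writes each extra block exactly as $6a'+9b'$ and gets a gain of at least $1$ from $v_3(\lambda!)\le(\lambda-1)/2$, whereas you use $\lfloor m/6\rfloor$ copies of $6$ and obtain the gain of at least $1$ from the integrality of the costs.
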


The idea is to write every other block as a combination of the blocks $6$ and $9$ and to compare the costs.  The proof of Lemma~\ref{lem:red3} is in Appendix~\ref{app:moved}.

\begin{lemma}[Maximisation]\label{lem:max3}
Let $n=9q+\nu$ with $0\le\nu\le8$, $t=\lfloor\nu/3\rfloor$, and $f(X,Y)=\ell_6(X)+\ell_9(Y)$.  Then $\kappa(n)=\ell_9(q)+[t=2]$, and the pairs $(X,Y)$ with $6X+9Y\le n$ and $f(X,Y)=\kappa(n)$ are listed in the following table, according to $t$ and to $q\bmod3$.
\begin{center}
\begin{tabular}{cccc}
\toprule
$q\bmod3$ & $t=0$ & $t=1$ & $t=2$\\
\midrule
$0$ & $(0,q)$ & $(0,q)$ & $(1,q)$\\
$1$ & $(0,q)$ & $(0,q),\ (2,q-1)$ & $(1,q)$\\
$2$ & $(0,q),\ (3,q-2)$ & $(0,q),\ (3,q-2),\ (2,q-1)$ & $(1,q),\ (4,q-2)$\\
\bottomrule
\end{tabular}
\end{center}
The leftover $\rho=n-6X-9Y$ is $\nu$ for $(0,q)$ and $(3,q-2)$, it is $\nu-3$ for $(2,q-1)$, and it is $\nu-6$ for $(1,q)$ and $(4,q-2)$.
\end{lemma}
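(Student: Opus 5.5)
The plan is to compare every feasible pair with the pair $(X,Y)=(0,q)$ by working with the digit-sum form of the costs. Write $Y=q-b$ with $b\ge0$ (the case $Y>q$ is impossible since $9Y\le n<9(q+1)$). The constraint becomes $6X\le 9b+\nu$, so $X\le\lfloor(9b+\nu)/6\rfloor$. Using $2\ell_6(X)=3X-s(X)$ and $2\ell_9(Y)=5Y-s(Y)$,
\[
2\bigl(f(X,q-b)-\ell_9(q)\bigr)=3X-s(X)-5b+\bigl(s(q)-s(q-b)\bigr).
\]
Subadditivity $s(q)\le s(q-b)+s(b)$ bounds the last bracket by $s(b)$. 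Equality holds exactly when adding $b$ to $q-b$ in base $3$ produces no carry, that is, when subtracting $b$ from $q$ needs no borrow. So
\[
2\bigl(f-\ell_9(q)\bigr)\le 3X-s(X)-5b+s(b).
\]

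The first step is to show that only small $b$ can compete. With $X\le(9b+\nu)/6$ we get $3X\le\tfrac92b+\tfrac32 t+\tfrac12$. The right-hand side is therefore at most $-\tfrac12b+s(b)+O(1)$, with an explicit constant coming from $t\le2$. Since $s(b)\le 2(\lfloor\log_3b\rfloor+1)$, this is negative once $b$ exceeds a small explicit bound, and I expect $b\le4$ or so to suffice. For those $b$ the value is strictly below the value $2[t=2]$ attained by $(0,q)$ when $t\le1$ and by $(1,q)$ when $t=2$.

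This already gives $\kappa(n)\ge\ell_9(q)+[t=2]$. The pair $(1,q)$ is feasible when $t=2$ because $6+9q\le n$, and $\ell_6(1)=1$. For the same reason, within fixed $Y$ the only useful choice of $X$ is the largest one allowed, apart from ties that the digit-sum term can create. I will check these ties for each small $b$.

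The second step is a finite case check over $b\in\{0,1,2,\dots\}$ up to the bound, over $t\in\{0,1,2\}$, and over the admissible $X$. For each case I compute $3X-s(X)-5b+s(b)$ and compare it with $2[t=2]$. Whenever the bound reaches this value, I determine when equality in the carry condition holds. For $b=1$ there is no borrow exactly when $q\not\equiv0\pmod3$. For $b=2$ there is no borrow exactly when the last ternary digit of $q$ is $2$, that is, $q\equiv2\pmod3$. This is why the table depends only on $q\bmod3$.

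For example, $b=1$, $X=2$ (feasible when $t\ge1$) gives $6-2-5+1=0$, which produces $(2,q-1)$ for $t=1$ and $q\not\equiv0$. Likewise $b=2$, $X=3$ gives $9-1-10+2=0$, which produces $(3,q-2)$ for $q\equiv2$. Finally $b=2$, $X=4$ (feasible when $t=2$) gives $12-2-10+2=2$, which produces $(4,q-2)$. The remaining cases fall strictly short, and the leftovers $\rho=n-6X-9Y$ are then read off directly.

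The main obstacle is making the finite check airtight. This means choosing the cutoff for $b$ so that it is provably small, including the slack from $s(X)\ge1$ and from the floor in $X$, and making sure no tie is missed for non-maximal $X$ at a given $b$. If the crude bound leaves too many $b$ to check by hand, I would sharpen it using $s(X)\ge1$ for $X\ge1$ and the exact value of $\lfloor(9b+\nu)/6\rfloor$ modulo the parity of $b$. As a fallback, I would verify the finitely many residual cases, namely small $b$ together with $q\bmod 3^{\lceil\log_3 b\rceil+1}$, by the same mechanical computation.
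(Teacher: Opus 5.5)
Your proof is correct, and it is organised differently from the paper's. The paper fixes $L=2X+3Y$ and parametrises the pairs on that line as $X=3X_1+a_0$, $Y=b_0-2X_1$. Using $\psi(x)=x+s(x)-s(2x)$ it shows that the best pair on each line has $X_1=0$, with a second one at $X_1=1$ exactly when $b_0\equiv2\pmod3$. Only then does it compare lines, via $\ell_9(K)-\ell_9(K-1)=2+v_3(K)$.

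You instead fix $Y=q-b$ and compare directly with the benchmark $\ell_9(q)$, so all dependence on $q$ sits in the carry count of $(q-b)+b$. This makes it transparent why only $b\in\{1,2\}$, and hence only $q\bmod 3$, can matter. The cost is a case check over small $b$, whereas the paper's two-stage argument is uniform and is the template it reuses at $p=2$ with $L=4X+3Y$.

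Two points tighten your plan.

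\begin{itemize}
\item The worry about ties at non-maximal $X$ does not arise. $\ell_6(X)=X+v_3(X!)$ is strictly increasing and the carry term does not involve $X$, so for each $b$ the only candidate is $X=\lfloor(3b+t)/2\rfloor$. The residue $\nu\bmod 3$ drops out because $6X$ and $9b+3t$ are both multiples of $3$.
\item Your crude cutoff leaves roughly $b\le 8$ rather than $b\le 4$, but you can avoid the enumeration altogether.
\begin{itemize}
\item For $b=2m$, your bound $3X-s(X)-5b+s(b)$ equals $2[t=2]-\psi(m)$, which is at most $2[t=2]-2$ for $m\ge2$.
\item For $b=2m+1$ it is at most $-2-\psi(m)$ if $t=0$, and at most $-\psi(m)$ if $t\ge1$, with the exact value $-2$ at $m=1$.
\end{itemize}
So only $b\in\{0,1,2\}$ can reach $2[t=2]$, and your borrow conditions then give exactly the table.
\end{itemize}
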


The idea is to fix $L=2X+3Y$, where the best pair is found by comparing base-$3$ digit sums, and then to vary $L$.  The proofs of Lemma~\ref{lem:max3} and Theorem~\ref{thm:p3} are in Appendix~\ref{app:moved}.

\begin{proof}[Proof of Corollary~\ref{cor:bala}]
Put $e_1(m)=m+\lfloor(m+1)/3\rfloor$ and $e_2(m)=m+\lfloor m/3\rfloor+1$, so that $e_i(m+3)=e_i(m)+4$ and $e(n+18)=e(n)+4$.  So it suffices to check that $e(6m+i)=e_1(m)$ for $i=0,1,2$ and $e(6m+i)=e_2(m)$ for $i=3,4,5$ when $m=0,1,2$.  Here $e(0),\dots,e(17)=0,0,0,1,1,1,1,1,1,2,2,2,3,3,3,3,3,3$, while $e_1(0),e_1(1),e_1(2)=0,1,3$ and $e_2(0),e_2(1),e_2(2)=1,2,3$.  The exceptional residues $n\equiv4\pmod{27}$ (for $r=3$) lie in the classes $6m+1$ and $6m+4$, because $n\equiv1\pmod 3$, and the exceptional residues $14,23\pmod{27}$ (for $r=4$) lie in the classes $6m+2$ and $6m+5$, because $n\equiv2\pmod3$.  These are exactly the classes in which the statements assert only an inequality.
\end{proof}

The divisibility $3^i\mid A_r(n)$ for large $n$ (Theorem~\ref{thm:dichotomy}(A)) also follows, with an explicit threshold, since $e(n)\to\infty$.  The excess in the exceptional classes is irregular, for example $v_3(A_3(n))-e(n)=1,2,1,1,3,1,1,2,1,1,2$ for $n=4,31,58,\dots,274$.

\section{The \texorpdfstring{$2$}{2}-adic valuations for cycle lengths at most \texorpdfstring{$4$ and $5$}{4 and 5}}\label{sec:p2}

For $p=2$ and $r=4$ two blocks of degrees $6$ and $8$ compete.

\begin{theorem}[$2$-adic valuations for cycle lengths at most $4$]\label{thm:p2r4}
Let $a(n)=A_4(n)$ (\OEIS{A070945}) and write $n=8K+\nu$ with $0\le\nu\le7$.  Then $v_2(a(n))\ge4K+\gamma(\nu)$ with $(\gamma(0),\dots,\gamma(7))=(0,0,1,1,3,3,3,3)$, and equality holds unless $n\equiv5$ or $12\pmod{16}$, where $v_2(a(n))>4K+\gamma(\nu)$.  In particular $v_2(a(2m))\ge m$ and $v_2(a(2m+1))\ge m$.
\end{theorem}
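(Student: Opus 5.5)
The plan is to run the valuation calculus of Theorem~\ref{thm:calculus} (Corollary~\ref{cor:calc}) at $p=2$ for $A=A_{\le4}$, in the same four steps as the proof of Theorem~\ref{thm:p3}: classify the blocks, reduce to two competing blocks, solve the small knapsack, and compute the residue $\Xi(n)$. The lower bound $v_2(a(n))\ge v_2(n!)-\kappa(n)$ should turn out to be $4K+\gamma(\nu)$. The exceptional classes are where $\Xi(n)=0$.

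\emph{Blocks.} By Lemma~\ref{lem:blockfacts} every block is even, exceeds $4$, and has $v_2(m)\le\delta_2(4)=3$. From Lemma~\ref{lem:weights}(a), $c_n=-\frac1n\sum_{e\mid n,\,e<n/4}\mu(e)$. This gives:
\begin{itemize}[itemsep=0pt]
\item $c_6=-1/6$, so $6$ is a block of depth $1$;
\item $c_8=-1/8$, so $8$ is a block of depth $3$;
\item $c_{10}=c_{12}=c_{14}=c_{16}=c_{20}=0$;
\item $c_{18}=1/18$ (depth $1$) and $c_{24}=1/24$ (depth $3$).
\end{itemize}
In general a block of $2$-adic valuation $v$ has depth at most $v\le3$. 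The efficiencies $\varepsilon_m=(j_m+1)/m$ are $\varepsilon_8=1/2$ and $\varepsilon_6=1/3$, and every other block is strictly less efficient than $6$ or $8$ in a form usable for an exchange argument. In analogy with Lemma~\ref{lem:red3}, I would write each other block $m$ as $6X+8Y\le m$ with $\ell_6(X)+\ell_9$-type cost exceeding $j_m$. Here $\ell_6(X)=X+v_2(X!)=2X-s_2(X)$ and $\ell_8(Y)=3Y+v_2(Y!)=4Y-s_2(Y)$. Subadditivity of $s_2$ then shows that any vector using another block has cost at most $\kappa(n)-1$. Only finitely many blocks need checking by Lemma~\ref{lem:essential}(b); those with $\lfloor m/6\rfloor\le j_m\le3$ have $m<24$, so the check is finite.

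\emph{Knapsack and residue.} Set $f(X,Y)=2X-s_2(X)+4Y-s_2(Y)$ and maximise it over $6X+8Y\le n$. Since $8$ is the efficient block, the candidates are $Y\in\{K,K-1,K-2,\dots\}$ with $X=\lfloor(n-8Y)/6\rfloor$. Trading one $8$ for one $6$ loses roughly $2$, and trading four $8$'s for five $6$'s loses about $6$, so only $Y\ge K-2$ or so can compete. The $s_2$ corrections make the answer depend on $K$ modulo a small power of $2$; I expect $K\bmod2$ to suffice, matching the modulus $16$. I would tabulate $\kappa(n)$ and $\mathrm{Max}(n)$ by $\nu$ and by $K\bmod2$ (or $\bmod4$), and check $v_2(n!)-\kappa(n)=4K+\gamma(\nu)$. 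For example, at $\nu=0$ one has $(0,K)$ with $\kappa=4K-s_2(K)$, which gives $4K$.

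For the residue, $\F_2^\times=\{1\}$, so every $\tau(V)=1$. Hence $\Xi(n)$ is just the number, modulo $2$, of maximisers whose leftover $\rho<6$ has $\iota_\rho=A_4(\rho)/\rho!$ odd. From $A_4(0..5)=1,1,2,6,24,96$ we get $\iota_0=\dots=\iota_4=1$ and $\iota_5=0$. So $n$ is exceptional exactly when the maximisers with leftover at most $4$ are even in number. This happens either because of a tie of two such maximisers, or because the unique maximiser has leftover $5$. I expect these to be precisely $n\equiv5,12\pmod{16}$; for instance $\nu=5$, $K$ even gives leftover $5$ with $(0,K)$ the sole maximiser.

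\emph{Final step and main obstacle.} The "in particular" follows from $\gamma(\nu)\ge\lfloor\nu/2\rfloor$ termwise. The main obstacle is the maximisation table: the binary digit-sum corrections interact with the exchange $4\cdot 6\leftrightarrow3\cdot8$ and with carries in $Y$. I would first try to prove that $Y\in\{K,K-1\}$ always suffices, handling $Y\le K-2$ by a crude bound $f\le \frac{2}{6}(n-8Y)+4Y$. I would then compare the finitely many remaining pairs via $s_2(K)-s_2(K-1)=1-v_2(K)$, with a computer check of small $K$ as a safeguard.
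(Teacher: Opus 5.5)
Your plan is the paper's own proof. The essential blocks are $6$ (depth $1$) and $8$ (depth $3$), and every $\tau(V)$ equals $1$ in $\F_2$. Since $\iota_0=\dots=\iota_4=1$ and $\iota_5=0$, the residue $\Xi(n)$ is the parity of the number of maximisers with leftover at most $4$. Your values of $c_m$, the value $A_4(5)/5!=4/5$, and the final step via $\gamma(\nu)\ge\lfloor\nu/2\rfloor$ are all correct.

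What is missing is the maximisation itself, which carries all the content of $\gamma$ and of the classes $5,12\pmod{16}$. The one concrete tool you propose for it does not work. The bound $f\le\frac13(n-8Y)+4Y$ discards $-s_2(Y)$, so for $Y=K-d$ it only gives $f\le4K-\frac{4d-\nu}3$. The competitor $f(0,K)=4K-s_2(K)$, however, can be as small as $4K-\log_2(K+1)$. For $K=2^{10}-1$ and $\nu=0$ the bound fails to exclude $Y=K-d$ for every $2\le d\le7$. Since $s_2(K)$ is unbounded, no check of small $K$ closes this.

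The repair is to keep the digit sums and use subadditivity, $s_2(K)\le s_2(K-d)+s_2(d)$. With $X=\lfloor(8d+\nu)/6\rfloor$ this gives $f(X,K-d)-f(0,K)\le2X-s_2(X)-4d+s_2(d)\le-2$ for all $d\ge2$ and all $\nu$, with equality at $d=3$, $\nu\in\{6,7\}$. So only $Y\in\{K,K-1\}$ survive. The identity $s_2(K)-s_2(K-1)=1-v_2(K)$ then gives:
\begin{itemize}
\item $\kappa(n)=\ell_8(K)+[\nu\ge6]$;
\item a tie between $(X,Y)=(0,K)$ and $(2,K-1)$ exactly when $\nu\in\{4,5\}$ and $K$ is odd;
\item $v_2(n!)-\kappa(n)=4K+\nu-s_2(\nu)-[\nu\ge6]=4K+\gamma(\nu)$.
\end{itemize}

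The paper organises the same computation by fixing the half-degree $L=3X+4Y$ instead of $Y$. For each $L$ the unique maximiser has $X<4$, by $s_2(a)\le s_2(a-3X_1)+2s_2(X_1)$. The four resulting values are then compared across $L$ using $\ell_8(K)-\ell_8(K-1)=3+v_2(K)$.
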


\begin{corollary}\label{cor:bala2}
For $a(n)=A_4(n)$ and every $n\ge1$, $v_2\bigl(a(2n+1)-a(2n)\bigr)\ge n+1$.
\end{corollary}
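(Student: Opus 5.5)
The plan is an induction on $n$. It combines the recurrence for $A_4$ with the bounds $v_2(a(2m))\ge m$ and $v_2(a(2m+1))\ge m$ from the last sentence of Theorem~\ref{thm:p2r4}.

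First I write down the recurrence. Apply~\eqref{eq:last} with $S=\{m:m>4\}$, so that the allowed lengths are $m=1,2,3,4$. This gives
\[
a(N+1)=a(N)+N\,a(N-1)+N(N-1)\,a(N-2)+N(N-1)(N-2)\,a(N-3),
\]
with $a(j)=0$ for $j<0$. Taking $N=2n$ gives
\[
a(2n+1)-a(2n)=2n\,a(2n-1)+2n(2n-1)\,a(2n-2)+2n(2n-1)(2n-2)\,a(2n-3).
\]

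Next I bound the three terms. The last term has $v_2\bigl(2n(2n-1)(2n-2)\bigr)\ge 2+v_2(n)+v_2(n-1)\ge3$. For $n\ge2$ we have $a(2n-3)=a(2(n-2)+1)$, so Theorem~\ref{thm:p2r4} gives $v_2(a(2n-3))\ge n-2$. The total is therefore at least $n+1$, and for $n=1$ the term is zero.

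The first two terms need more care. Used separately, Theorem~\ref{thm:p2r4} only gives each of them valuation at least $n$, which is one short when $n$ is odd. This is the only real obstacle. I get around it by regrouping:
\[
2n\,a(2n-1)+2n(2n-1)\,a(2n-2)=2n\bigl(a(2n-1)-a(2n-2)\bigr)+4n^2\,a(2n-2).
\]
The second summand has valuation at least $2+(n-1)=n+1$, using $v_2(a(2n-2))\ge n-1$. For the first summand, the induction hypothesis at $n-1$ gives $v_2\bigl(a(2n-1)-a(2n-2)\bigr)\ge n$, so this summand has valuation at least $1+n$. Hence the whole expression has valuation at least $n+1$.

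Finally the base case. At $n=1$ we have $a(3)-a(2)=6-2=4$, with valuation $2=n+1$. This starts the induction, and the induction step above is valid for every $n\ge2$.
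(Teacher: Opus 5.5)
Your proof is correct, and it handles the two troublesome terms differently from the paper.

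The paper does not induct. It factors out $2n$ and applies the recurrence a second time, at $m=2n-1$, to rewrite the bracket $a(2n-1)+(2n-1)a(2n-2)+(2n-1)(2n-2)a(2n-3)$ as $a(2n)-(2n-1)(2n-2)(2n-3)\,a(2n-4)$. It then splits on the parity of $n$:
\begin{itemize}
\item For odd $n$, the factor $2n-2$ gives the subtracted term valuation at least $n$, so the bracket has valuation at least $n$ and $v_2(2n)=1$ finishes.
\item For even $n$, the bracket may have valuation only $n-1$, and $v_2(2n)\ge2$ compensates.
\end{itemize}
So the paper needs only the even-index bound $v_2(a(2m))\ge m$.

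You keep all three terms and dispose of the third directly with the odd-index bound. You then regroup the first two as $2n\bigl(a(2n-1)-a(2n-2)\bigr)+4n^2a(2n-2)$, so that the statement at $n-1$ supplies exactly the missing cancellation. This treats both parities uniformly and avoids the case split. The cost is an induction with the base case $a(3)-a(2)=4$, and you use both halves of the last sentence of Theorem~\ref{thm:p2r4}, which is free since both are proved there.

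The two arguments are equally short and rest on the same ingredients. The paper's is a closed identity with no induction. Yours shows that the differences $a(2n+1)-a(2n)$ propagate their own $2$-adic divisibility from one $n$ to the next.
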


For $r=5$ a third block, of degree $10$, enters, and the exceptional classes move.

\begin{theorem}[$2$-adic valuations for cycle lengths at most $5$]\label{thm:p2r5}
Let $a(n)=A_5(n)$ (\OEIS{A070946}) and write $n=8K+\nu$ with $0\le\nu\le7$.  Then $v_2(a(n))\ge4K+\gamma(\nu)$, with the same $\gamma$ as in Theorem~\ref{thm:p2r4}, and equality holds if and only if $n\not\equiv12,13\pmod{16}$.  For $n\equiv12,13\pmod{16}$ one has $v_2(a(n))\ge4K+\gamma(\nu)+1$.
\end{theorem}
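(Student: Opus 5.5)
The plan follows the proof of Theorem~\ref{thm:p3} and Theorem~\ref{thm:p2r4}: apply the valuation calculus (Theorem~\ref{thm:calculus} with $W=1$) at $p=2$ to $A=A_{\le5}$. The work splits into four steps: classify the blocks, cut the packing problem down to three blocks, solve the resulting knapsack, and compute the residue $\Xi(n)$.

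\textbf{Step 1: the blocks.} From Lemma~\ref{lem:weights}(a) and Lemma~\ref{lem:blockfacts}, every block $m$ satisfies $m>5$, $2\mid m$ and $v_2(m)\le\delta_2(5)=3$. Computing the window sums $\sum_{d\mid m,\,d\le5}\mu(m/d)$ directly gives:
\begin{itemize}
\item $c_6=-\frac16(\mu(6)+\mu(3)+\mu(2))=-1/6$, so $6$ has depth $1$;
\item $c_8=-\frac18\mu(2)$, so $8$ has depth $3$ (indeed $c_8=1/8$);
\item $c_{10}=-\frac1{10}(\mu(10)+\mu(5)+\mu(2))=1/10$, so $10$ has depth $1$.
\end{itemize}
This third block, absent for $r=4$, is the new feature. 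The remaining blocks have $m\ge12$, with depth $\le2$ when $v_2(m)\le2$ and depth $\le3$ when $8\mid m$. The efficiencies $\varepsilon_m=(j_m+1)/m$ are $2/6$, $4/8$ and $2/10$, so $8$ is strictly the most efficient block, while $6$ is the minimal block.

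\textbf{Step 2: reduction to the blocks $6$, $8$, $10$.} I would prove an analogue of Lemma~\ref{lem:red3}: any vector that uses a block $m\ge12$ loses at least $1$ in cost against a vector built from $6$ and $8$ of no larger degree. The exchange replaces $m$ by $\lfloor m/8\rfloor$ copies of $8$, padded with a $6$ where needed, and uses superadditivity of $v_2(\cdot!)$ as in Lemma~\ref{lem:essential}(b). Only the finitely many small cases $m=12,14,16,18,20,24$ need a direct check. Lemma~\ref{lem:essential}(b) already excludes any block with $\lfloor m/6\rfloor>j_m$, which disposes of everything beyond a short list.

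\textbf{Step 3: the knapsack.} I would maximise
\[
X+v_2(X!)+3Y+v_2(Y!)+Z+v_2(Z!)\qquad\text{subject to}\qquad 6X+8Y+10Z\le n,
\]
writing each term as $(2X-s_2(X))$ and so on. With $n=8K+\nu$, the candidates are $(0,K,0)$ and small perturbations: trade two copies of $8$ for a $6$ and a $10$, or one $8$ plus leftover for a $6$ or a $10$. I would show that $\kappa(n)=v_2(n!)-4K-\gamma(\nu)$, and I would list $\mathrm{Max}(n)$ according to $\nu$ and the low binary digits of $K$, exactly in the shape of the table of Lemma~\ref{lem:max3}. Ties arise only through digit-sum coincidences, and these depend on $K$ modulo $2$, which is what produces period $16$.

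\textbf{Step 4: the residue.} Over $\F_2$ every $\alpha_m$ and every $\tilde\omega$ equals $1$, so $\Xi(n)$ is simply the parity of $\sum_{V\in\mathrm{Max}(n)}\iota_{n-\deg V}$, where $\iota_\rho=A_5(\rho)/\rho!\bmod 2$ for $\rho<6$ by Lemma~\ref{lem:essential}. Using $A_5(0),\dots,A_5(5)=1,1,2,6,24,96$, I would evaluate $\Xi$ class by class modulo $16$. The outcome should be that $\Xi(n)=0$ exactly for $n\equiv12,13\pmod{16}$. For $n\equiv12$ this would come from a cancellation between two optimal vectors involving the block $10$ (a $10$ plus a $6$ against two $8$'s); for $n\equiv13$, from the same pair with leftover $1$. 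Where $\Xi=0$, Theorem~\ref{thm:calculus} gives the bound $+1$ automatically.

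\textbf{Main obstacle.} I expect Step 3, the exact enumeration of $\mathrm{Max}(n)$ with three competing blocks, to be the hardest part. Missing a single tied vector flips the parity, so I would cross-check the table against the dynamic program of Section~\ref{sec:calc} for $n<64$. Periodicity in the binary digits of $K$ then lets the finite check plus the digit-sum argument cover all $n$.
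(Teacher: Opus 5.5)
Your framework is the paper's: the calculus at $p=2$, the essential blocks $\{6,8,10\}$, a knapsack, and the residue $\Xi$. However, two concrete claims in Steps~3--4 are wrong, and with your data the argument yields the wrong exceptional set.

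\emph{The value of $A_5(5)$.} It is $5!=120$, not $96$, since every permutation of $5$ points has all cycles of length at most $5$; the number $96$ is $A_4(5)$. Hence $\iota_\rho=1$ for every $\rho<6$. Since every $\alpha_m$ and every $\tilde\omega(\lambda)$ equals $1$ in $\F_2$, one simply has $\Xi(n)\equiv|\mathrm{Max}(n)|\pmod 2$. With your $\iota_5=0$ the conclusion changes:
\begin{itemize}
\item the single maximiser at $n=8K+5$ with $K$ even would be exceptional;
\item the tie at $n=8K+5$ with $K$ odd would not be;
\item so you would get $n\equiv5,12\pmod{16}$, which is the set of Theorem~\ref{thm:p2r4}, not $12,13$.
\end{itemize}
Your signs are also off, since $c_8=-1/8$ and $c_{10}=-1/10$, but only depths matter here.

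\emph{The predicted tie does not exist.} The block $10$ never occurs in a maximiser. Replace $Z\ge1$ copies of $10$ by $\lfloor5Z/4\rfloor\ge Z$ copies of $8$. This does not raise the degree, and by superadditivity of $v_2(\cdot!)$ it raises the cost from at most $2Z-1$ to at least $3Z$.

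Write $\ell_8(\lambda)=3\lambda+v_2(\lambda!)$ and $\ell_6(\lambda)=\lambda+v_2(\lambda!)$ for the costs of $\lambda$ copies of $8$ and of $6$. Your exchange of two $8$'s for a $10$ and a $6$ lowers the cost by $\ell_8(K)-\ell_8(K-2)-2=4+v_2(K)+v_2(K-1)\ge4$.

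The relevant perturbation is one your list omits: one $8$ together with the leftover $4$, traded for two $6$'s. The vectors ``$K$ copies of $8$'' and ``$K-1$ copies of $8$ and two of $6$'' differ in cost by $\ell_8(K)-\ell_8(K-1)-\ell_6(2)=(3+v_2(K))-3=v_2(K)$. So they tie exactly when $K$ is odd and $\nu\in\{4,5\}$, with leftovers $\nu$ and $\nu-4$.

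\emph{Step~3 is only announced.} You must still show there are no other maximisers, and a dynamic-programming check for $n<64$ does not replace a proof for all $n$. The paper argues as follows.
\begin{itemize}
\item With $X$ eights and $Y$ sixes, fix $L=4X+3Y\le\lfloor n/2\rfloor$. The cost is then $L-Y-s_2(X)-s_2(Y)$.
\item Write $Y=b_0+4Y_1$ with $b_0\equiv3L\pmod4$ and $0\le b_0\le3$. Using $s_2(3Y_1)\le2s_2(Y_1)$, the choice $Y=b_0$ is the unique maximiser for each $L$.
\item Compare $L=4K'+t$ for $t=0,1,2,3$. Every $K'<K$ loses, and the only possible tie is between $L=4K$ and $L=4K+2$, which occurs exactly when $K$ is odd.
\end{itemize}
This gives $|\mathrm{Max}(n)|=2$ exactly for $n\equiv12,13\pmod{16}$ and $|\mathrm{Max}(n)|=1$ otherwise.
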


The entry \OEIS{A070945} states $v_2(A_4(2m))\ge m$, $v_2(A_4(2m+1))\ge m$ and a third inequality, printed as $v_2(a(2n+1)-a(n))\ge n+1$, with the remark that they follow by induction from the recurrence, and the third one fails at $n=1$ since $a(3)-a(1)=5$.  The same method as in Section~\ref{sec:p3} determines the valuation exactly (Theorem~\ref{thm:p2r4}), and with one more step the bound for even indices gives the third inequality with $a(2n)$ in place of $a(n)$ (Corollary~\ref{cor:bala2}).  The blocks are $6$ (with $j=1$) and $8$ (with $j=3$), the unit parts are all odd, and ties are detected by parity.  For $r=5$ the exceptional classes are again decided by parity.  The proofs of Theorems~\ref{thm:p2r4} and~\ref{thm:p2r5} and of Corollary~\ref{cor:bala2} are in Appendix~\ref{app:moved}.

The exceptional classes differ from those of Theorem~\ref{thm:p2r4} because the coefficient $\coeff{5}{F}$ is $A_4(5)/5!=4/5$ for $r=4$, which is even, and $1$ for $r=5$.  So for $r=5$ the exceptional classes are exactly the two with a tie between $(K,0,0)$ and $(K-1,2,0)$.  For $r=4$ the class $\nu=5$ with $K$ even is exceptional without any tie, and the class $\nu=5$ with $K$ odd, which has a tie, is not.

The entry \OEIS{A070946} contains Bala's statement that $N\mid A_5(n)$ for all large $n$ when $N=2^i3^j5^k$.  This is the case $r=5$ of Theorem~\ref{thm:dichotomy}, and Corollary~\ref{cor:A070946} gives the sharp thresholds for $N=2^i3^j$.

\section{The global gcd over all excluded sets}\label{sec:ties}\label{sec:globalgcd}

We first state the counterexample of Theorem~\ref{thm:introgcd}(c).  For $S=\N\setminus\{4,9\}$ the number $a_S(n)$ counts the permutations of $n$ points all of whose cycles have length $4$ or $9$.

\begin{theorem}[The two gcds differ for $r=3$]\label{thm:counter}
Let $S=\N\setminus\{4,9\}$.
\begin{enumerate}[label=(\alph*),itemsep=0pt]
\item $v_3(a_S(111))=27$, while $v_3(G_3(111))=28$, where $G_3(111)=147259794302866275623239680$.  Consequently $G_3(111)$ does not divide $a_S(111)$, and in fact $g_3(111)=G_3(111)/3$.
\item The same happens at $n=192,273,354$.  Here $v_3(a_S(n))=45,63,81$ and $v_3(G_3(n))=46,64,82$, and $g_3(n)=G_3(n)/3$.
\end{enumerate}
\end{theorem}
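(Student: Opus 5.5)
Theorem~\ref{thm:counter} has three ingredients: an exact valuation of $a_S(n)$ for $S=\N\setminus\{4,9\}$, an exact valuation of $G_3(n)$, and the identification $g_3(n)=G_3(n)/3$.

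\emph{The valuation of $a_S(n)$.} The set $A=\{4,9\}$ gives a closed form. A permutation with $x$ cycles of length $4$ and $y$ cycles of length $9$, where $4x+9y=n$, is counted by $n!/(4^x x!\,9^y y!)$. Therefore
\[
a_S(n)=\sum_{4x+9y=n}\frac{n!}{4^xx!\,9^yy!},
\]
and its $3$-adic valuation of each term is $v_3(n!)-v_3(x!)-2y-v_3(y!)$. For $n=111$ the admissible $y$ are odd with $y\equiv3\pmod4$, namely $y\in\{3,7,11\}$, giving $x=21,12,3$. Only three terms occur, so the minimal term valuation and any cancellation modulo $3$ can be read off by hand. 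Equivalently, this is the calculus of Corollary~\ref{cor:calc} with the single block $9$, of depth $2$, because $A$ is $3$-saturated except at $9$. The same finite check handles $n=192,273,354$; these are $111+81t$, and the number of admissible $y$ grows only slightly.

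\emph{The valuation of $G_3(n)$.} By Theorem~\ref{thm:gcdfamily}, $v_3(G_3(n))=\min_{3<k\le n \text{ or } k=\infty}v_3(a_{\{1,2,3,k\}}(n))$. This is a finite computation. The plan is to certify each $v_3$ with the quotient form of Theorem~\ref{thm:quotient} and the calculus: compute $\kappa$ and $\Xi$ for $G=F_AW_A$ with $A=A_{>3}\setminus\{k\}$. Alternatively, compute $a_{\{1,2,3,k\}}(111)$ exactly from the recurrence~\eqref{eq:rec} and take the minimum, checking that it equals $28$. The stated integer $G_3(111)$ comes from the same exact computation.

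\emph{The identity $g_3(n)=G_3(n)/3$.} Since $g_3\mid G_3$, with equal valuations at primes $>3$ by Theorem~\ref{thm:gcdfamily}, it remains to handle the primes $2$ and $3$. At $3$, the example $S$ gives $v_3(g_3)\le27$. The lower bound $v_3(a_{S'}(n))\ge27$ for every $S'\supseteq\{1,2,3\}$ is the main obstacle, because it must be uniform over infinitely many sets. I would reduce it to the knapsack over cycle types that the paper announces (Proposition~\ref{prop:globalgcd}). Each term $n!/\prod_m m^{\lambda_m}\lambda_m!$ of $a_{S'}(n)$, with all parts $m\ge4$, has $3$-adic valuation at least the minimum over cycle types of $n$ with parts $\ge4$. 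For $n=111$ this minimum is computed by a dynamic program, and it must come out as $27$. At $2$, the same knapsack lower bound must match $v_2(G_3(n))$. If it is smaller, I would need an additional cancellation argument, or an explicit set attaining $v_2(G_3)$ already in the family. I would first try to show that the $2$-adic minimum over cycle types is attained by a type realisable within some $\{1,2,3,k\}$ complement without cancellation.
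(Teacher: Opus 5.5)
\textbf{There is a genuine gap in the lower bound $v_3(g_3(111))\ge27$.} Your treatment of $v_3(a_S(n))$ matches the paper: the three types $4^{21}9^3$, $4^{12}9^7$, $4^39^{11}$ have valuations $38,33,27$, with a unique minimum. So does your treatment of $v_3(G_3(n))$, which is an exact computation over the family. At $p=2$ your fallback is exactly what the paper does, since $v_2(\Lambda_3(111))=30=v_2(G_3(111))$. One aside is wrong but harmless: for $A=\{4,9\}$ the block $9$ is not the only one. For instance $c_{12}=-1/12$, $c_{18}=-1/18$ and $c_{27}=-1/27$ are also blocks, since $[4\in A]\ne[12\in A]$. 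The direct enumeration makes this irrelevant.

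The failing step is at the prime $3$. You want $3^{27}\mid a_{S'}(111)$ for every $S'\supseteq\{1,2,3\}$, and you plan to get it from the minimum of $v_3(N(\lambda))$ over cycle types with parts at least $4$. You assert that this minimum ``must come out as $27$''. It does not. We have $v_3(111!)=54$, and the types $6^29^{11}$ and $6^59^9$ have knapsack cost $2+(22+4)=28$ and $(5+1)+(18+4)=28$. So both have $v_3(N(\lambda))=26$, and $v_3(\Lambda_3(111))=26$. The same obstruction occurs in part (b): the type $6^29^{(n-12)/9}$ has valuation $44,62,80$ at $n=192,273,354$, one below $v_3(g_3(n))$. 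No argument that bounds cycle types one at a time can give the exponent $27$. This is exactly the situation $\Lambda_r(n)\ne g_r(n)$ described in Section~\ref{sec:globalgcd}.

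The missing idea is Proposition~\ref{prop:globalgcd}(i). Inclusion--exclusion over the Boolean lattice writes every part-set sum $f_{P_0}(n)$ as an integer combination of numbers $a_S(n)$, so $g_3(n)=\gcd_Pf_P(n)$. You therefore have to bound each $f_P$, and cancellation inside a part set is both allowed and needed. Here the only part set carrying a type of valuation $26$ is $\{6,9\}$. Since $N(6^29^{11})/N(6^59^9)=16/11$, the two leading terms add up to $(27/11)\,N(6^59^9)$, which has valuation $29$. The paper records $v_3(f_{\{6,9\}}(111))=28$.

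The paper's proof applies Proposition~\ref{prop:globalgcd}(iv) with $T=28$. It enumerates the part sets that carry a type of valuation below $28$, namely $\{4,6,9\}$, $\{4,9\}$, $\{6,9,12\}$, $\{9,12\}$ and $\{6,9\}$. It then computes $v_3(f_P(111))=27,27,27,27,28$ exactly, which gives $v_3(g_3(111))=27$. Part (b) is handled the same way, with the table of $f_P$-valuations there.
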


The valuation of $a_S(n)$ in the counterexamples comes from a general formula.

\begin{proposition}\label{prop:lead}
Let $S=\N\setminus\{4,9\}$, let $n\ge12$ with $n\equiv3\pmod9$, and put $y_0=(n-12)/9$.  Then
\[
v_3\bigl(a_S(n)\bigr)=v_3(n!)-1-2y_0-v_3(y_0!).
\]
\end{proposition}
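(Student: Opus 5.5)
Rather than running the block calculus of Theorem~\ref{thm:calculus}, I would argue directly from the exponential formula, because for $A=\{4,9\}$ the generating function is simply $F_A=\exp(x^4/4)\exp(x^9/9)$. Expanding both exponentials gives
\[
\frac{a_S(n)}{n!}=\sum_{\substack{a,b\ge0\\ 4a+9b=n}}\frac{1}{4^a\,a!\;9^b\,b!}.
\]
The term of $(a,b)$ has $3$-adic valuation exactly $-\bigl(v_3(a!)+2b+v_3(b!)\bigr)$, and it is $3$-adically a unit times that power of $3$. This is the same phenomenon as Lemma~\ref{lem:binom}: the whole question is which term has the largest cost $v_3(a!)+2b+v_3(b!)$.

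First I parametrise the solutions. From $n\equiv3\pmod9$ and $4a\equiv n\pmod 9$ we get $a\equiv3\pmod9$. So the solutions are $a=3+9t$ and $b=y_0-4t$ with $t\ge0$ and $4t\le y_0$; the value $t=0$ is admissible because $n\ge12$. Legendre's formula gives $v_3((9t+3)!)=(3t+1)+t+v_3(t!)=4t+1+v_3(t!)$. Hence the cost of the term indexed by $t$ is
\[
C(t)=1+2y_0-4t+v_3(t!)+v_3\bigl((y_0-4t)!\bigr),
\]
and $C(0)=1+2y_0+v_3(y_0!)$ is exactly the quantity in the statement.

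Next I show that $t=0$ is the unique maximiser. For $t\ge1$, superadditivity $v_3(y!)\ge v_3((y-4t)!)+v_3((4t)!)$ together with $v_3((4t)!)\ge v_3(t!)$ gives
\[
C(0)-C(t)=4t+v_3(y_0!)-v_3((y_0-4t)!)-v_3(t!)\ \ge\ 4t+v_3((4t)!)-v_3(t!)\ \ge\ 4t>0.
\]
So every term with $t\ge1$ has valuation strictly greater than $-C(0)$. The single term $t=0$ then determines the valuation of the sum: $v_3(a_S(n)/n!)=-C(0)$. Multiplying by $n!$ gives the formula of the proposition.

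There is no real obstacle here. The only care needed is the parametrisation, namely checking that $a\equiv3\pmod9$ with $a\ge0$ forces $t\ge0$, and the Legendre computation for $(9t+3)!$. In the language of Corollary~\ref{cor:split}, the point is that $\mathrm{Max}$ consists of a single vector, so no cancellation can occur.
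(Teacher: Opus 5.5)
Your proof is correct and is essentially the paper's: both write the cycle types as $4^{3+9t}9^{y_0-4t}$ and show that $t=0$ gives the unique term of minimal $3$-adic valuation. The paper does this with base-$3$ digit sums, and you do it with superadditivity of $v_3(\cdot!)$, which is the same estimate by Legendre's formula.

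Your closing remark is inaccurate, though. In the block calculus for $A=\{4,9\}$ the class $n\equiv3\pmod9$ is exceptional: for $n=9q+3$ the unique maximiser is $q$ copies of the block $9$ with leftover $3$, and $[x^3]F_A=0$, which is the first case of Corollary~\ref{cor:split}. So the uniqueness you prove concerns the knapsack over cycle types of Proposition~\ref{prop:globalgcd}(ii), not $\mathrm{Max}(n)$ of Definition~\ref{def:calc}.
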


For $n=111$ the formula gives $54-1-22-4=27$, and for $n=192,273,354$ it gives $45,63,81$.  The rest of this section explains how the counterexample is found and why this phenomenon first appears for $r=3$ and $n=111$, and it records the computations behind the other values of $r$ and $n$.

Let $r\ge1$, and let $g_r(n)$ and $G_r(n)$ be the gcds over all sets $S\supseteq\{1,\dots,r\}$ and over the family $\{1,\dots,r,k\}$, as in Theorem~\ref{thm:gcdfamily}.  The natural lower bound for $g_r(n)$ comes from the cycle types.  For a partition $\lambda$ of $n$ with $\lambda_m$ parts equal to $m$, the number of permutations of cycle type $\lambda$ is $N(\lambda)=n!/\prod_mm^{\lambda_m}\lambda_m!$.  Let $\Lambda_r(n)$ be the gcd of $N(\lambda)$ over the partitions of $n$ with all parts larger than $r$.  Every $a_S(n)$ with $S\supseteq\{1,\dots,r\}$ is a sum of numbers $N(\lambda)$, so $\Lambda_r(n)$ divides $g_r(n)$, which divides $G_r(n)$.  For $r=1,2$ the three numbers agree in all the cases we computed.  For $r=3$ the number $\Lambda_r(n)$ is smaller than the other two, for instance at $n=51$ (Example~\ref{ex:51}), and at $n=111$ even $g_3(n)$ is smaller than $G_3(n)$ (Theorem~\ref{thm:counter}).  The following proposition says exactly when each of these inequalities occurs.

For a set $P\subseteq\{r+1,\dots,n\}$ let $f_P(n)$ be the sum of $N(\lambda)$ over the partitions of $n$ whose set of distinct parts is exactly $P$.

\begin{proposition}\label{prop:globalgcd}
Let $r\ge1$ and $n\ge1$.
\begin{enumerate}[label=(\roman*),itemsep=0pt]
\item $g_r(n)=\gcd_Pf_P(n)$, and $\Lambda_r(n)$ divides $g_r(n)$.
\item For every prime $p$, $v_p(N(\lambda))=v_p(n!)-\sum_m\bigl(v_p(m)\lambda_m+v_p(\lambda_m!)\bigr)$, so the minimum of $v_p(N(\lambda))$ over the partitions is $v_p(n!)$ minus the maximum of a knapsack cost of the same shape as in Definition~\ref{def:calc}, with every part and not only the blocks.
\item Let $\mu_0=v_p(\Lambda_r(n))$.  Then $v_p(g_r(n))>\mu_0$ if and only if for every part set $P$ that carries a partition $\lambda$ with $v_p(N(\lambda))=\mu_0$, the sum of those $N(\lambda)$ with part set $P$ is divisible by $p^{\mu_0+1}$.
\item Let $n\ge r+2$.  Then $g_r(n)=G_r(n)$ if and only if for every prime $p\le r$ and every part set $P$ the number $f_P(n)$ is divisible by $p^{v_p(G_r(n))}$.  For a given prime $p$ it suffices to test the part sets $P$ that carry a partition $\lambda$ with $v_p(N(\lambda))<v_p(G_r(n))$.
\end{enumerate}
\end{proposition}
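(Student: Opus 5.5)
The four parts rest on one observation. For $S\supseteq\{1,\dots,r\}$ with complement $A=\N\setminus S\subseteq A_{>r}$, the count $a_S(n)$ is the sum of $N(\lambda)$ over the partitions $\lambda$ of $n$ all of whose parts lie in $A$. Grouping these partitions by their set of distinct parts $P$ gives
\[
a_S(n)=\sum_{P\subseteq A\cap[r+1,n]}f_P(n).
\]
I would prove the parts in the order (ii), (i), (iii), (iv). Part (ii) is routine bookkeeping. Part (iii) needs a valuation computation at the minimal level. Part (iv) needs the reduction of both gcds to the primes $p\le r$.

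\textbf{Part (ii).} This is immediate from $N(\lambda)=n!/\prod_m m^{\lambda_m}\lambda_m!$ by taking $v_p$. Hence $\min_\lambda v_p(N(\lambda))=v_p(n!)-\max_\lambda\sum_m\bigl(v_p(m)\lambda_m+v_p(\lambda_m!)\bigr)$, with the maximum over partitions of $n$ into parts $>r$. This is the knapsack of Definition~\ref{def:calc}, with exact degree $n$ and depth $v_p(m)$ for every part $m$.

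\textbf{Part (i).} Every $a_S(n)$ is a $\Z$-combination of the $f_P(n)$, so $\gcd_Pf_P(n)$ divides $g_r(n)$. For the converse I would induct on $|P|$ by inclusion--exclusion. Take $A=P$ together with all integers $>n$, so that $a_{\N\setminus A}(n)=\sum_{P'\subseteq P}f_{P'}(n)$. Möbius inversion over the Boolean lattice then gives
\[
f_P(n)=\sum_{P'\subseteq P}(-1)^{|P\setminus P'|}a_{\N\setminus(P'\cup(n,\infty))}(n).
\]
Each term on the right is a multiple of $g_r(n)$, so $g_r(n)$ divides every $f_P(n)$. Finally, $\Lambda_r(n)$ divides each $f_P(n)$, since each $f_P(n)$ is a sum of numbers $N(\lambda)$, and so $\Lambda_r(n)\mid g_r(n)$.

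\textbf{Part (iii).} By (i), $v_p(g_r(n))=\min_Pv_p(f_P(n))$, and every $f_P(n)$ is a sum of numbers $N(\lambda)$ with $v_p(N(\lambda))\ge\mu_0$. Fix a part set $P$. Split $f_P(n)$ into the terms with $v_p(N(\lambda))=\mu_0$ and the terms with valuation $\ge\mu_0+1$. Modulo $p^{\mu_0+1}$, $f_P(n)$ is congruent to the sum of the minimal-level terms; if $P$ carries no minimal $\lambda$, then already $p^{\mu_0+1}\mid f_P(n)$. Therefore $v_p(g_r(n))>\mu_0$ exactly when every $P$ that carries a minimal partition has its minimal-level sum divisible by $p^{\mu_0+1}$. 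This is the claimed criterion.

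\textbf{Part (iv).} This is the step needing care. By Theorem~\ref{thm:gcdfamily}, $g_r(n)\mid G_r(n)$, and the two have equal valuation at every prime $p>r$. So $g_r(n)=G_r(n)$ if and only if $v_p(g_r(n))\ge v_p(G_r(n))$ for every prime $p\le r$. By (i), this means $p^{v_p(G_r(n))}\mid f_P(n)$ for all $P$ and all $p\le r$.

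For the reduction, suppose $P$ carries no $\lambda$ with $v_p(N(\lambda))<v_p(G_r(n))$. Then every summand of $f_P(n)$ is divisible by $p^{v_p(G_r(n))}$, so $f_P(n)$ passes the test automatically. Hence only the part sets carrying such a $\lambda$ need to be checked. The main obstacle is already handled by Theorem~\ref{thm:gcdfamily}, which matches the valuations of $g_r(n)$ and $G_r(n)$ at primes above $r$; the remaining arguments are bookkeeping.
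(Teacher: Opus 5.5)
Your proposal is correct and follows the paper's proof essentially step for step. You use the decomposition $a_S(n)=\sum_P f_P(n)$ with Boolean-lattice inversion for (i), Legendre's formula for (ii), the split at the minimal valuation level $\mu_0$ for (iii), and the reduction to primes $p\le r$ via Theorem~\ref{thm:gcdfamily} for (iv). The only cosmetic difference is that in (i) you adjoin $(n,\infty)$ to the allowed set, which leaves $a_S(n)$ unchanged, whereas the paper simply takes the allowed set to be $P'$.
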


\begin{proof}
Part (ii) is Legendre's formula for $n!/\prod m^{\lambda_m}\lambda_m!$.  For (i), $a_S(n)=\sum_{P\subseteq A\cap(r,n]}f_P(n)$ for $A=\N\setminus S$, and inclusion--exclusion over the Boolean lattice writes every $f_{P_0}$ as an integer combination of the numbers $a_S(n)$, so the two gcds agree.  For (iii) and (iv), $v_p(g_r(n))=\min_Pv_p(f_P(n))$ by (i), each $f_P$ is a sum of numbers $N(\lambda)$, and the terms of valuation at least $\mu_0+1$, respectively at least $T=v_p(G_r(n))$, do not affect divisibility by $p^{\mu_0+1}$, respectively $p^T$.  The details are in Appendix~\ref{app:gg}.
\end{proof}

In words, $g_r(n)$ and $\Lambda_r(n)$ differ at $p$ exactly when the cycle types of smallest valuation all come in groups with a common part set whose leading residues cancel modulo $p$.  This is the tie phenomenon of Section~\ref{sec:p3}.

\begin{example}\label{ex:51}
Let $r=3$, $n=51$ and $p=3$, so $v_3(51!)=23$.  The part set $P=\{6,9\}$ carries the partitions $6^19^5$, $6^49^3$, $6^79^1$, whose costs $\sum_m(v_3(m)\lambda_m+v_3(\lambda_m!))$ are $12,12,11$, so the counts $N(\lambda)$ have $3$-adic valuations $11,11,12$.  The two counts of valuation $11$ have ratio $N(6^49^3)/N(6^19^5)=5/16$, so their sum is $21/16$ times the first and has valuation $12$, and their leading residues cancel.  The numerical values are $v_3(f_P(51))=13$, $v_3(g_3(51))=13$ and $v_3(\Lambda_3(51))=11$.
\end{example}

\subsection{The counterexample}

Proposition~\ref{prop:globalgcd}(iv) turns the question whether $g_r(n)=G_r(n)$ into a finite search for each $(r,n)$.  It finds the counterexample of Theorem~\ref{thm:counter}, and Proposition~\ref{prop:lead} explains its leading term.  Both proofs are in Appendix~\ref{app:moved}.

\medskip
\noindent\textbf{Computations.}  We computed $G_r(n)$ exactly and ran the test of Proposition~\ref{prop:globalgcd}(iv) for $r=3$ with $n\le900$ and for $4\le r\le12$ on smaller ranges.  For $r=3$ the two gcds differ exactly for $n=111+81j$ with $0\le j\le9$, and for $4\le r\le12$ they never differ, so $(r,n)=(3,111)$ is the smallest counterexample found.  The ranges and the part sets are in Appendix~\ref{app:data}.

\section{Valuations inside a residue class}\label{sec:interp}

The calculus of Section~\ref{sec:euler} computes $v_p(a_A(n))$ exactly when $\Xi(n)\ne0$, and for the other $n$ it gives only a lower bound.  What does the valuation do on the set $\{\Xi=0\}$?  When one block is strictly the most efficient, the answer is that inside each residue class modulo that block the valuation is the valuation of one locally analytic function of the quotient, up to an explicit Legendre term.  This answers the question about a $p$-adic integer $q^*$ in column $2$.  The proofs are in Appendices~\ref{app:interp} and~\ref{app:qstar}.

\subsection{The interpolation theorem}

Let $A\subseteq\N$ be a set of allowed cycle lengths, $p$ a prime, $W\in\Z_{(p)}[[x]]$ and $G=F_AW$, with blocks $m$ of depth $j_m$ and efficiency $\varepsilon_m=(j_m+\frac1{p-1})/m$ as in Section~\ref{sec:eff}.  Assume that $\mathcal B\ne\emptyset$ and that one block $m_*$ satisfies $\varepsilon_{m_*}>\varepsilon_m$ for every other block $m$.  Uniqueness among all blocks is the same as uniqueness among the essential blocks (Appendix~\ref{app:interp}).  Put $j=j_{m_*}$, $c_{m_*}=\alpha'/p^j$ with $\alpha'\in\Z_{(p)}^\times$, $\Gamma(q)=jq+v_p(q!)$ and $\beta_q=\binom{c_{m_*}+q-1}{q}$, so that $v_p(\beta_q)=-\Gamma(q)$.  Define $h_i$ by $\sum_ih_ix^i=G\,(1-x^{m_*})^{c_{m_*}}$ and, for $0\le\rho<m_*$,
\[
F_\rho(z)=\sum_{t\ge0}h_{\rho+tm_*}\prod_{i=0}^{t-1}\frac{p^{j}(z-i)}{\alpha'+p^{j}(z-1-i)} .
\]

\begin{theorem}[$p$-adic interpolation]\label{thm:interp}
Under these hypotheses there is an integer $\ell\ge0$ such that the series $F_\rho$ converges on $\Z_p$ and, on every disc $z_0+p^\ell\Z_p$, equals a restricted power series in $(z-z_0)/p^\ell$.  In particular $F_\rho$ is locally analytic, hence continuous, on $\Z_p$.  For all $q\ge0$ and $k=m_*q+\rho$,
\[
[x^{k}]G=\beta_q\,F_\rho(q),\qquad\text{so}\qquad v_p\bigl([x^k]G\bigr)=-\Gamma(q)+v_p(F_\rho(q)).
\]
For $W=1$ this reads $v_p(a_A(n))=v_p(n!)-\Gamma(q)+v_p(F_\rho(q))$ for $n=m_*q+\rho$.
\end{theorem}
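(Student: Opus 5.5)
The proof has three parts: an exact algebraic identity at the integers $q\ge0$, a convergence estimate on $\Z_p$, and local analyticity; the last is the real work. Write $c=c_{m_*}=\alpha'/p^j$ and $H=\sum_ih_ix^i=G\,(1-x^{m_*})^{c}$, so that $G=H\,(1-x^{m_*})^{-c}$.

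\emph{Step 1: the identity $[x^k]G=\beta_qF_\rho(q)$.} Extracting $x^k$ from $G=H\,(1-x^{m_*})^{-c}$ with $k=m_*q+\rho$ gives
\[
[x^k]G=\sum_{t=0}^{q}h_{\rho+tm_*}\,\beta_{q-t},
\]
since $0\le\rho<m_*$ forces the contributing degrees of $H$ to be $\rho+tm_*$. By Lemma~\ref{lem:binom}, $\beta_s=c(c+1)\cdots(c+s-1)/s!$. Writing $c+q-1-i=(\alpha'+p^j(q-1-i))/p^j$, we get
\[
\beta_{q-t}/\beta_q=\prod_{i<t}\frac{p^j(q-i)}{\alpha'+p^j(q-1-i)}.
\]
For $t>q$ this product contains the factor $q-q=0$. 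So the finite sum equals $\beta_qF_\rho(q)$, and the valuation statement follows from $v_p(\beta_q)=-\Gamma(q)$. For $W=1$ we multiply by $n!$.

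\emph{Step 2: a bound on $h_i$ and convergence.} $H$ is $I_W$ times the product of the block factors for the blocks $m\ne m_*$. Applying Theorem~\ref{thm:calculus} to $H$, with $m_*$ removed from the block set, gives $v_p(h_i)\ge-\kappa'(i)$, where $\kappa'$ is the knapsack optimum over the other blocks. By \eqref{eq:legcost}, and since only the finitely many essential blocks matter (Lemma~\ref{lem:essential}, applied to the reduced block set), we have $\kappa'(i)\le\varepsilon' i+C$. Here $\varepsilon'=\max_{m\ne m_*}\varepsilon_m<\varepsilon_{m_*}$. The gap is
\[
\delta=m_*(\varepsilon_{m_*}-\varepsilon')=j+\tfrac1{p-1}-m_*\varepsilon'>0 .
\]
For $z\in\Z_p$ the denominators $\alpha'+p^j(z-1-i)$ are units. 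Moreover $\prod_{i<t}(z-i)=t!\binom zt$ has valuation at least $v_p(t!)\ge t/(p-1)-\log_pt-1$. Hence the $t$-th term of $F_\rho(z)$ has valuation at least $\delta t-O(\log t)$, uniformly in $z$. This gives uniform convergence and continuity.

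\emph{Step 3: local analyticity (the main obstacle).} Fix $z_0\in\Z_p$ and put $z=z_0+p^\ell w$ with $w\in\Z_p$. Each inverse denominator expands as a power series in $w$ with $p$-integral coefficients, so only the numerator $\prod_{i<t}(z_0-i+p^\ell w)$ needs attention.

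The difficulty is that a crude expansion of this numerator loses the factor $v_p(t!)\approx t/(p-1)$, and we need that factor to beat $\kappa'$. I would treat the $t$ factors according to $e_i=\min(v_p(z_0-i),\ell)$:
\begin{itemize}
\item If $v_p(z_0-i)<\ell$, the factor equals $(z_0-i)(1+p^{\ell-e_i}w')$ with $w'$ a unit multiple of $w$, so it is $(z_0-i)$ times a power series in $w$ with integral coefficients.
\item If $v_p(z_0-i)\ge\ell$, the factor equals $p^\ell$ times an integral linear polynomial in $w$.
\end{itemize}
So every coefficient, as a series in $w$, has valuation at least $\sum_{i<t}e_i$. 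Over $t$ consecutive integers $i$ this sum is at least $\sum_{e=1}^{\ell}\lfloor t/p^e\rfloor\ge v_p(t!)-t/(p^\ell(p-1))-\ell$. Choose $\ell$ with $1/(p^\ell(p-1))<\delta/2$. Then every coefficient of the $t$-th term, as a series in $w$, has valuation at least $\tfrac{\delta}{2}t-O(\log t)$. The sum over $t$ therefore converges in the Tate algebra $\Q_p\langle w\rangle$, which is the claimed restricted power series in $(z-z_0)/p^\ell$.

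Finally I would check that a block strictly most efficient among all blocks is also strictly most efficient among the essential ones. This is needed so that $\varepsilon'$ is a maximum over a finite set, as the remark before the theorem indicates.
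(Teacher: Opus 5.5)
Your proposal is correct and follows essentially the paper's proof. It uses the same telescoping identity (Lemma~\ref{lem:ident}), the same bound $v_p(h_i)\ge-\varepsilon' i$ from the cost of block vectors on the other blocks (Lemma~\ref{lem:hsize}), and the same Gauss-valuation estimate $\sum_{i<t}\min(v_p(z_0-i),\ell)\ge\sum_{s\le\ell}\lfloor t/p^s\rfloor$ on discs $z_0+p^\ell\Z_p$, with $\ell$ chosen against the efficiency gap.

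Your separate pointwise convergence step, the slack terms $C$ and $-\ell$, and the appeal to the essential-block reduction are not needed. Indeed $\kappa'(i)\le\varepsilon' i$ already holds because $v_p(\lambda!)\le\lambda/(p-1)$, and the maximum of $\varepsilon_m$ over the other blocks is attained because $\varepsilon_m\to0$. These extra steps do no harm, however.
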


The quantity $v_p(F_\rho(q))$ is measured against $\Gamma(q)=\mathrm{cost}(q\,e_{m_*})$, the cost of packing $q$ copies of the best block, and not against $\kappa(n)$.  Since $\kappa(n)\ge\Gamma(q)$ may be strict, $v_p(F_\rho(q))$ can be negative, and it equals the excess of $v_p(a_A(n))$ over the value $v_p(n!)-\kappa(n)$ of the calculus only when $\kappa(n)=\Gamma(q)$.  The idea of the proof is that $G=H(1-x^{m_*})^{-c_{m_*}}$ with $H=\sum h_ix^i$, so $[x^k]G=\sum_th_{\rho+tm_*}\beta_{q-t}$ and $\beta_{q-t}/\beta_q$ is the product in $F_\rho$.  The other blocks make $h_i$ grow at a rate below $\varepsilon_{m_*}$, which is why the series converges on discs.

\begin{corollary}[Zeros on discs]\label{cor:strassmann}
Under the hypotheses of Theorem~\ref{thm:interp}, for every disc $D=z_0+p^\ell\Z_p$ either $F_\rho$ vanishes identically on $D$, or $F_\rho$ has finitely many zeros in $D$, each of finite multiplicity.  Near a zero $z^*$ of multiplicity $\mu$ one has $v_p(F_\rho(q))=\mu\,v_p(q-z^*)+c$ for $q\in\N$ close enough to $z^*$, and $v_p(F_\rho)$ is bounded on the complement of any neighbourhood of the zeros.  In particular, if $F_\rho$ has a zero in $\Z_p$ then $v_p(F_\rho(q))$ is unbounded as $q$ ranges over $\N$.
\end{corollary}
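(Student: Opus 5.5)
The plan is to prove Corollary~\ref{cor:strassmann} from the restricted-power-series description in Theorem~\ref{thm:interp} together with Strassmann's theorem and Weierstrass preparation. Fix a disc $D=z_0+p^\ell\Z_p$ and substitute $z=z_0+p^\ell w$ with $w\in\Z_p$. By Theorem~\ref{thm:interp}, $F_\rho(z)=g(w)=\sum_{i\ge0}b_iw^i$ with $b_i\in\Q_p$ and $b_i\to0$. If every $b_i$ vanishes, then $F_\rho\equiv0$ on $D$, which is the first alternative. Otherwise we rescale by a power of $p$ so that $\min_iv_p(b_i)=0$, and let $N$ be the largest index with $v_p(b_i)=0$. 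Strassmann's theorem shows that $g$ has at most $N$ zeros in $\Z_p$, counted with multiplicity. Each zero then has finite multiplicity, since a zero of infinite multiplicity would make every derivative vanish and hence $g\equiv0$.

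For the local behaviour we use Weierstrass preparation in the Tate algebra $\Z_p\langle w\rangle\otimes\Q_p$. It gives $g=u\cdot P$, where $P$ is a distinguished polynomial of degree $N$ and $u$ is a unit of the restricted power series ring. A unit satisfies $v_p(u(w))=v_p(u(0))$, a constant, for all $w\in\Z_p$. Factor $P(w)=\prod_k(w-w_k)$ over a finite extension, with $|w_k|\le1$. The roots lying in $\Z_p$ are the zeros of $g$ in $D$. For any root $w_k\notin\Z_p$, the valuation $v_p(w-w_k)$ is bounded above on $\Z_p$, since $\Z_p$ is closed and $w_k$ lies at positive distance from it. This bound will need a short compactness or distance argument.

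Now let $z^*=z_0+p^\ell w^*$ be a zero of multiplicity $\mu$. For $w$ close enough to $w^*$, every factor with $w_k\ne w^*$ has constant valuation $v_p(w^*-w_k)$, by the ultrametric inequality. Hence
\[
v_p(g(w))=\mu\,v_p(w-w^*)+\text{const}.
\]
Converting back with $v_p(q-z^*)=\ell+v_p(w-w^*)$ gives the stated formula $v_p(F_\rho(q))=\mu\,v_p(q-z^*)+c$. Away from a neighbourhood of the zeros, each $v_p(w-w_k)$ is bounded, so $v_p(F_\rho)$ is bounded on $D$ minus that neighbourhood.

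Finally, $\Z_p$ is covered by the finitely many discs $z_0+p^\ell\Z_p$. On a disc where $F_\rho$ vanishes identically the last claim is trivial, because $v_p(F_\rho(q))=\infty$ for the nonnegative integers $q$ in the disc, and such $q$ exist. Otherwise, suppose $z^*$ is a zero. Since $\N$ is dense in $\Z_p$, there are $q\in\N$ with $v_p(q-z^*)$ arbitrarily large, and the local formula then makes $v_p(F_\rho(q))$ unbounded. The main obstacle is bookkeeping rather than ideas. The step needing the most care is the uniform constant near a zero when other roots of $P$ lie in $\Z_p$ or in an extension: we must choose the neighbourhood radius smaller than the distances from $w^*$ to all the other roots.
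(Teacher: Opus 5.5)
Your proposal is correct and follows essentially the paper's route: Strassmann's theorem on each disc gives finiteness, Weierstrass preparation gives the local formula $v_p(F_\rho(q))=\mu\,v_p(q-z^*)+c$, and density of $\N$ in $\Z_p$ gives unboundedness. The one difference is that you prepare once on all of $D$ and read both the local formula and the bound away from the zeros off the factorisation $g=uP$, which costs you the extra remark that roots of $P$ outside $\Z_p$ stay at positive distance. The paper instead prepares locally as $F_\rho=(z-z^*)^\mu U$ near each zero and gets the bound on the compact complement directly from continuity.
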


\begin{corollary}[The case $W=1$]\label{cor:dich}
Under the hypotheses of Theorem~\ref{thm:interp}, let $A\ne\emptyset$, $g=\gcd(A)$ and $W=1$.  Then $F_\rho$ is identically zero if and only if $g\nmid\rho$.  If $g\mid\rho$, then $F_\rho$ has finitely many zeros in $\Z_p$.
\end{corollary}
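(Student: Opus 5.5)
The plan is to treat the two directions and the finiteness claim separately, and to work throughout with the identity $a_A(n)/n!=\beta_qF_\rho(q)$ of Theorem~\ref{thm:interp}, for $n=m_*q+\rho$ and $W=1$, together with the local analyticity of $F_\rho$ on discs $z_0+p^\ell\Z_p$.

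\emph{If $g\nmid\rho$.} Every $m\in A$ is a multiple of $g$. Hence $F_A$ is a power series in $x^g$ and $a_A(n)=0$ whenever $g\nmid n$. The block $m_*$ satisfies $c_{m_*}\ne0$, and since $c_{m_*}=\frac1{m_*}\sum_{d\mid m_*,\,d\in A}\mu(m_*/d)$ is nonzero, some $d\in A$ divides $m_*$. I will sharpen this to $g\mid m_*$. For that, note that $\sum_{n\mid m}nc_n=[m\in A]$ and Möbius inversion show that $c_m=0$ unless every element of $A$ dividing $m$ contributes. Cleaner still, $\log F_A$ is a series in $x^g$, so by the uniqueness in Lemma~\ref{lem:eulerprod}, comparing with the product in the variable $x^g$ gives $c_m=0$ for $g\nmid m$. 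Hence $g\mid m_*$, so $n=m_*q+\rho\equiv\rho\pmod g$, and $a_A(n)=0$ for every $q\ge0$. Since $\beta_q\ne0$, $F_\rho(q)=0$ on $\N$. As $\N$ is dense in $\Z_p$ and $F_\rho$ is continuous, $F_\rho\equiv0$.

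\emph{If $g\mid\rho$.} Write $A\subseteq g\N$ with $\gcd(A)=g$. For all large $n$ divisible by $g$, $n$ is a nonnegative combination of elements of $A$, so $a_A(n)>0$. The numbers $n=m_*q+\rho$ are all divisible by $g$, so $F_\rho(q)\ne0$ for all large $q\in\N$. In particular $F_\rho$ is not identically zero on any disc $D=z_0+p^\ell\Z_p$, because each such disc contains infinitely many large natural numbers. Corollary~\ref{cor:strassmann} then gives finitely many zeros in each disc. Since $\Z_p$ is covered by the $p^\ell$ discs of radius $p^{-\ell}$, the total number of zeros is finite.

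\emph{Main obstacle.} The step needing care is the claim $g\mid m_*$, equivalently $c_m=0$ unless $g\mid m$. I would prove it directly from the formula for $c_m$. If $g\nmid m$, then no $d\in A$ divides $m$ (every $d\in A$ is a multiple of $g$), so the sum defining $c_m$ is empty. The remaining ingredient, that $a_A(n)>0$ for large $n\equiv0\pmod g$, is the standard Frobenius coin fact for the finitely many elements of $A$ whose gcd is $g$.
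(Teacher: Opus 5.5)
Your proof is correct and follows essentially the same route as the paper. You get $g\mid m_*$ because the sum defining $c_m$ is empty when $g\nmid m$. For $g\nmid\rho$ you use vanishing on $\N$ together with density and continuity. For $g\mid\rho$ you apply the Frobenius coin argument to a finite subset of $A$ with gcd $g$, which gives $F_\rho(q)\ne0$ for large $q$, so no disc is a zero disc and Corollary~\ref{cor:strassmann} applies on the finitely many discs. The half-sentence about M\"obius inversion and ``every element of $A$ dividing $m$ contributes'' that precedes your empty-sum argument is muddled and can simply be deleted.
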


\subsection{Column 2 and the \texorpdfstring{$p$}{p}-adic integer \texorpdfstring{$q^*$}{q*}}

Take $A=A_{>2}$ and $W=1/(1-x)$, so that $G=(1-x)^{-2}e^{-x-x^2/2}$ and $Q_{k+3}=k!\,[x^k]G$ (Theorem~\ref{thm:quotient}).  For an odd prime $p$ the block $m_*=p$ has depth $1$ and $\alpha'=1$, and every other block has depth $1$ and degree at least $3p$ (Section~\ref{sec:second}).  These are the only properties used, and Theorem~\ref{thm:col2} is the first level of the following statement.

\begin{theorem}[The $p$-adic integer $q^*$]\label{thm:qstar}
Let $p$ be an odd prime and $0\le\nu<p$ with $v_p(Q_{\nu+3})=1$, and let $\xi$ and $\zeta$ be as in Theorem~\ref{thm:col2} and $F_\nu$ the function of Theorem~\ref{thm:interp} for this $G$, $m_*=p$ and $\rho=\nu$.  If $p\nmid\zeta$, there is a unique $q^*\in\Z_p$, with $q^*\equiv-\xi/\zeta\pmod p$, such that $v_p(F_\nu(z))=1+v_p(z-q^*)$ for every $z\in\Z_p$.  Consequently $v_p(Q_n)=1+v_p(q-q^*)$ for every $n=\nu+3+pq$ with $q\in\N$, and $q^*\notin\N$.  If $p\mid\zeta$, then $v_p(Q_n)=1$ for every $n=\nu+3+pq$ with $q\in\N$.
\end{theorem}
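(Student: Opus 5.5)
The plan is to work entirely with the function $F_\nu$ supplied by Theorem~\ref{thm:interp}, with $m_*=p$, $j=1$, $\alpha'=1$. First I will show that $F_\nu$ maps $\Z_p$ into $p\Z_p$ and that $F_\nu(z)/p \bmod p$ is the polynomial $\xi+\zeta z$ on all of $\Z_p$. For this I expand
\[
F_\nu(z)=\sum_{t\ge0}h_{\nu+tp}\prod_{i<t}\frac{p(z-i)}{1+p(z-1-i)}.
\]
The $t$-th term is $h_{\nu+tp}\,p^t\binom{z}{t}t!$ times a unit. The other blocks have depth $1$ and degree at least $3p$, so $h_i$ acquires denominators only through them. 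Hence $v_p(h_{\nu+tp})\ge-\lfloor t/3\rfloor-v_p(\lfloor t/3\rfloor!)$, roughly. Combined with $p^t t!$, the terms with $t\ge2$ have valuation at least $2$, and this lower bound grows linearly in $t$.

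Only $t=0,1$ then survive modulo $p^2$. The term $t=0$ is $h_\nu=[x^\nu]G=Q_{\nu+3}/\nu!$, of valuation exactly $1$ by hypothesis. The term $t=1$ is $h_{\nu+p}\,pz/(1+p(z-1))$. So modulo $p^2$ we get $F_\nu(z)\equiv p(\xi+\zeta z)$, where $\xi=h_\nu/p$ and $\zeta$ is the residue of $h_{\nu+p}$, matching Theorem~\ref{thm:col2}. I will verify the $t\ge2$ bounds carefully, including the small-$p$ case $p=3$ where $v_p(t!)$ is largest.

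Next comes the case analysis. If $p\mid\zeta$, then $F_\nu(z)/p\equiv\xi\not\equiv0$ for every $z$. Hence $v_p(F_\nu(q))=1$, and Theorem~\ref{thm:interp} with $\Gamma(q)=q+v_p(q!)$ gives $v_p(Q_n)=v_p(k!)-\Gamma(q)+1$ for $k=\nu+pq$. Since $v_p(k!)=q+v_p(q!)$ when $\nu<p$, this is $1$.

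If $p\nmid\zeta$, write $\Phi(z)=F_\nu(z)/p$, a function $\Z_p\to\Z_p$. The aim is to show that $\Phi$ is given on all of $\Z_p$ by one restricted power series $\sum b_i z^i$ with $b_1$ a unit and $b_i\in p\Z_p$ for $i\ge2$. Mahler-type expansions of the terms $p^t\binom zt t!\cdot(\text{unit in }z)$ will supply this, with $\ell=0$ in Theorem~\ref{thm:interp}. With that in hand, Weierstrass preparation (or Hensel/Newton iteration) gives a unique simple zero $q^*\equiv-\xi/\zeta$. It also gives $\Phi(z)=(z-q^*)U(z)$ with $U$ a unit-valued power series, so $v_p(F_\nu(z))=1+v_p(z-q^*)$ for every $z\in\Z_p$.

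For $q^*\notin\N$: if $q^*=q_0\in\N$, then $F_\nu(q_0)=0$, so $Q_{\nu+3+pq_0}=0$. But $Q_n>0$ for $n\ge3$, since $T(n,2)>0$. Finally the formula for $v_p(Q_n)$ follows as in the first case.

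The main obstacle I expect is the uniform analyticity, i.e. showing that the higher Taylor coefficients $b_i$, $i\ge2$, of $\Phi$ lie in $p\Z_p$ on the whole of $\Z_p$, not just on small discs. The factors $1/(1+p(z-1-i))$ must be expanded as power series in $z$ with coefficients in $p\Z_p$, and $p^t\binom zt t!$ has integral coefficients divisible by $p^{t-?}$. I would try first to bound the coefficient of $z^i$ in the $t$-th term by $p^{t-v_p(t!)-\lfloor t/3\rfloor-\dots}$ and check that it is $\ge2$ for $t\ge2$.
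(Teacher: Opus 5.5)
Your proposal follows the paper's proof. You expand $F_\nu$ term by term and show that the terms with $t\ge2$ are $O(p^2)$, so that $F_\nu/p\equiv\xi+\zeta z\pmod p$, where $h_\nu=p\xi$ and $h_{\nu+p}=g_{\nu+p}-c_pg_\nu=\zeta$ exactly. You then apply Weierstrass preparation and use $Q_n>0$ to rule out $q^*\in\N$.

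The step you flag as the main obstacle is where the paper's proof does its work. It is easier than you expect, but your tentative bound for it is wrong.

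\emph{The correct bound.} In $\Z_p\langle z\rangle$ the factor $p^t\prod_{i<t}(z-i)$ has Gauss valuation exactly $t$, because $\prod_{i<t}(z-i)$ is a monic polynomial in $\Z[z]$. Each $1/(1+p(z-1-i))$ is a unit of $\Z_p\langle z\rangle$. So the $t$-th term has Gauss valuation at least $t+v_p(h_{\nu+tp})$, and no factor $t!$ is gained or lost.

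\emph{Why your version fails.} The gain of $v_p(t!)$ in your first paragraph holds only pointwise on $\Z_p$: the sup norm of $\prod_{i<t}(z-i)$ on $\Z_p$ is $|t!|_p$, but its Gauss norm is $1$. A pointwise bound cannot be fed into Weierstrass preparation. Going the other way, subtracting $v_p(t!)$ as in your proposed coefficient bound fails for $p=3$ already at $t=3$, where $3-v_3(3!)-1=1<2$.

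\emph{Finishing the estimate.} Your bound $v_p(h_{\nu+tp})\ge-s-v_p(s!)$ with $s=\lfloor t/3\rfloor$ is correct, and slightly sharper than the paper's $-\frac p{p-1}\lfloor t/3\rfloor$. Combined with the Gauss bound above:
\begin{itemize}
\item for $t\ge3$, the $t$-th term has Gauss valuation at least $t-s-v_p(s!)\ge(3s+1)/2\ge2$, using $v_p(s!)\le(s-1)/2$;
\item for $t=2$, it has Gauss valuation at least $2$, since $h_{\nu+2p}\in\Z_p$;
\item the bound tends to infinity with $t$.
\end{itemize}
So the series converges in $\Z_p\langle z\rangle$, and $F_\nu/p=\xi+\zeta z+pR(z)$ with $R\in\Z_p\langle z\rangle$ on the whole disc. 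That is all Weierstrass preparation needs, and no Mahler expansion is required.
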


The proof expands $F_\nu/p$ modulo $p$ on the full disc $\Z_p$, where it is $\xi+\zeta z$, and applies Weierstrass preparation in its general form, which gives a monic polynomial of degree one with reduction $z+\xi/\zeta$ (Appendix~\ref{app:qstar}).  The numbers $Q_n$ are non-zero because $T(n,2)>0$ for $n\ge3$, since an $n$-cycle has no cycle of length $1$ or $2$, and this is why $q^*$ is not a natural number.  For the seven pairs $(p,\nu+3)$ listed in Question~\ref{prob:odd} the root $q^*$ modulo $p^j$ is the residue class found there.

\section{Composite moduli}\label{sec:composite}

This section combines the primes.  It describes the set of $n$ with $m\mid a_A(n)$ for a composite $m$ and the gcd of $a_A(n)$ over $n\ge n_0$.  For a set $X\subseteq\N$ put $\dens X=\lim_{x\to\infty}|X\cap[0,x)|/x$ when the limit exists.  We write $a=a_A$, and $Q(n)=a_{A_{>r}}(n)/\fall{n-1}{r}$ for the quotient family of Theorem~\ref{thm:quotient}, studied in the variable $n-r-1$.

\subsection{Two kinds of primes}

The example of $A_3(n)$ modulo $5$ and modulo $10$ in Section~\ref{sec:main} shows the two behaviours.

\begin{definition}\label{def:types}
Let $A\subseteq\N$ and let $p$ be a prime.  We say that $p$ is of \emph{type~I} for $A$ if $[1\in A]=[p\in A]$, and of \emph{type~II} otherwise.  For an integer $m=\prod p^{e_p}$ we write $m=m_{\mathrm I}m_{\mathrm{II}}$, where $m_{\mathrm I}$ collects the prime powers at primes of type~I.  Put $t_p(e)=p^e$ for odd $p$ and $t_2(e)=2^{\max(e,4)}$, and for $e\ge1$ let $Z(p^e)=\{n\ge0:p^e\mid a(n)\}$.
\end{definition}

For $A_{\le r}$ and for $A_{>r}$ the primes of type~I are the $p\le r$.  For derangements every prime is of type~II.  Type~I is case~(A) of the dichotomy of Theorem~\ref{thm:dichotomy}.

\begin{lemma}[Primes of type~II]\label{lem:typeB}
Let $p$ be of type~II for $A$.
\begin{enumerate}[label=(\alph*),itemsep=0pt]
\item For every $e\ge1$ and $n\ge0$ one has $a(n+p^e)\equiv\pm a(n)\pmod{p^e}$.  For odd $p$ the sign is $+$ when $1\in A$, $p\notin A$, and $-$ when $1\notin A$, $p\in A$.  For $p=2$ the sign is $+$ except for $e=2$, where it is $(-1)^{[2\in A]+[4\in A]}$.  Hence $Z(p^e)$ is purely periodic, its least period $L_Z(p^e)$ divides $p^e$, and the least period $\varpi(p^e)$ of $a(n)\bmod p^e$ divides $2p^e$, and divides $p^e$ when the sign is $+$, in particular for $p=2$ and $e\ne2$.
\item $p\nmid a(n)$ whenever $p\mid n$.  In particular $p^e\mid a(n)$ fails for infinitely many $n$.
\item The calculus at $p$ gives $v_p(n!)-\kappa(n)=0$ for every $n$, and the exceptional set is $E_p=Z(p)=\{n:p\mid a(n\bmod p)\}$, a union of classes modulo $p$.  For $p=2$ it is empty if $1\in A$, and it is the set of odd $n$ if $1\notin A$.
\item If $p$ is odd then $m_*=p$, $j=1$, every other block has $\varepsilon_mp<1$, and for $n=pq+\nu$ with $0\le\nu<p$ one has $v_p(a(n))=v_p(F_\nu(q))$, with $F_\nu$ the function of Theorem~\ref{thm:interp}.
\end{enumerate}
\end{lemma}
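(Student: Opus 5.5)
Parts (a) and (b) come from Theorem~\ref{thm:dichotomy}, case (B), translated into the language of $A$. Parts (c) and (d) come from checking that the single block $p$ of depth $1$ dominates the calculus at $p$.

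For (a), note that $w_m=[m\notin S]=[m\in A]$, so type~II is exactly $w_1\ne w_p$, which is case (B) of Theorem~\ref{thm:dichotomy}. For odd $p$ the case $1\notin S$, $p\in S$ is $1\in A$, $p\notin A$ and gives the sign $+$. The case $1\in S$, $p\notin S$ is $1\notin A$, $p\in A$ and gives the sign $-$. For $p=2$ the sign is $+$ for $e\ne2$ and $(-1)^{w_2+w_4}=(-1)^{[2\in A]+[4\in A]}$ for $e=2$. Since $a(n+p^e)\equiv\pm a(n)$, membership of $n$ in $Z(p^e)$ depends only on $n\bmod p^e$, so $Z(p^e)$ is purely periodic and $L_Z(p^e)\mid p^e$. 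Iterating the congruence twice gives $a(n+2p^e)\equiv a(n)$, so $\varpi(p^e)\mid 2p^e$, and $\varpi(p^e)\mid p^e$ when the sign is $+$.

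For (b), apply (a) with $e=1$ repeatedly. It gives $a(pt)\equiv(\pm1)^t a(0)=\pm1\pmod p$, so $p\nmid a(n)$ whenever $p\mid n$. This already gives infinitely many $n$ with $p^e\nmid a(n)$.

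For (c), first compute $c_p=([p\in A]-[1\in A])/p=\pm1/p$ from Lemma~\ref{lem:eulerprod}, so $p$ is a block of depth $1$ with $\alpha_p=\pm1$.

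\emph{Lower bound.} The vector of $q=\lfloor n/p\rfloor$ copies of $p$ has cost $q+v_p(q!)=v_p((pq)!)=v_p(n!)$, because the leftover $\nu$ is less than $p$. Hence $\kappa(n)\ge v_p(n!)$.

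\emph{Upper bound.} For any block vector $V$, each block $m$ has $j_m\le v_p(m)$ by Lemma~\ref{lem:blockfacts}. So $j_m\lambda+v_p(\lambda!)\le v_p(m^\lambda\lambda!)\le v_p((m\lambda)!)$, since $(m\lambda)!/(m!^\lambda\lambda!)$ is an integer and $m^\lambda\mid m!^\lambda$. Summing over the blocks and using integrality of multinomial coefficients gives $\mathrm{cost}(V)\le v_p((\deg V)!)\le v_p(n!)$. This step is the only real content and I expect it to be the main point to get right.

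Thus $v_p(n!)-\kappa(n)=0$, and Corollary~\ref{cor:calc} says that $\Xi(n)=0$ exactly when $p\mid a(n)$. So $E_p=Z(p)$. By (a) this set is a union of classes modulo $p$, and each class is decided by $a(n\bmod p)$. For $p=2$ use $a(0)=1$ and $a(1)=[1\in A]$.

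For (d), the block $p$ has efficiency $(1+\frac1{p-1})/p=\frac1{p-1}$. Let $m\ne p$ be another block of depth $j$. The condition $\varepsilon_m<\frac1{p-1}$ is equivalent to $(p-1)j+1<m$. Split by depth.
\begin{itemize}
\item If $j=1$, then $m\ge2p>p$, so the inequality holds.
\item If $j\ge2$, then $m\ge p^j>(p-1)j+1$, which holds for $p\ge3$.
\end{itemize}
So $p$ is strictly the most efficient block, and it is also $\min\mathcal B$. Theorem~\ref{thm:interp} with $W=1$ then gives $v_p(a(n))=v_p(n!)-q-v_p(q!)+v_p(F_\nu(q))=v_p(F_\nu(q))$ by Legendre's formula.
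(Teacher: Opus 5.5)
Parts (a) and (b) follow the paper's argument. In (d), however, you prove a weaker inequality than the one stated. The lemma asserts $\varepsilon_mp<1$, that is $\varepsilon_m<\frac1p$. You show only $\varepsilon_m<\frac1{p-1}=\varepsilon_p$, i.e.\ $(p-1)j+1<m$.

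This weaker bound is exactly the hypothesis of Theorem~\ref{thm:interp}, so your formula $v_p(a(n))=v_p(F_\nu(q))$ stands. But the explicit claim $\varepsilon_mp<1$ is left unproved, and your depth-one step, which uses only $m>p$, cannot give it.

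The two bounds genuinely differ. Your inequality $\varepsilon_m<\varepsilon_p$ also holds at $p=2$, where $\varepsilon_mp<1$ can fail: for $1,4\in A$ and $2\notin A$, the block $4$ has depth $2$ and $2\varepsilon_4=\frac32$, while $\varepsilon_4=\frac34<\varepsilon_2=1$. So oddness of $p$ has to be used more strongly. The repair is short:
\begin{itemize}
\item A depth-one block $m\ne p$ has $m\ge2p$, so $\varepsilon_mp\le\frac{p}{2(p-1)}<1$ for $p\ge3$.
\item A block of depth $j\ge2$ has $m\ge p^j$, so $\varepsilon_mp\le(j+\frac1{p-1})/p^{j-1}\le\frac56<1$, the middle expression decreasing in $j$.
\end{itemize}

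Your (c) is correct and takes a different route from the paper. For odd $p$ the paper checks condition ($\star$) of Theorem~\ref{thm:dominant} with $m_*=p$. This shows that $q$ copies of $p$ form the \emph{unique} maximiser and that $\Xi(n)$ is a unit times $a(\nu)/\nu!$, so the residue classes come out of the calculus itself. For $p=2$ the paper trades each block $2^kw$ for $2^{k-1}w\ge k$ copies of $2$, and then uses (a).

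Your bound $\mathrm{cost}(V)\le v_p((\deg V)!)$ holds for every $A$ and every $p$. It uses only $j_m\le v_p(m)$ and the integrality of $(m\lambda)!/(m^\lambda\lambda!)$ and of multinomial coefficients, so it handles both parities at once and is shorter. It does not give uniqueness of the maximiser. You instead take the description $Z(p)=\{n:p\mid a(n\bmod p)\}$ from the shift congruences in (a) rather than from $\Xi$, which is legitimate here.
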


The proof of Lemma~\ref{lem:typeB} is in Appendix~\ref{app:moved}.

\begin{lemma}[Primes of type~I]\label{lem:typeA}
Let $p$ be of type~I for $A$.
\begin{enumerate}[label=(\alph*),itemsep=0pt]
\item $p^e\mid a(n)$ for all $n\ge t_p(e)$.  Hence $n_0(p^e)=\min\{N:p^e\mid a(n)\text{ for all }n\ge N\}$ equals $1+\max\{n<t_p(e):p^e\nmid a(n)\}$, or $0$, a finite computation.
\item Every block $m$ has $\varepsilon_m<\frac1{p-1}$.  Let $\varepsilon_{\max}$ be the largest efficiency of a block, and $\varepsilon_{\max}=0$ if there is none.  Then $\sigma_p=\frac1{p-1}-\varepsilon_{\max}>0$ and
\[
v_p\bigl(a(n)\bigr)\ \ge\ \sigma_pn-\log_pn-1\qquad(n\ge1).
\]
\end{enumerate}
\end{lemma}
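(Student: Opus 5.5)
Part (a) is a direct application of Theorem~\ref{thm:dichotomy}. Since $p$ is of type~I, we have $[1\in A]=[p\in A]$, which in the language of excluded sets is $w_1=w_p$, i.e.\ case~(A). That case gives $p^e\mid a(n)$ for all $n\ge p^e$ when $p$ is odd, and for all $n\ge 2^{\max(e,4)}$ when $p=2$; this threshold is $t_p(e)$. The statement about $n_0(p^e)$ then follows: any failure of $p^e\mid a(n)$ must occur at some $n<t_p(e)$, so $n_0$ is one more than the largest such failure, or $0$ if there is none, and this is read off from finitely many values.

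For part (b) I would first prove the efficiency bound and then feed it into Corollary~\ref{cor:calc}.

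\emph{Efficiency bound.} Let $m$ be a block. By Lemma~\ref{lem:blockfacts}, $p\mid m$ and $j_m\le v_p(m)$. The key input from type~I concerns $mc_m=\sum_{d\mid m,\,d\in A}\mu(m/d)$. Its value $p$-adically is controlled by pairing each $d$ with $pd$: the terms with $v_p(m/d)\ge2$ vanish, so only $d=m$ and $d=m/p$ matter at the top level. I expect type~I to force an extra factor of $p$, giving $j_m\le v_p(m)-1$. I am less sure of this than of the rest, and would first test it on $A_{\le r}$ with $p\le r$, where $1,p\in A$.

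Granting $j_m\le v_p(m)-1$, write $m=p^k w$. Then
\[
\varepsilon_m\le\frac{k-1+\frac1{p-1}}{p^kw}.
\]
It remains to check that this is less than $\frac1{p-1}$, i.e.\ $(k-1)(p-1)+1<p^k$. This holds for all $k\ge1$. If instead only $j_m\le v_p(m)$ is available, the bound $(k+\frac1{p-1})/p^k<\frac1{p-1}$ still holds whenever $p^k>k(p-1)+1$, which fails exactly when $k=1$. So the crucial case is $v_p(m)=1$ with $j_m=1$, and I must exclude it using type~I. When $v_p(m)=1$, I would compute $mc_m$ modulo $p$ by the pairing $d\leftrightarrow pd$ (together with the sign $\mu(p)=-1$), and show that the resulting sum is divisible by $p$, so $j_m=0$. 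This is the main obstacle.

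Once every block satisfies $\varepsilon_m<\frac1{p-1}$, the maximum $\varepsilon_{\max}$ exists. Only finitely many essential blocks matter, by Lemma~\ref{lem:essential}, and efficiencies tend to $0$ as $m\to\infty$. Hence $\sigma_p>0$.

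\emph{Valuation bound.} Using \eqref{eq:legcost}, any block vector $V$ of degree at most $n$ has
\[
\mathrm{cost}(V)=\sum_m\Bigl(e_m\lambda_m-\frac{s_p(\lambda_m)}{p-1}\Bigr)\le\sum_m \varepsilon_m m\lambda_m\le\varepsilon_{\max}\,n.
\]
Legendre gives
\[
v_p(n!)=\frac{n-s_p(n)}{p-1}\ge\frac{n}{p-1}-(\lfloor\log_pn\rfloor+1).
\]
Corollary~\ref{cor:calc} then yields
\[
v_p(a(n))\ge v_p(n!)-\kappa(n)\ge\sigma_pn-\log_pn-1.
\]
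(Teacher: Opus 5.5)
Part (a), the existence of $\varepsilon_{\max}$, the cost estimate $\mathrm{cost}(V)\le\varepsilon_{\max}\deg V$, Legendre's bound and the appeal to Corollary~\ref{cor:calc} are all fine and match the paper. The gap is in the efficiency bound of (b). The step you flag as the main obstacle is false, so it cannot be proved.

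Type~I does not force $j_m\le v_p(m)-1$. It also does not force blocks with $v_p(m)=1$ to have depth $0$. A counterexample is $A=A_{\le3}$ at $p=3$, which is type~I because $1,3\in A$:
\begin{itemize}
\item $6c_6=\mu(6)+\mu(3)+\mu(2)=-1$, so $6$ is a block with $j_6=1=v_3(6)$.
\item $9c_9=\mu(9)+\mu(3)=-1$, so $j_9=2=v_3(9)$.
\end{itemize}
Similarly, $A_{\le4}$ at $p=2$ has blocks with $j_6=1$ and $j_8=3$. Your pairing shows why. For $m=pw$ with $p\nmid w$ it gives
\[
mc_m=\sum_{d'\mid w}\mu(w/d')\bigl([pd'\in A]-[d'\in A]\bigr).
\]
Type~I kills only the term $d'=1$, and the other terms survive. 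Your remark that only $d=m$ and $d=m/p$ matter is also wrong: every $d$ with $v_p(d)\in\{k-1,k\}$ contributes.

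The case analysis went astray because you bounded $m=p^kw$ below by $p^k$ and lost the factor $w$. If you keep it, then for $k=1$ and $j_m=1$ you get
\[
\varepsilon_m=\frac{1+\frac1{p-1}}{pw}=\frac1{(p-1)w},
\]
which is below $\frac1{p-1}$ as soon as $w\ge2$. So the only problematic block is $m=p$ itself, and this is exactly where type~I enters. Since $c_p=\frac1p\bigl([p\in A]-[1\in A]\bigr)=0$, the integer $p$ is not a block.

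The paper's argument runs as follows. Every block with $j_m=1$ has $m\ge2p>(p-1)j_m+1$. Every block with $j_m\ge2$ has $m\ge p^{j_m}>(p-1)j_m+1$. The inequality $m>(p-1)j_m+1$ is equivalent to $\varepsilon_m<\frac1{p-1}$. If you replace your efficiency step with this, the rest of your proof goes through unchanged.
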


\begin{proof}
(a) is case~(A) of Theorem~\ref{thm:dichotomy}.  (b) Here $c_p=\frac1p([p\in A]-[1\in A])=0$, so $p$ is not a block.  A block $m$ has $p\mid m$ and $j_m\le v_p(m)$, so $m\ge p^{j_m}$.  If $j_m=1$ then $m\ge2p>(p-1)j_m+1$, and if $j_m\ge2$ then $p^{j_m}>(p-1)j_m+1$.  So $m>(p-1)j_m+1$, which is $\varepsilon_m<\frac1{p-1}$.  Since $\varepsilon_m\le(\log_pm+1)/m\to0$, the supremum is a maximum when blocks exist.  A block vector $V$ has $\mathrm{cost}(V)\le\sum_m\varepsilon_mm\lambda_m\le\varepsilon_{\max}\deg V$, so $\kappa(n)\le\varepsilon_{\max}n$, while $v_p(n!)=(n-s_p(n))/(p-1)\ge\frac n{p-1}-\log_pn-1$, and Corollary~\ref{cor:calc} gives the bound.  If there are no blocks, $\kappa=0$ by Theorem~\ref{thm:saturation}.
\end{proof}

The composite statements rest on one elementary lemma.  For a set $S\subseteq\N$ that is purely periodic with period $L$ and an integer $\mathcal M\mid L$, the \emph{profile} of $S$ modulo $\mathcal M$ is $\pi_S(c)=\frac{\mathcal M}L\,|S\cap[0,L)\cap(c+\mathcal M\Z)|$, the density of $S$ inside the class $c$.

\begin{lemma}[Chinese remainder theorem for periodic sets]\label{lem:crt}
Let $S_1,\dots,S_h$ be purely periodic with periods $L_1,\dots,L_h$.
\begin{enumerate}[label=(\alph*),itemsep=0pt]
\item For a prime $\ell$ let $s_\ell$ be the second largest of the numbers $v_\ell(L_i)$, and $0$ if $h=1$, and put $\mathcal M=\prod_\ell\ell^{s_\ell}$.  Then
\[
\dens\Bigl(\bigcap_iS_i\Bigr)=\frac1{\mathcal M}\sum_{c\bmod\mathcal M}\ \prod_i\pi_{S_i}(c),
\]
with $\pi_{S_i}$ the profile modulo $\gcd(\mathcal M,L_i)$, read at $c\bmod\gcd(\mathcal M,L_i)$.  The same holds with any $S_i$ replaced by its complement.
\item For $h=2$ one has $\mathcal M=\gcd(L_1,L_2)$ and $\dens(S_1\cap S_2)=\dens S_1\dens S_2+\mathrm{Cov}$ with $\mathrm{Cov}=\frac1{\mathcal M}\sum_c(\pi_{S_1}(c)-\dens S_1)(\pi_{S_2}(c)-\dens S_2)$.  So $S_1$ and $S_2$ are independent if and only if $\mathrm{Cov}=0$, which holds when one profile is constant, in particular when $\mathcal M=1$.
\item If the $L_i$ are pairwise coprime, $L_i$ is the least period of $S_i$ and every $S_i$ is non-empty, then the least period of $\bigcap_iS_i$ is $\prod_iL_i$.
\end{enumerate}
\end{lemma}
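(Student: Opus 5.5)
The plan is to reduce everything to counting residues modulo $L=\operatorname{lcm}(L_1,\dots,L_h)$. Each $S_i$, and hence $\bigcap_iS_i$, is purely periodic with period $L$. So the density in (a) equals $\frac1L\,\bigl|\{n\in[0,L):n\bmod L_i\in S_i\text{ for all }i\}\bigr|$, where membership in $S_i$ depends only on $n\bmod L_i$. I will split $[0,L)$ into the classes $c\bmod\mathcal M$, each containing $L/\mathcal M$ elements, and show two facts. First, as $n$ runs uniformly over such a class, the residues $(n\bmod L_i)_i$ are independent. Second, each residue is uniformly distributed over the lifts of $c\bmod\gcd(\mathcal M,L_i)$ to $\Z/L_i$. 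Granting these, the conditional probability that $n\in S_i$ is exactly the profile $\pi_{S_i}(c)$ modulo $\gcd(\mathcal M,L_i)$, and averaging the product over $c$ gives (a). The complement of $S_i$ has the same period and profile $1-\pi_{S_i}$, so the same computation applies to it.

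The independence comes from the Chinese remainder theorem one prime $\ell$ at a time. Write $t_\ell=\max_iv_\ell(L_i)$, so that $v_\ell(\mathcal M)=s_\ell\le t_\ell$. If the maximum is attained twice, then $s_\ell=t_\ell$. In that case every $\ell$-component $n\bmod\ell^{v_\ell(L_i)}$ is already determined by $c$. If the maximum is attained by a single index $i_\ell$, then for every $i\ne i_\ell$ one has $v_\ell(L_i)\le s_\ell$, so the $\ell$-component of $n\bmod L_i$ is determined by $c$. Only the $\ell$-component of $n\bmod L_{i_\ell}$ is free. As $n$ runs over the class $c\bmod\mathcal M$ inside $[0,L)$, this component runs uniformly over the $\ell^{t_\ell-s_\ell}$ lifts of $c\bmod\ell^{s_\ell}$, independently for different primes $\ell$. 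Each free prime component belongs to exactly one index $i$. Hence the vector $(n\bmod L_i)_i$ is uniform on the product of the fibres, which gives both claimed facts. I expect this bookkeeping to be the main obstacle. In particular I must check carefully that $\gcd(\mathcal M,L_i)$ has $\ell$-exponent $\min(s_\ell,v_\ell(L_i))$. I must also check that the fibre over $c\bmod\gcd(\mathcal M,L_i)$ in $\Z/L_i$ has exactly the prime-by-prime product structure used above.

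For (b), with $h=2$ the second largest of $v_\ell(L_1),v_\ell(L_2)$ is their minimum, so $\mathcal M=\gcd(L_1,L_2)$. Averaging the profile over the classes gives the density, $\frac1{\mathcal M}\sum_c\pi_{S_i}(c)=\dens S_i$. Expanding $\pi_{S_1}\pi_{S_2}=\bigl((\pi_{S_1}-\dens S_1)+\dens S_1\bigr)\bigl((\pi_{S_2}-\dens S_2)+\dens S_2\bigr)$ in the formula of (a) therefore gives the product of the densities plus $\mathrm{Cov}$. If one profile is constant, its centred term vanishes identically, so $\mathrm{Cov}=0$. When $\mathcal M=1$ there is only one class, so the profile is trivially constant.

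For (c), $\bigcap_iS_i$ has period $\prod_iL_i$, so its least period $P$ divides $\prod_iL_i$. Fix $j$, take any $x\in S_j$, and pick $y_i\in S_i$ for $i\ne j$, which is possible because the sets are non-empty. By the Chinese remainder theorem there is an $n$ with $n\equiv x\pmod{L_j}$ and $n\equiv y_i\pmod{L_i}$ for $i\ne j$. This $n$ lies in the intersection, so $n+P$ does too, and therefore $x+P\in S_j$. Thus $S_j$ is invariant under translation by $P$. Being also $L_j$-periodic, it has period $\gcd(P,L_j)$, and minimality of $L_j$ forces $L_j\mid P$. Since the $L_j$ are pairwise coprime, $\prod_jL_j\mid P$, hence $P=\prod_jL_j$.
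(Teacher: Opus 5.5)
Your proposal is correct and follows the paper's own proof. For (a) you use the same CRT decomposition of $n\bmod\operatorname{lcm}L_i$ into $\ell$-adic components, in which the digits above $\ell^{s_\ell}$ are free only at the index where $v_\ell(L_i)$ is the unique maximum, and (b) and (c) are the paper's expansion and its ``a translation preserves a product of non-empty factors only if it preserves each factor'' argument, which you make explicit with a CRT witness.
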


The proof of Lemma~\ref{lem:crt} is in Appendix~\ref{app:moved}.

\subsection{Composite moduli}

The headline is Theorem~\ref{thm:introcomp}, which the next theorem restates with the exact threshold and the least period.  The details on residues and tail gcds follow in a proposition.

\begin{theorem}[Composite moduli]\label{thm:composite}
Let $A\subseteq\N$, $a=a_A$ and $m=m_{\mathrm I}m_{\mathrm{II}}$.
\begin{enumerate}[label=(\roman*),itemsep=0pt]
\item $m_{\mathrm I}\mid a(n)$ for all $n\ge n_0(m_{\mathrm I})=\max_{p\mid m_{\mathrm I}}n_0(p^{e_p})$, where the maximum over the empty set is $0$, and if $n_0(m_{\mathrm I})\ge1$ then $m_{\mathrm I}\nmid a(n_0(m_{\mathrm I})-1)$.
\item The set $Z(m_{\mathrm{II}})=\{n:m_{\mathrm{II}}\mid a(n)\}$ is purely periodic.  Its least period is $\prod_{p\mid m_{\mathrm{II}}}L_Z(p^{e_p})$ if every $Z(p^{e_p})$ is non-empty, and it is empty otherwise.
\item $Z(m)$ and $Z(m_{\mathrm{II}})$ agree on $[n_0(m_{\mathrm I}),\infty)$, and $\dens Z(m)=\prod_{p\mid m_{\mathrm{II}}}\dens Z(p^{e_p})$.  For distinct primes of type~II the events $p^{e_p}\mid a(n)$ are independent.
\item $m\mid a(n)$ for all large $n$ if and only if $m_{\mathrm{II}}=1$.
\end{enumerate}
\end{theorem}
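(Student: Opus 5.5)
The plan is to assemble Theorem~\ref{thm:composite} from the two prime-level lemmas, Lemma~\ref{lem:typeA} for primes of type~I and Lemma~\ref{lem:typeB} for primes of type~II, and to combine them with the Chinese remainder theorem in the form of Lemma~\ref{lem:crt}.

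\textbf{Part (i).} Since $m_{\mathrm I}$ is a product of coprime prime powers, $m_{\mathrm I}\mid a(n)$ exactly when $p^{e_p}\mid a(n)$ for every $p\mid m_{\mathrm I}$. By Lemma~\ref{lem:typeA}(a), each condition holds for all $n\ge n_0(p^{e_p})$, and this threshold is finite. So $m_{\mathrm I}\mid a(n)$ for all $n\ge\max_pn_0(p^{e_p})$. For sharpness, let $p$ attain the maximum. Then $p^{e_p}\nmid a(n_0(p^{e_p})-1)$ by the definition of $n_0$, and hence $m_{\mathrm I}\nmid a(n_0(m_{\mathrm I})-1)$.

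\textbf{Part (ii).} By Lemma~\ref{lem:typeB}(a), each $Z(p^{e_p})$ is purely periodic with least period $L_Z(p^{e_p})$, a power of $p$ dividing $p^{e_p}$. The set $Z(m_{\mathrm{II}})$ is the intersection of these sets, so it is purely periodic with period $\prod_pL_Z(p^{e_p})$. If some $Z(p^{e_p})$ is empty, the intersection is empty. Otherwise the least periods are pairwise coprime prime powers, and Lemma~\ref{lem:crt}(c) gives that the least period of the intersection is exactly their product. This is the one step needing care. Nonemptiness of each factor is essential, since an empty factor makes the intersection empty, which has least period $1$ rather than the product. Also, if some $L_Z(p^{e_p})=1$ (a full or empty set), it contributes trivially to the product.

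\textbf{Part (iii).} For $n\ge n_0(m_{\mathrm I})$, part (i) shows that $m\mid a(n)$ holds exactly when $m_{\mathrm{II}}\mid a(n)$. Hence $Z(m)$ and $Z(m_{\mathrm{II}})$ differ only on a finite set, and they have the same density. The periods $L_Z(p^{e_p})$ are powers of distinct primes, so they are pairwise coprime. In Lemma~\ref{lem:crt}(a) every prime $\ell$ then divides at most one $L_i$, so $s_\ell=0$ and $\mathcal M=1$. Each profile modulo $1$ is just the density, so the formula gives the product $\prod_p\dens Z(p^{e_p})$. Applying the same lemma to any subfamily, including with complements, gives independence of the events.

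\textbf{Part (iv).} If $m_{\mathrm{II}}=1$, the claim is part (i). If $m_{\mathrm{II}}>1$, pick a prime $p\mid m_{\mathrm{II}}$. By Lemma~\ref{lem:typeB}(b), $p\nmid a(n)$ for every $n$ divisible by $p$, so $m\mid a(n)$ fails for infinitely many $n$.

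The main obstacle is bookkeeping rather than depth. The points to handle are the empty-set and trivial-period edge cases in (ii), and the claim in (i) that the maximum is attained sharply.
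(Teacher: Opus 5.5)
Your proposal is correct and follows the paper's proof essentially step for step: (i) from Lemma~\ref{lem:typeA}(a) applied prime by prime, (ii) from Lemma~\ref{lem:typeB}(a) together with Lemma~\ref{lem:crt}(c), (iii) from (i), (ii) and Lemma~\ref{lem:crt}(a) with $\mathcal M=1$, and (iv) from the dichotomy together with Lemma~\ref{lem:typeB}(b). Your sharpness argument in (i) and your handling of the empty-factor case in (ii) are both right. The ``full set'' case you mention never actually occurs, because $a(0)=1$ keeps $0$ out of every $Z(p^{e_p})$.
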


\begin{proof}
(i) is Lemma~\ref{lem:typeA}(a) for each $p\mid m_{\mathrm I}$.  (ii) is Lemma~\ref{lem:typeB}(a) with Lemma~\ref{lem:crt}(c), since the $L_Z(p^{e_p})$ are powers of distinct primes.  (iii) follows from (i), (ii) and Lemma~\ref{lem:crt}(a).  (iv) is Theorem~\ref{thm:dichotomy} with Lemma~\ref{lem:typeB}(b).
\end{proof}

For $A_3$ and $m=10$ one has $m_{\mathrm I}=2$, $n_0(2)=2$ and $Z(5)=\emptyset$, so no $A_3(n)$ is divisible by $10$, and the residues modulo $10$ are periodic from $n=2$ with period $5$.

\begin{proposition}[Residues and tail gcds]\label{thm:compositeA}
In the setting of Theorem~\ref{thm:composite} the following hold.
\begin{enumerate}[label=(\roman*),itemsep=0pt]
\item $a(n)\bmod m$ is periodic from exactly $n_0(m_{\mathrm I})$ on, and from $0$ if $m_{\mathrm I}=1$, and its least period is $\operatorname{lcm}_{p\mid m_{\mathrm{II}}}\varpi(p^{e_p})$.
\item The integers $m$ with $m\mid a(n)$ for all $n\ge n_0$ are exactly the divisors of the tail gcd $\mathcal D_A(n_0)=\gcd_{n\ge n_0}a(n)$.  If some $a(n)$ with $n\ge n_0$ is non-zero, then $\mathcal D_A(n_0)=\prod_{p\text{ of type I}}p^{\mu_p(n_0)}$ with $\mu_p(n_0)=\min_{n\ge n_0}v_p(a(n))$, and the minimum is over a finite window, which Lemma~\ref{lem:typeA}(b) bounds.
\item $m\mid a(n)$ for infinitely many $n$, equivalently for a set of $n$ of positive density, if and only if $Z(p^{e_p})\ne\emptyset$ for every $p\mid m_{\mathrm{II}}$.
\end{enumerate}
\end{proposition}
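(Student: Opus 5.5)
The three parts are proved separately. Each combines the type~I statements of Lemma~\ref{lem:typeA} with the type~II statements of Lemma~\ref{lem:typeB}, glued by the Chinese remainder theorem.

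\emph{Part (i).} Write $m=\prod p^{e_p}$. For $p\mid m_{\mathrm I}$, Lemma~\ref{lem:typeA}(a) gives $a(n)\equiv0\pmod{p^{e_p}}$ for $n\ge n_0(p^{e_p})$. In particular the residues at type~I primes are constant from $n_0(m_{\mathrm I})$ on. For $p\mid m_{\mathrm{II}}$, Lemma~\ref{lem:typeB}(a) gives that $a(n)\bmod p^{e_p}$ is purely periodic with least period $\varpi(p^{e_p})$. By the Chinese remainder theorem, $a(n)\bmod m$ is periodic from $n_0(m_{\mathrm I})$ with least period $\operatorname{lcm}_p\varpi(p^{e_p})$. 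The lcm is the least period because a period of the combined sequence reduces to a period of each component, and the components live on distinct primes.

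For exactness of the pre-period, suppose $n_0=n_0(m_{\mathrm I})\ge1$ and that the sequence were periodic from $n_0-1$ with some period $L$. Then for a type~I prime $p$ attaining the maximum in the definition of $n_0(m_{\mathrm I})$, we have $a(n_0-1)\equiv a(n_0-1+tL)\equiv0\pmod{p^{e_p}}$ for large $t$. This contradicts Theorem~\ref{thm:composite}(i). The step needing care is this one: periodicity of the full sequence modulo $m$ implies periodicity modulo $p^{e_p}$, and a sequence eventually $0$ modulo $p^{e_p}$ is then $0$ at every index from which it is periodic.

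\emph{Part (ii).} If $m\mid a(n)$ for all $n\ge n_0$, then $m$ divides the gcd, and conversely; this is trivial. Now assume some $a(n)\ne0$ with $n\ge n_0$, so the gcd is a positive integer. A type~II prime $p$ cannot divide it, since $p\nmid a(n)$ for all large $n$ with $p\mid n$ by Lemma~\ref{lem:typeB}(b). Hence the gcd is $\prod_{p\text{ type I}}p^{\mu_p(n_0)}$.

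Only finitely many type~I primes can occur, namely those dividing the fixed nonzero $a(n)$. For each such $p$, $\mu_p(n_0)$ is a minimum of integers bounded below, so it is attained. By Lemma~\ref{lem:typeA}(b), $v_p(a(n))\ge\sigma_pn-\log_pn-1$, which exceeds the valuation at the fixed index once $n$ is large. So the minimum is attained in an explicit finite window, namely the $n$ with $\sigma_pn-\log_pn-1\le v_p(a(n_1))$, where $n_1\ge n_0$ is the fixed index with $a(n_1)\ne0$.

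\emph{Part (iii).} If every $Z(p^{e_p})$ is non-empty, then by Theorem~\ref{thm:composite}(ii) the set $Z(m_{\mathrm{II}})$ is non-empty and purely periodic. By (iii) of that theorem it has positive density, namely the product of the densities of the $Z(p^{e_p})$, each positive as a non-empty periodic set. Since $Z(m)$ agrees with $Z(m_{\mathrm{II}})$ beyond $n_0(m_{\mathrm I})$, this gives infinitely many $n$ and positive density. Conversely, if some $Z(p^{e_p})=\emptyset$ with $p\mid m_{\mathrm{II}}$, then no $n$ has $m\mid a(n)$.

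The main obstacle I anticipate is the bookkeeping in (i) about exactness of the pre-period and the least period. There one must check that a period of $a\bmod m$ restricted to the tail is a common multiple of the $\varpi(p^{e_p})$, which follows by reducing modulo each $p^{e_p}$ and using pure periodicity.
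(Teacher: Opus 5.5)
Your proposal is correct and follows the paper's proof: for (i) you split the residue modulo $m$ by the Chinese remainder theorem into type~I components, which are eventually zero, and type~II components, which are purely periodic; for (ii) you use Lemma~\ref{lem:typeB}(b) and Lemma~\ref{lem:typeA}(b); and for (iii) you use Theorem~\ref{thm:composite}(ii),(iii) together with $Z(m)\subseteq Z(p^{e_p})$. Your pre-period argument differs only slightly: it rules out every period $L$ from $n_0-1$ directly, where the paper compares the residues at $N-1$ and $N-1+P$ for the least period $P$, so it makes that step a little more explicit.
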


The proof of Proposition~\ref{thm:compositeA} is in Appendix~\ref{app:moved}.

For cycle lengths at most $r$ the primes of type~II are the primes $p>r$, and their sets $Z(p)$ are explicit.

\begin{corollary}[Cycle lengths at most $r$]\label{cor:compAr}
Let $r\ge1$ and $a=A_r$.  The primes of type~I are the $p\le r$.  For a prime $p>r$ one has $Z(p)=\{n:n\bmod p\in R_p\}$ with $R_p=\{\nu:r<\nu<p,\ p\mid A_r(\nu)\}$, and $\dens Z(p)=|R_p|/p$.  The least period of $A_r(n)\bmod m$ is the product of the $\varpi(p^{e_p})$ with $p>r$, each of which divides $p^{e_p}$.
\end{corollary}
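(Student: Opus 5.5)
The corollary is Lemma~\ref{lem:typeB} and Theorem~\ref{thm:composite} applied to $A=A_{\le r}$, together with one explicit computation of $Z(p)$ for $p>r$. The plan has three steps.

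\emph{Step 1: the types.} For $A=A_{\le r}$ with $r\ge1$ we have $1\in A$, and $p\in A$ exactly when $p\le r$. So $[1\in A]=[p\in A]$ holds exactly for $p\le r$. By Definition~\ref{def:types} these are the primes of type~I, and the primes $p>r$ are of type~II.

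\emph{Step 2: the set $Z(p)$ for $p>r$.} By Lemma~\ref{lem:typeB}(c), $Z(p)=\{n:p\mid a(n\bmod p)\}$, so $Z(p)$ is the union of the classes $\nu\bmod p$ with $0\le\nu<p$ and $p\mid A_r(\nu)$. I then show that no class with $\nu\le r$ qualifies.
\begin{itemize}[itemsep=0pt]
\item For $\nu\le r$, every permutation of $\nu$ points has all cycles of length at most $r$, so $A_r(\nu)=\nu!$.
\item Since $\nu<p$, the prime $p$ does not divide $\nu!$.
\end{itemize}
Only classes with $r<\nu<p$ remain, so $Z(p)=\{n:n\bmod p\in R_p\}$. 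A union of $|R_p|$ residue classes modulo $p$ has density $|R_p|/p$.

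As a cross-check, which I will not need: for $p=2$ and $r=1$, we have $A_1(\nu)=1$, so $R_2$ is empty. This agrees with the $p=2$ case of Lemma~\ref{lem:typeB}(c), since $1\in A$.

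\emph{Step 3: the least period.} Proposition~\ref{thm:compositeA}(i) gives the least period of $a(n)\bmod m$ as $\operatorname{lcm}_{p\mid m_{\mathrm{II}}}\varpi(p^{e_p})$. The primes dividing $m_{\mathrm{II}}$ are exactly the prime divisors of $m$ that exceed $r$. I need two further facts.
\begin{itemize}[itemsep=0pt]
\item $\varpi(p^{e_p})$ is a power of $p$, so the lcm over distinct primes is a product.
\item $\varpi(p^{e_p})$ divides $p^{e_p}$ rather than only $2p^{e_p}$. Lemma~\ref{lem:typeB}(a) guarantees this when the sign is $+$.
\end{itemize}

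The main obstacle is this sign. For odd $p$ the sign is $+$ because $1\in A$ and $p\notin A$. The lemma gives that sign for $1\in A$, $p\notin A$, while Theorem~\ref{thm:dichotomy}(B) is phrased in terms of $S$. Since $1\notin S$ and $p\in S$ here, the theorem's case also gives $a(n+p^e)\equiv a(n)$, so the two agree.

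For $p=2$ we have $2>r$, so $r=1$. Then $A_1(n)=1$ for all $n$, and $\varpi=1$ trivially. This also disposes of the exceptional $e=2$ sign for $p=2$.

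With these in hand, each $\varpi(p^{e_p})$ divides $p^{e_p}$ and the least period is their product, which completes the proof.
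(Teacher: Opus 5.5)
Your proposal is correct and follows the paper's proof. You specialize Theorem~\ref{thm:composite} and Proposition~\ref{thm:compositeA} to $A=A_{\le r}$, discard the classes $\nu\le r$ because $A_r(\nu)=\nu!$ is prime to $p$, and use the sign $+$ of Lemma~\ref{lem:typeB}(a) to get $\varpi(p^{e_p})\mid p^{e_p}$, so the lcm of these coprime prime powers is their product. Your separate treatment of $p=2$, which forces $r=1$ and $A_1(n)=1$, is just a direct check of a case the paper covers by noting that the sign is $+$ there as well.
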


The proof of Corollary~\ref{cor:compAr} is in Appendix~\ref{app:moved}.

For $2\le r\le8$, primes $r<p<40$ and small $e$ the least period of $A_r(n)\bmod p^e$ is exactly $p^e$, and for derangements and for $A_{>r}$ with $r\le4$ it is exactly $2p^e$.  The zero set can have a smaller period, for example $L_Z(9)=3$ for derangements, where $v_3(D(n))\ge2$ on the whole class $n\equiv1\pmod3$.

The quotient family has the same structure at the primes outside $S$.

\begin{theorem}[The quotient family]\label{thm:compquot}
Let $r\ge1$, $S\supseteq\{1,\dots,r\}$ and $Q_S(n)=a_S(n)/\fall{n-1}{r}$ for $n>r$.  For every prime $p\notin S$ and every $e\ge1$ one has $Q_S(n+p^e)\equiv-Q_S(n)\pmod{p^e}$ for odd $p$, and $Q_S(n+2^e)\equiv\pm Q_S(n)\pmod{2^e}$ for $p=2$ with the sign of Lemma~\ref{lem:typeB}(a), for all $n>r$.  Hence $\{n>r:p^e\mid Q_S(n)\}$ is purely periodic with period dividing $p^e$, and for $m$ composed of primes outside $S$ the set $\{n>r:m\mid Q_S(n)\}$ is purely periodic, with least period the product of the least periods of its prime-power parts and density the product of their densities.
\end{theorem}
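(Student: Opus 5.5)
The plan is to lift the sign rule of Lemma~\ref{lem:quot} from $p$ to $p^e$ in the same way that Theorem~\ref{thm:dichotomy}(B) lifts Lemma~\ref{lem:A}, and then to reuse the periodic-set machinery of Lemma~\ref{lem:crt} exactly as in the proof of Theorem~\ref{thm:composite}(ii),(iii).

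\textbf{Step 1: the congruence at $p^e$.} Since $p\notin S$ and $1\in S$, the prime $p$ is of type~II for $A=\N\setminus S$, so Lemma~\ref{lem:typeB}(a) gives $a_S(n+p^e)\equiv\pm a_S(n)\pmod{p^e}$. The sign is $-$ for odd $p$ (because $1\notin A$ and $p\in A$), and for $p=2$ it is the sign stated there. We cannot simply divide by $\fall{n-1}{r}$, because that factor may share powers of $p$ with $p^e$. So we will work with the generating function instead. By Theorem~\ref{thm:quotient},
\[
\sum_k Q_S(k+r+1)\,\frac{x^k}{k!}=F_AW_A,\qquad W_A=\sum_{m\in A}x^{m-r-1}.
\]
Equivalently, $Q_S(n)=\sum_{m\in A,\,r<m\le n}\fall{n-r-1}{m-r-1}\,a(n-m)$, which is the definition in Lemma~\ref{lem:quot}. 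We then redo the shift-congruence argument for this weighted sum, using a coloured-permutation model. Here $Q_S(n)$ counts pairs consisting of a permutation of $n-r-1$ points in which one distinguished cycle (the cycle of the last point, which has length $m$) carries a marked structure of length $m-r-1$. Equivalently, it counts permutations of $\{1,\dots,n\}$ in which the cycle through $n$ has the points $n-1,\dots,n-r$ in fixed consecutive positions after $n$.

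\textbf{Step 2: the group action.} In this model, let $\Z/p^e$ act by conjugation with a $p^e$-cycle on $p^e$ points disjoint from the last $r+1$ points. This follows the proof in \cite{KallatShift} for integer weights, which is the source cited in Theorem~\ref{thm:dichotomy}. The fixed-point and orbit analysis of \cite[Corollary~46, Theorem~48]{KallatShift} should then give
\[
Q_S(n+p^e)\equiv \gamma^{p^{e-1}}\,Q_S(n)\pmod{p^e},\qquad \gamma=w_1+(p-1)w_p\equiv-1 .
\]
For $p=2$ it gives the analogous $\alpha,\beta$ formulas, with the same sign as for $a_S$.

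A cleaner alternative, which I would try first, is to apply the shift congruence $A(n+k)\equiv A(n)A(k)\pmod k$ of \cite[Theorem~1]{KallatShift} in its weighted form with $k=p^e$. The factor $W_A$ corresponds to a marked cycle, so one would need the shift congruence for the product of an exponential with a derivative-type series, that is, for $F_A'$. Concretely, $x^rF_AW_A=F_A'$, and $F_A'$ is the generating function of $a(n+1)$. The difficulty is then only the division by $\fall{n-1}{r}$. I expect this division to be the main obstacle: pulling the congruence back from $a_S$ to $Q_S$ modulo $p^e$ needs a combinatorial model in which the $r$ forced points are rigid, so that conjugation by the $p^e$-cycle never moves them. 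That model makes the $\fall{n-1}{r}$ choices disappear before counting, rather than dividing afterwards.

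\textbf{Step 3: the consequences.} Once the congruence at $p^e$ holds, the set $\{n>r:p^e\mid Q_S(n)\}$ is invariant under $n\mapsto n+p^e$, because the sign is a unit. It is therefore purely periodic, with least period dividing $p^e$. For $m=\prod p^{e_p}$ with all $p\notin S$, the periods are powers of distinct primes. Lemma~\ref{lem:crt}(c) then gives the least period as the product, when every prime-power set is nonempty; otherwise the intersection is empty and the claim is vacuous. Lemma~\ref{lem:crt}(a), with $\mathcal M=1$, gives the density as the product of the densities.
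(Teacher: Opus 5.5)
\textbf{There is a genuine gap in Step~2.} That step carries all the content, and the congruence $Q_S(n+p^e)\equiv\pm Q_S(n)\pmod{p^e}$ is never proved there. You say the orbit analysis of \cite{KallatShift} ``should then give'' it for your rigid model, but this does not follow.

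Conjugation by a $p^e$-cycle $\chi$ generates a cyclic group of order $p^e$, whose orbits can have every size $p^i$ with $0\le i\le e$. Counting fixed points therefore only gives a congruence modulo $p$. To reach $p^e$ you must also count the elements commuting with each $\chi^{p^i}$. That is exactly the nontrivial work behind Corollary~46 and Theorem~48 of \cite{KallatShift}, and you would have to redo it for the marked-cycle model. Your alternative route also stops short: it ends at the obstacle you identify yourself, the division by $\fall{n-1}{r}$.

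\textbf{That obstacle does not exist.} The missing idea is already in your Step~1: the formula
$Q_S(n)=\sum_{r<m\le n,\,m\notin S}\fall{n-r-1}{m-r-1}\,a_S(n-m)$ from Lemma~\ref{lem:quot} involves no division. Write $Q_S(n+p^e)$ with the same formula and split the sum at $m=n$.

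\emph{Terms with $m\le n$.} The factor $\fall{n+p^e-r-1}{m-r-1}$ is an integer polynomial in $n$, so it is congruent to $\fall{n-r-1}{m-r-1}$ modulo $p^e$. Also $a_S(n+p^e-m)\equiv\pm a_S(n-m)\pmod{p^e}$ by Theorem~\ref{thm:dichotomy}(B), which is the congruence you already quoted from Lemma~\ref{lem:typeB}(a). The sign depends only on $S$ and $p^e$, not on $m$, so it factors out of the sum.

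\emph{Terms with $n<m\le n+p^e$.} The falling factorial is the product of the consecutive integers from $n+p^e-m+1\le p^e$ up to $n+p^e-r-1\ge p^e$, the upper bound using $n>r$. So it contains the factor $p^e$, and these terms vanish modulo $p^e$.

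This is the paper's proof: the proof of Lemma~\ref{lem:quot} with $q$ replaced by $p^e$. No combinatorial model or group action on the quotient is needed. Your Step~3, deriving periodicity, least period and density from Lemma~\ref{lem:crt}, is fine and matches what the paper intends.
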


The proof of Theorem~\ref{thm:compquot} is in Appendix~\ref{app:moved}.

Now we turn to the entries \OEIS{A057693}, \OEIS{A070945} and \OEIS{A070946}, which record Bala's statements.  Write $\lambda_3(n)=\lfloor n/9\rfloor+\lfloor(n+6)/9\rfloor$, the function $e(n)$ of Theorem~\ref{thm:p3}, and $\lambda_2(8K+\nu)=4K+\gamma(\nu)$ with $\gamma=(0,0,1,1,3,3,3,3)$, as in Theorem~\ref{thm:p2r4}.

\begin{corollary}[\OEIS{A057693}]\label{cor:A057693}
Let $a(n)=A_3(n)$.
\begin{enumerate}[label=(\alph*),itemsep=0pt]
\item For every $n_0\ge0$, $\gcd_{n\ge n_0}a(n)=2^{1+\lfloor(n_0-2)/4\rfloor}\,3^{\lambda_3(n_0)}$.
\item $n_0(2^e)=4e-2$, and $n_0(3^e)=\frac92(e-1)+3$ for odd $e$ and $\frac92e$ for even $e$.  So $2^i3^j\mid a(n)$ for all $n\ge\max(n_0(2^i),n_0(3^j))$, and this threshold is sharp.
\item For every $k\ge1$ the sequence $a(n)\bmod k$ is periodic from $n_0(k_{\mathrm I})$ on, with least period $\prod_{p\mid k,\,p>3}\varpi(p^{e_p})$.  For example the pre-period and the period are $0$ and $7$ modulo $7$, and $2$ and $5$ modulo $10$.
\item The exceptional sets are $E_2=\emptyset$, $E_3=\{n\equiv4\pmod{27}\}$, and $E_p=Z(p)$ for $p\ge5$, with densities $0$, $\frac1{27}$ and $|R_p|/p$, and they are independent.
\end{enumerate}
\end{corollary}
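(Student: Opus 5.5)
The plan is to read every part off the exact valuations already proved: Corollary~\ref{cor:plateau}(a) for the prime $2$, Theorem~\ref{thm:p3} with $r=3$ for the prime $3$, and the type~I/type~II split of Section~\ref{sec:composite}. For $A=A_{\le3}$ the primes of type~I are $2$ and $3$, and every $p\ge5$ is of type~II.

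For (a), Proposition~\ref{thm:compositeA}(ii) gives $\gcd_{n\ge n_0}a(n)=2^{\mu_2(n_0)}3^{\mu_3(n_0)}$, because $a(n)>0$. For $p=2$, the function $v_2(a(n))=1+\lfloor(n-2)/4\rfloor$ is nondecreasing for $n\ge2$. The values $a(0)=a(1)=1$ agree with the formula, since $1+\lfloor -2/4\rfloor=0$. Hence $\mu_2(n_0)=1+\lfloor(n_0-2)/4\rfloor$.

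For $p=3$, write $n=9q+\nu$, so that $e(n)=2q+[\nu\ge3]$ is nondecreasing and constant on the runs $\nu\in\{0,1,2\}$ and $\nu\in\{3,\dots,8\}$. By Theorem~\ref{thm:p3} the valuation exceeds $e(n)$ only when $n\equiv4\pmod{27}$, which forces $\nu=4$. In that case $n+1$ lies in the same run and is not exceptional, so $v_3(a(n+1))=e(n)$. Therefore $\min_{n\ge n_0}v_3(a(n))=e(n_0)=\lambda_3(n_0)$ in every case, which gives (a).

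For (b), $n_0(p^e)$ is one more than the largest $n$ with $v_p(a(n))<e$. For $p=2$, the inequality $1+\lfloor(n-2)/4\rfloor\ge e$ holds exactly when $n\ge4e-2$, so $n_0(2^e)=4e-2$. For $p=3$, consider the last $n$ with $e(n)<e$. It ends a run, so $\nu\in\{2,8\}$ and $n\not\equiv4\pmod{27}$. Hence $v_3(a(n))=e(n)<e$ there, while every later $n$ has $e(n)\ge e$ and therefore $v_3(a(n))\ge e$. So $n_0(3^e)$ is the first $n$ with $e(n)\ge e$:
\begin{itemize}
\item for even $e$, taking $\nu=0$ and $q=e/2$ gives $n_0(3^e)=9e/2$;
\item for odd $e$, taking $\nu=3$ and $q=(e-1)/2$ gives $n_0(3^e)=9(e-1)/2+3$.
\end{itemize}
Sharpness of $\max(n_0(2^i),n_0(3^j))$ is Theorem~\ref{thm:composite}(i), because the larger of the two thresholds fails at its own predecessor.

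For (c), apply Proposition~\ref{thm:compositeA}(i) with Corollary~\ref{cor:compAr}. Each $\varpi(p^{e_p})$ divides $p^{e_p}$ for distinct primes $p$, so their lcm is their product. For the examples, modulo $7$ we have $k_{\mathrm I}=1$, so the pre-period is $0$. The period is $\varpi(7)\in\{1,7\}$, and it is $7$ because the sequence is not constant modulo $7$ (already $a(1)=1$ and $a(2)=2$). Modulo $10$, the threshold is $n_0(2)=2$ because $a(1)=1$ is odd. The period is $\varpi(5)$, which equals $5$ since the residues $1,1,2,1,3$ are not constant.

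For (d):
\begin{itemize}
\item $E_2=\emptyset$ because Corollary~\ref{cor:plateau}(a) is an equality.
\item $E_3$ is read off from Theorem~\ref{thm:p3}, noting that $v_3(n!)-\kappa(n)=e(n)$ by Lemma~\ref{lem:max3}.
\item $E_p=Z(p)$ for $p\ge5$ by Lemma~\ref{lem:typeB}(c), with density $|R_p|/p$ by Corollary~\ref{cor:compAr}.
\end{itemize}
Independence follows from Lemma~\ref{lem:crt}(a). The sets have periods $1$, $27$ and $p$, which are powers of distinct primes, so every second-largest exponent is $0$, hence $\mathcal M=1$ and the density of any intersection, or of intersections with complements, factors.

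The main obstacle is the small bookkeeping in (a) and (b): one must check that the exceptional class $n\equiv4\pmod{27}$ never sits at the end of a run of $e$, so that it can neither lower the tail minimum nor shift the threshold. Everything else is direct substitution.
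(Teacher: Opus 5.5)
Your proposal is correct and follows the paper's proof essentially step by step. It reads off $\mu_2$, $\mu_3$ and the thresholds $n_0(p^e)$ from Corollary~\ref{cor:plateau}(a) and Theorem~\ref{thm:p3}, using that the exceptional residue $4 \bmod 9$ never ends a run of $e(n)$, and it takes (c) and (d) from Proposition~\ref{thm:compositeA}, Corollary~\ref{cor:compAr}, Lemma~\ref{lem:typeB}(c) and Lemma~\ref{lem:crt}. The only differences are minor. You fold $n_0=0,1$ into the $2$-adic formula instead of treating them separately. You justify the periods $7$ and $5$ in the examples by non-constancy of a purely periodic sequence whose period divides a prime, where the paper calls this a finite computation.
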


The proof of Corollary~\ref{cor:A057693} is in Appendix~\ref{app:moved}.

\begin{corollary}[\OEIS{A070945}]\label{cor:A070945}
Let $a(n)=A_4(n)$.  Then $\gcd_{n\ge n_0}a(n)=2^{\lambda_2(n_0)}3^{\lambda_3(n_0)}$ for every $n_0\ge0$.  With $e-1=4K+t$ and $0\le t\le3$ one has $n_0(2^e)=8K+2,8K+4,8K+4,8K+8$ for $t=0,1,2,3$, and $n_0(3^e)$ is as in Corollary~\ref{cor:A057693}.  The exceptional sets are $E_2=\{n\equiv5,12\pmod{16}\}$, $E_3=\{n\equiv14,23\pmod{27}\}$ and $E_p=Z(p)$ for $p\ge5$, with densities $\frac18$, $\frac2{27}$ and $|R_p|/p$, and they are independent.  Modulo $10$ the sequence is periodic from $n=2$ with least period $5$.
\end{corollary}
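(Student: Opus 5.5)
The plan is to treat $A=A_{\le4}$ as the composite-moduli framework of Theorem~\ref{thm:composite} and Proposition~\ref{thm:compositeA} prescribes, feeding in the exact valuations already proved.  The primes of type~I are $2$ and $3$, and every prime $p\ge5$ is of type~II.  The valuations at the type~I primes come from Theorem~\ref{thm:p2r4} ($p=2$) and Theorem~\ref{thm:p3} with $r=4$ ($p=3$), both of which give $v_p(a(n))\ge\lambda_p(n)$ with equality off an explicit exceptional set.

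\textbf{Tail gcd.}  By Proposition~\ref{thm:compositeA}(ii), $\gcd_{n\ge n_0}a(n)=2^{\mu_2(n_0)}3^{\mu_3(n_0)}$ with $\mu_p(n_0)=\min_{n\ge n_0}v_p(a(n))$.  Both $\lambda_2$ and $\lambda_3$ are non-decreasing: $\gamma$ is non-decreasing on $0,\dots,7$ and $\gamma(7)=3\le4+\gamma(0)$.  So $\mu_p(n_0)\ge\lambda_p(n_0)$.  For equality I will exhibit some $n\ge n_0$ with $\lambda_p(n)=\lambda_p(n_0)$ outside the exceptional set.  If $n_0$ is itself exceptional, I move to the next index on the same plateau of $\lambda_p$.
\begin{itemize}
\item For $p=2$, the plateaus are $\{8K,8K+1\}$, $\{8K+2,8K+3\}$ and $\{8K+4,\dots,8K+7\}$.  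The exceptional classes $5,12\pmod{16}$ fall at $\nu=5$ or $\nu=4$ of the last plateau, which always contains a non-exceptional member $\ge n_0$ (for instance $8K+6$ or $8K+7$).
\item For $p=3$, $\lambda_3$ is constant on blocks such as $9q+3,\dots,9q+11$.  The exceptional residues $14,23\pmod{27}$ have $\nu=5$, and the next index $15$ or $24$ lies on the same plateau and is not exceptional.
\end{itemize}
I will write this check out cleanly; it is routine.

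\textbf{Thresholds.}  Here $n_0(p^e)$ is the least $N$ with $\lambda_p(n)\ge e$ for all $n\ge N$, corrected by one step for exceptional indices.  For the parametrisation $e-1=4K+t$, the first $n$ with $\lambda_2(n)\ge e$ is $8K+2$, $8K+4$, $8K+4$, $8K+8$ for $t=0,1,2,3$; this is read off from the values $4K+\gamma(\nu)$.  I then need one check: the index just below each threshold is not exceptional with excess pushing it to $\ge e$.  For example, at $t=1,2$ the index $8K+3$ has $\nu=3$ and is never exceptional; at $t=3$, the index $8K+7$ is not exceptional; at $t=0$, $8K+1$ is not exceptional.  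These are exactly the sharpness statements.  For $3^e$, the threshold coincides with the $r=3$ case because $\lambda_3$ is the same and the exceptional classes $14,23$ never sit immediately before a jump of $\lambda_3$.  The jumps occur at $\nu=3$ and at the passage from $\nu=8$ to the next $0$, and the indices just before them are $\nu=2,8$, while the exceptional indices have $\nu=5$.

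\textbf{Exceptional sets, independence, and the modulus $10$.}  $E_2$ and $E_3$ are stated verbatim in Theorems~\ref{thm:p2r4} and~\ref{thm:p3}, giving densities $2/16$ and $2/27$.  $E_p=Z(p)$ for $p\ge5$ comes from Lemma~\ref{lem:typeB}(c), with density $|R_p|/p$ by Corollary~\ref{cor:compAr}.  Independence follows from Lemma~\ref{lem:crt}: all the sets are purely periodic, with periods $16$, $27$ and $p$, which are pairwise coprime, so the $\mathcal M$ of Lemma~\ref{lem:crt}(a) is $1$.  Modulo $10$, $m_{\mathrm I}=2$ and $n_0(2)=2$ by the threshold formula ($e=1$, $K=0$, $t=0$).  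Proposition~\ref{thm:compositeA}(i) gives a least period $\varpi(5)$, which divides $5$.  It equals $5$ because the residues $A_4(n)\bmod5$ for $n=0,\dots,4$, namely $1,1,2,1,0$, are not constant.

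The main obstacle is purely bookkeeping: making sure no exceptional index sits at a plateau end where it would shift the minimum or a threshold.  I would tabulate $\lambda_2$ over one period $16$ and $\lambda_3$ over one period $27$, mark the exceptional indices, and verify all cases directly.
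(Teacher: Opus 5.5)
Your proposal is correct and follows the paper's own argument. You take the exact valuations from Theorems~\ref{thm:p2r4} and~\ref{thm:p3}, use that $\lambda_2,\lambda_3$ are non-decreasing and that no exceptional index sits at the right end of a plateau, and then apply Proposition~\ref{thm:compositeA} and Lemma~\ref{lem:crt} for the tail gcd, the period modulo $10$ and independence.

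Two slips are harmless. First, the upper plateau of $\lambda_3$ is $\{9q+3,\dots,9q+8\}$, since $\lambda_3$ jumps at $9q+9$, as your later threshold paragraph correctly says. Second, $A_4(4)=24\equiv4\pmod 5$, so the residues for $n=0,\dots,4$ are $1,1,2,1,4$; they are still non-constant, so the least period modulo $10$ is still $5$.
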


\begin{proof}
Theorem~\ref{thm:p2r4} gives $v_2(a(n))\ge\lambda_2(n)$ with equality unless $n\equiv5,12\pmod{16}$.  The function $\lambda_2$ is non-decreasing and constant on the blocks $\{8K,8K+1\}$, $\{8K+2,8K+3\}$ and $\{8K+4,\dots,8K+7\}$, and the last two points $8K+6,8K+7$ of the third block, as well as all points of the first two, are not exceptional.  This gives $\mu_2$ and $n_0(2^e)$ as in Corollary~\ref{cor:A057693}.  At $p=3$, Theorem~\ref{thm:p3} gives the bound $\lambda_3(n)$ with exceptions in the residue $5\bmod9$ only, and the rest is as before.
\end{proof}

The entry \OEIS{A070945} also prints $v_2(a(2n+1)-a(n))\ge n+1$, which fails for $347$ of the $n\le348$.  The statement with $a(2n)$ in place of $a(n)$ is Corollary~\ref{cor:bala2}.

\begin{corollary}[\OEIS{A070946}]\label{cor:A070946}
Let $a(n)=A_5(n)$.  The exceptional sets at $2$ and $3$ are $E_2=\{n\equiv12,13\pmod{16}\}$ and $E_3=\emptyset$, with densities $\frac18$ and $0$, and for every $n_0\ge0$
\[
\gcd_{n\ge n_0}a(n)=2^{\lambda_2(n_0)}3^{\lambda_3(n_0)}5^{\mu_5(n_0)},\qquad\mu_5(n_0)=\min_{n\ge n_0}v_5(a(n)).
\]
So $2^i3^j$ divides $a(n)$ for all $n\ge\max(n_0(2^i),n_0(3^j))$, with $n_0(2^e)$ as in Corollary~\ref{cor:A070945} and $n_0(3^e)$ as in Corollary~\ref{cor:A057693}, and this threshold is sharp.
\end{corollary}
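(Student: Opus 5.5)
The approach follows the proof of Corollary~\ref{cor:A070945}, with the $5$-adic part treated separately. For $A_5$ the primes of type~I are $2,3,5$, and every other prime is of type~II. By Proposition~\ref{thm:compositeA}(ii) the tail gcd is $\prod_{p\le5}p^{\mu_p(n_0)}$, provided some $a(n)$ with $n\ge n_0$ is nonzero, and this holds because $A_5(n)>0$. So it suffices to show $\mu_2(n_0)=\lambda_2(n_0)$ and $\mu_3(n_0)=\lambda_3(n_0)$. The $5$-adic factor is left as its definition $\mu_5(n_0)$.

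The exceptional sets are read off directly. Theorem~\ref{thm:p2r5} gives $v_2(a(n))\ge\lambda_2(n)$ with equality exactly off $n\equiv12,13\pmod{16}$. These are two classes out of sixteen, so $E_2$ has density $\frac18$. Theorem~\ref{thm:p3} with $r=5$ gives $v_3(a(n))=\lambda_3(n)$ for every $n$, so $E_3=\emptyset$.

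For the minima I use that $\lambda_2$ and $\lambda_3$ are non-decreasing and constant on runs. $\lambda_2$ is constant on $\{8K,8K+1\}$, $\{8K+2,8K+3\}$ and $\{8K+4,\dots,8K+7\}$. $\lambda_3$ is constant on $\{9q,9q+1,9q+2\}$ and on $\{9q+3,\dots,9q+8\}$. Since $v_p(a(n))\ge\lambda_p(n)\ge\lambda_p(n_0)$ for $n\ge n_0$, it remains to find some $n\ge n_0$ with $v_p(a(n))=\lambda_p(n_0)$.

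At $p=3$ take $n=n_0$, since there are no exceptions. At $p=2$ take $n=n_0$ unless $n_0\equiv12,13\pmod{16}$. In that case $n_0=8K+4$ or $8K+5$ with $K$ odd. Then $n=8K+6$ lies in the same run and is not exceptional, since it is $\equiv14\pmod{16}$. This step is the only delicate point: it needs each exceptional residue to share its $\lambda_2$-run with a later non-exceptional residue, which the run $\{8K+4,\dots,8K+7\}$ provides.

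The threshold statement follows by applying the same run analysis to $n_0(p^e)=\min\{N:\lambda_p\text{-minimum over }n\ge N\text{ is at least }e\}$. For $p=2$ the minimum over $n\ge N$ is $\lambda_2(N)$, as just shown. So $n_0(2^e)$ is the first point of the first run on which $\lambda_2\ge e$. This gives $8K+2,8K+4,8K+4,8K+8$ for $e-1=4K+t$, $t=0,1,2,3$, exactly as in Corollary~\ref{cor:A070945}. For $p=3$, $\mu_3(N)=\lambda_3(N)$ gives $n_0(3^e)$ as in Corollary~\ref{cor:A057693}, since the function $\lambda_3$ is the same.

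Sharpness is Theorem~\ref{thm:composite}(i) applied to $m_{\mathrm I}=2^i3^j$. Its threshold is $\max(n_0(2^i),n_0(3^j))$, and at one less than this value $m_{\mathrm I}$ fails to divide $a(n)$.
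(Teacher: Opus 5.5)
Your proposal is correct and follows the paper's proof. Theorems~\ref{thm:p2r5} and~\ref{thm:p3} give $E_2$, $E_3$ and the lower bounds $\lambda_2,\lambda_3$; the run $\{8K+4,\dots,8K+7\}$ supplies a later non-exceptional point with the same value of $\lambda_2$; and Proposition~\ref{thm:compositeA}(ii) gives the shape of the tail gcd, with the $5$-adic exponent left as $\mu_5(n_0)$. The only differences are that you spell out the run argument and the threshold values, which the paper imports from the proof of Corollary~\ref{cor:A070945}, and that you rule out primes $p\ge7$ directly through Proposition~\ref{thm:compositeA}(ii) instead of citing Corollary~\ref{cor:compAr}.
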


The proof of Corollary~\ref{cor:A070946} is in Appendix~\ref{app:moved}.

\section{Column \texorpdfstring{$2$}{2} and open questions}\label{sec:open}

We justify Examples~\ref{ex:830} and~\ref{ex:19}, show that infinitely many odd primes divide some term of column $2$ after the factor $(n-1)(n-2)$ is removed, prove the second-order theorem for column $2$, and state the open questions.

\subsection{The column \texorpdfstring{$2$}{2} at odd primes}

Let $Q_n\coloneqq T(n,2)/((n-1)(n-2))$ for $n\ge3$.  This is an integer by Theorem~\ref{thm:general}(i) with $r=2$, and it begins $1,1,2,8,38,214,1444$.  The exact formula $v_p(T(n,2))=v_p((n-1)(n-2))$ for an odd prime $p$ is equivalent to $p\nmid Q_n$.

\begin{proposition}\label{prop:col2}
For every odd prime $p$ and every $n\ge3$, $Q_{n+p}\equiv-Q_n\pmod p$.  Hence $p\nmid Q_n$ for all $n\ge3$ if and only if $p\nmid Q_n$ for $3\le n\le p+2$.  For $p=3$ this holds, so $v_3(T(n,2))=v_3((n-1)(n-2))$ for all $n\ge3$.
\end{proposition}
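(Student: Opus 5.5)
The plan is to read the statement off Lemma~\ref{lem:quot} with $r=2$ and $S=\{1,2\}$, and then do a finite check for $p=3$. Here $Q_n=Q_S(n)$ in the notation of that lemma, since $T(n,2)=a_{\{1,2\}}(n)=\fall{n-1}{2}\,Q_S(n)$. For an odd prime $p$ we have $p\ge3$, so $p\notin S=\{1,2\}$, and Lemma~\ref{lem:quot} gives $Q_S(n+p)\equiv-Q_S(n)\pmod p$ for every $n>r=2$, that is for every $n\ge3$. This is the first claim.

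For the equivalence, one direction is trivial. For the other, take any $n\ge3$ and write $n-3=tp+\rho$ with $0\le\rho<p$, so that $n=(\rho+3)+tp$ with $3\le\rho+3\le p+2$. Iterating the congruence $t$ times gives $Q_n\equiv(-1)^tQ_{\rho+3}\pmod p$. Since $\pm1$ are units mod $p$, $p\mid Q_n$ if and only if $p\mid Q_{\rho+3}$. Hence if $p$ divides none of $Q_3,\dots,Q_{p+2}$, it divides no $Q_n$.

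For $p=3$ the window is $Q_3,Q_4,Q_5=1,1,2$, read from the listed initial values $1,1,2,8,\dots$ of $Q_n$. These can be checked from $T(3,2)=2$, $T(4,2)=6$ and $T(5,2)=24$, divided by $2$, $6$ and $12$. None is divisible by $3$, so $3\nmid Q_n$ for all $n\ge3$. Since $T(n,2)=(n-1)(n-2)Q_n$, this gives $v_3(T(n,2))=v_3((n-1)(n-2))+v_3(Q_n)=v_3((n-1)(n-2))$.

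There is no real obstacle. The only points to watch are that the hypothesis $q\notin S$ of Lemma~\ref{lem:quot} holds because $p$ is odd, and that the range $n>r$ in that lemma matches $n\ge3$.
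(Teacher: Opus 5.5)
Your proposal is correct and follows the paper's proof: the paper also obtains the congruence directly from Lemma~\ref{lem:quot} with $r=2$, $S=\{1,2\}$ and $p\notin S$, and it settles $p=3$ by checking that $Q_3,Q_4,Q_5=1,1,2$ are prime to $3$. You additionally spell out the reduction of an arbitrary $n$ to the window $[3,p+2]$ and recompute the three initial values, which the paper leaves implicit.
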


\begin{proof}
This is Lemma~\ref{lem:quot} with $r=2$ and $S=\{1,2\}$, since $Q_n=Q_S(n)$ and $p\notin S$.  For $p=3$ the values $Q_3,Q_4,Q_5=1,1,2$ are not divisible by $3$, hence neither are all $Q_n$.
\end{proof}

By this criterion the exact formula holds for $3,5,7,11,13,17,29,43,53,61,71,83$ among the first odd primes, and fails at $p=19$ already at $n=7$, and by Lemma~\ref{lem:A} also at $n=26$, since $T(26,2)\equiv-T(7,2)\equiv0\pmod{19}$ although $19\nmid25\cdot24$.  Of the $238$ odd primes below $1500$, exactly $98$ satisfy it.  If $q\ge5$ and $n-1=tq$, then Lemma~\ref{lem:quot} and $T(q+1,2)/q=(q-1)Q_{q+1}$ give $T(n,2)/(n-1)\equiv\pm Q_{q+1}\pmod q$, so $q^{v_q(n-1)+1}\mid T(n,2)$ if and only if $q\mid Q_{q+1}$.  Among primes $q<1000$ this happens only for $q=829$, which is Example~\ref{ex:830}.

\begin{example}[Odd primes in column $2$]\label{ex:19}
One might hope that $v_p(T(n,2))=v_p((n-1)(n-2))$ for all odd primes $p$ and $n\ge3$, as is true for $p=3,5,7$ (Proposition~\ref{prop:col2}).  At $p=19$, $n=7$ we have $T(7,2)=1140=19\cdot60$ while $19\nmid6\cdot5$.
\end{example}

The primes at which the exact formula fails do not run out.  The sign rule of Proposition~\ref{prop:col2} keeps a prime $p$ away from the terms $Q_n$ with $n\equiv3\pmod p$, the $2$-adic valuation of $Q_n$ is known exactly from Theorem~\ref{thm:A378}(f), and $Q_n$ grows like a factorial.  If only finitely many odd primes divided the numbers $Q_n$, these three facts would make some large $Q_n$ a small power of $2$.

\begin{theorem}[Infinitely many primes divide column $2$]\label{thm:bad}
An odd prime $p$ divides some $Q_n$ with $n\ge3$ if and only if it divides some $Q_n$ with $3\le n\le p+2$.  There are infinitely many such primes.
\end{theorem}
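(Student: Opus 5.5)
The first sentence of Theorem~\ref{thm:bad} follows directly from Proposition~\ref{prop:col2}. Since $Q_{n+p}\equiv-Q_n\pmod p$ for every $n\ge3$, each $n\ge3$ reduces modulo $p$ to a unique $n'\in[3,p+2]$ with $Q_n\equiv\pm Q_{n'}\pmod p$. Hence $p$ divides some $Q_n$ exactly when it divides one of $Q_3,\dots,Q_{p+2}$.

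For the second sentence we argue by contradiction. Suppose that only finitely many odd primes $p_1,\dots,p_s$ divide any $Q_n$. For each $p_i$, the sign rule shows that $p_i\mid Q_n$ depends only on $n\bmod p_i$. The class $n\equiv3\pmod{p_i}$ is always free of $p_i$, because $Q_3=1$ and so $Q_{3+tp_i}\equiv\pm1\pmod{p_i}$.

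We therefore restrict to the progression $n\equiv3\pmod{P}$ with $P=p_1\cdots p_s$. By the Chinese remainder theorem we may further impose a congruence modulo $4$, namely $n\equiv3\pmod 4$, since $P$ is odd. On this progression no $p_i$ divides $Q_n$, so $Q_n$ is a power of $2$. Theorem~\ref{thm:A378}(f) gives $v_2(T(n,2))=m+1$ at $n=4m+3$, while $v_2((n-1)(n-2))=v_2((4m+2)(4m+1))=1$. Thus $v_2(Q_n)=m=(n-3)/4$, and so $Q_n=2^{(n-3)/4}$ for every $n$ in the progression.

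It remains to contradict this with growth. From Theorem~\ref{thm:quotient}, $Q_{k+3}=k!\,[x^k](1-x)^{-2}e^{-x-x^2/2}$. The dominant singularity is the double pole at $x=1$, so $[x^k]G\sim e^{-3/2}k$ and $Q_{k+3}\sim e^{-3/2}(k+1)!$, which is superexponential. Even an elementary lower bound suffices: from $T(n,2)\ge(n-1)!\cdot c$ for some fixed $c>0$ (counting $n$-cycles gives $T(n,2)\ge(n-1)!$), we get $Q_n\ge(n-1)!/((n-1)(n-2))=(n-3)!$. This exceeds $2^{(n-3)/4}$ for large $n$, and the progression contains arbitrarily large $n$, which is the contradiction.

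The main obstacle is checking the $2$-adic input: the progression must be placed in a residue class modulo $4$ where Theorem~\ref{thm:A378}(f) gives an exact valuation without the extra $v_2(m+1)$ term, and the class $n\equiv3\pmod4$ does this. Everything else is the crude bound $T(n,2)\ge(n-1)!$.
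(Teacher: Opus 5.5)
Your proposal is correct and follows the paper's proof essentially step for step. You use the sign rule of Proposition~\ref{prop:col2} for the reduction to $[3,p+2]$. You then take $n\equiv3$ modulo $4$ and modulo every prime of the putative finite set, so that Theorem~\ref{thm:A378}(f) forces $Q_n=2^{(n-3)/4}$, and you contradict this with the bound $Q_n\ge(n-3)!$ coming from $T(n,2)\ge(n-1)!$. The asymptotic via the double pole and the extra constant $c$ are not needed for this argument.
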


\begin{proof}
By Proposition~\ref{prop:col2}, $Q_{n+p}\equiv-Q_n\pmod p$, so $p\mid Q_n$ for some $n\ge3$ implies $p\mid Q_{n'}$ for the $n'\in[3,p+2]$ with $n'\equiv n\pmod p$.  Suppose that the odd primes dividing some $Q_n$ form a finite set $\mathcal P$.  By the Chinese remainder theorem there are arbitrarily large $n$ with $n\equiv3\pmod4$ and $n\equiv3\pmod p$ for every $p\in\mathcal P$.  For such $n$ Proposition~\ref{prop:col2} gives $Q_n\equiv\pm Q_3=\pm1\pmod p$ for $p\in\mathcal P$, and no other odd prime divides $Q_n$, so $Q_n$ is a power of $2$.  For $n=4m+3$, Theorem~\ref{thm:A378}(f) gives $v_2(T(n,2))=m+1$, and $v_2((n-1)(n-2))=1$, so $Q_n=2^m$.  But $T(n,2)\ge(n-1)!$, since the $n$-cycles are counted, so $Q_n\ge(n-3)!>2^m$ for $n\ge7$, a contradiction.
\end{proof}

\subsection{First and second order at an odd prime}\label{sec:second}

For the odd primes in column $2$ of \OEIS{A378495} the first-order calculus gives $v_p(T(n,2))=v_p((n-1)(n-2))$ in all classes where $p\nmid Q_{\nu+3}$, with $Q_n=T(n,2)/((n-1)(n-2))$ and $\nu=(n-3)\bmod p$.  The next order is available when $v_p(Q_{\nu+3})=1$.

\begin{theorem}[Valuations of $T(n,2)$ at odd primes]\label{thm:col2}
Let $p$ be an odd prime, $n=3+\nu+pq$ with $0\le\nu<p$ and $q\ge1$, and suppose $v_p(Q_{\nu+3})=1$. Put $\xi=Q_{\nu+3}/(p\,\nu!)$ and $\zeta=Q_{\nu+p+3}/(\nu+p)!-\xi$. Then $v_p(Q_n)=1$ unless $p\nmid q$, $\zeta\not\equiv0$ and $q\equiv-\xi/\zeta\pmod p$, in which case $v_p(Q_n)\ge2$. Consequently $v_p(T(n,2))=v_p((n-1)(n-2))+1$ for all other $n$ in the class of $\nu$.
\end{theorem}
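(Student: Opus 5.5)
The plan is to apply Theorem~\ref{thm:interp}, or really its defining identity, to $G=(1-x)^{-2}e^{-x-x^2/2}$ with $m_*=p$, and to expand $F_\nu(q)$ modulo $p^2$. By Theorem~\ref{thm:quotient}, $Q_{k+3}=k!\,[x^k]G$. For $A=A_{>2}$, Lemma~\ref{lem:weights}(b) gives $c_p=1/p$, so $j=1$ and $\alpha'=1$. Every other block has depth $1$ and degree at least $3p$, as recorded before Theorem~\ref{thm:qstar}.

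First I would factor $G=H\,(1-x^p)^{-1/p}$ with $H=\sum h_ix^i=G(1-x^p)^{1/p}$. Writing $k=\nu+pq$ and $\beta_q=\binom{1/p+q-1}{q}$, this gives
\[
[x^k]G=\beta_q\sum_{t=0}^{q}h_{\nu+tp}\prod_{i<t}\frac{p(q-i)}{1+p(q-1-i)} .
\]
By Lemma~\ref{lem:binom}, $v_p(\beta_q)=-q-v_p(q!)$. Legendre gives $v_p(k!)=q+v_p(q!)$ because $\nu<p$. Hence $v_p(Q_n)=v_p(F_\nu(q))$, where $F_\nu(q)$ is the sum above and the denominators $1+p(\cdot)$ are units.

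Next I would identify the first two coefficients. Since $\nu<p$, we have $h_\nu=[x^\nu]G=Q_{\nu+3}/\nu!=p\xi$. Comparing coefficients of $x^{\nu+p}$ in $G=H(1-x^p)^{-1/p}$ gives $[x^{\nu+p}]G=h_{\nu+p}+h_\nu\cdot\frac1p$. Hence $h_{\nu+p}=Q_{\nu+p+3}/(\nu+p)!-\xi=\zeta$. Moreover $h_{\nu+p}\in\Z_{(p)}$, because below degree $3p$ the series $H$ contains no block factor and is therefore $p$-integral (Lemma~\ref{lem:binom}). So the terms $t=0,1$ contribute $p\xi+pq\zeta\cdot(\text{unit}\equiv1)$, and these agree with $p(\xi+q\zeta)$ modulo $p^2$. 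The $t=2$ term is $p^2q(q-1)h_{\nu+2p}$ times a unit, and $h_{\nu+2p}$ is $p$-integral for the same reason, so this term is $0$ modulo $p^2$.

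The main obstacle is the tail $t\ge3$. Here I would bound $v_p(h_{\nu+tp})$ from below by the knapsack over the remaining blocks of $H$, using Lemma~\ref{lem:blockexp} applied to $H$. Each such block has depth $1$ and degree at least $3p$, so a vector of degree at most $\nu+tp$ uses $\lambda\le(t+1)/3$ copies in total. Its cost is therefore at most $\lambda+v_p(\lambda!)\le\lambda(1+\frac1{p-1})\le\frac{3}{2}\lambda\le\frac{t+1}2$. The factor $p^t$ then leaves valuation at least $t-\frac{t+1}2\ge1$, and I need $\ge2$. I expect to sharpen this with the integer constraints: for $t\le 5$ one has $\lambda\le1$, so the cost is at most $1$ and $t-1\ge2$; for $t\ge6$ the crude bound already gives $t-(t+1)/2\ge 2.5$. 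This yields $F_\nu(q)\equiv p(\xi+q\zeta)\pmod{p^2}$.

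Finally, $v_p(Q_n)=1$ exactly when $\xi+q\zeta\not\equiv0\pmod p$, and otherwise $v_p(Q_n)\ge2$. Note that $\xi$ is a unit because $v_p(Q_{\nu+3})=1$. So if $p\mid q$ or $p\mid\zeta$ the residue is $\xi\ne0$, and otherwise vanishing means $q\equiv-\xi/\zeta\pmod p$. Multiplying by $(n-1)(n-2)$ gives the statement about $T(n,2)$.
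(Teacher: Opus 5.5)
Your proof is correct, but it takes a different route from the paper's proof of this theorem. The paper obtains Theorem~\ref{thm:col2} from the general second-order Theorem~\ref{thm:second}. There it compares the costs of all block vectors, including mixed ones, and shows that modulo $p^{2-\Gamma(q)}$ only the two vectors $qe_p$ and $(q-1)e_p$ survive, so that $[x^k]G\equiv p\xi\beta_q+\zeta\beta_{q-1}$. It then factors out $\beta_{q-1}$ when $p\nmid q$, and treats $p\mid q$ by a separate valuation comparison.

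You instead divide out the dominant factor exactly, $G=H(1-x^p)^{-1/p}$, and normalize by $\beta_q$ (the identity of Lemma~\ref{lem:ident}). All the other blocks go into $h_i$, which you bound by the knapsack on the remaining blocks, as in Lemma~\ref{lem:hsize}. This gives $F_\nu(q)\equiv p(\xi+q\zeta)\pmod{p^2}$ uniformly in the integer $q$, so the case $p\mid q$ needs no separate valuation argument. It is precisely the expansion $F_\nu/p\equiv\xi+\zeta z$ from the paper's proof of Theorem~\ref{thm:qstar}, evaluated at integer points. There the sum is finite, so no Tate-algebra convergence is needed.

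What each route buys: your version is shorter and leads directly to the $p$-adic root $q^*$. The paper's version uses only the cost comparison of Lemma~\ref{lem:blockexp}. It also holds for an arbitrary twist $W$ under the weaker hypothesis $j_m\le\lfloor m/p\rfloor-2$, which allows other blocks of higher depth.

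One small point. Your claim that $\lambda\le1$ for $t\le5$ needs the strict bound $3p\lambda\le\nu+tp<(t+1)p$, that is $\lambda\le\lfloor t/3\rfloor$. The bound $\lambda\le(t+1)/3$ you stated gives only $\lambda\le2$ at $t=5$. This does not break the argument: $\lambda=2$ would give cost $2$ for odd $p$, and $t-2=3\ge2$ still suffices.
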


By Theorem~\ref{thm:quotient} with $r=2$ and $A=A_{>2}$, one has $Q_{k+3}/k!=[x^k]G$ for $G=F_{A_{>2}}/(1-x)=(1-x)^{-2}e^{-x-x^2/2}$. By Lemma~\ref{lem:eulerprod}, $c_n=-(\mu(n)+\mu(n/2)[2\mid n])/n$ for $n\ge2$. For an odd prime $p$ the blocks are therefore the integers $m=pd$ with $d$ odd and squarefree and prime to $p$, or $d=4d'$ with $d'$ odd, squarefree and prime to $p$, together with $m=p$ itself, all of depth $1$, and $c_p=1/p$, so $\alpha'=1$. Every block other than $p$ is $pd$ with $d\ge3$.

\begin{corollary}[First order]\label{cor:firstcol2}
For an odd prime $p$ and $k=\nu+pq$ with $0\le\nu<p$, one has $v_p(Q_{k+3})=0$ if and only if $p\nmid Q_{\nu+3}$, and otherwise $v_p(Q_{k+3})\ge1$.
\end{corollary}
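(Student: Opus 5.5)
We apply Theorem~\ref{thm:dominant} to $G=(1-x)^{-2}e^{-x-x^2/2}$, with $W=1/(1-x)$ and $A=A_{>2}$ as in Theorem~\ref{thm:quotient}. By the discussion just before the statement, every block has depth $1$. The minimal block is $m_*=p$, with $j_*=1$ and $c_p=1/p$, and every other block has the form $m=pd$ with $d\ge3$. Condition~($\star$) then reads $j_m=1<\lfloor m/p\rfloor=d$, which holds. So Theorem~\ref{thm:dominant} applies. For $k=pq+\nu$ with $0\le\nu<p$ it gives $v_p([x^k]G)\ge-\Gamma(q)$, where $\Gamma(q)=q+v_p(q!)$, with equality if and only if $p\nmid[x^\nu]G=Q_{\nu+3}/\nu!$. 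Since $\nu<p$, the factor $\nu!$ is a $p$-adic unit, so this condition is simply $p\nmid Q_{\nu+3}$.

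The next step converts this back to $Q_{k+3}=k!\,[x^k]G$, which gives $v_p(Q_{k+3})\ge v_p(k!)-q-v_p(q!)$. By Legendre's formula and $\nu<p$, we have $v_p(k!)=q+v_p(q!)$, as in the proof of Corollary~\ref{cor:coprimeM}. So the lower bound is $0$, with equality exactly when $p\nmid Q_{\nu+3}$, and otherwise $v_p(Q_{k+3})\ge1$.

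Alternatively, and more elementarily, Proposition~\ref{prop:col2} gives $Q_{k+3}\equiv(-1)^qQ_{\nu+3}\pmod p$ directly, which yields the same dichotomy. I would mention both routes. The only point that needs care is the verification of ($\star$), that is, that the non-minimal blocks have degree at least $3p$. This was already established in the block description from the Euler exponents $c_n=-(\mu(n)+\mu(n/2)[2\mid n])/n$, so no real obstacle remains.
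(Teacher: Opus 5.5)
Your proposal is correct and matches the paper's proof essentially line for line: you verify ($\star$) with $m_*=p$, $j_*=1$ from the fact that every other block is $pd$ with $d\ge3$ and depth $1$, apply Theorem~\ref{thm:dominant} with $[x^\nu]G=Q_{\nu+3}/\nu!$, and convert back using $v_p(k!)=q+v_p(q!)$. Your alternative route through Proposition~\ref{prop:col2}, namely $Q_{k+3}\equiv(-1)^qQ_{\nu+3}\pmod p$, is also valid for this first-order statement, as the paper itself notes right after the corollary.
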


\begin{proof}
Every block other than $p$ has $\lfloor m/p\rfloor\ge3>1=j_m$, so ($\star$) holds with $m_*=p$ and $j_*=1$, and Theorem~\ref{thm:dominant} applies with $[x^\nu]G=Q_{\nu+3}/\nu!$ and $\nu!$ a unit. It gives $v_p([x^k]G)\ge-q-v_p(q!)$ with equality if and only if $p\nmid Q_{\nu+3}$. Now $v_p(Q_{k+3})=v_p(k!)+v_p([x^k]G)$ and $v_p(k!)=q+v_p(q!)$.
\end{proof}

This is the statement of Proposition~\ref{prop:col2} sharpened to valuations.  When $p\mid Q_{\nu+3}$ the first-order calculus only gives $v_p(Q_{k+3})\ge1$.  The second-order analysis, in which only the vectors $q$ and $q-1$ of blocks $p$ contribute modulo the next power of $p$, proves Theorem~\ref{thm:col2}, and it is carried out for a general dominant block of depth one in Appendix~\ref{app:second}.

For $p=19$ and $n_0=7$ one has $T(7,2)=1140=19\cdot60$, and the theorem predicts $v_{19}(Q_{7+19q})\ge2$ exactly for $q\equiv9\pmod{19}$. There are $30$ pairs $(p,n_0)$ with $p<110$, $3\le n_0\le p+2$ and $v_p(Q_{n_0})=1$, and in each case the class of $q$ with $v_p(Q_{n_0+pq})\ge2$ is the predicted one, or empty when $\zeta\equiv0$, as the ancillary program confirms.

\subsection{Open questions}

Each of the following questions is open, and we state it with the numerical evidence that suggests the answer.

\begin{problem}[Odd primes in column $2$]\label{prob:odd}
Call an odd prime $p$ \emph{good} if $p\nmid Q_n$ for all $n\ge3$, equivalently for $3\le n\le p+2$ by Proposition~\ref{prop:col2}.  The good primes begin $3,5,7,11,13,17,29,43,\dots$.  Of the $216815$ odd primes up to $3\cdot10^6$, exactly $79656$ are good, a proportion $0.3674$ with $95$ percent interval $[0.3654,0.3694]$, which contains $e^{-1}=0.3679$, and the number of $n\in[3,p+2]$ with $p\mid Q_n$ is close to a Poisson variable of mean $1$ (Appendix~\ref{app:data}).  Is the density of good primes equal to $e^{-1}$?  Are there infinitely many good primes?  The other odd primes, the bad ones, are the odd prime factors of the numbers $Q_n$, and there are infinitely many of them by Theorem~\ref{thm:bad}.  Its proof shows that every finite set of odd primes misses an odd prime factor of some $Q_n$ with $n\equiv3\pmod4$, and it produces these new bad primes from the growth of $Q_n$ alone.  Remark~\ref{rem:lines} reduces the question to the position of one point of $\mathbb P^2(\F_p)$ relative to a fixed arrangement of lines.  Theorem~\ref{thm:qstar} gives the valuation of $T(n,2)$ in the classes with $v_p(Q_{\nu+3})=1$, and it covers all $30$ pairs $(p,n_0)$ of Section~\ref{sec:second}, of which only $(19,10)$ has $p\mid\zeta$.  For seven of these pairs the classes of $q$ with $v_p(Q_{n_0+pq})\ge1+j$ were checked for $j\le3$ and $q\le30000$, as Theorem~\ref{thm:qstar} predicts (Appendix~\ref{app:data}).
\end{problem}

\begin{remark}[What can be proved about the good primes]\label{rem:lines}
Put $q_\nu=Q_{\nu+3}$.  From $(1-x)G'=(1+x^2)G$ for $G=(1-x)^{-2}e^{-x-x^2/2}$ one gets $q_{\nu+1}=(\nu+1)q_\nu+\nu(\nu-1)q_{\nu-2}$ with $q_0,q_1,q_2=1,1,2$.  Define vectors $u_0,u_1,u_2=e_0,e_1,e_2$ in $\Q^3$ and $u_{m+2}=(u_{m-1}+m\,u_m)/((m+1)(m+2))$ for $m\ge1$.  These vectors do not depend on $p$, and for every odd prime $p$ and $0\le m\le p-3$ their denominators are prime to $p$ and $q_{p-1-m}\equiv\langle u_m,c(p)\rangle\pmod p$ with $c(p)=(q_{p-1},q_{p-2},q_{p-3})\bmod p$.  Hence $p$ is bad if and only if the point $[c(p)]$ of the projective plane $\mathbb P^2(\F_p)$ lies on one of the $p-2$ lines $\{u_m^\perp\}$ of a fixed arrangement, and the number of $n\in[3,p+2]$ with $p\mid Q_n$ is the multiplicity of that point.  For a uniformly random point and distinct lines the first two factorial moments of this multiplicity are $(p-2)(p+1)/(p^2+p+1)$ and $(p-2)(p-3)/(p^2+p+1)$, which tend to the values $1,1$ of a Poisson variable of mean $1$.  Consequently, if (H1) the proportion of points of $\mathbb P^2(\F_p)$ on none of the lines tends to $e^{-1}$, and (H2) the number of primes $p\le x$ with $[c(p)]$ on none of the lines is, up to $o(\pi(x))$, the sum over $p\le x$ of that proportion, then the good primes have density $e^{-1}$.  This conditional statement is proved in Appendix~\ref{app:density}.

The density is not a Chebotarev statement, since the number of conditions grows with $p$ (Appendix~\ref{app:data}).
\end{remark}

\begin{problem}[Least periods at primes of type~II]\label{prob:composite}
Section~\ref{sec:composite} gives the set of $n$ with $m\mid a_A(n)$ and its density.  Is the least period of $a(n)\bmod p^e$ at a prime of type~II always $p^e$, or $2p^e$ when the sign of Lemma~\ref{lem:typeB}(a) is $-$?  This holds in every case we computed.  The natural approach shows $a(p^{e-1})\not\equiv1\pmod{p^e}$, which needs a second-order shift congruence, while the residue formulas behind Theorem~\ref{thm:dichotomy} give $a(p^e)\bmod p^e$ only, and the interpolation of Theorem~\ref{thm:interp} controls valuations and not residues.
\end{problem}

\begin{problem}[Cycle lengths prime to $n$ when $n+1$ is prime]\label{prob:335}
By Theorem~\ref{thm:A335088} there are exactly two values $n\le3\cdot10^6$ with $n+1$ prime and $(n+1)\mid\operatorname{cop}(n)$, namely $n=708$ and $n=92202$, and Observation~\ref{obs:cop} gives a heuristic count that predicts infinitely many.  No Fermat prime $n+1$ occurs by Proposition~\ref{prop:copfermat}, for $n+1=2q+1$ the condition is the single binomial congruence of Proposition~\ref{prop:copsafe}, and for $n=2^aq^b$ it is the vanishing of the short sum of Proposition~\ref{prop:cop2q}, which happens for none of the $454$ primes $N<20000$ of this form.  Are there infinitely many such $n$?  Is there a structural reason for $n=708=2^2\cdot3\cdot59$ and $n=92202=2\cdot3\cdot11^2\cdot127$?  Both have three distinct prime factors, where the product of Proposition~\ref{prop:copprefix} has at least seven factors and we know no closed form.  A proof that there are infinitely many such $n$ would need an equidistribution statement for a residue whose defining product changes with $N$, and we know no such statement.
\end{problem}

\begin{problem}[The global gcd]\label{prob:gcdr}
By Theorem~\ref{thm:gcdfamily}, $(n-1)_r\mid g_r(n)\mid G_r(n)=(n-1)_rm_r(n)$ with $m_r(n)$ involving only primes at most $r$, and $m_1(n)=1$, $m_2(n)=2^{\lfloor(n-1)/4\rfloor}$.  By the computations of Section~\ref{sec:globalgcd}, for $r=3$ and $n\le900$ the two gcds differ exactly for $n=111+81j$ with $0\le j\le9$.  Do they differ for every $n\equiv30\pmod{81}$ with $n\ge111$ and for no other $n$?  Is $g_r(n)=G_r(n)$ for every $r\ge4$ and every $n\ge r+2$?  What is the exponent of each prime $p\le r$ in $g_r(n)/(n-1)_r$ for $r\ge3$?  The gcd $\Lambda_r(n)$ of the cycle type counts divides $g_r(n)$ (Proposition~\ref{prop:globalgcd}), and for $r=3$ the two differ at $p=3$ for $n=51,55,56,57,78,82,83,84$ in $n\le90$.  For which $n$ do $\Lambda_r(n)$ and $g_r(n)$ differ?
\end{problem}

\section{Computations and formalization}\label{sec:computations}

\subsection{Ancillary files}

The program \texttt{verify\_restricted\_cycles.py} and the C programs in the folder \texttt{anc}, supplied as ancillary files, use exact integers and re-check the numerical claims of the paper against brute-force enumeration, the OEIS entries, and the recurrences, on the ranges stated in their output and comments.  The folders of \texttt{anc/interp} contain the scripts and outputs behind Sections~\ref{sec:interp} and~\ref{sec:composite} and behind the observations on $\operatorname{cop}(n)$ and column $2$, and the README files of \texttt{anc} and of its folders match the names in the scripts with the notation of the paper.  The verifier reruns the key computations on small ranges, recounts the stored outputs of the long runs, and checks the numerical examples of the introduction with its own exact integers.  The whole run takes about seven minutes on a laptop with ten cores.  Appendix~\ref{app:data} lists the folders and what each check covers.

The supplementary catalogue in the files \texttt{anc/catalogue\_restricted\_oeis.*} lists the $99$ OEIS sequences to which a theorem of the paper applies, with the statement for each, the theorem used, what the entry records, and the range on which the statement was checked against the terms of the entry.

\subsection{Formalization}

Part of the paper is formalized in Lean~4 with Mathlib, and the sources are among the ancillary files in \texttt{anc/lean}, with a README that gives the versions and the build commands.  The files build without \texttt{sorry} and without \texttt{native\_decide}, and the theorems listed here depend only on the axioms \texttt{propext}, \texttt{Classical.choice} and \texttt{Quot.sound}.  The recurrence of Section~\ref{sec:tools} is proved to count the permutations of a finite set with the given cycle lengths, and the following statements are formalized.  The exact $2$-adic valuation of the number of involutions (Theorem~\ref{thm:invol}).  The exactness half of the Valuation Calculus, namely that $v_p(a_A(n))$ equals $v_p(n!)-\kappa(n)$ exactly when $\Xi(n)\ne0$, for every set $A$ (Theorem~\ref{thm:calculus} and Corollary~\ref{cor:calc}).  The integrality criterion (Theorem~\ref{thm:saturation}).  The dichotomy at a prime (Theorem~\ref{thm:dichotomy}), unconditionally, with the prime-power residue lemma of~\cite{KallatShift} supplied by a separate bridge package.  The algebraic identity $[x^{Nq+\rho}]G=\beta_qF_\rho(q)$ and the resulting valuation formula of Theorem~\ref{thm:interp}.  The Harmonic Theorem, parts (a) to (d), with the plateau formulas of Corollary~\ref{cor:plateau}.  Theorems~\ref{thm:p2r4} and~\ref{thm:p2r5} for $p=2$ and $r=4,5$, and Theorem~\ref{thm:p3} for $p=3$ and $r=3,4,5$, which give Bala's statements for \OEIS{A057693}, \OEIS{A070945} and \OEIS{A070946} (Corollaries~\ref{cor:bala} and~\ref{cor:bala2}).  The case of composite $n+1$ in Theorem~\ref{thm:A335088}.  The cycle gcd theorems, in the form $(n-1)\mid C_\varphi(n)$, $(n^2-1)\mid C(n,g)$ for $n\ge4$ and $n/g\ge3$, and the valuation identities of Theorem~\ref{thm:cyclegcd}(d).  The gcd over the family $\{1,\dots,r,k\}$ for every $r$, with the sandwich $\fall{n-1}{r}\mid g_r(n)\mid G_r(n)$ (Theorem~\ref{thm:gcdfamily} and Proposition~\ref{prop:gcdq}), and the theorem on \OEIS{A333726} (Theorem~\ref{thm:A333726}).  The finiteness of the essential set and the fact that $\kappa$, $\mathrm{Max}$ and $\Xi$ are determined by it (Lemma~\ref{lem:essential}).  The second-order theorem for a dominant block of depth one in the case $W=1$ (Appendix~\ref{app:second}) and the theorem on column $2$ at odd primes (Theorem~\ref{thm:col2}).  Finally there are kernel-checked certificates for the counterexample $g_3(111)\ne G_3(111)$ of Theorem~\ref{thm:counter} and for $709\mid\operatorname{cop}(708)$ of Proposition~\ref{prop:cop708}.  The other results are proved on paper.  Among them are the analytic parts of the interpolation theorem and Theorem~\ref{thm:qstar}, the second-order theorem for a twist $W\ne1$, the Artin--Hasse bound, Theorem~\ref{thm:bad}, Proposition~\ref{prop:cop2q}, and the results of Section~\ref{sec:composite} on composite moduli.

\section*{Acknowledgments}
The author thanks the contributors to the OEIS for the entries and conjectures treated here.  The author used AI tools (large language models, in several different systems) for exploratory discussion, for finding and checking proofs and computations, for the Lean 4 formalization, and for drafting and revising the text. The author independently checked the mathematical arguments, computations, and the final manuscript.

\clearpage
\appendix
\renewcommand{\thesection}{A.\arabic{section}}
\section*{Appendix}\phantomsection\addcontentsline{toc}{section}{Appendix}

Sections~\ref{app:AH} to~\ref{app:density} contain the Artin--Hasse bound and the proofs of Theorem~\ref{thm:A378}(f), of the second-order theorem for column $2$, of Proposition~\ref{prop:123}, of Proposition~\ref{prop:globalgcd}, of the interpolation theorem and its corollaries, of the theorem on the $p$-adic integer $q^*$, of the formulas for $\operatorname{cop}(n)$ modulo a prime $n+1$, and of the statements about the good primes in column $2$.  Section~\ref{app:lean} describes the Lean formalization, Section~\ref{app:moved} contains the proofs of the other results whose statements are in the main text, in the order of the paper, and Section~\ref{app:data} contains the data and the computational details behind several observations.  The notation is that of the main text.

\section{The Artin--Hasse bound}\label{app:AH}

For an integer weight sequence $(w_m)$ let $a(n)$ be defined by $\sum_na(n)x^n/n!=\exp\bigl(\sum_{m\ge1}w_mx^m/m\bigr)$, with $a_S$ the case $w_m=[m\notin S]$.  The Artin--Hasse exponential gives the following lower bound, which is far from exact in general and is replaced in the paper by the valuation calculus.

\begin{theorem}[Valuation bound]\label{thm:valuation}
Let $p$ be a prime and let $w_1=w_p=\eta$.  Put $w'_m=w_m-\eta$ if $m$ is a power of $p$ and $w'_m=w_m$ otherwise (so $w'_1=w'_p=0$), let $J=\{m:w'_m\ne0\}$, and
\[
\theta=\max_{m\in J}\frac{v_p(m)+\frac1{p-1}}{m}\qquad(\theta=0\text{ if }J=\emptyset).
\]
Then $v_p\bigl(a(n)\bigr)\ge v_p(n!)-\theta\,n$ for all $n\ge0$.
\end{theorem}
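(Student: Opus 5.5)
The plan is to split the exponent into an Artin--Hasse part, which is $p$-integral, and a remainder whose coefficients are bounded term by term, in the spirit of Lemma~\ref{lem:binom} and the cost of Definition~\ref{def:calc}. Since $w'_{p^k}=w_{p^k}-\eta$ for every $k\ge0$, we have
\[
\sum_{m\ge1}\frac{w_mx^m}{m}=\eta\sum_{k\ge0}\frac{x^{p^k}}{p^k}+\sum_{m\in J}\frac{w'_mx^m}{m}.
\]
Exponentiating gives
\[
\sum_na(n)\frac{x^n}{n!}=E_p(x)^{\eta}\cdot R(x),\qquad E_p(x)=\exp\Bigl(\sum_{k\ge0}\frac{x^{p^k}}{p^k}\Bigr),\qquad R(x)=\exp\Bigl(\sum_{m\in J}\frac{w'_mx^m}{m}\Bigr).
\]

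First, $E_p$ is the Artin--Hasse exponential. By the Dieudonn\'e--Dwork criterion (in the form of~\cite[Lemma~1]{KM}, or via Theorem~\ref{thm:saturation} applied to the $p$-saturated exponent) it lies in $1+x\Z_{(p)}[[x]]$. Its inverse is then also in $\Z_{(p)}[[x]]$, so $E_p^{\eta}\in\Z_{(p)}[[x]]$ for every integer $\eta$, of either sign. This is the only external input, and it is the step I regard as the real content. Everything else is bookkeeping.

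Second, I bound the coefficients of $R$. Write $R=\prod_{m\in J}\exp(w'_mx^m/m)$, a formally convergent product, and expand each factor as $\sum_\lambda (w'_m)^\lambda x^{m\lambda}/(m^\lambda\lambda!)$. Since $w'_m\in\Z$ and $v_p(\lambda!)\le\lambda/(p-1)$ by Legendre's formula, the term for $\lambda$ copies has valuation at least
\[
-\lambda v_p(m)-v_p(\lambda!)\ \ge\ -\lambda\Bigl(v_p(m)+\tfrac1{p-1}\Bigr)\ \ge\ -\theta\,m\lambda .
\]
Here $\theta$ is a genuine maximum: the quantity $(v_p(m)+\frac1{p-1})/m$ is positive and tends to $0$, so the supremum over $J$ is attained. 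The coefficient of $x^k$ in $R$ is a finite sum over vectors $(\lambda_m)$ with $\sum_m m\lambda_m=k$. Each product term therefore has valuation at least $-\theta k$, and hence $v_p([x^k]R)\ge-\theta k$. If $J=\emptyset$, then $R=1$ and this holds with $\theta=0$.

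Third, I combine the two factors. We have $[x^n](E_p^\eta R)=\sum_{k\le n}[x^{n-k}]E_p^\eta\cdot[x^k]R$. The first factor in each term is $p$-integral, and the second has valuation at least $-\theta k\ge-\theta n$, using $\theta\ge0$. So $v_p(a(n)/n!)\ge-\theta n$, and multiplying by $n!$ gives $v_p(a(n))\ge v_p(n!)-\theta n$.
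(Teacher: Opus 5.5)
Your proof is correct and is essentially the paper's. It uses the same splitting $E_p^{\eta}\cdot\prod_{m\in J}\exp(w'_mx^m/m)$ and the same term-by-term bound $-\lambda v_p(m)-v_p(\lambda!)\ge-\theta m\lambda$. The only difference is how you get $E_p\in\Z_{(p)}[[x]]$. The paper proves it directly in Lemma~\ref{lem:AH}, via $E_p=\prod_{p\nmid m}(1-x^m)^{-\mu(m)/m}$ and Lemma~\ref{lem:binom}. You cite Dieudonn\'e--Dwork, or Theorem~\ref{thm:saturation} applied to the $p$-saturated set $\{1,p,p^2,\dots\}$, which is legitimate and not circular.
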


\begin{lemma}[Artin--Hasse]\label{lem:AH}
The series $E_p(x)\coloneqq\exp\Bigl(\sum_{i\ge0}x^{p^i}/p^i\Bigr)$ lies in $\Z_{(p)}[[x]]$.
\end{lemma}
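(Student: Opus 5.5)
The plan is the classical argument through the Dieudonné--Dwork criterion, proved directly with Möbius inversion in the style of Lemma~\ref{lem:eulerprod}.  Write $L(x)=\sum_{i\ge0}x^{p^i}/p^i$.  By the same computation as in Lemma~\ref{lem:eulerprod}, applied to the set $A=\{p^i:i\ge0\}$, we get $E_p(x)=\prod_{n\ge1}(1-x^n)^{-c_n}$ with
\[
c_n=\frac1n\sum_{\substack{d\mid n\\ d\text{ a power of }p}}\mu(n/d).
\]
I would evaluate $c_n$ by splitting into cases according to $n$.  Write $n=p^kw$ with $p\nmid w$.  The divisors $d=p^i$ with $i\le k$ give $\mu(p^{k-i}w)$, which vanishes unless $k-i\le1$.

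If $k=0$, only $d=1$ occurs and $c_n=\mu(w)/w$.  This lies in $\Z_{(p)}$ because $p\nmid w$.

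If $k\ge1$, we get $c_n=\bigl(\mu(w)+\mu(pw)\bigr)/n=\bigl(\mu(w)-\mu(w)\bigr)/n=0$.

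So every exponent $c_n$ is a $p$-adic integer.  This also says that $A$ is $p$-saturated, so Theorem~\ref{thm:saturation} applies directly, since $[p^iw\in A]=[w\in A]$ for $p\nmid w$; the argument above makes this explicit.

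Finally, Lemma~\ref{lem:binom} shows that each factor $(1-x^n)^{-c_n}$ with $c_n\in\Z_{(p)}$ has coefficients in $\Z_{(p)}$.  Only the factors with $n\le N$ affect the coefficient of $x^N$, so every coefficient of $E_p$ is a finite product-sum of elements of $\Z_{(p)}$.  This gives $E_p\in\Z_{(p)}[[x]]$.

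The only step needing care is the Möbius case analysis.  In particular, for $k\ge1$ the divisors $p^i$ with $i\le k-2$ must contribute $0$, which holds because $p^2\mid n/d$ for them.  Otherwise the proof is immediate from results already in the paper.
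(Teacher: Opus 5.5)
Your proof is correct and is essentially the paper's argument: the paper writes down $\prod_{p\nmid m}(1-x^m)^{-\mu(m)/m}$ and checks directly that its logarithm is $\sum_i x^{p^i}/p^i$, which is the same computation you obtain from Lemma~\ref{lem:eulerprod} for $A=\{p^i:i\ge0\}$ ($c_n=\mu(n)/n$ for $p\nmid n$ and $c_n=0$ otherwise). Both arguments then conclude with Lemma~\ref{lem:binom} and $x$-adic convergence, and your side remark that $A$ is $p$-saturated, so that Theorem~\ref{thm:saturation} also applies, is valid and not circular, since that theorem's proof does not use the Artin--Hasse lemma.
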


\begin{proof}
Let $\mu$ be the M\"obius function.  Then
\[
\prod_{p\nmid m}(1-x^m)^{-\mu(m)/m}=\exp\Bigl(\sum_{p\nmid m}\frac{\mu(m)}m\sum_{k\ge1}\frac{x^{mk}}k\Bigr),
\]
and the coefficient of $x^N$ in the exponent is $\frac1N\sum_{m\mid N,\ p\nmid m}\mu(m)$, which is $1/N$ if $N$ is a power of $p$ and $0$ otherwise.  So the product equals $E_p(x)$.\footnote{This is the classical Artin--Hasse exponential~\cite{ArtinHasse}.}  Each factor lies in $\Z_{(p)}[[x]]$ by Lemma~\ref{lem:binom}, as $\mu(m)/m\in\Z_{(p)}$, and the product converges $x$-adically since the factor for $m$ is $1+O(x^m)$.
\end{proof}

\begin{proof}[Proof of Theorem~\ref{thm:valuation}]
Let $u_m=1$ if $m$ is a power of $p$ (including $m=1$) and $u_m=0$ otherwise, so that $w'_m=w_m-\eta u_m$.  Then
\[
\exp\Bigl(\sum_mw_m\frac{x^m}m\Bigr)=E_p(x)^\eta\prod_{m\in J}\exp\Bigl(w'_m\frac{x^m}m\Bigr).
\]
By Lemma~\ref{lem:AH}, $E_p^\eta\in\Z_{(p)}[[x]]$ (for $\eta<0$ use $E_p^{-1}=\exp(-\cdots)$, which is also in $\Z_{(p)}[[x]]$ since $E_p$ has constant term $1$).  The coefficient of $x^{ml}$ in $\exp(w'_mx^m/m)$ is $w_m'^{\,l}/(m^l\,l!)$, with $p$-adic valuation at least $-\bigl(l\,v_p(m)+v_p(l!)\bigr)\ge-l\bigl(v_p(m)+\tfrac1{p-1}\bigr)\ge-\theta\,ml$, using $v_p(l!)\le(l-1)/(p-1)$.  A coefficient of $x^N$ in the product over $J$ is a sum of products of such coefficients with $\sum_m m\,l_m=N$, so it has valuation at least $-\theta N$.  Multiplying by $E_p^\eta$, which has $p$-integral coefficients, does not lower this bound.  Therefore $v_p\bigl(\coeff{n}{\exp(\sum w_mx^m/m)}\bigr)\ge-\theta n$, and multiplying by $n!$ gives the claim.  This completes the proof of Theorem~\ref{thm:valuation}.
\end{proof}

The maximum defining $\theta$ exists because $(v_p(m)+\frac1{p-1})/m\to0$.  For $S=\{m:m>3\}$ and $p=2$ we get $J=\{3,4,8,16,\dots\}$ and $\theta=\frac34$ (attained at $m=4$), while $S=\{m:m>2\}$ gives $\theta=\frac34$ as well and $S=\{m:m>4\}$ gives $\theta=\frac12$.  For $p=3$ and $S=\{m:m>3\}$, $\theta=\frac5{18}$ (at $m=9$).  In the case of cycle lengths at most $3$ the theorem gives $v_2(A_3(n))\ge n/4-s_2(n)$, so the slope of Corollary~\ref{cor:plateau}(a) is the one forced by the Artin--Hasse bound.

\section{Proof of Theorem~\ref{thm:A378}(f)}\label{app:f}

\begin{proof}[Proof of part (f)]
Write $v=v_2$, $H(m)$ for the statement of (f) for a given $m$, and $s=v(m+1)$.  In the base case $H(0)$ the numbers $a_3,\dots,a_6=2,6,24,160$ have valuations $1,1,3,5$, as required.

Let $m\ge1$ and assume $H(m-1)$, so that $v(a_{4m})=m$, $v(a_{4m+1})=m+2+v(m)$, $v(a_{4m+2})=m+4+v(m)$.  The integers $m$ and $m+1$ are consecutive, so at most one of $v(m),s$ is positive.  We use (\ref{eq:R2}) in the form $a_{j+1}=ja_j+j(j-1)a_{j-2}$.

\emph{Step 1.} $a_{4m+3}=(4m+2)a_{4m+2}+(4m+2)(4m+1)a_{4m}$.  The two terms have valuations $1+(m+4+v(m))$ and $1+m$.  The second is smaller, so $v(a_{4m+3})=m+1$.

\emph{Step 2.} $a_{4m+4}=(4m+3)a_{4m+3}+(4m+3)(4m+2)a_{4m+1}$, with valuations $m+1$ and $1+m+2+v(m)$.  So $v(a_{4m+4})=m+1$.

\emph{Step 3.} $a_{4m+5}=(4m+4)a_{4m+4}+(4m+4)(4m+3)a_{4m+2}$.  Since $v(4m+4)=2+s$, the valuations are $m+3+s$ and $m+6+s+v(m)$.  So $v(a_{4m+5})=m+3+s$.

\emph{Step 4 (the tie).} $a_{4m+6}=(4m+5)\bigl[a_{4m+5}+(4m+4)a_{4m+3}\bigr]$, and the two terms in the bracket both have valuation $m+3+s$.  Insert the expression of Step 3,
\begin{equation}\label{eq:tie}
a_{4m+5}+(4m+4)a_{4m+3}=(4m+4)\Bigl[\,a_{4m+4}+a_{4m+3}+(4m+3)a_{4m+2}\Bigr].
\end{equation}
We claim $v(a_{4m+4}+a_{4m+3})=m+3$.  By (\ref{eq:R2}), $a_{4m+4}+a_{4m+3}=(4m+4)a_{4m+3}+(4m+3)(4m+2)a_{4m+1}$, whose terms have valuations $m+3+s$ and $m+3+v(m)$.  If $m$ is odd, then $s\ge1$ and $v(m)=0$, so the second term wins, with valuation $m+3$.  If $m$ is even, then $s=0$ and $v(m)\ge1$, so the first term wins, with valuation $m+3$.  Now $v\bigl((4m+3)a_{4m+2}\bigr)=m+4+v(m)>m+3$, so the bracket in \eqref{eq:tie} has valuation exactly $m+3$.  Therefore $v(a_{4m+5}+(4m+4)a_{4m+3})=(2+s)+(m+3)=m+5+s$, and as $4m+5$ is odd, $v(a_{4m+6})=m+5+s$.

Steps 1--4 give $H(m)$.  This completes the proof of Theorem~\ref{thm:A378}(f).
\end{proof}

\section{Second order at an odd prime}\label{app:second}

\begin{theorem}[Second order for a dominant block of depth one]\label{thm:second}
Let $G=F_AW$ and suppose that $p\in\mathcal B$ with $j_p=1$ and that every other block $m$ satisfies $j_m\le\lfloor m/p\rfloor-2$. Write $c_p=\alpha'/p$ and let $\alpha$ be the residue of $\alpha'$. Fix $\nu\in\{0,\dots,p-1\}$ with $v_p([x^\nu]G)=1$, put $\xi=[x^\nu]G/p$ and $\zeta=[x^{\nu+p}]G-\alpha'\xi$, and let $k=\nu+pq$ with $q\ge1$. Then $\zeta\in\Z_{(p)}$, and
\begin{enumerate}[label=(\alph*),itemsep=0pt]
\item if $p\mid q$ or $\zeta\equiv0\pmod p$, then $v_p([x^k]G)=1-q-v_p(q!)$,
\item if $p\nmid q$ and $\zeta\not\equiv0$, then $v_p([x^k]G)=1-q-v_p(q!)$ when $q\not\equiv-\alpha \xi/\zeta\pmod p$, and $v_p([x^k]G)\ge2-q-v_p(q!)$ when $q\equiv-\alpha \xi/\zeta\pmod p$.
\end{enumerate}
\end{theorem}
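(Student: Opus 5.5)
The approach is to refine the block expansion of Lemma~\ref{lem:blockexp} by one order of $p$. Write $G=H\,(1-x^p)^{-c_p}$ with $H=I_W\prod_{m\in\mathcal B,\,m\ne p}(1-x^m)^{-c_m}$. Then
\[
[x^k]G=\sum_{t\ge0}h_{\nu+tp}\,\beta_{q-t},\qquad \beta_\lambda=\tbinom{c_p+\lambda-1}{\lambda},\qquad v_p(\beta_\lambda)=-\lambda-v_p(\lambda!).
\]
First I will bound $v_p(h_i)$. A block vector $V'$ built only from the blocks $m\ne p$ has cost at most $\sum_m(\lfloor m/p\rfloor-2)\lambda_m+v_p(\lambda_m!)\le \deg V'/p-\tfrac{p-2}{p-1}\cdot(\text{number of copies})$, roughly. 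So each copy of another block loses at least $1$ against filling the same degree with copies of $p$. Hence $v_p(h_{\nu+tp})\ge -t+1$ whenever the $h$-term involves some other block (more precisely, the contribution of such vectors to $[x^k]G$ has valuation $\ge 2-q-v_p(q!)$). The other contributions come from $I_W$ alone, which is $p$-integral. I will make this precise by the same exchange as in Lemma~\ref{lem:essential}(b), using $j_m\le\lfloor m/p\rfloor-2$: replacing $\lambda$ copies of $m$ by $\lfloor m/p\rfloor\lambda$ copies of $p$ gains at least $2\lambda$ in cost. This gain is two and not one, which is exactly why the hypothesis has the $-2$.

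Second, modulo $p^{2-q-v_p(q!)}$ only the terms $t=0$ and $t=1$ survive among the $I_W$-terms, because $t\ge2$ loses $t-v_p$-correction $\ge2$ in the $\beta$ factor while $h$ stays integral. Here I need $v_p(\beta_{q-t})-v_p(\beta_q)\ge t-v_p(q!/(q-t)!)$, which is $\ge2$ for $t\ge2$ except when $p\mid q$ and $t$ is small; that case needs a separate check. The $t=0$ term is $h_\nu\beta_q$ with $h_\nu=[x^\nu]G=p\xi$, of valuation $1-q-v_p(q!)$. The $t=1$ term is $h_{\nu+p}\beta_{q-1}$ with
\[
\beta_{q-1}=\beta_q\cdot\frac{q}{c_p+q-1}=\beta_q\,\frac{pq}{\alpha'+p(q-1)}.
\]
Also $h_{\nu+p}=[x^{\nu+p}]G-c_p h_\nu=[x^{\nu+p}]G-\alpha'\xi=\zeta$, which shows $\zeta\in\Z_{(p)}$. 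So
\[
[x^k]G\equiv p\beta_q\Bigl(\xi+\frac{q\zeta}{\alpha'}\Bigr)
\]
modulo terms of valuation $\ge2-q-v_p(q!)$, and the residue of the bracket is $\xi+q\zeta/\alpha$.

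Finally comes the case split. If $p\mid q$, then $v_p(\beta_{q-1}/\beta_q)=1+v_p(q)\ge2$, so the bracket is $\xi\ne0$ and (a) follows. If $\zeta\equiv0$, the bracket is again $\xi$. Otherwise the bracket vanishes mod $p$ exactly when $q\equiv-\alpha\xi/\zeta$, which gives (b).

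The main obstacle is the uniform error bound in the first two steps, namely showing that every vector other than ($q$ copies of $p$) and ($q-1$ copies of $p$ with leftover $\nu+p$) contributes at valuation $\ge2-q-v_p(q!)$. The delicate points are the digit-sum behaviour of $v_p(\lambda!)$ when $t\ge2$ copies of $p$ are removed, and the vectors mixing $p$ with other blocks. For these I would first try the Legendre form \eqref{eq:legcost} together with subadditivity of $s_p$, checking $v_p(q!)-v_p((q-t)!)\le t-2+[\,\cdot\,]$ carefully for $t=2$ when $p\mid q$ or $p\mid q-1$.
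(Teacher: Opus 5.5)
Your argument is correct and is essentially the paper's proof. Exchanging each other block $m$ for $\lfloor m/p\rfloor$ copies of $p$ gains at least $2$ in cost, so modulo $p^{2-q-v_p(q!)}$ only $qe_p$ and $(q-1)e_p$ survive, with leftover coefficients $[x^\nu]I_W=p\xi$ and $[x^{\nu+p}]I_W=\zeta$. You should state explicitly why the second one holds: every other block has degree at least $3p$ (from $j_m\ge1$), and this is what makes $h_{\nu+p}=\zeta$ lie in $\Z_{(p)}$. Because you factor out $\beta_q$ rather than $\beta_{q-1}$, your bracket $\xi+q\zeta/\alpha$ already covers $p\mid q$ without the paper's separate case.

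The obstacle you flag does not exist; it comes from a sign error. The correct identity is $v_p(\beta_{q-t})-v_p(\beta_q)=\Gamma(q)-\Gamma(q-t)=t+v_p\bigl(q!/(q-t)!\bigr)\ge t$, where $\Gamma(s)=s+v_p(s!)$. So the $I_W$-terms with $t\ge2$ have valuation at least $2-q-v_p(q!)$, and no digit-sum analysis is needed.
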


\begin{proof}
Every other block has $\lfloor m/p\rfloor\ge3$, since $j_m\ge1$, so $m\ge3p$ and $p=\min\mathcal B$. Put $\Gamma(t)=t+v_p(t!)$, so $\Gamma(t+1)-\Gamma(t)=1+v_p(t+1)$. Since $G=I_W(1-x^p)^{-c_p}\prod_{m\ge3p}(1-x^m)^{-c_m}$, one has $G(1-x^p)^{c_p}\equiv I_W\pmod{x^{3p}}$, and comparing coefficients at $x^\nu$ and $x^{\nu+p}$ (note $\nu+p<2p$) gives $[x^\nu]I_W=[x^\nu]G=p\xi$ and $[x^{\nu+p}]I_W=[x^{\nu+p}]G-c_p[x^\nu]G=\zeta$. So $\zeta\in\Z_{(p)}$.

Take a block vector with $b$ blocks $p$ and other blocks $(d_i,\lambda_i)$, of total degree at most $k$. If some $\lambda_i>0$, put $t=b+\sum_i\lambda_i$ and $Q'=b+\sum_i\lambda_i\lfloor d_i/p\rfloor\le q$. Using $\lfloor d_i/p\rfloor\ge3$ we get $t\le Q'-2\sum_i\lambda_i$, and using $j_i\le\lfloor d_i/p\rfloor-2$,
\[
\mathrm{cost}\le b+\sum_i\lambda_i\bigl(\lfloor d_i/p\rfloor-2\bigr)+v_p(t!)\le\Gamma\bigl(q-2\textstyle\sum_i\lambda_i\bigr)\le\Gamma(q)-2.
\]
A vector $be_p$ with $b\le q-2$ has cost $\Gamma(b)\le\Gamma(q)-2$. Hence, by Lemma~\ref{lem:blockexp}, modulo terms of valuation at least $2-\Gamma(q)$ only $qe_p$ and $(q-1)e_p$ contribute, with leftovers $\nu$ and $\nu+p$, and
\[
[x^k]G\equiv p\xi\binom{c_p+q-1}{q}+\zeta\binom{c_p+q-2}{q-1}\pmod{p^{2-\Gamma(q)}\Z_{(p)}} .
\]
Since $\binom{c_p+q-1}{q}=\frac{c_p+q-1}{q}\binom{c_p+q-2}{q-1}$ and $c_p+q-1=(\alpha'+p(q-1))/p$, for $p\nmid q$ the right side equals $\binom{c_p+q-2}{q-1}\bigl(\xi(\alpha'+p(q-1))/q+\zeta\bigr)$. The binomial factor has valuation exactly $-\Gamma(q-1)=-\Gamma(q)+1$ when $p\nmid q$, and the bracket is congruent to $\xi\alpha/q+\zeta$ modulo $p$. This vanishes exactly when $q\equiv-\alpha \xi/\zeta$, and it never vanishes if $\zeta\equiv0$, which gives (b) and the case $\zeta\equiv0$ of (a). When $p\mid q$ the first term has valuation exactly $1-\Gamma(q)$ while the second has valuation at least $-\Gamma(q-1)=-\Gamma(q)+1+v_p(q)\ge2-\Gamma(q)$.
\end{proof}

\begin{proof}[Proof of Theorem~\ref{thm:col2}]
The blocks satisfy the hypothesis of Theorem~\ref{thm:second}, since every other block $pd$ has $d\ge3$ and depth $1\le d-2$, and $\alpha=\alpha'=1$. Multiply $[x^k]G=Q_{k+3}/k!$ by $k!$ and use $v_p(k!)=q+v_p(q!)$.
\end{proof}

\section{Proof of Proposition~\ref{prop:123}}\label{app:123}

\begin{proof}
By (\ref{eq:R3}), $z(n+1)=n\,z(n)+n(n-1)(n-2)\,z(n-3)$, and $z(0)=1$, $z(1)=z(2)=z(3)=0$, $z(4)=6$, $z(5)=24$, $z(6)=120$, $z(7)=720$.  Write $v=v_2$ and $s=v(m+1)$.  For $n=4m+4,4m+5,4m+6$ one computes $\beta(n)=m+1,\ m+3+s,\ m+3+s$.  We prove by induction on $m\ge0$ the statement $H(m)$ that
\[
v(z(4m+4))=m+1,\quad v(z(4m+5))=v(z(4m+6))=m+3+s,\quad v(z(4m+7))\ge m+3 .
\]
For $m=0$ the values $6,24,120,720$ have valuations $1,3,3,4$.  Let $m\ge1$ and assume $H(m-1)$, which says that $v(z(4m))=m$, $v(z(4m+1))=m+2+v(m)$, $v(z(4m+2))=m+2+v(m)$ and $v(z(4m+3))\ge m+2$.  Apply the recurrence four times.
\begin{itemize}[itemsep=0pt]
\item $z(4m+4)=(4m+3)z(4m+3)+(4m+3)(4m+2)(4m+1)z(4m)$.  The two terms have valuations at least $m+2$ and exactly $1+m$, so $v(z(4m+4))=m+1$.
\item $z(4m+5)=(4m+4)z(4m+4)+(4m+4)(4m+3)(4m+2)z(4m+1)$.  The terms have valuations $(2+s)+(m+1)=m+3+s$ and $(2+s)+1+(m+2+v(m))>m+3+s$.
\item $z(4m+6)=(4m+5)z(4m+5)+(4m+5)(4m+4)(4m+3)z(4m+2)$.  The terms have valuations $m+3+s$ and $(2+s)+(m+2+v(m))>m+3+s$.
\item $z(4m+7)=(4m+6)z(4m+6)+(4m+6)(4m+5)(4m+4)z(4m+3)$.  The terms have valuations at least $1+m+3+s\ge m+4$ and $1+(2+s)+(m+2)\ge m+5$.
\end{itemize}
This is $H(m)$.  Since the exponent in the first three cases agrees with $\beta(n)$, the proposition follows.
\end{proof}

\section{Proof of Proposition~\ref{prop:globalgcd}}\label{app:gg}

\begin{proof}
(i) For $S\supseteq\{1,\dots,r\}$ and $A=\N\setminus S$ one has $a_S(n)=\sum_{P\subseteq A\cap(r,n]}f_P(n)$, since the set of distinct cycle lengths of a permutation lies in $A$.  Conversely, for $P_0\subseteq(r,n]$ taking $A=P$ over the subsets of $P_0$ and inverting over the Boolean lattice gives $f_{P_0}=\sum_{P\subseteq P_0}(-1)^{|P_0\setminus P|}a_{\N\setminus P}(n)$, so every $f_{P_0}$ is an integer combination of the numbers $a_S(n)$.  Hence the two gcds agree.  Each $f_P$ is a sum of numbers $N(\lambda)$, so $\Lambda_r\mid g_r$.

(ii) This is Legendre's formula for $n!/\prod m^{\lambda_m}\lambda_m!$.

(iii) Each $f_P$ is a sum of numbers $N(\lambda)$, each of valuation at least $\mu_0$, and the terms of valuation larger than $\mu_0$ do not affect divisibility by $p^{\mu_0+1}$.  So $v_p(f_P)>\mu_0$ if and only if the sum of the $N(\lambda)$ with $v_p(N(\lambda))=\mu_0$ and part set $P$ is divisible by $p^{\mu_0+1}$, which holds trivially when there is no such $\lambda$.  By (i), $v_p(g_r(n))=\min_Pv_p(f_P(n))$, which exceeds $\mu_0$ if and only if $v_p(f_P)>\mu_0$ for every $P$.

(iv) By Theorem~\ref{thm:gcdfamily}, $g_r(n)$ divides $G_r(n)$ and the quotient $G_r(n)/\fall{n-1}{r}$ has only prime factors at most $r$, while $\fall{n-1}{r}$ divides $g_r(n)$.  So the two gcds agree if and only if they have the same valuation at every prime $p\le r$.  Since $v_p(g_r(n))=\min_Pv_p(f_P(n))$ by (i) and $v_p(g_r(n))\le v_p(G_r(n))$, equality at $p$ means that $v_p(f_P(n))\ge T:=v_p(G_r(n))$ for every $P$.  Finally $f_P(n)$ is a sum of numbers $N(\lambda)$, and the terms with $v_p(N(\lambda))\ge T$ do not affect divisibility by $p^T$.
\end{proof}

\section{Proof of Theorem~\ref{thm:interp} and its corollaries}\label{app:interp}

We use the notation of Section~\ref{sec:interp} and write $m_*$, $j$, $c_*=c_{m_*}=\alpha'/p^j$, $\beta_q$, $\Gamma$, $h_i$ and $F_\rho$ as there, and $H=\sum_ih_ix^i$.

\begin{lemma}[Identity]\label{lem:ident}
For every $k=m_*q+\rho$ with $0\le\rho<m_*$, $[x^k]G=\sum_{t=0}^{q}h_{\rho+tm_*}\beta_{q-t}$, and $\beta_{q-t}/\beta_q=\prod_{i<t}p^j(q-i)/(\alpha'+p^j(q-1-i))$.  Also $v_p(\beta_q)=-\Gamma(q)$.  Hence $[x^k]G=\beta_qF_\rho(q)$.
\end{lemma}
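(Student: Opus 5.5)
We factor $G=H\,(1-x^{m_*})^{-c_*}$, where by definition $H=G(1-x^{m_*})^{c_*}=\sum_i h_ix^i$. Lemma~\ref{lem:binom} gives $(1-x^{m_*})^{-c_*}=\sum_{\lambda\ge0}\beta_\lambda x^{m_*\lambda}$. Extracting the coefficient of $x^k$ with $k=m_*q+\rho$ and $0\le\rho<m_*$, only the indices $i\equiv\rho\pmod{m_*}$ survive, namely $i=\rho+tm_*$ with $0\le t\le q$. This gives the first identity,
\[
[x^k]G=\sum_{t=0}^{q}h_{\rho+tm_*}\,\beta_{q-t}.
\]

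For the ratio we use $\beta_\lambda=\prod_{i<\lambda}(c_*+i)/\lambda!$. Then
\[
\frac{\beta_{q-t}}{\beta_q}=\frac{q!}{(q-t)!}\prod_{i=q-t}^{q-1}\frac1{c_*+i}=\prod_{i<t}\frac{q-i}{c_*+q-1-i},
\]
after reindexing the falling factorial $q(q-1)\cdots(q-t+1)$ and the $t$ denominators $c_*+q-1,\dots,c_*+q-t$ in matching order. Multiplying numerator and denominator by $p^j$ and substituting $c_*=\alpha'/p^j$ turns each factor into $p^j(q-i)/(\alpha'+p^j(q-1-i))$, which is the stated product. Every denominator $\alpha'+p^j(q-1-i)$ is a $p$-adic unit, so it never vanishes and the division is legitimate.

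The valuation $v_p(\beta_q)=-jq-v_p(q!)=-\Gamma(q)$ is exactly Lemma~\ref{lem:binom}, since $v_p(c_*)=-j<0$. In particular $\beta_q\ne0$, so we may factor it out: $[x^k]G=\beta_q\sum_{t\le q}h_{\rho+tm_*}\,\beta_{q-t}/\beta_q$. Comparing with the definition of $F_\rho$ evaluated at $z=q$, the terms with $t>q$ vanish, because the factor with $i=q$ contains $z-q=0$. The finite sum therefore equals the series $F_\rho(q)$, with no convergence issue at integer arguments.

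The main obstacle is making sure the ratio product lines up with the definition's indexing, $z-i$ against $z-1-i$. That is a bookkeeping check, and the rest is immediate.
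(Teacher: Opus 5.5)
Your proof is correct and matches the paper's argument: you factor $G=H(1-x^{m_*})^{-c_*}$, extract the coefficient of $x^k$, and compute the ratio $\beta_{q-t}/\beta_q$; you read that ratio off the closed product formula, where the paper telescopes the recursion $\beta_q=\beta_{q-1}(c_*+q-1)/q$. You then take $v_p(\beta_q)$ from Lemma~\ref{lem:binom} and note that the factor $z-q$ kills every term with $t>q$ at $z=q$, exactly as the paper does.
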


\begin{proof}
We have $G=H(1-x^{m_*})^{-c_*}$ and $[y^i](1-y)^{-c_*}=\beta_i$.  From $\beta_q=\beta_{q-1}(c_*+q-1)/q$ we get $\beta_{q-1}/\beta_q=q/(c_*+q-1)$, and $c_*+q-1-i=(\alpha'+p^j(q-1-i))/p^j\ne0$.  The product telescopes, and the factor $i=q$ vanishes, which cuts the sum at $t=q$.  Each $c_*+i$ has valuation exactly $-j$, so $v_p(\beta_q)=-jq-v_p(q!)$.
\end{proof}

\begin{lemma}[Size of $h$]\label{lem:hsize}
Let $\varepsilon'=\max_{m\in\mathcal B\setminus\{m_*\}}\varepsilon_m$, with $\varepsilon'=0$ if there is no other block.  Then $\varepsilon'<\varepsilon_{m_*}$ and $v_p(h_i)\ge-\varepsilon'i$.  More precisely $v_p(h_i)\ge-\max_V\mathrm{cost}(V)$ over block vectors $V$ on $\mathcal B\setminus\{m_*\}$ of degree at most $i$.
\end{lemma}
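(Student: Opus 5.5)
The plan is to write $H=G(1-x^{m_*})^{c_{m_*}}$ as a product of a $p$-integral series and the binomial factors of the remaining blocks, and then to bound each coefficient by the block expansion. By Lemma~\ref{lem:eulerprod},
\[
H=I_W\prod_{m\in\mathcal B\setminus\{m_*\}}(1-x^m)^{-c_m},
\]
with $I_W\in\Z_{(p)}[[x]]$ as in Definition~\ref{def:calc}. Expanding the remaining factors as in Lemma~\ref{lem:blockexp} gives
\[
h_i=\sum_{V}\bigl([x^{i-\deg V}]I_W\bigr)\prod_{m\ne m_*}\tbinom{c_m+\lambda_m-1}{\lambda_m},
\]
where $V$ runs over the block vectors supported on $\mathcal B\setminus\{m_*\}$ with $\deg V\le i$. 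By Lemma~\ref{lem:binom} each such term has valuation at least $-\mathrm{cost}(V)$. The ultrametric inequality then gives the refined bound $v_p(h_i)\ge-\max_V\mathrm{cost}(V)$.

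For the linear bound, first show that $\varepsilon'$ is attained and is strictly below $\varepsilon_{m_*}$. Since $j_m\le v_p(m)\le\log_pm$, we have $\varepsilon_m\le(\log_pm+1)/m\to0$. So only finitely many blocks have efficiency above any fixed positive level, and the supremum over $\mathcal B\setminus\{m_*\}$ is a maximum, or $0$ if that set is empty. The hypothesis $\varepsilon_{m_*}>\varepsilon_m$ for every other block then gives $\varepsilon'<\varepsilon_{m_*}$.

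Next, bound the cost of each $V$ by its degree, as in the proof of Lemma~\ref{lem:typeA}(b). By \eqref{eq:legcost},
\[
j_m\lambda+v_p(\lambda!)=e_m\lambda-\frac{s_p(\lambda)}{p-1}\le e_m\lambda=\varepsilon_m m\lambda .
\]
Summing over the blocks in $V$ gives $\mathrm{cost}(V)\le\varepsilon'\deg V\le\varepsilon' i$, and hence $v_p(h_i)\ge-\varepsilon' i$.

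The step that needs care is justifying the product formula for $H$ when there are infinitely many blocks. The factors $(1-x^m)^{-c_m}$ are $1+O(x^m)$, so the product converges $x$-adically, and the coefficient $h_i$ involves only blocks of degree at most $i$. Multiplying $G$ by $(1-x^{m_*})^{c_{m_*}}$ cancels the $m_*$ factor exactly, since these are exponentials of formal logarithms. Beyond this, the proof is routine.
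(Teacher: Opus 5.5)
Your proposal is correct and takes essentially the same route as the paper. You factor $H=I_W\prod_{m\ne m_*}(1-x^m)^{-c_m}$, bound each term of the expansion by $-\mathrm{cost}(V)$, use $v_p(\lambda!)\le\lambda/(p-1)$ to get $\mathrm{cost}(V)\le\varepsilon'\deg V$, and note that the maximum exists because $\varepsilon_m\le(\log_pm+1)/m\to0$.
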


\begin{proof}
We have $H=I_W\prod_{m\in\mathcal B\setminus\{m_*\}}(1-x^m)^{-c_m}$ with $I_W\in\Z_{(p)}[[x]]$.  The coefficient of $x^i$ is a sum over block vectors $V$ of degree at most $i$ of terms of valuation at least $-\mathrm{cost}(V)$, and $\mathrm{cost}(V)=\sum_m(j_m\lambda_m+v_p(\lambda_m!))\le\sum_m\varepsilon_mm\lambda_m\le\varepsilon'\deg V$ because $v_p(\lambda!)\le\lambda/(p-1)$.  The maximum exists because $\varepsilon_m\le(\log_pm+1)/m\to0$.
\end{proof}

\begin{remark}\label{rem:uniq}
Uniqueness of the most efficient block among the essential blocks $\mathcal E$ is the same as uniqueness among all blocks.  Let $m_0=\min\mathcal B$.  If $m\notin\mathcal E$ then $\lfloor m/m_0\rfloor>j_m$, so $\varepsilon_m\le(\lfloor m/m_0\rfloor-1+\frac1{p-1})/m<\lfloor m/m_0\rfloor/m\le1/m_0<\varepsilon_{m_0}$, and $m_0\in\mathcal E$.
\end{remark}

\begin{proof}[Proof of Theorem~\ref{thm:interp}]
The identity and the value of $v_p(\beta_q)$ are Lemma~\ref{lem:ident}.  Put $\theta=(\varepsilon_{m_*}-\varepsilon')m_*>0$ and choose $\ell$ with $p^\ell(p-1)\theta>2$.  Fix $z_0\in\Z_p$ and write $z=z_0+p^\ell y$.  The $t$-th term of $F_\rho$ is $h_{\rho+tm_*}T_t(y)$ with
\[
T_t=\prod_{i<t}\frac{p^j(b_i+p^\ell y)}{u_i+p^{j+\ell}y},\qquad b_i=z_0-i,\quad u_i=\alpha'+p^j(b_i-1)\in\Z_p^\times .
\]
Each denominator is a unit, and $1/(u_i+p^{j+\ell}y)\in\Z_p\langle y\rangle$ has Gauss valuation $0$.  The Gauss valuation $w$ is additive, so
\[
w(T_t)=jt+\sum_{i<t}\min(v(b_i),\ell)\ \ge\ jt+\sum_{s=1}^{\ell}\lfloor t/p^s\rfloor\ \ge\ jt+v_p(t!)-\frac{t}{p^\ell(p-1)},
\]
since among $t$ consecutive integers at least $\lfloor t/p^s\rfloor$ are divisible by $p^s$, and the dropped tail of Legendre's sum is at most $t/(p^\ell(p-1))$.  With $v_p(t!)=(t-s_p(t))/(p-1)\ge t/(p-1)-\log_pt-1$ and $e_*=j+\frac1{p-1}=\varepsilon_{m_*}m_*$, Lemma~\ref{lem:hsize} gives
\[
w(h_{\rho+tm_*}T_t)\ \ge\ -\varepsilon'(\rho+tm_*)+e_*t-\frac{t}{p^\ell(p-1)}-\log_pt-1\ \ge\ \tfrac\theta2t-\log_pt-C_\rho\to\infty .
\]
So $F_\rho(z_0+p^\ell y)$ is a Gauss-norm limit of restricted series, hence a restricted power series in $y$.  By compactness finitely many discs $z_0+p^\ell\Z_p$ cover $\Z_p$.
\end{proof}

\begin{proof}[Proof of Corollary~\ref{cor:strassmann}]
Strassmann's theorem on each disc gives finitely many zeros of finite multiplicity.  Weierstrass preparation on $D$ writes $F_\rho=(z-z^*)^\mu U$ near $z^*$ with $U$ a unit on a smaller disc around $z^*$, which gives the formula, and $v_p(F_\rho)$ is bounded on the compact complement of a neighbourhood of the zeros by continuity.  If $z^*\in\Z_p$ is a zero, then $q\to z^*$ along $\N$ gives unbounded values.
\end{proof}

\begin{proof}[Proof of Corollary~\ref{cor:dich}]
If $g\nmid n$ then no $d\in A$ divides $n$, so $c_n=0$.  Hence every block, and $m_*$, is a multiple of $g$, and a class modulo $m_*$ lies entirely inside or entirely outside $g\N$.  If $g\nmid\rho$ then $a_A(n)=0$ on the class, so $F_\rho(q)=0$ for all $q\in\N$, and $F_\rho\equiv0$ by density and continuity.  If $g\mid\rho$, the semigroup $\langle A\rangle$ contains all large multiples of $g$, by taking a finite subset of $A$ with the same gcd, and $a_A(n)\ge1$ for $n\in\langle A\rangle$.  So $F_\rho(q)\ne0$ for all large $q$.  Every disc contains infinitely many such $q$, so $F_\rho$ vanishes identically on no disc, and Corollary~\ref{cor:strassmann} gives finitely many zeros on each of the finitely many discs.
\end{proof}

\section{Proof of Theorem~\ref{thm:qstar}}\label{app:qstar}

Here $G=(1-x)^{-2}e^{-x-x^2/2}$, $m_*=p$ is odd, $j=1$, $\alpha'=1$, and every other block has depth $1$ and degree at least $3p$.  Let $\rho=\nu<p$, so that $h_\rho=[x^\rho]G=Q_{\rho+3}/\rho!=p\xi$ with $\xi$ a $p$-adic unit, and $h_{\rho+p}=\zeta$ in the notation of Theorem~\ref{thm:col2}, since $\zeta=Q_{\rho+p+3}/(\rho+p)!-\xi=[x^{\rho+p}]G-c_p[x^\rho]G$.

\begin{proof}
For a vector $V$ on the other blocks, all of depth $1$, $\mathrm{cost}(V)=\sum_m(\lambda_m+v_p(\lambda_m!))\le\frac p{p-1}\sum_m\lambda_m\le\frac p{p-1}\lfloor\deg V/3p\rfloor$.  By Lemma~\ref{lem:hsize}, $v_p(h_{\rho+tp})\ge-\frac p{p-1}\lfloor t/3\rfloor$ for $\rho<p$.  (The bound $-\lfloor t/3\rfloor$ is false, for example $v_5(h_{\rho+75})=-6$.)  On the full disc $\Z_p$ the Gauss valuation of $T_t=\prod_{i<t}p(z-i)/(1+p(z-1-i))$ is exactly $t$, since $p^t\prod(z-i)$ is $p^t$ times a monic polynomial and the denominators are units of $\Z_p\langle z\rangle$.  As $p\ge3$ gives $p/(p-1)\le\frac32$,
\[
w(h_{\rho+tp}T_t)\ \ge\ t-\tfrac p{p-1}\cdot\tfrac t3\ \ge\ \tfrac t2 .
\]
So the series for $F_\rho$ converges in the Tate algebra $\Z_p\langle z\rangle$.  We examine the terms $t=0,1,2$ and $t\ge3$.  The term $t=0$ is $h_\rho=p\xi$.  Since $T_1=pz/(1+p(z-1))$, the term $t=1$ divided by $p$ is $\zeta z/(1+p(z-1))\equiv\zeta z$ modulo $p\Z_p\langle z\rangle$.  For $t=2$, $h_{\rho+2p}\in\Z_p$ because no other block has degree at most $\rho+2p<3p$, and $w(T_2)=2$.  For $t=3$ the cost bound is $p/(p-1)\le\frac32$ per block, so $w(h_{\rho+3p}T_3)\ge3-\frac p{p-1}\ge\frac32$.  Gauss valuations in $\Z_p\langle z\rangle$ are integers, so rounding $w\ge\frac32$ up gives $w\ge2$.  For $t\ge4$, $w\ge t/2\ge2$ directly.  Hence
\[
F_\rho/p=\xi+\zeta z+pR(z),\qquad R\in\Z_p\langle z\rangle .
\]
If $\zeta$ is a unit, this is congruent modulo $p$ to a polynomial of degree one with unit slope, and Weierstrass preparation in $\Z_p\langle z\rangle$, in its general form with a monic polynomial of degree one that has reduction $z+\xi/\zeta$, gives $F_\rho/p=(z-q^*)U(z)$ with $U$ a unit of $\Z_p\langle z\rangle$ and a unique root $q^*\in\Z_p$ with $q^*\equiv-\xi/\zeta\pmod p$.  A unit of $\Z_p\langle z\rangle$ has constant non-zero reduction, so it has absolute value $1$ at every point of $\Z_p$, and $v_p(F_\rho(z))=1+v_p(z-q^*)$ for all $z\in\Z_p$.  If $p\mid\zeta$, then $F_\rho/p\equiv\xi\not\equiv0\pmod p$ is a unit at every point, so $v_p(F_\rho(z))=1$ for all $z\in\Z_p$.

Now let $n=\rho+3+pq$ with $q\in\N$ and $k=\rho+pq$.  Then $Q_n=k!\,[x^k]G$ and $v_p(k!)=\Gamma(q)=q+v_p(q!)$, so Theorem~\ref{thm:interp} gives $v_p(Q_n)=v_p(F_\rho(q))$.  For $n\ge3$ the number $Q_n=T(n,2)/((n-1)(n-2))$ is positive, because an $n$-cycle has no cycle of length $1$ or $2$ and so $T(n,2)\ge1$.  Hence $v_p(Q_n)$ is finite, which excludes $q=q^*$, and $q^*\notin\N$.
\end{proof}

\section{Proofs for the residue of \texorpdfstring{$\operatorname{cop}(n)$}{cop(n)} modulo a prime \texorpdfstring{$n+1$}{n+1}}\label{app:cop}

Let $N=n+1$ be an odd prime.  By Proposition~\ref{prop:cop}, $\operatorname{cop}(n)\equiv-[x^n]\prod_{d\mid n}(1-x^d)^{\mu(d)n/d}\pmod N$, the product over the squarefree divisors of $n$.

\begin{proof}[Proof of Proposition~\ref{prop:copprefix}]
The factor $d=1$ is $(1-x)^n=(1-x)^N/(1-x)$, and $(1-x)^N\equiv1-x^N\pmod N$.  So modulo $(N,x^N)$ this factor is $1/(1-x)$.  Multiplying a power series by $1/(1-x)$ replaces its coefficients by their prefix sums, and $n<N$, so $[x^n]\prod_{d\mid n}(1-x^d)^{\mu(d)n/d}\equiv g_0+\dots+g_n$.  For the recurrence, $xG'/G=-n\sum_{d>1}\mu(d)\sum_{i\ge1}x^{di}$ and $n\equiv-1\pmod N$, so $kg_k\equiv\sum_{d>1}\mu(d)\sum_{i\ge1}g_{k-di}$, and all divisions are by integers $k\le n<N$.
\end{proof}

\begin{proof}[Proof of Proposition~\ref{prop:copfermat}]
For $n=2^a$ the only squarefree divisor above $1$ is $2$, so $G=(1-x^2)^{-n/2}$ and $\sum_{k\le n}g_k=\sum_{i=0}^{n/2}\binom{n/2-1+i}{i}=\binom{n}{n/2}$ by the hockey stick identity.  Finally $\binom{N-1}{i}\equiv(-1)^i\pmod N$.  The case $N=3$ is $n=2$.
\end{proof}

\begin{proof}[Proof of Proposition~\ref{prop:copsafe}]
For $n=2q$ the squarefree divisors above $1$ are $2$, $q$ and $2q$, so $G=(1-x^2)^{-q}(1-x^q)^{-2}(1-x^{2q})=A(x)\frac{1+x^q}{1-x^q}$ with $A=(1-x^2)^{-q}$.  Modulo $x^{2q+1}$ this is $A\,(1+2x^q+2x^{2q})$.  Let $P(m)=\sum_{k\le m}[x^k]A=\sum_{i\le\lfloor m/2\rfloor}\binom{q-1+i}{i}=\binom{q+\lfloor m/2\rfloor}{\lfloor m/2\rfloor}$.  Then $\sum_{k\le n}g_k=P(2q)+2P(q)+2P(0)=\binom{2q}{q}+2\binom{(3q-1)/2}{(q-1)/2}+2$, and $\binom{2q}{q}=\binom{N-1}{q}\equiv(-1)^q=-1$.  Hence $\operatorname{cop}(n)\equiv-(-1+2\binom{(3q-1)/2}{(q-1)/2}+2)=-1-2\binom{(3q-1)/2}{(q-1)/2}$.
\end{proof}

\begin{proof}[Proof of Proposition~\ref{prop:cop2q}]
By Proposition~\ref{prop:copprefix}, $\operatorname{cop}(n)\equiv-[x^n]\,G(x)/(1-x)\pmod N$.  For $n=2^aq^b$ the squarefree divisors above $1$ are $2$, $q$ and $2q$, with $\mu=-1,-1,1$, so with $e=n/(2q)$, an integer since $a,b\ge1$,
\[
G=(1-x^2)^{-n/2}(1-x^q)^{-2e}(1-x^{2q})^{e}=(1-x^2)^{-n/2}\Bigl(\frac{1+x^q}{1-x^q}\Bigr)^{e}.
\]
The coefficient of $x^{2i}$ in $(1-x^2)^{-c}$ is $\binom{c+i-1}{i}$, a polynomial in $c$ whose denominator $i!$ is prime to $N$ for $2i\le n$, and $n/2\equiv-1/2\pmod N$ because $n\equiv-1$.  So modulo $(N,x^{N})$ the factor $(1-x^2)^{-n/2}$ may be replaced by $(1-x^2)^{1/2}$.  Finally $(1-x^2)^{1/2}/(1-x)=(1+x)(1-x^2)^{-1/2}$, and the coefficient of $x^m$ in it is $\binom{2h}{h}4^{-h}=\vartheta(m)$ with $h=\lfloor m/2\rfloor$.  Expanding $((1+x^q)/(1-x^q))^e=\sum_j\gamma_jx^{qj}$ gives $[x^n]G/(1-x)\equiv\sum_{j\le n/q}\gamma_j\vartheta(n-qj)$.
\end{proof}

The script \texttt{cop\_twoprime.py} in \texttt{anc/interp/windows} compares the formula of Proposition~\ref{prop:cop2q} with the recurrence of Proposition~\ref{prop:cop} for the $454$ primes $N<20000$ of the form $2^aq^b+1$ and with exact values of $\operatorname{cop}(n)$ for $N<400$.

The scripts are in \texttt{anc/interp/cop}.  The program \texttt{cop\_fast.c} runs the recurrence of Proposition~\ref{prop:copprefix}, \texttt{cop\_cycle.c} runs the cycle recurrence modulo $N$ without the M\"obius step, and \texttt{safe\_prime.c} evaluates the formula for safe primes.  The prefix-sum program agrees with exact big-integer values of $\operatorname{cop}(n)$ for all $167$ primes $N\le1000$, including $N=709$, with the cycle recurrence for all $2261$ odd primes $N\le20000$, and the safe-prime formula agrees with it for all $669$ safe primes $N<10^5$.  The Fermat primes $5,17,257,65537$ give the residue $N-1$.  As a negative control, the version with the exponent $n/d+1$ in place of $n/d$ agrees with the exact values for $3$ of $76$ primes $N\le400$.  The search over all primes $N\le3000001$ gives the zeros $709$ and $92203$ only, and the residues of all $9591$ odd primes below $10^5$ are in \texttt{residues\_p\_lt\_1e5.txt}.

\section{Proofs for the good primes in column \texorpdfstring{$2$}{2}}\label{app:density}

Let $Q_n=T(n,2)/((n-1)(n-2))$ for $n\ge3$ and $q_\nu=Q_{\nu+3}$, so that $Q_{k+3}=k!\,[x^k]G$ with $G=(1-x)^{-2}e^{-x-x^2/2}$.  An odd prime $p$ is good if $p\nmid Q_n$ for all $n\ge3$, and bad otherwise.

\begin{lemma}\label{lem:gaprec}
$(1-x)G'=(1+x^2)G$.  Hence $(\nu+1)(g_{\nu+1}-g_\nu)=g_{\nu-2}$ for $g_\nu=[x^\nu]G$, and $q_{\nu+1}=(\nu+1)q_\nu+\nu(\nu-1)q_{\nu-2}$ for $\nu\ge2$, with $q_0,q_1,q_2=1,1,2$.
\end{lemma}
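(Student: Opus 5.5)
The plan is to differentiate $G=(1-x)^{-2}e^{-x-x^2/2}$ logarithmically, read off the coefficient recurrence for $g_\nu=[x^\nu]G$, and then convert it into a recurrence for $q_\nu=\nu!\,g_\nu$.

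The logarithmic derivative is
\[
\frac{G'}{G}=\frac{2}{1-x}-1-x .
\]
Multiplying by $1-x$ gives
\[
(1-x)\frac{G'}{G}=2-(1+x)(1-x)=2-1+x^2=1+x^2 .
\]
This is the identity $(1-x)G'=(1+x^2)G$.

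Next we compare coefficients of $x^\nu$. On the left the coefficient is $(\nu+1)g_{\nu+1}-\nu g_\nu$. On the right it is $g_\nu+g_{\nu-2}$, with $g_j=0$ for $j<0$. Rearranging gives $(\nu+1)(g_{\nu+1}-g_\nu)=g_{\nu-2}$.

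For the $q$-recurrence we use $q_\nu=Q_{\nu+3}=\nu!\,g_\nu$, which is Theorem~\ref{thm:quotient} with $r=2$ and $A=A_{>2}$; there $W_A=\sum_{m\ge3}x^{m-3}=1/(1-x)$ and $F_{A_{>2}}=e^{-x-x^2/2}/(1-x)$. Multiply the coefficient identity by $\nu!$. Since $(\nu+1)\nu!\,g_{\nu+1}=q_{\nu+1}$ and $(\nu+1)\nu!\,g_\nu=(\nu+1)q_\nu$, we get
\[
q_{\nu+1}-(\nu+1)q_\nu=\nu!\,g_{\nu-2}=\nu(\nu-1)\,q_{\nu-2},
\]
which is the stated recurrence for $\nu\ge2$.

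The initial values come from expanding $G=(1+2x+3x^2+\cdots)(1-x+0\cdot x^2+\cdots)$. Here $e^{-x-x^2/2}=1-x+\bigl(\tfrac{1}{2}-\tfrac{1}{2}\bigr)x^2+\cdots$, so its $x^2$ coefficient is $0$. This gives $g_0=1$, $g_1=2-1=1$ and $g_2=3-2+0=1$, hence $q_0,q_1,q_2=1,1,2$. These agree with $Q_3,Q_4,Q_5=1,1,2$.

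There is no real obstacle. The only care needed is the index bookkeeping when passing from $g$ to $q$, and keeping the convention $g_j=0$ for negative $j$. That convention makes the recurrence also hold formally for $\nu=0,1$, but the statement only requires $\nu\ge2$.
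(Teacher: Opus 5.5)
Your proposal is correct and follows the paper's proof: both take the logarithmic derivative $G'/G=2/(1-x)-(1+x)$, compare coefficients of $x^\nu$ in $(1-x)G'=(1+x^2)G$, and multiply by $\nu!$ to pass to $q_\nu=\nu!\,g_\nu$. You also check the initial values $q_0,q_1,q_2=1,1,2$ from the expansion of $G$, which the paper leaves implicit.
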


\begin{proof}
$G'/G=2/(1-x)-(1+x)$, which gives the differential equation.  Comparing the coefficients of $x^\nu$ gives $(\nu+1)g_{\nu+1}-\nu g_\nu=g_\nu+g_{\nu-2}$.  Multiply by $\nu!$.  For example $q_5=5\cdot38+12\cdot2=214=Q_8$.
\end{proof}

\begin{theorem}[Fixed lines]\label{thm:lines}
Define $u_0,u_1,u_2=e_0,e_1,e_2$ in $\Q^3$ and $u_{m+2}=(u_{m-1}+m\,u_m)/((m+1)(m+2))$ for $m\ge1$.  The vectors $u_m$ do not depend on $p$.  For every odd prime $p$ and $0\le m\le p-3$ the denominators of $u_m$ are prime to $p$ and $q_{p-1-m}\equiv\langle u_m,c(p)\rangle\pmod p$, where $c(p)=(q_{p-1},q_{p-2},q_{p-3})\bmod p$.  Consequently $p$ is bad if and only if $[c(p)]\in\mathbb P^2(\F_p)$ lies on one of the $p-2$ lines $\{u_m^\perp\}$, $0\le m\le p-3$, and the number of $n\in[3,p+2]$ with $p\mid Q_n$ is the number of $m\in[0,p-3]$ for which $[c(p)]$ lies on the line of $u_m$.
\end{theorem}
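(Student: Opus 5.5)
The plan is to run the recurrence of Lemma~\ref{lem:gaprec} backwards modulo $p$ from the three top values $q_{p-1},q_{p-2},q_{p-3}$.  For $\nu=p-1-m$ the recurrence $q_{\nu+1}=(\nu+1)q_\nu+\nu(\nu-1)q_{\nu-2}$ reads, modulo $p$, as $q_{p-m}\equiv-m\,q_{p-1-m}+(m+1)(m+2)q_{p-3-m}$, since $\nu+1=p-m\equiv-m$ and $\nu(\nu-1)\equiv(m+1)(m+2)$.  For $m\ge1$ we have $p-m\le p-1$, so the left side is a known earlier index in the backward direction, and we can solve for the lowest index:
\[
q_{p-3-m}\equiv\frac{q_{p-m}+m\,q_{p-1-m}}{(m+1)(m+2)}\pmod p,
\]
valid whenever $(m+1)(m+2)$ is prime to $p$, which holds for $m\le p-3$.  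Writing $q_{p-1-m}\equiv\langle u_m,c(p)\rangle$, this says $u_{m+2}$ should be $(u_{m-1}+m\,u_m)/((m+1)(m+2))$, which is exactly the recursion of the statement.  The base cases $m=0,1,2$ are tautological with $u_0,u_1,u_2=e_0,e_1,e_2$ and $c(p)=(q_{p-1},q_{p-2},q_{p-3})$.

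Step one is the algebra above, checked by induction on $m$.  Assuming the congruence for $m-1,m,m+1$, the displayed formula gives it for $m+2$ as long as $m+2\le p-3$.  Step two is the $p$-integrality of $u_m$ for $m\le p-3$.  The denominators of $u_{m+2}$ only introduce the factors $(m+1)(m+2)$ with $m+2\le p-3$, so by induction all denominators are products of integers at most $p-3<p$ and are prime to $p$.  The $u_m$ are visibly independent of $p$, being defined over $\Q$ by a fixed recursion.

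Step three is the translation into geometry.  The values $q_0,\dots,q_{p-1}$ correspond to $n=3,\dots,p+2$.  The indices $q_{p-1-m}$ with $0\le m\le p-3$ cover $q_2,\dots,q_{p-1}$, while $q_0=q_1=1$ are never divisible by $p$.  So $p$ divides some $Q_n$ with $3\le n\le p+2$ exactly when $\langle u_m,c(p)\rangle\equiv0$ for some $m\le p-3$, and by Proposition~\ref{prop:col2} this is equivalent to $p$ being bad.  Since $c(p)\ne0$ (otherwise backward recursion forces $q_0\equiv0$, a contradiction, or simply $[c(p)]$ is well defined because $u_0,u_1,u_2$ are a basis), the condition is projective.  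The count of $n$ with $p\mid Q_n$ is the number of lines through $[c(p)]$, counting each $m$ separately.

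The only delicate point I expect is the index bookkeeping: confirming that the backward step never divides by $p$, and that all needed $m$ stay within $[0,p-3]$.  A secondary point is confirming that some $u_m$ is nonzero modulo $p$, so that each $u_m^\perp$ is a genuine line.  If some $u_m$ reduces to $0$, the statement should be read with that "line" as the whole plane, and I would check that the recursion excludes this case.
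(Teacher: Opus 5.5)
Your proposal is correct and is essentially the paper's proof: substitute $\nu=p-1-m$ in Lemma~\ref{lem:gaprec}, use $\nu+1\equiv-m$ and $\nu(\nu-1)\equiv(m+1)(m+2)$ modulo $p$, solve for $q_{p-3-m}$ by induction, and then read off the denominators and the projective condition. Two small corrections. First, $c(p)\ne0$ follows from your first reason, or more directly, as in the paper, from $\langle u_{p-3},c(p)\rangle\equiv q_2=2$; the fact that $u_0,u_1,u_2$ are a basis does not give it. Second, your worry that some $u_m$ might vanish modulo $p$ is settled by the identity $\det(u_{m+1},u_{m+2},u_{m+3})=\det(u_m,u_{m+1},u_{m+2})/((m+2)(m+3))$, which shows that every $u_m$ with $m\le p-3$ lies in a triple that is a basis modulo $p$.
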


\begin{proof}
Put $R_m=q_{p-1-m}\bmod p$ and $\nu=p-1-m$.  Lemma~\ref{lem:gaprec} at the index $\nu$ reads $q_{\nu-2}\,\nu(\nu-1)=q_{\nu+1}-(\nu+1)q_\nu$.  Modulo $p$ one has $\nu(\nu-1)\equiv(m+1)(m+2)$ and $\nu+1\equiv-m$, so $R_{m+2}(m+1)(m+2)\equiv R_{m-1}+mR_m$ for $1\le m\le p-3$.  The factor $(m+1)(m+2)$ is a unit because $m+2\le p-1$, so induction on $m$ gives $R_m=\langle u_m,c(p)\rangle$, and the denominators of $u_m$ are products of integers at most $p-1$.  The indices $m=p-2,p-1$ give $q_1,q_0=1,1$, which are non-zero modulo $p$, and $c(p)\ne0$, since $c(p)=0$ would force $R_m=0$ for all $m\le p-3$, whereas $R_{p-3}=q_2=2$.  The values $q_{p-1-m}$ for $0\le m\le p-3$ are $Q_n$ for $5\le n\le p+2$, and $Q_3=Q_4=1$.
\end{proof}

\begin{proposition}[Moments for a random point]\label{prop:moments}
Let $\ell_0,\dots,\ell_{p-3}$ be pairwise distinct lines of $\mathbb P^2(\F_p)$ and let $N(c)$ be the number of them through a point $c$.  If $c$ is uniform on $\mathbb P^2(\F_p)$, then $\mathbb EN=(p-2)(p+1)/(p^2+p+1)$ and $\mathbb EN(N-1)=(p-2)(p-3)/(p^2+p+1)$.
\end{proposition}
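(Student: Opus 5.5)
The plan is to compute both moments by double counting incidences in $\mathbb P^2(\F_p)$, which has $p^2+p+1$ points, with every line containing $p+1$ points and any two distinct lines meeting in exactly one point. Write $N(c)=\sum_{m=0}^{p-3}[c\in\ell_m]$ as a sum of indicator functions, so that both moments become counts of incidences between points and lines.

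For the first moment, sum over all points:
\[
\sum_cN(c)=\sum_{m}|\ell_m|=(p-2)(p+1),
\]
since there are $p-2$ lines, each with $p+1$ points. Dividing by the number of points $p^2+p+1$ gives $\mathbb EN=(p-2)(p+1)/(p^2+p+1)$.

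For the second factorial moment, note that $N(c)(N(c)-1)$ counts the ordered pairs $(m,m')$ with $m\ne m'$ and $c\in\ell_m\cap\ell_{m'}$. Summing over $c$ therefore counts the triples $(c,m,m')$ with $m\ne m'$ and $c$ on both lines. For each of the $(p-2)(p-3)$ ordered pairs of distinct indices, the lines $\ell_m$ and $\ell_{m'}$ are distinct by hypothesis, so they meet in exactly one point. Hence
\[
\sum_cN(c)(N(c)-1)=(p-2)(p-3),
\]
and dividing by $p^2+p+1$ gives $\mathbb EN(N-1)=(p-2)(p-3)/(p^2+p+1)$.

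There is no real obstacle in this argument. The only point needing care is that the hypothesis of pairwise distinct lines is exactly what guarantees one intersection point per ordered pair; coinciding lines would instead contribute $p+1$ points for that pair. If $p=3$ there is a single line, and both formulas remain correct, with the second one equal to $0$.
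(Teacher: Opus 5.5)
Your proof is correct and is the same argument as the paper's: the paper's one-line proof cites exactly the three facts you use ($p+1$ points per line, one intersection point for two distinct lines, $p^2+p+1$ points in total), and you have simply written out the double count of incidences and of ordered pairs of distinct lines explicitly. Your remark that distinctness of the lines is what makes each of the $(p-2)(p-3)$ ordered pairs contribute exactly one point is the only point needing care, and you handled it.
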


\begin{proof}
Every line has $p+1$ points, and two distinct lines meet in exactly one point, and $\mathbb P^2(\F_p)$ has $p^2+p+1$ points.
\end{proof}

\begin{theorem}[Conditional density]\label{thm:cond}
Let $U_p\subset\mathbb P^2(\F_p)$ be the set of points on none of the lines of Theorem~\ref{thm:lines}.  Assume (H1) $|U_p|/(p^2+p+1)\to e^{-1}$ as $p\to\infty$, and (H2) $\#\{p\le x:[c(p)]\in U_p\}=\sum_{p\le x}|U_p|/(p^2+p+1)+o(\pi(x))$.  Then the good primes have density $e^{-1}$ among the primes.
\end{theorem}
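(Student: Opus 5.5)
The plan is to derive the statement from Theorem~\ref{thm:lines} together with (H1) and (H2), using only elementary limits. By Theorem~\ref{thm:lines}, an odd prime $p$ is good if and only if the point $[c(p)]\in\mathbb P^2(\F_p)$ lies on none of the lines $u_m^\perp$ with $0\le m\le p-3$, that is, if and only if $[c(p)]\in U_p$. We must check that $c(p)\ne0$ so that $[c(p)]$ is a point. The proof of Theorem~\ref{thm:lines} already gives this, since $R_{p-3}=q_2=2$. It follows that
\[
\#\{p\le x:\ p\text{ good}\}=\#\{p\le x:\ [c(p)]\in U_p\}.
\]
Some $u_m^\perp$ may be degenerate or may coincide for different $m$. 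This does not matter here, because $U_p$ is defined as the complement of their union and (H1) is a hypothesis about exactly that set.

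Next apply (H2), which gives $\#\{p\le x:\ p\text{ good}\}=\sum_{p\le x}\delta_p+o(\pi(x))$ with $\delta_p=|U_p|/(p^2+p+1)\in[0,1]$. It remains to show that $\frac1{\pi(x)}\sum_{p\le x}\delta_p\to e^{-1}$. This is a Ces\`aro statement. Given $\epsilon>0$, (H1) gives $P$ with $|\delta_p-e^{-1}|<\epsilon$ for $p>P$. Split the sum at $P$. The primes $p\le P$ contribute at most $\pi(P)$, which is $o(\pi(x))$ since $\pi(x)\to\infty$. The remaining primes contribute $(e^{-1}+O(\epsilon))(\pi(x)-\pi(P))$. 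Dividing by $\pi(x)$, letting $x\to\infty$ and then $\epsilon\to0$ gives density $e^{-1}$.

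There is no real obstacle, because the analytic content is carried by (H1) and (H2). The only points needing care are the identification of goodness with membership in $U_p$, including the range of $n$ covered by Theorem~\ref{thm:lines}, and the exclusion of the prime $2$ and of finitely many small primes, which does not affect the density. For the first point, Theorem~\ref{thm:lines} handles $5\le n\le p+2$. The values $Q_3=Q_4=1$ are never divisible by $p$, and Proposition~\ref{prop:col2} reduces goodness to the window $3\le n\le p+2$.
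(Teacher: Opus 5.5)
Your proof is correct and follows the paper's argument: you identify goodness with $[c(p)]\in U_p$ via Theorem~\ref{thm:lines}, apply (H2), and then use (H1) to evaluate the averaged sum. The only difference is that you spell out the Ces\`aro averaging step and the non-vanishing of $c(p)$, which the paper's two-line proof leaves implicit.
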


\begin{proof}
By Theorem~\ref{thm:lines} a prime $p$ is good if and only if $[c(p)]\in U_p$.  By (H2) the number of good primes up to $x$ is $\sum_{p\le x}|U_p|/(p^2+p+1)+o(\pi(x))$, and by (H1) this is $(e^{-1}+o(1))\pi(x)$.
\end{proof}

\begin{proposition}\label{prop:chebotarev}
Let $f\in\Z[x]$ be irreducible of degree $d$ with Galois group $S_d$.  The set of primes $p$ for which $f$ has no root modulo $p$ has Dirichlet density $\sum_{k=0}^d(-1)^k/k!$, and this differs from $e^{-1}$ by less than $1/(d+1)!$.  For every fixed $K$ the set of odd primes dividing some $Q_n$ with $3\le n\le K$ is finite.
\end{proposition}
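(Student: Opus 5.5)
The plan is to prove the two assertions separately: the first is a direct application of the Chebotarev density theorem combined with the derangement count, and the second is a finiteness remark.

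For the first assertion, let $L$ be the splitting field of $f$ and identify $\operatorname{Gal}(L/\Q)$ with $S_d$ acting on the $d$ roots of $f$. Exclude the finitely many primes dividing the leading coefficient of $f$ or its discriminant; this does not affect a Dirichlet density. For each remaining prime $p$, the reduction $f\bmod p$ is separable of degree $d$. By Dedekind's theorem, the number of roots of $f$ in $\F_p$ equals the number of fixed points of the Frobenius class $\mathrm{Frob}_p$ acting on the roots. So $f$ has no root modulo $p$ exactly when $\mathrm{Frob}_p$ is a fixed-point-free permutation, i.e.\ a derangement in $S_d$. The set of derangements is a union of conjugacy classes, so Chebotarev gives Dirichlet density $D(d)/d!$ for this set of primes. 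Inclusion--exclusion gives
\[
\frac{D(d)}{d!}=\sum_{k=0}^d\frac{(-1)^k}{k!}.
\]
For the error term, $e^{-1}=\sum_{k\ge0}(-1)^k/k!$, so the difference from $e^{-1}$ is the tail $\sum_{k>d}(-1)^k/k!$. This is an alternating series whose terms strictly decrease in absolute value. Its modulus is therefore strictly less than the first term, $1/(d+1)!$ (and positive), as the standard alternating-series estimate gives.

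For the second assertion, recall from the proof of Theorem~\ref{thm:qstar} that $Q_n=T(n,2)/((n-1)(n-2))$ is a positive integer for every $n\ge3$. It is an integer by Theorem~\ref{thm:general}(i) with $r=2$, and it is positive because an $n$-cycle is counted by $T(n,2)$. Hence $\prod_{3\le n\le K}Q_n$ is a positive integer, so it has finitely many prime divisors, and every odd prime dividing some $Q_n$ with $3\le n\le K$ is among them.

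I do not expect a real obstacle. The only care needed is in the first part: matching "no root modulo $p$" with "Frobenius has no fixed point" requires excluding the finitely many ramified primes and those dividing the leading coefficient, where the reduction may drop degree or acquire repeated roots. Removing finitely many primes does not change a Dirichlet density, so this causes no difficulty.

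The role of the proposition in context is the contrast with Question~\ref{prob:odd}. A fixed polynomial condition produces a density of the form $\sum_{k\le d}(-1)^k/k!$, which is close to but not equal to $e^{-1}$, while any fixed window $n\le K$ only ever accounts for finitely many bad primes. So the good-prime density cannot come from a single Chebotarev statement, because the number of conditions grows with $p$.
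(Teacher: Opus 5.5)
Your proposal is correct and matches the paper's proof: Chebotarev identifies the primes with no root of $f$ with those whose Frobenius class has no fixed point, the derangement proportion gives $\sum_{k\le d}(-1)^k/k!$, the alternating-series estimate gives the $1/(d+1)!$ bound, and finiteness follows because each $Q_n$ is a positive integer. Your additional exclusion of the primes dividing the leading coefficient, alongside those dividing the discriminant, is a harmless refinement of the paper's argument.
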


\begin{proof}
For $p$ not dividing the discriminant, $f$ has a root modulo $p$ if and only if the Frobenius class of $p$ in $S_d$ has a fixed point, so by the Chebotarev density theorem the density is the proportion $D_d/d!=\sum_{k\le d}(-1)^k/k!$ of permutations without fixed point, and the error bound is the alternating series estimate.  The last claim holds because each $Q_n$ is a positive integer.
\end{proof}

The scripts are in \texttt{anc/interp/density}.  The program \texttt{fast.c} runs the recurrence of Lemma~\ref{lem:gaprec} modulo $p$ for every odd prime up to $3\cdot10^6$, the file \texttt{analyze.out} contains the counts quoted in Question~\ref{prob:odd}, \texttt{arrangement.py} checks the identity of Theorem~\ref{thm:lines} for all odd primes up to $3000$ with no mismatch, \texttt{interp\_degree.py} computes the degree of the interpolating polynomial, and \texttt{controls.py} reproduces the densities $1/2$, $1/3$ and $3/8$ of Proposition~\ref{prop:chebotarev} for $x^2+1$, $x^3-2$ and $x^4+x+1$.  The zero positions of $Q_n$ modulo $p$ for all these primes are in \texttt{zeros\_all.txt.gz}.  Among the $216815$ odd primes up to $3\cdot10^6$ the number $N_p$ of zeros has mean $1.003$ and variance $1.008$ for $p\ge1000$.

\section{The Lean formalization}\label{app:lean}

The folder \texttt{anc/lean} contains the Lean~4 sources, without build products, in three directories.  The directory \texttt{lean-restricted} is the Lake package \texttt{CongruenceRestricted} with the subpackage \texttt{lean-restricted/bridge}, and \texttt{lean-certificates} is the package \texttt{CongruenceCertificates}.  The file \texttt{anc/lean/README.md} gives the toolchain, \texttt{leanprover/lean4:v4.31.0} with Mathlib \texttt{v4.31.0}, and the build commands.  In the packages the headline theorems depend only on \texttt{propext}, \texttt{Classical.choice} and \texttt{Quot.sound}, as printed by \texttt{\#print axioms} in the files \texttt{Main*.lean} and \texttt{Certificates.lean}.  There is no \texttt{sorry} and no \texttt{native\_decide} in the sources.  The recurrence of Section~\ref{sec:tools} is proved to count the permutations of \texttt{Fin n} with the prescribed cycle lengths, in \texttt{PermFinal.lean}.

\begin{center}\small
\begin{tabular}{p{5.3cm}|p{4.7cm}|p{5.0cm}}
statement of the paper & Lean names & file\\\hline
Theorem~\ref{thm:invol} (involutions, $v_2(I(n))=\lfloor n/4\rfloor+\epsilon_\nu$) & \texttt{Inv\_exact} & \texttt{Invol.lean}\\
exactness half of Theorem~\ref{thm:calculus} and Corollary~\ref{cor:calc} & \texttt{calculus\_exact}, \texttt{calculus\_exact\_val}, \texttt{calculus\_exact\_vec} & \texttt{ExactCalculus.lean}, \texttt{VecXi.lean}\\
Theorem~\ref{thm:saturation} & \texttt{saturation\_tfae}, \texttt{saturated\_valuation} & \texttt{Saturation.lean}\\
Theorem~\ref{thm:dichotomy} & \texttt{dichotomy\_A'}, \texttt{dichotomy\_B\_odd\_pos'}, \texttt{dichotomy\_B\_odd\_neg'}, \texttt{dichotomy\_B\_two'}, \texttt{dichotomy\_eventual'} & \texttt{bridge/Bridge/Residues.lean}\\
algebraic part of Theorem~\ref{thm:interp} & \texttt{interp\_identity}, \texttt{interp\_norm}, \texttt{interp\_val} & \texttt{Interp.lean}\\
Theorem~\ref{thm:harmonic}(a)--(d), Corollary~\ref{cor:plateau} & \texttt{harmonic\_a\_bound}, \texttt{harmonic\_b}, \texttt{harmonic\_c\_bound}, \texttt{harmonic\_d}, \texttt{plateau\_closed} & \texttt{Harmonic*.lean}, \texttt{Plateau.lean}\\
Theorems~\ref{thm:p2r4} and~\ref{thm:p2r5} & \texttt{p2r4}, \texttt{p2r5} & \texttt{P2Main.lean}\\
Theorem~\ref{thm:p3}, Corollary~\ref{cor:bala} & \texttt{p3}, \texttt{bala\_A3}, \texttt{bala\_A4}, \texttt{bala\_A5} & \texttt{P3Main.lean}\\
Theorem~\ref{thm:A335088}, composite $n+1$ & \texttt{cop\_dvd\_succ\_of\_composite}, \texttt{cop\_dvd\_pred}, \texttt{cop\_coprime} & \texttt{Cop.lean}\\
Theorem~\ref{thm:cyclegcd} & \texttt{cyclegcd\_a}, \texttt{cyclegcd\_b}, \texttt{cyclegcd\_c}, \texttt{cyclegcd\_d\_dvd}, \texttt{cyclegcd\_d\_ndvd} & \texttt{CycleGcd*.lean}\\
Theorem~\ref{thm:counter}(a), $g_3(111)\ne G_3(111)$ & \texttt{C1\_g\_ne\_G}, \texttt{C1\_G\_value} & \texttt{lean-certificates}\\
Proposition~\ref{prop:cop708}, $709\mid\operatorname{cop}(708)$ & \texttt{C2\_709\_dvd\_cop\_708} & \texttt{lean-certificates}\\
Theorem~\ref{thm:gcdfamily} for every $r$, Proposition~\ref{prop:gcdq} & \texttt{gcdfamily}, \texttt{gcdfamily\_sandwich}, \texttt{gcdfamily\_val}, \texttt{gcdq}, \texttt{gcdq\_val}, \texttt{gcdfamily\_r1}, \texttt{gcdfamily\_r2} & \texttt{GcdFamilyGen.lean}\\
Theorem~\ref{thm:A333726} & \texttt{A333726}, \texttt{A333726\_val}, \texttt{A333726\_perm} & \texttt{A333726.lean}\\
Lemma~\ref{lem:essential}, finiteness of $\mathcal E$ and $\kappa$, $\mathrm{Max}$, $\Xi$ determined by $\mathcal E$ & \texttt{essSet\_finite}, \texttt{essSet\_bound}, \texttt{essential\_determines}, \texttt{calculus\_exact\_ess} & \texttt{Essential.lean}\\
Theorem~\ref{thm:second} for $W=1$ & \texttt{second\_order} & \texttt{SecondOrder.lean}\\
Theorem~\ref{thm:col2} & \texttt{col2}, \texttt{col2\_exc\_iff}, \texttt{col2\_T} & \texttt{Col2.lean}
\end{tabular}
\end{center}

The proofs of some of these statements differ from the proofs in the paper.  The involution theorem is proved by an elementary induction on the recurrence, the three instances of Theorems~\ref{thm:p2r4}, \ref{thm:p2r5} and~\ref{thm:p3} by a factorization of the generating function into a block part and a part with bounded coefficients followed by finite certificates checked by the kernel, and the certificates of \texttt{lean-certificates} by kernel evaluation (\texttt{decide +kernel}).  The statement about all sets $S\supseteq\{1,2,3\}$ in Theorem~\ref{thm:counter} is given through the witness $S=\N\setminus\{4,9\}$, for which $3^{28}\nmid a_S(111)$, together with $3^{28}\mid a_{\{1,2,3,k\}}(111)$ for $4\le k\le112$.

The Lean files cover the items listed above, and the other results are proved on paper in the text and the appendices.  Among them are the analytic parts of Theorem~\ref{thm:interp} and its corollaries (convergence, Strassmann's theorem and Weierstrass preparation, and Theorem~\ref{thm:qstar}), Theorem~\ref{thm:second} for a twist $W\ne1$, the Artin--Hasse bound, Theorem~\ref{thm:bad}, Proposition~\ref{prop:cop2q}, and the results of Section~\ref{sec:composite} on composite moduli.

\section{Proofs of results in the main text}\label{app:moved}

This section contains the proofs of results of the paper whose statements, and the idea of whose proofs, are in the main text.  They are given in the order of the paper, and the notation is that of the corresponding section.

\subsection{Proofs for Section~\ref{sec:tools}}

\begin{proof}[Proof of Lemma~\ref{lem:rec}]
For (\ref{eq:last}), look at the cycle containing the point $n+1$.  If its length is $m\notin S$, there are $n(n-1)\cdots(n-m+2)=\fall{n}{m-1}$ ways to choose and order its other points, and the remaining $n+1-m$ points carry a permutation counted by $a_{n+1-m}$.

For (\ref{eq:rec}), let $A$ be the series (\ref{eq:egf}), so $A'=L'A$ with $L'=\sum_{m\notin S}x^{m-1}=\frac1{1-x}-\sum_{m\in S}x^{m-1}$.  Since $\{1,\dots,r\}\subseteq S$, $\sum_{m\in S}x^{m-1}=\frac{1-x^r}{1-x}+x^rC$ with $C=\sum_ic_ix^i$.  Hence $(1-x)L'=x^r\bigl(1-(1-x)C\bigr)=x^rD$ with $D=\sum d_ix^i\in\Z[[x]]$, and
\[
(1-x)A'=x^rDA.
\]
The coefficient of $x^n/n!$ in $A'$ is $a_{n+1}$, in $xA'$ it is $n\,a_n$, and in $x^jA$ it is $\fall{n}{j}a_{n-j}$.  Comparing coefficients gives (\ref{eq:rec}).  For $S=\{1,k\}$ and $r=1$ the only nonzero $c_i$ is $c_{k-2}=1$.  So for $k\ge3$ we get $d_0=1$, $d_{k-2}=-1$, $d_{k-1}=1$, and (\ref{eq:rec}) is (\ref{eq:Rk}).  For $k=2$, $d_0=0$ and $d_1=1$, which is (\ref{eq:R2}).  For $S=\{1,2,3\}$ and $r=3$ all $c_i$ vanish, so $d_0=1$ and the other $d_i$ vanish, which is (\ref{eq:R3}).
\end{proof}

\begin{proof}[Proof of Lemma~\ref{lem:A}]
Let $X$ be the set of permutations of $\{1,\dots,n+q\}$ with no cycle length in $S$.  Conjugation by $\chi$ maps $X$ to itself, and the orbits have size $1$ or $q$, so $|X|\equiv|X^\chi|\pmod q$, where $X^\chi$ is the set of $\sigma\in X$ with $\sigma \chi=\chi\sigma$.  Such a $\sigma$ maps the support of $\chi$ to itself, so $\sigma=s\,\tau$ with $s$ a permutation of the first $n$ points and $\tau$ a permutation of the last $q$ points commuting with $\chi$.  The centralizer of a $q$-cycle in $S_q$ is $\langle \chi\rangle$, so $\sigma=s\,\chi^j$ with $0\le j<q$.  Now $\sigma\in X$ if and only if $s$ has no cycle length in $S$ and $\chi^j$ contributes none, where $\chi^0$ contributes $q$ fixed points, allowed if and only if $1\notin S$, and $\chi^j$ with $j\ne0$ is a $q$-cycle, allowed if and only if $q\notin S$.  Hence $|X^\chi|=\bigl(w_1+(q-1)w_q\bigr)a_S(n)$.  The last sentence follows by applying the congruence $t$ times.
\end{proof}

\begin{proof}[Proof of Lemma~\ref{lem:quot}]
Look at the cycle containing the point $n$.  If its length is $m\notin S$, there are $\fall{n-1}{m-1}$ ways to choose and order its other points, and the remaining $n-m$ points carry a permutation counted by $a(n-m)$.  Every allowed $m$ exceeds $r$ and $\fall{n-1}{m-1}=\fall{n-1}{r}\fall{n-r-1}{m-r-1}$, which gives $a(n)=\fall{n-1}{r}Q_S(n)$.

Now let $q\notin S$ be a prime and let $n>r$.  The terms of $Q_S(n+q)$ with $m\le n$ are $\fall{n+q-r-1}{m-r-1}a(n+q-m)$.  The falling factorial is a polynomial in $n$ with integer coefficients, so it is congruent modulo $q$ to $\fall{n-r-1}{m-r-1}$, and $a(n+q-m)\equiv-a(n-m)$ by Lemma~\ref{lem:A} since $1\in S$.  So these terms add up to a quantity congruent to $-Q_S(n)$.  The remaining terms have $n<m\le n+q$.  The falling factorial $\fall{n+q-r-1}{m-r-1}$ is the product of the integers from $n+q-m+1$ up to $n+q-r-1$.  Here $n+q-m+1\le q$ because $m\ge n+1$, and $q\le n+q-r-1$ because $n\ge r+1$.  So the product contains the factor $q$ and the remaining terms vanish modulo $q$.
\end{proof}

\subsection{Proofs for Section~\ref{sec:general}}

\begin{proof}[Proof of Theorem~\ref{thm:general}(ii)]
Let $a_n=a_S(n)$.  Apply (\ref{eq:rec}) with $r=1$ and divide by $n\ge1$, using $\fall{n}{1+i}=n\fall{n-1}{i}$,
\begin{equation}\label{eq:Xk}
a_{n+1}/n=a_n+\sum_{i\ge0}d_i\,\fall{n-1}{i}\,a_{n-1-i}.
\end{equation}
With $r=1$ we have $c_i=1$ exactly when $i+2\in S$, and the hypothesis says $c_i=0$ for $0\le i\le t-1$.  So $d_i=[i=0]-c_i+c_{i-1}$ vanishes for $1\le i\le t-1$ and $d_0=1$, $d_t=c_{t-1}-c_t=-c_t$.  For $i\ge t$ the factor $\fall{n-1}{i}$ in \eqref{eq:Xk} is divisible by $i!$, hence by $t!$ (a product of $i$ consecutive integers is $i!$ times a binomial coefficient, even if some factor is $0$).  Therefore
\begin{equation}\label{eq:modt}
a_{n+1}/n\equiv a_n+a_{n-1}\pmod{t!}\qquad(n\ge1).
\end{equation}
By Theorem~\ref{thm:general}(i), $a_n=(n-1)Q_n$ for integers $Q_n$ ($n\ge2$).  Then \eqref{eq:modt} reads $Q_{n+1}\equiv(n-1)Q_n+(n-2)Q_{n-1}\pmod{t!}$ for $n\ge3$.  For derangements the same recursion holds with equality, $D(n+1)=n\bigl(D(n)+D(n-1)\bigr)$, so $Q^D_n\coloneqq D(n)/(n-1)$ satisfies it for $n\ge3$.  Also $Q_2=a_2=1=D(2)$ (the single transposition, as $2\notin S$ and $1\in S$), and $Q_3=a_3/2=1=Q^D_3$ (the two $3$-cycles, as $3\notin S$).  By induction $Q_n\equiv Q^D_n\pmod{t!}$ for all $n\ge2$, that is $a_n\equiv D(n)\pmod{t!(n-1)}$.

For the last assertion take $t=2$, which gives $a_n/(n-1)\equiv Q^D_n\pmod2$.  From $D(n)=nD(n-1)+(-1)^n$ one gets $D(n)$ odd for even $n$ and even for odd $n$ (induction), so $Q^D_n=D(n-1)+D(n-2)$ is odd for $n\ge2$.  This completes the proof of Theorem~\ref{thm:general}(ii).
\end{proof}

\begin{proof}[Proof of Theorem~\ref{thm:general}(iii)]
By Lemma~\ref{lem:quot} with $r=p$ we have $a_S(n)=\fall{n-1}{p}R_n$ for $n\ge p+1$, with
\[
R_n=\sum_{\substack{p<m\le n\\ m\notin S}}\fall{n-p-1}{m-p-1}\,a_S(n-m),
\]
and it suffices to show $v_p(R_n)\ge\lfloor(n-1)/(2p)\rfloor$.  We use induction on $n$.  For $n=p+1$ the bound is $0$.  Let $n\ge p+2$ and assume the theorem for all $N$ with $p+1\le N<n$.  Consider a term of $R_n$ with index $m\notin S$.

If $n-p\le m\le n-1$ then $1\le n-m\le p$, and every cycle length at most $p$ lies in $S$, so $a_S(n-m)=0$.

If $m=n$, the term is $(n-p-1)!$, whose valuation is at least $\lfloor(n-p-1)/p\rfloor$.  Put $x=(n-1)/(2p)$, so that $(n-p-1)/p=2x-1$.  If $x<1$ then $\lfloor x\rfloor=0$ and there is nothing to prove.  If $x\ge1$ and $f=\lfloor x\rfloor$, then $\lfloor2x-1\rfloor\ge2f-1\ge f$.

If $m\le n-p-1$ write $m=p+1+j$ with $j\ge0$ and $N=n-m\ge p+1$.  The factor $\fall{n-p-1}{m-p-1}$ is a product of $j$ consecutive positive integers, so it is divisible by $j!$ and has valuation at least $\lfloor j/p\rfloor$.  The product $\fall{N-1}{p}$ of $p$ consecutive positive integers is divisible by $p!$, so by the induction hypothesis $v_p(a_S(N))\ge1+\lfloor(N-1)/(2p)\rfloor$.  Hence the term has valuation at least
\[
\Bigl\lfloor\frac jp\Bigr\rfloor+1+\Bigl\lfloor\frac{n-1}{2p}-\frac{p+1+j}{2p}\Bigr\rfloor
\ \ge\ \Bigl\lfloor\frac jp\Bigr\rfloor+1-\Bigl\lceil\frac{p+1+j}{2p}\Bigr\rceil+\Bigl\lfloor\frac{n-1}{2p}\Bigr\rfloor ,
\]
using $\lfloor y-z\rfloor\ge\lfloor y\rfloor-\lceil z\rceil$.  Let $u=\lfloor j/p\rfloor$.  Since $j\le up+p-1$ we have $(p+1+j)/(2p)\le u/2+1$, so $\lceil(p+1+j)/(2p)\rceil\le\lceil u/2\rceil+1\le u+1$.  Thus every term has valuation at least $\lfloor(n-1)/(2p)\rfloor$.

For the consequences, $v_p(\fall{n-1}{p})\ge1$ for $n\ge p+1$ because $\fall{n-1}{p}$ is a product of $p$ consecutive positive integers, so $v_p(a_S(n))\ge1+\lfloor(n-1)/(2p)\rfloor=\lceil n/(2p)\rceil$, and for $n\le p$ we have $a_S(n)=0$.  If $\{1,2\}\subseteq S$ and $p=2$ we get $v_2(a_S(n))\ge v_2((n-1)(n-2))+\lfloor(n-1)/4\rfloor$ for $n\ge3$, and every odd prime $q$ has $v_q((n-1)(n-2))\le v_q(a_S(n))$ by (i).  This completes the proof of Theorem~\ref{thm:general}(iii).
\end{proof}

\subsection{Proofs for Section~\ref{sec:A378}}

\begin{proof}[Proof of Theorem~\ref{thm:A378}(c),(d)]
For $n\ge2$, a derangement with an $n$-cycle is an $n$-cycle, and there are $(n-1)!$ of them.  For $n\ge3$, a derangement with an $(n-1)$-cycle would leave a point fixed, so none exist.  Hence
\begin{equation}\label{eq:Tnn}
T(n,n)=D(n)-(n-1)!,\qquad T(n,n-1)=D(n).
\end{equation}
Now let $n\ge5$.  A derangement with an $(n-2)$-cycle has its other two points in a $2$-cycle.  Since $n-2>n/2$ there is only one such cycle, and there are $\binom n2$ choices for the two points and $(n-3)!$ cyclic orders for the others.  Hence
\begin{equation}\label{eq:Tn2}
T(n,n-2)=D(n)-\binom n2(n-3)!.
\end{equation}
Subtracting \eqref{eq:Tn2} from the second formula in \eqref{eq:Tnn},
\begin{equation}\label{eq:d}
T(n,n-1)-T(n,n-2)=\binom n2(n-3)!=\frac{n(n-1)(n-3)!}{2}=\frac{n!}{2(n-2)},
\end{equation}
which is (d).  Subtracting the first formula in \eqref{eq:Tnn} from \eqref{eq:Tn2},
\begin{equation}\label{eq:c}
\begin{aligned}
T(n,n-2)-T(n,n)&=(n-1)!-\binom n2(n-3)!\\&=(n-3)!\Bigl[(n-1)(n-2)-\tfrac{n(n-1)}2\Bigr]=\tfrac12(n-3)!\,(n-1)(n-4),
\end{aligned}
\end{equation}
which is (c).
\end{proof}

\subsection{Proofs for Section~\ref{sec:gcd}}

\begin{proof}[Proof of Proposition~\ref{prop:gcdq}]
Since $q>r$ we have $q\notin S_\infty$.  Write $n=tq+\rho$ with $0\le\rho<q$.

\emph{Case $1\le\rho\le r$.}  Since $n>r$ we have $t\ge1$.  Put $n'=q+\rho$, so that $n'>r$ and $n=n'+(t-1)q$.  Let $k\in\{\infty,n'\}$.  Then $q\notin S_k$ (as $n'>q$), so Lemma~\ref{lem:quot}, applied $t-1$ times, gives $Q_k(n)\equiv(-1)^{t-1}Q_k(n')\pmod q$.  By (\ref{eq:Qdiff}), $Q_\infty(n')-Q_{n'}(n')=(n'-r-1)!$, and $0\le n'-r-1=q+\rho-r-1\le q-1$, so this difference is not divisible by $q$.  Hence $q$ does not divide both $Q_\infty(n')$ and $Q_{n'}(n')$.

\emph{Case $\rho=0$.}  Take $k=\infty$.  Since $1\in S_\infty$ and $q\notin S_\infty$, Lemma~\ref{lem:A} gives $a_\infty(n)\equiv(-1)^ta_\infty(0)=\pm1\pmod q$.  So $q\nmid a_\infty(n)$ and $q\nmid Q_\infty(n)$.

\emph{Case $r+1\le\rho\le q-1$.}  If $t=0$ then $n=\rho<q$ and $Q_n(n)=Q_\infty(n)-(n-r-1)!$ with $0\le n-r-1<q$, so one of the columns $\infty$ and $n$ has $q\nmid Q_k(n)$.  If $t\ge1$, the columns $k=\infty$ and $k=\rho$ satisfy $q\notin S_k$, so Lemma~\ref{lem:A} gives $a_k(n)\equiv(-1)^ta_k(\rho)\pmod q$.  Now $a_\rho(\rho)=a_\infty(\rho)-(\rho-1)!$ by (\ref{eq:Qdiff}), so the two values $a_\infty(\rho)$ and $a_\rho(\rho)$ differ by $(\rho-1)!$, which is not divisible by $q$ as $\rho-1<q$.  So $q$ does not divide both $a_\infty(n)$ and $a_\rho(n)$, and because $\fall{n-1}{r}\equiv\fall{\rho-1}{r}\not\equiv0\pmod q$ (the factors $\rho-1,\dots,\rho-r$ are positive and below $q$), the same holds for $Q_\infty(n)$ and $Q_\rho(n)$.

In all cases the column found lies in $\{\infty\}\cup(r,n]$, since $n'\le n$ and $\rho\le n$.  The last assertion follows from $a_k(n)=\fall{n-1}{r}Q_k(n)$.
\end{proof}

\begin{proof}[Proof of Theorem~\ref{thm:gcdfamily}]
Let $n\ge r+2$ and let $g'=G_r(n)$.  By (\ref{eq:sameg}), $g'$ is also the gcd of $a_{S_k}(n)$ over $k\in\{\infty\}\cup(r,n]$.  By Theorem~\ref{thm:general}(i), $\fall{n-1}{r}$ divides every $a_{S_k}(n)$, hence $g'$.  By Proposition~\ref{prop:gcdq}, for every prime $q>r$ some $a_{S_k}(n)$ with $k\in\{\infty\}\cup(r,n]$ has $v_q=v_q(\fall{n-1}{r})$, so $v_q(g')=v_q(\fall{n-1}{r})$.  Hence $g'=\fall{n-1}{r}m_r(n)$ with $m_r(n)$ having only prime factors at most $r$.  The divisibilities $\fall{n-1}{r}\mid g_r(n)\mid G_r(n)$ were noted before the theorem.

For $r=1$ there is no prime at most $r$, so $m_1(n)=1$ and $g_1(n)=G_1(n)=n-1$.  The gcd of row $n$ of \OEIS{A378495} is over $2\le k\le n$, which is the range $r<k\le n$, so this is Theorem~\ref{thm:A378}(b).

For $r=2$ and $n\ge4$ put $G=2^{\lfloor(n-1)/4\rfloor}(n-1)(n-2)$.  By Theorem~\ref{thm:general}(i) and (iii), $G$ divides $a_S(n)$ for every $S\supseteq\{1,2\}$, so $G\mid g_2(n)\mid G_2(n)$.  By the first part, for every odd prime $q$ some column has $v_q=v_q((n-1)(n-2))=v_q(G)$.  For the prime $2$ we need a column $k\in[3,n]$ with $v_2(a_k(n))=\beta(n)=v_2(G)$.  If $n\not\equiv3\pmod4$ take $k=3$ and use Proposition~\ref{prop:123}.  If $n=4m+3$ with $n\ge7$ take $k=n$.  By Theorem~\ref{thm:A378}(f), $v_2(a_\infty(4m+3))=m+1=\beta(n)$, and $v_2((n-1)!)\ge\lfloor(n-1)/2\rfloor=2m+1>m+1$ for $m\ge1$, so by (\ref{eq:Qdiff}) $v_2(a_n(n))=v_2(a_\infty(n)-(n-1)!)=\beta(n)$.  Hence $v_p(G_2(n))=v_p(G)$ for every prime $p$, and $G_2(n)=G=g_2(n)$.
\end{proof}

\begin{proof}[Proof of Theorem~\ref{thm:A333726}]
Let $n\ge3$ and $g=\gcd_kU(n,k)$.  For $k\ge2$, $1\in S=\{1,\dots,k-1\}$, so $(n-1)\mid U(n,k)$ by Theorem~\ref{thm:general}(i), and $(n-1)\mid U(n,1)=n!$.  So $(n-1)\mid g$.  Also $U(n,n)=(n-1)!$, so $g\mid(n-1)!$.  Let $q\le n-1$ be a prime, for which we show that the stated column $k$ satisfies $v_q(U(n,k))=v_q(n-1)$, which gives $v_q(g)=v_q(n-1)$ and hence $g=n-1$.

\emph{Case $q\mid n-1$.}  Take $k=q$ and $S=\{1,\dots,q-1\}$, so $1\in S$ and $q\notin S$.  Write $n-1=tq$.  By Lemma~\ref{lem:quot} with $r=1$, $U(n,q)/(n-1)=Q_S(n)\equiv(-1)^{t-1}Q_S(q+1)\pmod q$ with $Q_S(q+1)=U(q+1,q)/q$.  A permutation of $q+1$ points with all cycles of length at least $q$ is a $(q+1)$-cycle (a $q$-cycle would leave a fixed point), so $U(q+1,q)=q!$ and $Q_S(q+1)=(q-1)!\equiv-1\pmod q$ by Wilson's theorem.  Hence $q\nmid U(n,q)/(n-1)$.

\emph{Case $q\nmid n-1$.}  Write $n=tq+\rho$ with $t\ge1$, $0\le\rho<q$, $\rho\ne1$.  Take $k=2$ if $\rho=0$ and $k=\rho$ if $\rho\ge2$, which gives $2\le k\le q$, so $1\in S=\{1,\dots,k-1\}$ and $q\notin S$, and Lemma~\ref{lem:A} gives $U(n,k)\equiv(-1)^tU(\rho,k)\pmod q$.  If $\rho=0$, $U(0,2)=1$.  If $\rho\ge2$, $U(\rho,\rho)=(\rho-1)!$ (a $\rho$-cycle), which is not divisible by $q$ as $\rho-1<q$.  Hence $q\nmid U(n,k)$ and $v_q(U(n,k))=0=v_q(n-1)$.

This completes the proof of Theorem~\ref{thm:A333726}.
\end{proof}

\subsection{Proofs for Section~\ref{sec:coprime}}

\begin{proof}[Proof of Lemma~\ref{lem:bd}]
The exponential formula gives $\sum_nb_d(n)x^n/n!=\exp\bigl(\sum_{j\ge1}x^{dj}/(dj)\bigr)=(1-x^d)^{-1/d}$. By Lemma~\ref{lem:binom} the coefficient of $x^{dk}$ is $\binom{1/d+k-1}{k}=\prod_{j=0}^{k-1}(dj+1)/(d^kk!)$, which gives the formula.  If $p\mid d$ then every factor $dj+1$ is prime to $p$, which gives the valuation.  The number $n!/(d^kk!)$ equals $\bigl((dk)!/((d!)^kk!)\bigr)((d-1)!)^k$, where the first factor counts the partitions of $\{1,\dots,dk\}$ into $k$ blocks of size $d$. For the identities, $\varphi(g)=\sum_{d\mid g}\psi(d)$ for $g\mid n$ by M\"obius inversion on the divisors of $n$, so
\[
C_\varphi(n)=\sum_\sigma\sum_{d\mid\gcd(\sigma)}\psi(d)=\sum_{d\mid n}\psi(d)\,\#\{\sigma:d\mid\gcd(\sigma)\}=\sum_{d\mid n}\psi(d)b_d(n).
\]
The choice $\varphi(h)=[h=g]$ gives $\psi(d)=\mu(d/g)[g\mid d]$.
\end{proof}

\begin{proof}[Proof of Theorem~\ref{thm:cyclegcd}]
(a) Let $q$ be a prime dividing $n-1$ and let $d\mid n$. Then $q\nmid d$, so $1/d\in\Z_{(q)}$, and $\binom{1/d+k-1}{k}\in\Z_{(q)}$ by Lemma~\ref{lem:binom}. Hence $v_q(b_d(n))\ge v_q(n!)\ge v_q(n-1)$, because $n-1$ divides $n!$. So $(n-1)\mid b_d(n)$ for every $d\mid n$, and $(n-1)\mid C_\varphi(n)$ by Lemma~\ref{lem:bd}.

(b) Put $N=n+1$ and $P(X)=\prod_{i=1}^{k}(X-di)\in\Z[X]$. Since $dk=n$, the substitution $j=k-i$ gives $P(N)=\prod_{j=0}^{k-1}(dj+1)$, and $P(0)=(-d)^kk!$. The polynomial $P(X)-P(0)$ has integer coefficients and no constant term, so $P(N)-P(0)=N\,R$ with $R\in\Z$. By Lemma~\ref{lem:bd},
\[
b_d(n)=\frac{n!}{d^kk!}P(N)=(-1)^kn!+(n+1)\frac{n!}{d^kk!}R,
\]
so $b_d(n)\equiv(-1)^{n/d}n!\pmod{n+1}$ and $C_\varphi(n)\equiv n!\sum_{d\mid n}\psi(d)(-1)^{n/d}$. For $C(n,g)$ this sum is $\sum_{e\mid k}\mu(e)(-1)^{k/e}$. For $k=1$ it is $-1$, and for $k=2$ it is $\mu(1)(-1)^2+\mu(2)(-1)^1=2$. For odd $k\ge3$ every $k/e$ is odd and the sum is $-\sum_{e\mid k}\mu(e)=0$. For $k=2^sk'$ with $s\ge1$ and $k'$ odd, write $(-1)^{k/e}=1-2[k/e\text{ odd}]$, so the sum is $\sum_{e\mid k}\mu(e)-2\sum_{k/e\text{ odd}}\mu(e)$. The first sum is $0$. The condition that $k/e$ is odd means $2^s\mid e$, which is impossible with $\mu(e)\ne0$ when $s\ge2$, and for $s=1$ forces $e=2e'$ with $e'\mid k'$, so that $\sum_{k/e\text{ odd}}\mu(e)=-\sum_{e'\mid k'}\mu(e')=-[k'=1]$ and the whole sum equals $2[k'=1]$. So the sum equals $2[k=2]-[k=1]$ in all cases, and $C(n,g)\equiv n!\,(2[k=2]-[k=1])\pmod{n+1}$.

(c) For even $n$ the integers $n\pm1$ are coprime, and (a), (b) give the claim. For odd $n\ge5$ an odd prime $q$ divides at most one of $n\pm1$, and (a), (b) give $v_q(n^2-1)\le v_q(C(n,g))$. For $q=2$ every $d\mid n$ is odd, so as in (a) $v_2(b_d(n))\ge v_2(n!)$, and it suffices to show $v_2(n!)\ge v_2(n^2-1)$. For $n=5,7$ one checks $v_2(n!)=3,4$ and $v_2(n^2-1)=3,4$. For $n\ge9$ one of $n\pm1$ has $2$-adic valuation exactly $1$ and the other has valuation $t$ with $2^t\le n+1$, so $v_2(n^2-1)\le1+\log_2(n+1)$, while $v_2(n!)=n-s_2(n)\ge n-\log_2n-1\ge1+\log_2(n+1)$.

(d) By Lemma~\ref{lem:bd} and the formula for $b_d(n)$, for $p\mid d$ the product $\prod_{j<k_d}(dj+1)$ is a unit and $v_p(b_d(n))=v_p(n!)-c(d)$ with $k_d=n/d$ and $c(d)=k_dv_p(d)+v_p(k_d!)$, while for $p\nmid d$ one has $v_p(b_d(n))\ge v_p(n!)$. Recall $C(n,g)=\sum_{e\mid k}\mu(e)b_{ge}(n)$ with $e$ squarefree.

\emph{Case $p\mid g$.} Every $d=ge$ is divisible by $p$. For $e>1$,
\[
c(g)-c(ge)=\frac ke\bigl((e-1)v_p(g)-v_p(e)\bigr)+\bigl(v_p(k!)-v_p((k/e)!)\bigr).
\]
The first bracket is positive unless $e=2$, $p=2$ and $v_2(g)=1$, where it vanishes and the second bracket equals at least $k/2\ge1$ (as $v_2(k!)-v_2((k/2)!)\ge k/2$). The second bracket is non-negative in all cases. So $c(g)>c(ge)$ for every $e>1$, the term $e=1$ has the smallest valuation, and $v_p(C(n,g))=v_p(n!)-c(g)=v_p(n!)-kv_p(g)-v_p(k!)$.

\emph{Case $p\nmid g$.} Since $p\mid n=gk$ one has $p\mid k$. The terms with $p\nmid e$ have $p\nmid ge$ and valuation at least $v_p(n!)$. The terms with $e=pe'$, $p\nmid e'$, have $v_p(ge)=1$ and valuation $v_p(n!)-t-v_p(t!)$ with $t=k/(pe')\ge1$. Since $t\mapsto t+v_p(t!)$ is strictly increasing, the unique smallest valuation is that of $e'=1$, namely $v_p(n!)-k/p-v_p((k/p)!)<v_p(n!)$.
\end{proof}

\begin{proof}[Proof of Theorem~\ref{thm:A335088}]
(a) If $q\mid n-1$ is prime, then $q\nmid d$ for every $d\mid n$, so $[x^n]F_{A_n}\in\Z_{(q)}$ by Lemma~\ref{lem:binom}, and $v_q(\operatorname{cop}(n))\ge v_q(n!)\ge v_q(n-1)$.

(b) Let $p\mid n$ and let $X_m$ be the set of permutations of $m$ points all of whose cycle lengths are prime to $n$. For $m\ge p$ let $\gamma$ be a $p$-cycle on $p$ of the points. Conjugation by $\gamma$ permutes $X_m$ in orbits of size $1$ or $p$, so $|X_m|\equiv|X_m^\gamma|\pmod p$. An element commuting with $\gamma$ has the form $s\gamma^i$ with $s$ a permutation of the other $m-p$ points and $0\le i<p$. Since $p\mid n$, a $p$-cycle is forbidden, so $i=0$, and fixed points are allowed. Hence $|X_m|\equiv|X_{m-p}|$, and $\operatorname{cop}(n)=|X_n|\equiv|X_0|=1\pmod p$.

(c) Suppose first that $n+1$ is composite, let $q$ be a prime dividing $n+1$ and $e=v_q(n+1)$. Then $q\nmid n$, so as in (a) $v_q(\operatorname{cop}(n))\ge v_q(n!)$, and it suffices to show $v_q(n!)\ge e$. If $n+1=q^em$ with $m\ge2$ then $n\ge2q^e-1$ and $v_q(n!)\ge\lfloor n/q\rfloor\ge2q^{e-1}-1\ge e$. If $n+1=q^e$ with $e\ge2$ then $v_q(n!)=\frac{q^e-1}{q-1}-e\ge e$ except for $q^e=4$, where $\operatorname{cop}(3)=4$ is divisible by $4$ directly.  The case of $n+1$ prime is Proposition~\ref{prop:cop708} below.
\end{proof}

\begin{proof}[Proof of Proposition~\ref{prop:cop}]
Let $d$ be a squarefree divisor of $n$.  Then $N\nmid d$, and $d\cdot(n/d)=n\equiv-1\pmod N$, so $1/d\equiv-n/d$ in $\Z_{(N)}/N\Z_{(N)}$.  The coefficient of $y^\lambda$ in $(1-y)^{-c}$ is $c(c+1)\cdots(c+\lambda-1)/\lambda!$, a polynomial in $c$ whose denominator is a unit at $N$ when $\lambda<N$.  Hence if $c,c'\in\Z_{(N)}$ and $c\equiv c'\pmod N$, then the coefficients of $y^\lambda$ in $(1-y)^{-c}$ and $(1-y)^{-c'}$ agree modulo $N$ for all $\lambda<N$.  Apply this with $c=\mu(d)/d$ and $c'=-\mu(d)n/d$ to each factor $(1-x^d)^{-\mu(d)/d}$ of $F_{A_n}$, noting that only degrees at most $n<N$ matter.  This gives $F_{A_n}\equiv\prod_{d\mid n}(1-x^d)^{\mu(d)n/d}\pmod{(N,x^{n+1})}$.  Wilson's theorem gives $n!=(N-1)!\equiv-1\pmod N$, and $\operatorname{cop}(n)=n!\,\coeff{n}{F_{A_n}}$, which proves the congruence.

For the recurrence, $F_{A_n}=\exp\bigl(\sum_{\gcd(m,n)=1}x^m/m\bigr)$ satisfies $xF_{A_n}'=F_{A_n}\sum_{\gcd(m,n)=1}x^m$, so its coefficients $f_k$ satisfy $kf_k=\sum_{m\le k,\ \gcd(m,n)=1}f_{k-m}$.  Since $\mathbf 1[\gcd(m,n)=1]=\sum_{d\mid\gcd(m,n)}\mu(d)$, this is the stated recurrence.  The inner sums $T_d(k)=\sum_{j\ge1}f_{k-dj}$ satisfy $T_d(k)=f_{k-d}+T_d(k-d)$, which gives the operation count.  All divisions are by integers at most $n<N$, and $\operatorname{cop}(n)=n!f_n\equiv-f_n\pmod N$.
\end{proof}

\begin{proof}[Proof of Proposition~\ref{prop:cop708}]
All statements are finite exact computations.  The number $\operatorname{cop}(708)$ has $1711$ decimal digits.  We computed it in exact integer arithmetic from the recurrence $a(m)=\sum_{\gcd(l,708)=1}\fall{m-1}{l-1}\,a(m-l)$, which is (\ref{eq:last0}) for the set of allowed lengths prime to $708$, and found $\operatorname{cop}(708)\equiv0\pmod{709}$.  The recurrence of Proposition~\ref{prop:cop} gives the same residue, as does the cycle recurrence carried out modulo $709$.  For $n=92202$ the recurrence of Proposition~\ref{prop:cop} and the cycle recurrence modulo $92203$ both give residue $0$.  The recurrence of Proposition~\ref{prop:cop} agrees with the exact value of $\operatorname{cop}(n)\bmod(n+1)$ for every prime $n+1\le301$.  Running it for every $n\le3\cdot10^6$ with $n+1$ prime, about $2.17\cdot10^5$ values, gives a non-zero residue except at $n=708$ and $n=92202$.  The search was repeated with the prefix-sum recurrence of Proposition~\ref{prop:copprefix}, a different algorithm, and it gives the same two zeros.  The recurrence of the proposition agrees with the cycle recurrence modulo $n+1$ for all primes $n+1\le20000$.
\end{proof}

\subsection{Proofs for Section~\ref{sec:dichotomy}}

\begin{proof}[Proof of Theorem~\ref{thm:dichotomy}]
Let $p$ be odd.  If $w_1=w_p$ then $\gamma=pw_1$, and $\gamma^{p^{e-1}}$ is divisible by $p^{p^{e-1}}$ with $p^{e-1}\ge e$, so $A(p^e)\equiv0\pmod{p^e}$ and the shift congruence with $k=p^e$ gives $A(n+p^e)\equiv0$ for all $n\ge0$, which is (A).  If $(w_1,w_p)=(1,0)$ then $\gamma=1$ and $A(p^e)\equiv1$.  If $(w_1,w_p)=(0,1)$ then $\gamma=p-1$ and $(p-1)^{p^{e-1}}\equiv-1\pmod{p^e}$, because $x\equiv y\pmod{p^i}$ implies $x^p\equiv y^p\pmod{p^{i+1}}$ and $1-p\equiv1\pmod p$.  The shift congruence with $k=p^e$ gives $A(n+p^e)\equiv\pm A(n)$ in the two cases of (B).

Let $p=2$.  If $w_1=w_2$ then $\alpha$ is even, and $r=\max(e,4)$ gives $A(2^r)\equiv\alpha^{2^{r-1}}\equiv0\pmod{2^r}$ since $2^{r-1}\ge r$, so the shift congruence gives $A(n)\equiv0\pmod{2^r}$ for $n\ge2^r$.  If $w_1\ne w_2$ then $\alpha$ is odd.  Now $A(2)\equiv1\pmod2$, $A(4)\equiv1+2\beta\equiv(-1)^\beta\pmod4$, $A(8)\equiv1\pmod8$ because $\alpha^4\equiv1\pmod{16}$ and $\beta(\beta+1)$ is even, and $A(2^r)\equiv1\pmod{2^r}$ for $r\ge4$ by the same lifting as above.  The shift congruence with $k=2,4,8,2^r$ gives the stated congruences, with $\beta\equiv w_2+w_4\pmod2$.

For the last assertion, in case (B) the sequences are periodic or antiperiodic modulo $p^e$ with $A(0)=1$, so $p\nmid A(n)$ for $n\equiv0$ modulo the period.
\end{proof}

\subsection{Proofs for Section~\ref{sec:euler}}

\begin{proof}[Proof of Lemma~\ref{lem:weights}]
(a) Put $S_n=\sum_{d\mid n,\ d\le r}\mu(n/d)=\sum_{e\mid n,\ e\ge n/r}\mu(e)$, so that $c_n=S_n/n$. If $2\le n\le r$ every divisor of $n$ is at most $r$ and $S_n=\sum_{e\mid n}\mu(e)=0$. If $n>r$ then $n>1$, the full sum $\sum_{e\mid n}\mu(e)$ vanishes, and $S_n=-\sum_{e\mid n,\ e<n/r}\mu(e)$. For the last claim let $p\mid n$. Every squarefree divisor of $n$ is $e'$ or $pe'$ with $e'\mid w$ and $\mu(pe')=-\mu(e')$, so the sum over $e<n/r$ is $\sum_{e'\mid w,\ e'<n/r}\mu(e')-\sum_{e'\mid w,\ e'<n/(pr)}\mu(e')$, which is the window sum.

(b) The product $F_{A_{\le r}}F_{A_{>r}}=\exp(\sum_mx^m/m)=(1-x)^{-1}$, so by uniqueness $c_n(A_{\le r})+c_n(A_{>r})=[n=1]$.

(c) The logarithm of the displayed product has coefficient $\sum_{d\mid\gcd(m,M)}\mu(d)=[\gcd(m,M)=1]$ at $x^m/m$.
\end{proof}

\begin{proof}[Proof of Theorem~\ref{thm:saturation}]
(ii)$\Rightarrow$(iii). If there are no blocks then every $c_n$ lies in $\Z_{(p)}$, since $nc_n\in\Z$ settles $p\nmid n$, so $F_A$ is a product of series in $\Z_{(p)}[[x]]$ by Lemma~\ref{lem:binom}. (iii)$\Rightarrow$(ii). If $\mathcal B\ne\emptyset$ and $m_0=\min\mathcal B$, then $[x^{m_0}]F_A=[x^{m_0}]I_1+c_{m_0}\notin\Z_{(p)}$.

(i)$\Rightarrow$(ii) and (ii)$\Rightarrow$(i). Write $n=p^kw$ with $k\ge1$ and $p\nmid w$, put $e_i(w')=[p^iw'\in A]$ and $\delta_k=e_k-e_{k-1}$. Splitting $d=p^iw'$ in the definition of $nc_n$ gives
\[
nc_n=\sum_{w'\mid w}\mu(w/w')\sum_{i=0}^k\mu(p^{k-i})e_i(w')=\sum_{w'\mid w}\mu(w/w')\,\delta_k(w').
\]
For a saturated set all $\delta_k$ vanish, so $c_n=0$ whenever $p\mid n$, and there are no blocks. Conversely, if $A$ is not saturated, let $k\ge1$ be minimal with $\delta_k\not\equiv0$ and let $w$ be minimal under divisibility with $\delta_k(w)\ne0$. Then $nc_n=\delta_k(w)=\pm1$ for $n=p^kw$, so $v_p(c_n)=-k<0$ and $n$ is a block.

The last statement is Theorem~\ref{thm:calculus} with $\kappa=0$.
\end{proof}

\begin{proof}[Proof of Theorem~\ref{thm:dominant}]
The function $\Gamma$ is strictly increasing. Let $V$ have $b$ blocks $m_*$ and other blocks $(d_i,\lambda_i,j_i)$, $i=1,\dots,s$, meaning $\lambda_i$ blocks of degree $d_i>m_*$ and depth $j_i$, and let $\deg V\le k<m_*(q+1)$. Put $t=b+\sum_i\lambda_i$ and $Q'=b+\sum_i\lambda_i\lfloor d_i/m_*\rfloor$. Then $t\le Q'\le\lfloor\deg V/m_*\rfloor\le q$. By ($\star$), $\sum_ij_i\lambda_i\le j_*\sum_i\lambda_i\lfloor d_i/m_*\rfloor-\sum_i\lambda_i$, and $v_p(b!)+\sum_iv_p(\lambda_i!)\le v_p(t!)\le v_p(Q'!)$. Therefore, if $s\ge1$,
\[
\mathrm{cost}(V)\le j_*Q'+v_p(Q'!)-\sum_i\lambda_i=\Gamma(Q')-\sum_i\lambda_i\le\Gamma(q)-1.
\]
If $s=0$ then $\mathrm{cost}(V)=\Gamma(b)\le\Gamma(q)$ with equality only for $b=q$. So $\kappa(k)=\Gamma(q)$ and $\mathrm{Max}(k)=\{qe_{m_*}\}$ with leftover $\nu$, and $\Xi(k)=\iota_\nu\alpha_*^q\tilde\omega(q)^{-1}$ is non-zero if and only if $\iota_\nu\ne0$. Since $\nu<m_*=\min\mathcal B$, $[x^\nu]G=[x^\nu]I_W\in\Z_{(p)}$, and Theorem~\ref{thm:calculus} applies.
\end{proof}

\begin{proof}[Proof of Theorem~\ref{thm:invol}]
Take $A=\{1,2\}$ and $p=2$, so that $F_A=\exp(x+x^2/2)$ and $a_A(n)=I(n)$.  By Lemma~\ref{lem:eulerprod}, $c_n=\bigl(\mu(n)+[2\mid n]\,\mu(n/2)\bigr)/n$ for $n\ge2$, and we write $n=2^kw$ with $w$ odd.  For $k=0$ the exponent $c_n$ is $2$-integral.  For $k=1$, $\mu(2w)=-\mu(w)$ gives $c_n=0$.  For $k\ge3$ both M\"obius values vanish.  For $k=2$, $c_n=-\mu(w)/(4w)$.  So the blocks are the degrees $4w$ with $w$ odd and squarefree, all of depth $2$, with $c_4=-\frac14$, and every block other than $4$ has $w\ge3$.  Put $\Gamma(t)=2t+v_2(t!)$, so that $\Gamma(t+1)-\Gamma(t)=2+v_2(t+1)\ge2$.

Let $n=4q+\nu$ with $0\le\nu\le3$, and let $V$ have $b$ blocks of degree $4$ and $\lambda_i$ blocks of degree $4w_i$ with $w_i\ge3$ for $i=1,\dots,s$, and degree at most $n$.  Put $t=b+\sum_i\lambda_i$ and $Q'=b+\sum_iw_i\lambda_i$, so that $4Q'\le n$ and $Q'\le q$.  Since $w_i\ge3$ we have $t\le Q'-2\sum_i\lambda_i$.  By superadditivity of $v_2((\cdot)!)$, $\mathrm{cost}(V)=2t+v_2(b!)+\sum_iv_2(\lambda_i!)\le\Gamma(t)$.  If $s\ge1$ then $t\le q-2$ and $\mathrm{cost}(V)\le\Gamma(q-2)\le\Gamma(q)-4$.  If $s=0$ and $b<q$ then $\mathrm{cost}(V)=\Gamma(b)\le\Gamma(q-1)\le\Gamma(q)-2$.  So every vector other than $q$ blocks of degree $4$ has cost at most $\Gamma(q)-2$, and by Lemma~\ref{lem:blockexp} its term in $\coeff{n}{F_A}$ has valuation at least $-\Gamma(q)+2$.

The vector with $q$ blocks of degree $4$ has leftover $\nu<4=\min\mathcal B$, and its term is $\coeff{\nu}{F_A}\binom{c_4+q-1}{q}$ with $\coeff{\nu}{F_A}=I(\nu)/\nu!=1,1,1,\frac23$ for $\nu=0,\dots,3$ (Lemma~\ref{lem:essential}).  By Lemma~\ref{lem:binom} the binomial coefficient has valuation exactly $-\Gamma(q)$, so the term has valuation $-\Gamma(q)+\delta_\nu$ with $(\delta_0,\dots,\delta_3)=(0,0,0,1)$.  Since $\delta_\nu\le1<2$, $v_2(\coeff n{F_A})=-\Gamma(q)+\delta_\nu$.  Multiplying by $n!$ and using $v_2(n!)=n-s_2(n)=4q+\nu-s_2(q)-s_2(\nu)$ and $\Gamma(q)=3q-s_2(q)$,
\[
v_2(I(n))=q+\nu-s_2(\nu)+\delta_\nu=q+\epsilon_\nu,
\]
as $\nu-s_2(\nu)+\delta_\nu=0,0,1,2$ for $\nu=0,\dots,3$.
\end{proof}

\begin{proof}[Proof of Lemma~\ref{lem:belowpower}]
By Lemma~\ref{lem:blockfacts} blocks are multiples of $p$ larger than $r$. Since $p^k-p\le r<p^k$ there is no multiple of $p$ in $(r,p^k)$, so every block is at least $p^k$. For $n=p^k$ Lemma~\ref{lem:weights}(a) gives $nc_n=-\sum_{e\mid p^k,\ e<p^k/r}\mu(e)=-1$, because the divisor $e=p$ would need $r<p^{k-1}$, which is false since $r\ge p^k-p\ge p^{k-1}$ for $k\ge2$, and for $k=1$ the condition $p<p/r$ fails. So $p^k$ is a block of depth $k$.

Let $n\ne p^k$ be a block, $e=v_p(n)\ge1$ and $u=\lfloor n/p^k\rfloor\ge1$, so $j_n\le e$. If $e<k$ then $ku\ge k>e$. If $e=k$ then $n=p^kw$ with $w\ge2$ and $ku\ge2k>e$. If $e=k+t$ with $t\ge1$ then $u\ge p^t$ and $kp^t>k+t$ unless $(p,k,t)=(2,1,1)$, which is excluded. So $j_n<ku$. By Lemma~\ref{lem:blockfacts} the same holds for $A_{>r}$.
\end{proof}

\begin{proof}[Proof of Theorem~\ref{thm:harmonic}]
By Lemma~\ref{lem:belowpower} Theorem~\ref{thm:dominant} applies with $m_*=p^k$ to $G=F_{A_{\le r}}$, and to $G=F_{A_{>r}}/(1-x)$ in the setting of Theorem~\ref{thm:quotient}.

(a) Theorem~\ref{thm:dominant} gives $v_p(\coeff nF)\ge-\Gamma(q)=-kq-v_p(q!)$, with equality if and only if $p\nmid\coeff\nu F=A_r(\nu)/\nu!$, that is $v_p(A_r(\nu))=v_p(\nu!)$ (the number $\coeff\nu F$ lies in $\Z_{(p)}$ since $\nu<p^k=\min\mathcal B$).  Multiplying by $n!$ gives the bound and the equality criterion.  The two expressions in the statement agree because, by Legendre's formula and $\nu<p^k$, $v_p((p^kq+\nu)!)=q\frac{p^k-1}{p-1}+v_p(q!)+v_p(\nu!)$.  If $\nu\le r$ then $A_r(\nu)=\nu!$, and if $r=p^k-1$ then $\nu\le r$ always.

(b), (c) It remains to compute $[x^\nu]G$ for $\nu<p^k$.  The terms $1/m$ of $H_r(\nu)$ are $p$-integral because there is no multiple of $p$ in $(r,p^k)$.  For $\nu\le r$ one has $[x^\nu]F_{A_{\le r}}=1$ and $[x^\nu]F_{A_{>r}}=[\nu=0]$, so $[x^\nu]G$ is a unit in both cases and there is no exception.  Now let $r<\nu<p^k\le2r+2$, so $\nu\le2r+1$.  A permutation of $\nu$ points has at most one cycle longer than $r$.  The permutations with exactly one cycle of length $m>r$ number $\binom\nu m(m-1)!(\nu-m)!=\nu!/m$, since the other $\nu-m\le r$ points are arbitrary.  Hence $a_{A_{\le r}}(\nu)/\nu!=1-H_r(\nu)$.  Likewise a permutation of $i$ points with $r<i\le2r+1$ and all cycles longer than $r$ is an $i$-cycle, so $a_{A_{>r}}(i)/i!=1/i$, while it is $0$ for $1\le i\le r$.  Therefore $[x^\nu]\bigl(F_{A_{>r}}/(1-x)\bigr)=\sum_{i\le\nu}a_{A_{>r}}(i)/i!=1+H_r(\nu)$.  By Theorem~\ref{thm:dominant}, $p$ divides $1-H_r(\nu)$ in (b) and $1+H_r(\nu)$ in (c) exactly in the exceptional classes.  The valuation statement of (c) is the dominant block bound multiplied by $(n-r-1)!\,\fall{n-1}{r}/(n-r-1)!$ via Theorem~\ref{thm:quotient}.

(d) For $r=p^k-1$ no $\nu$ lies in $(r,p^k)$, and $\fall{n-1}{r}(n-r-1)!=(n-1)!$.
\end{proof}

\begin{proof}[Proof of Corollary~\ref{cor:plateau}]
Parts (a), (b), (d) are the case $r=p^k-1$ of Theorem~\ref{thm:harmonic}(a). For (a), $p=2$, $k=2$ and $v_2(A_3(n))=q+v_2(\nu!)=q+\lfloor\nu/2\rfloor$ with $n=4q+\nu$, and checking $\nu=0,1,2,3$ shows that this equals $1+\lfloor(n-2)/4\rfloor$ for $n\ge2$. For (b), the exponent $(2^k-1)-k$ is the coefficient in the theorem. For (d), $k=1$ and $r=p-1$ give $v_p(A_{p-1}(n))=v_p(n!)-q-v_p(q!)=0$ for $p$ odd, and for $p=2$ the number $A_1(n)=1$ is odd. For (c) take $p=3$, $k=2$, so that $6\le r\le8$ and $v_3(n!)-2q-v_3(q!)=2q+\lfloor\nu/3\rfloor=\lfloor n/3\rfloor-\lfloor n/9\rfloor$. The exceptional classes are given by the Harmonic Theorem in the example above, namely none for $r=7,8$ and $\nu=7$ for $r=6$, where the inequality is strict.
\end{proof}

\subsection{Proofs for Section~\ref{sec:p3}}

\begin{proof}[Proof of Lemma~\ref{lem:red3}]
Every block $d\notin\{6,9\}$ is a multiple of $3$ with $d\ge12$ by Lemma~\ref{lem:blocks3}, so $d=3u$ with $u\ge4$ and $d$ can be written $d=6a'+9b'$ with $a',b'\ge0$.  We choose such a representation, and when $d=9u'$ with $u'\ge2$ we take $a'=0$, $b'=u'$.  Let $X=\lambda_6+\sum a'_d\lambda_d$ and $Y=\lambda_9+\sum b'_d\lambda_d$ be the totals over all blocks, so that $6X+9Y=\deg V\le n$.  Since $v_3((x+y)!)\ge v_3(x!)+v_3(y!)$, the functions $\ell_6$ and $\ell_9$ are superadditive, so $\ell_6(X)+\ell_9(Y)\ge \ell_6(\lambda_6)+\ell_9(\lambda_9)+\sum_d\bigl(\ell_6(a'_d\lambda_d)+\ell_9(b'_d\lambda_d)\bigr)$.  For a block with $j=1$ and $d\ne6$, so $d=3u$ with $u\ge4$, we have $a'+2b'\ge2$ (if $b'=0$ then $a'=u/2\ge2$), hence $\ell_6(a'\lambda)+\ell_9(b'\lambda)\ge2\lambda\ge\lambda+v_3(\lambda!)+\frac{\lambda+1}2$ because $v_3(\lambda!)\le(\lambda-1)/2$.  For a block with $j=2$ and $d=9u'$, $u'\ge2$, we have $\ell_9(u'\lambda)\ge4\lambda\ge2\lambda+v_3(\lambda!)+\frac{3\lambda+1}2$.  In both cases the gain over $j\lambda+v_3(\lambda!)$ is at least $1$.  Hence $\mathrm{cost}(V)\le \ell_6(X)+\ell_9(Y)-1\le\kappa(n)-1$.
\end{proof}

\begin{proof}[Proof of Lemma~\ref{lem:max3}]
We have $2f(X,Y)=3X+5Y-s(X)-s(Y)$.  The condition $6X+9Y\le n$ is $L:=2X+3Y\le M:=\lfloor n/3\rfloor=3q+t$.

\emph{Fixed $L$.}  Since $2X\equiv L\pmod 3$, write $X=3X_1+a_0$ with $a_0\equiv2L\pmod3$ and $a_0\in\{0,1,2\}$, and $Y=b_0-2X_1$ with $b_0=(L-2a_0)/3$.  Then $3X+5Y=5L/3-X/3$ and $s(X)=s(X_1)+a_0$, so
\[
2f(X,Y)=\tfrac53L-\tfrac43a_0-\Delta(X_1),\qquad \Delta(X_1)=X_1+s(X_1)+s(b_0-2X_1).
\]
Subadditivity of the digit sum gives $s(b_0)\le s(b_0-2X_1)+s(2X_1)$, so $\Delta(X_1)-\Delta(0)\ge\psi(X_1)$ with $\psi(x)=x+s(x)-s(2x)$.  We have $\psi(0)=\psi(1)=0$, $\psi(2)=2$, and $\psi(x)\ge x-s(x)=2v_3(x!)\ge2$ for $x\ge3$.  So $\Delta(X_1)>\Delta(0)$ for $X_1\ge2$.  For $X_1=1$, $\Delta(1)-\Delta(0)=2+s(b_0-2)-s(b_0)\ge0$ with equality if and only if adding $2$ to $b_0-2$ produces no carry in base $3$, that is, if and only if $b_0\equiv2\pmod 3$ (and $b_0\ge2$).  Hence for each feasible $L$ the maximum $F(L)$ of $f$ is attained at $X_1=0$, and also at $X_1=1$ exactly when $b_0\equiv2\pmod 3$ and $b_0\ge2$.  Evaluating, for $K\ge0$ (and $K\ge1$ in the second line),
\[
\begin{aligned}F(3K)&=\ell_9(K)\ \text{at }(0,K),\qquad F(3K+1)=\ell_9(K-1)+2\ \text{at }(2,K-1),\\ F(3K+2)&=\ell_9(K)+1\ \text{at }(1,K),\end{aligned}
\]
with the additional maximisers $(3,K-2)$, $(5,K-3)$, $(4,K-2)$ when $b_0=K,K-1,K$ is $\equiv2\pmod 3$ respectively (and large enough).  The value $L=1$ is not feasible.

\emph{Varying $L$.}  Since $\ell_9(K)-\ell_9(K-1)=2+v_3(K)$, we get $F(3K+1)=\ell_9(K)-v_3(K)\le F(3K)$, with equality if and only if $3\nmid K$, and $F(3K+2)=\ell_9(K)+1>F(3K)$, and $F(3K+3)-F(3K+2)=1+v_3(K+1)>0$.  So the maximum of $F$ over $L\le M$ is $F(M)$ if $t=2$, it is $F(3q)=\ell_9(q)$ if $t\in\{0,1\}$, attained also at $L=3q+1$ when $t=1$ and $3\nmid q$.  For $L=3q+1$ the extra maximiser $(5,q-3)$ would need $q\equiv0\pmod3$, which is excluded.  Reading off the leftovers $\rho=n-3L$ for $X_1=0,1$ gives the list.
\end{proof}

\begin{proof}[Proof of Theorem~\ref{thm:p3}]
By Lemmas~\ref{lem:blockexp}, \ref{lem:blocks3} and~\ref{lem:red3}, $v_3(\coeff nF)\ge-\kappa(n)$, and the terms of $\coeff nF$ coming from vectors $V$ with other blocks have valuation at least $-\kappa(n)+1$.  The remaining terms come from pairs $(X,Y)$ with $6X+9Y\le n$.  The ones with $f(X,Y)<\kappa(n)$ also have valuation at least $-\kappa(n)+1$.  Hence
\[
v_3(A_r(n))\ge v_3(n!)-\kappa(n),
\]
and the terms with $f(X,Y)=\kappa(n)$, which are listed in Lemma~\ref{lem:max3}, determine whether equality holds.  By the same lemma, $\kappa(n)=2q+v_3(q!)+[\nu\ge6]$, and $v_3(n!)=4q+v_3(q!)+\lfloor\nu/3\rfloor$, so $v_3(n!)-\kappa(n)=2q+[\nu\ge3]=e(n)$.

The term for $(X,Y)$ is $\coeff{\rho}{I}\,\binom{c_6+X-1}{X}\binom{c_9+Y-1}Y$ with $\rho=n-6X-9Y\le5$.  Since $\rho<6=\min\mathcal B$, $\coeff\rho I=A_r(\rho)/\rho!$, which is $1,1,1,1,\frac34,\frac{11}{20}$ for $r=3$, $1,1,1,1,1,\frac45$ for $r=4$, and $1$ for $r=5$, for $\rho=0,\dots,5$.  Modulo $3$ these are $J_\rho=1,1,1,1,0,1$ ($r=3$), $1,1,1,1,1,2$ ($r=4$), and $1$ ($r=5$), where $0$ means that $\coeff\rho I$ is divisible by $3$.

Let $\omega(m)\in\{\pm1\}$ be the residue of $m!/3^{v_3(m!)}$ modulo $3$.  Since $3k+1\equiv1$ and $3k+2\equiv-1$ modulo $3$, we have $\omega(3k+1)=\omega(3k)$ and $\omega(3k+2)=-\omega(3k)$, and $\omega(0)=\omega(1)=1$, $\omega(2)=\omega(3)=\omega(4)=-1$.  By Lemma~\ref{lem:binom} with $c_6=-1/6$, the coefficient $\binom{c_6+X-1}X$ equals $3^{-\ell_6(X)}u_X$ with $u_X\equiv\prod(6i-1)/(2^X\omega(X))\equiv\omega(X)$ modulo $3$, because $\prod_{i<X}(6i-1)\equiv(-1)^X\equiv2^X$.  Likewise $\binom{c_9+Y-1}Y=3^{-\ell_9(Y)}u'_Y$ with $u'_Y\equiv(-1)^Y\omega(Y)$.  Put $\varsigma(X,Y)=\omega(X)(-1)^Y\omega(Y)$.  A direct computation with the rules for $\omega$ gives, for $q=3k+1$, $\varsigma(2,q-1)=\varsigma(0,q)$, and for $q=3k+2$, $\varsigma(3,q-2)=\varsigma(0,q)$, $\varsigma(2,q-1)=-\varsigma(0,q)$ and $\varsigma(4,q-2)=\varsigma(1,q)$.  (For instance for $q=3k+2$, $\varsigma(0,q)=(-1)^q\omega(3k+2)=-(-1)^q\omega(3k)$, $\varsigma(3,q-2)=\omega(3)(-1)^q\omega(3k)=-(-1)^q\omega(3k)$ and $\varsigma(2,q-1)=\omega(2)(-1)^{q-1}\omega(3k+1)=(-1)^q\omega(3k)$.)

The sum of the terms with $f(X,Y)=\kappa(n)$ is therefore $3^{-\kappa(n)}\tau$ times a quantity congruent modulo $3$ to the following, where $\tau$ is a unit.
\begin{itemize}[itemsep=0pt]
\item If $t=0$ ($\nu\le2$), the quantity is $J_\nu$, or $2J_\nu$ if $q\equiv2$.  Here $J_\nu=1$.
\item If $t=1$ ($\nu=3,4,5$), it is $J_\nu$ for $q\equiv0$, $J_\nu+J_{\nu-3}$ for $q\equiv1$, and $2J_\nu-J_{\nu-3}$ for $q\equiv2$.
\item If $t=2$ ($\nu\ge6$), it is $J_{\nu-6}$, or $2J_{\nu-6}$ if $q\equiv2$.  Here $J_{\nu-6}=1$.
\end{itemize}
The first and last cases never vanish.  In the middle case the values for $(\nu,q\bmod3)$ are as follows.  For $r=3$, $J=1,1,1,1,0,1$ gives $\nu=3$ values $1,2,1$, $\nu=4$ values $0,1,2$ (for $q\equiv0,1,2$), and $\nu=5$ values $1,2,1$.  For $r=4$, $J=1,1,1,1,1,2$ gives $\nu=3$ values $1,2,1$, $\nu=4$ values $1,2,1$, and $\nu=5$ values $2,0,0$.  For $r=5$ all values are $1,2,1$.  A value of $0$ occurs only for $r=3$ with $\nu=4$ and $q\equiv0$, which is $n=9q+4\equiv4\pmod{27}$, and for $r=4$ with $\nu=5$ and $q\equiv1,2$, which is $n\equiv14,23\pmod{27}$.  In every other case the leading terms add up to a unit times $3^{-\kappa(n)}$, so $v_3(\coeff nF)=-\kappa(n)$ and $v_3(A_r(n))=e(n)$.  In the exceptional cases all terms have valuation at least $-\kappa(n)+1$, so $v_3(A_r(n))>e(n)$.
\end{proof}

\subsection{Proofs for Section~\ref{sec:p2}}

\begin{proof}[Proof of Theorem~\ref{thm:p2r4}]
Let $r=4$, $p=2$ and $s=s_2$.  If $n\in\mathcal B$ then $e=v_2(n)\ge1$ and $S_n=\sum_{d\mid n,d\le4}\mu(n/d)$.  For $e\ge4$ every $n/d$ with $d\le4$ is divisible by $4$, so $S_n=0$.  For $e=3$, $n=8u$ with $u$ odd, only $d=4$ contributes, and $S_n=\mu(2u)=-\mu(u)$.  For $e=2$, $n=4u$, the terms $d=2$ and $d=4$ give $\mu(2u)+\mu(u)=0$.  For $e=1$, $n=2u$ with $u$ odd, $S_n=\mu(2u)+\mu(u)+[3\mid u]\mu(2u/3)=-[3\mid u]\mu(u/3)$.  Hence the blocks are the degrees $8u$ with $u$ odd and squarefree (with $j=3$) and the degrees $6w$ with $w$ odd and squarefree (with $j=1$), and $c_6=-1/6$, $c_8=-1/8$.  Let $\ell_6(Y)=Y+v_2(Y!)=2Y-s(Y)$ and $\ell_8(X)=3X+v_2(X!)=4X-s(X)$.

\emph{Reduction.}  A block $6w$ with $w\ge3$ is dominated by $w$ blocks of degree $6$ since $\ell_6(w\lambda)\ge w\lambda+v_2(\lambda!)\ge3\lambda+v_2(\lambda!)$ exceeds $\lambda+v_2(\lambda!)$ by at least $2\lambda$, and a block $8u$ with $u\ge3$ is dominated by $u$ blocks of degree $8$ since $\ell_8(u\lambda)\ge3u\lambda+v_2(\lambda!)$ exceeds $3\lambda+v_2(\lambda!)$ by at least $6\lambda$.  By superadditivity of $\ell_6$ and $\ell_8$, exactly as in Lemma~\ref{lem:red3}, any vector with another block has cost at most $\kappa(n)-1$, where $\kappa(n)=\max\{\ell_8(X)+\ell_6(Y):8X+6Y\le n\}$.

\emph{Maximisation.}  Put $f=\ell_8(X)+\ell_6(Y)$ and $L=4X+3Y\le M=\lfloor n/2\rfloor=4K+\lfloor\nu/2\rfloor$.  Since $4X=L-3Y$, $f=L-Y-s(X)-s(Y)$.  Write $Y=b_0+4Y_1$ with $b_0\equiv3L\pmod4$, $0\le b_0\le3$, and $X=a_0-3Y_1$ with $a_0=(L-3b_0)/4$.  Then $Y+s(Y)+s(X)=b_0+s(b_0)+\bigl(4Y_1+s(Y_1)+s(a_0-3Y_1)\bigr)$, and $s(a_0)\le s(a_0-3Y_1)+s(3Y_1)\le s(a_0-3Y_1)+2s(Y_1)$ shows that the bracket is at least $s(a_0)+4Y_1-s(Y_1)>s(a_0)$ for $Y_1\ge1$.  So for each feasible $L$ there is a unique maximiser, $Y=b_0$, with value
\[
F(4K)=\ell_8(K),\quad F(4K+1)=\ell_8(K-2)+4,\quad F(4K+2)=\ell_8(K-1)+3,\quad F(4K+3)=\ell_8(K)+1 .
\]
Since $\ell_8(K)-\ell_8(K-1)=3+v_2(K)$ we get $F(4K+2)=\ell_8(K)-v_2(K)$ and $F(4K+1)\le \ell_8(K)-2$.  Hence the maximum over $L\le M$ is $F(4K+3)=\ell_8(K)+1$ at $(K,1)$ if $\nu\in\{6,7\}$, and $\ell_8(K)$ otherwise.  It is attained at $(K,0)$ with leftover $\rho=\nu$ for $\nu\le5$, and in addition at $(K-1,2)$ with leftover $\rho=\nu-4$ when $\nu\in\{4,5\}$ and $K$ is odd.  Thus $\kappa(n)=\ell_8(K)+[\nu\ge6]$ and $v_2(n!)-\kappa(n)=(8K+\nu-s(K)-s(\nu))-(4K-s(K))-[\nu\ge6]=4K+\gamma(\nu)$.

\emph{Leading terms.}  All the coefficients of the binomial series are $2$-adic units times $2^{-\mathrm{cost}}$ by Lemma~\ref{lem:binom}, since $c_6$ and $c_8$ have numerators $\pm1$ and the factors $\alpha+2^ji$ are odd.  By Lemma~\ref{lem:blockexp} the leading terms sum to $2^{-\kappa(n)}$ times a quantity congruent modulo $2$ to the sum of $\coeff\rho I$ over the maximisers, where $\coeff\rho I=A_4(\rho)/\rho!=1,1,1,1,1,\frac45$ for $\rho=0,\dots,5$.  For $\nu\le3$ or $\nu\in\{6,7\}$ the sum is $1$ and equality holds.  For $\nu=4$ it is $1$ if $K$ is even, and $1+1\equiv0$ if $K$ is odd.  For $\nu=5$ it is $\frac45\equiv0$ if $K$ is even, and $\frac45+1=\frac95$, a unit, if $K$ is odd.  So the exceptions are $\nu=4$ with $K$ odd and $\nu=5$ with $K$ even, that is, $n\equiv12,5\pmod{16}$, and in these cases all terms have valuation at least $-\kappa(n)+1$.

For the last assertion, $\gamma(\nu)\ge\lfloor\nu/2\rfloor$ for each $\nu$, because $(0,0,1,1,3,3,3,3)\ge(0,0,1,1,2,2,3,3)$.  Hence $v_2(A_4(n))\ge4\lfloor n/8\rfloor+\lfloor\nu/2\rfloor=\lfloor n/2\rfloor$.
\end{proof}

\begin{proof}[Proof of Corollary~\ref{cor:bala2}]
By (\ref{eq:last}) with $S=\{m:m>4\}$, $a(m+1)=\sum_{j=0}^{3}\fall{m}{j}\,a(m-j)$ for $m\ge0$, with $a(j)=0$ for $j<0$.  Taking $m=2n$ and subtracting the term $j=0$,
\[
a(2n+1)-a(2n)=2n\bigl[a(2n-1)+(2n-1)a(2n-2)+(2n-1)(2n-2)a(2n-3)\bigr].
\]
Taking $m=2n-1$ shows that the bracket equals $a(2n)-(2n-1)(2n-2)(2n-3)\,a(2n-4)$.  Now $v_2(a(2n))\ge n$ by Theorem~\ref{thm:p2r4}.  For $n=1$ the second term vanishes since $a(-2)=0$.  For $n\ge2$ the second term has valuation at least $1+v_2(n-1)+(n-2)$, again by Theorem~\ref{thm:p2r4}.  If $n$ is odd then $v_2(2n)=1$ and $v_2(n-1)\ge1$, so the bracket has valuation at least $n$.  If $n$ is even then $v_2(2n)\ge2$ and the bracket has valuation at least $n-1$.  In both cases $v_2(a(2n+1)-a(2n))\ge n+1$.
\end{proof}

\begin{proof}[Proof of Theorem~\ref{thm:p2r5}]
Let $A=\{1,\dots,5\}$, $p=2$ and $s=s_2$.

\emph{The blocks.}  By Lemma~\ref{lem:weights}(a), $c_m=S_m/m$ with $S_m=\sum_{d\mid m,\ d\le5}\mu(m/d)$.  By Lemma~\ref{lem:blockfacts} a block is an even $m>5$ with $j_m=v_2(m)-v_2(S_m)>0$, and $v_2(m)\le\delta_2(5)=3$, so $j_m\le3$.  For $m=6,8,10,12,14,16,18,20,22$ one finds $S_m=-1,-1,-1,0,0,0,1,0,0$.  So $6,8,10,18$ are blocks of depths $1,3,1,1$, and $12,14,16,20,22$ are not.  The smallest block is $m_0=6$.  An essential block satisfies $\lfloor m/6\rfloor\le j_m\le3$, so $m<24$, and since $\lfloor18/6\rfloor=3>1=j_{18}$ we get $\mathcal E=\{6,8,10\}$.  All residues $\alpha_m$ and all units $\tilde\omega(\lambda)^{-1}$ lie in $\F_2^\times=\{1\}$.  For $\rho<m_0=6$ we have $\iota_\rho=\coeff{\rho}{F}=A_5(\rho)/\rho!=1$, since every permutation of at most $5$ points has all cycles of length at most $5$.  Hence $\Xi(n)\equiv|\mathrm{Max}(n)|\pmod2$, and by Theorem~\ref{thm:calculus} the valuation $v_2(A_5(n))$ equals $v_2(n!)-\kappa(n)$ exactly when $|\mathrm{Max}(n)|$ is odd.

\emph{The packing problem.}  By Lemma~\ref{lem:essential} we may restrict to vectors supported on $\mathcal E$.  Let a vector have $X$ blocks of degree $8$, $Y$ blocks of degree $6$ and $Z$ blocks of degree $10$.  Its degree is $8X+6Y+10Z$ and its cost is $\ell_8(X)+\ell_6(Y)+\ell_{10}(Z)$, where $\ell_8(X)=3X+v_2(X!)=4X-s(X)$, $\ell_6(Y)=2Y-s(Y)$ and $\ell_{10}(Z)=2Z-s(Z)$.  We first show that a maximiser has $Z=0$.  Suppose $Z\ge1$ and replace the $Z$ blocks of degree $10$ by $X'=\lfloor5Z/4\rfloor\ge Z$ blocks of degree $8$.  The degree does not increase since $8X'\le10Z$.  The function $\ell_8$ is superadditive because $v_2((x+y)!)\ge v_2(x!)+v_2(y!)$, so the new cost is at least $\ell_8(X)+\ell_8(X')+\ell_6(Y)$.  Since $\ell_8(X')\ge3X'\ge3Z>2Z-1\ge\ell_{10}(Z)$, where we used $s(x)\le x$ and $s(Z)\ge1$, the new vector has larger cost.  So no maximiser has $Z\ge1$.

Now let $Z=0$ and put $L=4X+3Y$.  The degree $2L$ is at most $n$ exactly when $L\le M:=\lfloor n/2\rfloor=4K+\lfloor\nu/2\rfloor$, and the cost is $f=\ell_8(X)+\ell_6(Y)=L-Y-s(X)-s(Y)$.  Fix $L$.  From $4X+3Y=L$ we get $Y\equiv3L\pmod4$.  Write $Y=b_0+4Y_1$ with $b_0\in\{0,1,2,3\}$ and $Y_1\ge0$, so that $X=a_0-3Y_1$ with $a_0=(L-3b_0)/4$.  Since $b_0<4$ we have $s(Y)=s(b_0)+s(Y_1)$.  Also $s(a_0)\le s(a_0-3Y_1)+s(3Y_1)$ and $s(3Y_1)\le s(Y_1)+s(2Y_1)=2s(Y_1)$.  Hence
\[
Y+s(X)+s(Y)\ \ge\ b_0+s(b_0)+s(a_0)+4Y_1-s(Y_1)\ >\ b_0+s(b_0)+s(a_0)\qquad(Y_1\ge1),
\]
because $s(Y_1)\le Y_1$.  So for each $L$ there is at most one maximiser, namely $Y=b_0$ and $X=a_0$, and it exists if and only if $a_0\ge0$.  Write $L=4K'+t$ with $0\le t\le3$ and use $\ell_8(x)-\ell_8(x-1)=3+v_2(x)$.  The cost of this vector is as follows, where the entries for $t=1$ and $t=2$ require $K'\ge2$ and $K'\ge1$ respectively.
\[
\begin{array}{c|cccc}
t&0&1&2&3\\\hline
\text{vector }(X,Y)&(K',0)&(K'-2,3)&(K'-1,2)&(K',1)\\
f&\ell_8(K')&\ell_8(K')-2-v_2(K')-v_2(K'-1)&\ell_8(K')-v_2(K')&\ell_8(K')+1
\end{array}
\]
For $t=1$ one of $v_2(K')$ and $v_2(K'-1)$ vanishes, so $f\le\ell_8(K')-2$.

Let $h=\lfloor\nu/2\rfloor$, so that $L\le4K+h$.  If $K'<K$ then $f\le\ell_8(K')+1\le\ell_8(K-1)+1=\ell_8(K)-2-v_2(K)<\ell_8(K)$, and $\ell_8(K)$ is the cost at $L=4K$.  So every maximiser has $K'=K$, and we compare the four entries for $K'=K$ and $t\le h$.
\begin{itemize}[itemsep=0pt]
\item If $\nu\le3$ then $h\le1$, $\kappa(n)=\ell_8(K)$ and $\mathrm{Max}(n)=\{(K,0,0)\}$, because the entry for $t=1$ is smaller.
\item If $\nu\in\{4,5\}$ then $h=2$ and $\kappa(n)=\ell_8(K)$.  The entry for $t=2$ equals $\ell_8(K)$ exactly when $v_2(K)=0$, that is when $K$ is odd.  So $\mathrm{Max}(n)=\{(K,0,0)\}$ if $K$ is even, and $\mathrm{Max}(n)=\{(K,0,0),(K-1,2,0)\}$ if $K$ is odd.
\item If $\nu\in\{6,7\}$ then $h=3$, $\kappa(n)=\ell_8(K)+1$ and $\mathrm{Max}(n)=\{(K,1,0)\}$.
\end{itemize}

\emph{Conclusion.}  Since $v_2(n!)=n-s(n)=8K+\nu-s(K)-s(\nu)$ and $\ell_8(K)=4K-s(K)$, we get $v_2(n!)-\kappa(n)=4K+\nu-s(\nu)-[\nu\ge6]$.  The numbers $\nu-s(\nu)-[\nu\ge6]$ for $\nu=0,\dots,7$ are $0,0,1,1,3,3,3,3$, which is $\gamma(\nu)$.  By Theorem~\ref{thm:calculus}, $v_2(A_5(n))\ge4K+\gamma(\nu)$, with equality if and only if $|\mathrm{Max}(n)|$ is odd.  By the list, $|\mathrm{Max}(n)|=2$ exactly when $\nu\in\{4,5\}$ and $K$ is odd, which is $n\equiv12,13\pmod{16}$, and $|\mathrm{Max}(n)|=1$ otherwise.
\end{proof}

\subsection{Proofs for Section~\ref{sec:ties}}

\begin{proof}[Proof of Theorem~\ref{thm:counter}(a)]
The cycle types with all parts in $\{4,9\}$ are $4^x9^y$ with $4x+9y=111$.  This forces $y\equiv3\pmod4$, so $(x,y)\in\{(21,3),(12,7),(3,11)\}$.  The number of permutations of cycle type $4^x9^y$ is $N=111!/(4^xx!\,9^yy!)$, and $v_3(111!)=37+12+4+1=54$.  The three valuations are as follows.
\begin{center}
\begin{tabular}{c|c|c}
$(x,y)$&$v_3(4^xx!\,9^yy!)$&$v_3(N)$\\\hline
$(21,3)$&$v_3(21!)+6+v_3(3!)=9+6+1=16$&$38$\\
$(12,7)$&$v_3(12!)+14+v_3(7!)=5+14+2=21$&$33$\\
$(3,11)$&$v_3(3!)+22+v_3(11!)=1+22+4=27$&$27$
\end{tabular}
\end{center}
The minimum is attained once, so $v_3(a_S(111))=27$.

The value $v_3(G_3(111))=28$ is an exact finite computation.  The integers $a_{S_k}(111)$ for $4\le k\le111$, with $S_k=\{1,2,3,k\}$, were computed from the recurrence (\ref{eq:last0}), and their gcd is
\[
G_3(111)=147259794302866275623239680=\fall{110}{3}\cdot113721152119718805504,
\]
with $v_3=28$ and $v_2=30$.  Every one of these $108$ integers has $v_3\ge28$, and the minimum is attained at $k=5,6,7,8,10,\dots$.  The gcd is unchanged if one adds the number $a_{\{1,2,3\}}(111)$, as it must be by (\ref{eq:sameg}).  Hence $3^{28}\mid G_3(111)$, whereas $3^{27}\,\|\,a_S(111)$.  Since $S\supseteq\{1,2,3\}$, the number $g_3(111)$ divides $a_S(111)$, so $g_3(111)\ne G_3(111)$.

For the last claim apply Proposition~\ref{prop:globalgcd}(iv) with $r=3$ and $T=28$.  An exhaustive search of the partitions of $111$ into parts larger than $3$ with $v_3(N(\lambda))<28$ finds exactly five part sets, namely $\{4,6,9\}$, $\{4,9\}$, $\{6,9,12\}$, $\{9,12\}$ and $\{6,9\}$, and the exact valuations of $f_P(111)$ are $27,27,27,27,28$.  Every other part set has $v_3(f_P)\ge28$.  So $v_3(g_3(111))=27$.  At $p=2$ the maximal cost of a partition gives $v_2(\Lambda_3(111))=30=v_2(G_3(111))$, so $v_2(g_3(111))=30$ as well.  The primes above $3$ agree by Theorem~\ref{thm:gcdfamily}.  Hence $g_3(111)=G_3(111)/3$.
\end{proof}

\begin{proof}[Proof of Theorem~\ref{thm:counter}(b)]
The computation is the same as in (a) for $n=192,273,354$.  By Proposition~\ref{prop:lead} the valuations $v_3(a_S(n))$ are $45,63,81$.  The exact gcds of the families give $v_3(G_3(n))=46,64,82$, and $v_2(G_3(n))=v_2(\Lambda_3(n))=48,72,93$.  The exhaustive search finds the same five part sets as for $n=111$.  Their $3$-adic valuations, with $T=v_3(G_3(n))$, are as follows.
\begin{center}
\begin{tabular}{c|cc|c|c}
$n$ & $v_3(a_S(n))$ & $v_3(f_P(n))$ for $P=\{4,9\},\{4,6,9\},\{6,9,12\},\{9,12\}$ & $v_3(f_{\{6,9\}}(n))$ & $T$\\\hline
$111$ & $27$ & $27$ & $28$ & $28$\\
$192$ & $45$ & $45$ & $46$ & $46$\\
$273$ & $63$ & $63$ & $65$ & $64$\\
$354$ & $81$ & $81$ & $82$ & $82$
\end{tabular}
\end{center}
Every other part set has $v_3(f_P(n))\ge T$.  So $v_3(g_3(n))=T-1$ at $p=3$, the primes above $3$ agree by Theorem~\ref{thm:gcdfamily}, and $v_2(g_3(n))=v_2(G_3(n))$, so $g_3(n)=G_3(n)/3$.
\end{proof}

\begin{proof}[Proof of Proposition~\ref{prop:lead}]
The cycle types are $4^x9^y$ with $4x+9y=n$, so $4x\equiv3\pmod9$, that is $x=3+9i$ and $y=y_0-4i$ with $0\le i\le y_0/4$.  The number of permutations of type $4^x9^y$ has valuation $v_3(n!)-c(i)$ with $c(i)=v_3(x!)+2y+v_3(y!)$, since $v_3(4^x)=0$ and $v_3(9^y)=2y$.  Write $s=s_3$ and use $v_3(m!)=(m-s(m))/2$.  Since $3+9i$ has base-$3$ digit sum $s(i)+1$, we get $v_3((3+9i)!)-v_3(3!)=(9i-s(i))/2$.  Also $v_3(y_0!)-v_3((y_0-4i)!)=(4i-s(y_0)+s(y_0-4i))/2$.  Hence
\[
c(i)-c(0)=\frac{9i-s(i)}2-8i-\frac{4i-s(y_0)+s(y_0-4i)}2=\frac{-11i-s(i)+s(y_0)-s(y_0-4i)}2 .
\]
Now $s(y_0)-s(y_0-4i)\le s(4i)\le s(i)+s(3i)=2s(i)$, because multiplying by $3$ shifts the base-$3$ digits.  So $c(i)-c(0)\le(-11i+s(i))/2<0$ for $i\ge1$.  Thus $i=0$ gives the unique term of smallest valuation, which is $v_3(n!)-c(0)=v_3(n!)-1-2y_0-v_3(y_0!)$ since $v_3(3!)=1$.
\end{proof}

\subsection{Proofs for Section~\ref{sec:composite}}

\begin{proof}[Proof of Lemma~\ref{lem:typeB}]
(a) is case~(B) of Theorem~\ref{thm:dichotomy} with $S=\N\setminus A$.  A sequence with $x_{n+P}=\pm x_n$ for all $n$ has a purely periodic zero set of period $P$, and its residues have period dividing $2P$.  (b) By (a) with $e=1$, $a(pq)\equiv\pm a(0)=\pm1\pmod p$.

(c) Let $p$ be odd.  By Lemma~\ref{lem:eulerprod}, $c_p=\frac1p([p\in A]-[1\in A])=\pm\frac1p$, so $p$ is a block of depth $1$, and it is the smallest block since every block is a multiple of $p$.  Let $m=p^kw\ne p$ be another block with $p\nmid w$.  Then $j_m\le k$ and $\lfloor m/p\rfloor=p^{k-1}w$.  If $k=1$ then $w\ge2$, so $\lfloor m/p\rfloor>1\ge j_m$, and if $k\ge2$ then $\lfloor m/p\rfloor\ge p^{k-1}\ge3^{k-1}>k\ge j_m$.  So condition~($\star$) of Theorem~\ref{thm:dominant} holds with $m_*=p$, and that theorem gives $v_p(a(n))\ge v_p(n!)-q-v_p(q!)=v_p(\nu!)=0$ for $n=pq+\nu$, with equality if and only if $p\nmid a(\nu)$.  For $p=2$ the block $2$ has depth $1$.  Replacing each copy of a block $m=2^kw$, of depth at most $k$, by $m/2\ge k$ copies of $2$ does not increase the degree and, by superadditivity of $v_2((\cdot)!)$, does not decrease the cost.  So $\kappa(n)$ is the cost $\lfloor n/2\rfloor+v_2(\lfloor n/2\rfloor!)=v_2(n!)$ of $\lfloor n/2\rfloor$ copies of $2$, and $E_2=Z(2)$.  By (a) with $e=1$, $a(n)\equiv a(n\bmod2)\pmod2$ with $a(0)=1$ and $a(1)=[1\in A]$.

(d) A block $m=p^kw\ne p$ has $\varepsilon_mp\le(k+\frac1{p-1})/(p^{k-1}w)$.  For $k=1$ and $w\ge2$ this is at most $\frac p{2(p-1)}<1$, and for $k\ge2$ it is at most $(2+\frac12)/3<1$ and decreases in $k$.  Theorem~\ref{thm:interp} gives $v_p(a(n))=v_p(n!)-q-v_p(q!)+v_p(F_\nu(q))$, and $v_p(n!)=q+v_p(q!)$ by Legendre's formula.
\end{proof}

\begin{proof}[Proof of Lemma~\ref{lem:crt}]
(a) By the Chinese remainder theorem $n\bmod\operatorname{lcm}L_i$ corresponds to the tuple of residues $n\bmod\ell^{v_\ell(L)}$.  Split each of them into its residue modulo $\ell^{s_\ell}$ and the remaining digits.  Membership in $S_i$ depends on $c=n\bmod\mathcal M$ and on the remaining $\ell$-adic digits only for the primes $\ell$ at which $v_\ell(L_i)$ is the unique largest exponent.  So for fixed $c$ the events $n\in S_i$ depend on disjoint sets of independent uniform digits, their joint probability is $\prod_i\pi_{S_i}(c)$, and we average over $c$.  (b) is (a) for $h=2$, expanded.  (c) Here $\mathcal M=1$, and $\bigcap S_i$ corresponds to a product of non-empty sets $R_i\subseteq\Z/L_i$.  A translation preserves such a product if and only if it preserves each factor.
\end{proof}

\begin{proof}[Proof of Proposition~\ref{thm:compositeA}]
(i) The components $a(n)\bmod p^{e_p}$ with $p\mid m_{\mathrm I}$ vanish exactly from $n_0(p^{e_p})$ on, and those with $p\mid m_{\mathrm{II}}$ are purely periodic with least period $\varpi(p^{e_p})$.  So $a(n)\bmod m$ is eventually periodic with least period the lcm $P$ of the $\varpi(p^{e_p})$.  Let $N=n_0(m_{\mathrm I})$.  The component with $n_0(p^{e_p})=N$ is non-zero at $N-1$ and zero at $N-1+P$, so the residues at $N-1$ and $N-1+P$ differ.  (ii) The first claim is the definition of the gcd.  If $a(n_1)\ne0$ with $n_1\ge n_0$, then $\mathcal D_A(n_0)$ divides $a(n_1)$, no prime of type~II divides it by Lemma~\ref{lem:typeB}(b), and Lemma~\ref{lem:typeA}(b) bounds the window.  (iii) If every $Z(p^{e_p})$ with $p\mid m_{\mathrm{II}}$ is non-empty, then $Z(m_{\mathrm{II}})$ is a non-empty periodic set, and Theorem~\ref{thm:composite}(iii) gives positive density.  Conversely $Z(m)\subseteq Z(m_{\mathrm{II}})$.
\end{proof}

\begin{proof}[Proof of Corollary~\ref{cor:compAr}]
This is Theorem~\ref{thm:composite} and Proposition~\ref{thm:compositeA} for $A=A_{\le r}$, where $1\in A$, and $p\in A$ exactly for $p\le r$.  The classes $\nu\le r$ never lie in $Z(p)$ because $A_r(\nu)=\nu!$.  The sign of Lemma~\ref{lem:typeB}(a) is $+$, so $\varpi(p^e)\mid p^e$, and the lcm of these coprime prime powers is their product.
\end{proof}

\begin{proof}[Proof of Theorem~\ref{thm:compquot}]
This is the argument of Lemma~\ref{lem:quot} with $q$ replaced by $p^e$.  Write $Q_S(n+p^e)=\sum_{r<m\le n+p^e,\,m\notin S}\fall{n+p^e-r-1}{m-r-1}a_S(n+p^e-m)$.  For $m\le n$ the falling factorial is congruent to $\fall{n-r-1}{m-r-1}$ modulo $p^e$, and $a_S(n+p^e-m)\equiv-a_S(n-m)$ by Theorem~\ref{thm:dichotomy}(B), since $1\in S$ and $p\notin S$, with the sign of Lemma~\ref{lem:typeB}(a) for $p=2$.  For $n<m\le n+p^e$ the falling factorial is the product of the integers from $n+p^e-m+1\le p^e$ to $n+p^e-r-1\ge p^e$, which contains $p^e$.
\end{proof}

\begin{proof}[Proof of Corollary~\ref{cor:A057693}]
(a) By Corollary~\ref{cor:plateau}(a), $v_2(a(n))=1+\lfloor(n-2)/4\rfloor$ for $n\ge2$, which is non-decreasing.  By Theorem~\ref{thm:p3}, $v_3(a(n))\ge\lambda_3(n)$ with equality unless $n\equiv4\pmod{27}$.  The function $\lambda_3$ is non-decreasing and constant on the blocks $\{9q,9q+1,9q+2\}$ and $\{9q+3,\dots,9q+8\}$, and each block contains a non-exceptional element at or after each of its points, since the only exceptional residue $4\bmod9$ is followed by $5$ in its block.  So $\mu_3(n_0)=\lambda_3(n_0)$, and $\mu_2(n_0)=1+\lfloor(n_0-2)/4\rfloor$ for $n_0\ge2$.  For $n_0=0,1$ the gcd is $1$ because $a(1)=1$, and the formula gives $1$.  No prime $p\ge5$ divides the tail gcd, by Corollary~\ref{cor:compAr}.  (b) $n_0(p^e)=1+\max\{n:v_p(a(n))<e\}$, and the largest such $n$ is the last element of the last block whose value is at most $e-1$, which is not exceptional.  (c) is Proposition~\ref{thm:compositeA}(i), and the values are a finite computation.  (d) Corollary~\ref{cor:plateau}(a) gives no exceptions at $2$, Theorem~\ref{thm:p3} gives $E_3$, Lemma~\ref{lem:typeB}(c) gives the rest, and the periods $27$ and $p$ are coprime.
\end{proof}

\begin{proof}[Proof of Corollary~\ref{cor:A070946}]
Theorem~\ref{thm:p2r5} gives $v_2(a(n))\ge\lambda_2(n)$ with equality unless $n\equiv12,13\pmod{16}$, and Theorem~\ref{thm:p3} gives $v_3(a(n))=\lambda_3(n)$ for every $n$.  The exceptional points $8K+4$ and $8K+5$ lie in the block $\{8K+4,\dots,8K+7\}$, on which $\lambda_2$ is constant, and the last two points of this block are not exceptional.  So $\mu_2$ and $n_0(2^e)$ are as in the proof of Corollary~\ref{cor:A070945}.  At $p=3$ there are no exceptions, and $\mu_3$ and $n_0(3^e)$ are as in Corollary~\ref{cor:A057693}.  No prime $p\ge7$ divides the tail gcd, by Corollary~\ref{cor:compAr}, and the exponent of $5$ is $\mu_5(n_0)$ by Proposition~\ref{thm:compositeA}(ii).
\end{proof}

\section{Computational details}\label{app:data}

This section contains the data and the computational details of observations and remarks of the paper whose short form is in the main text.

\subsection*{Data for Observation~\ref{obs:cop}}
For the $9537$ primes $N=n+1$ with $260<N<10^5$ the residue $\operatorname{cop}(n)\bmod N$, divided by $N$, is consistent with the uniform distribution in every test we ran, with $\chi^2=13.2$ on $19$ degrees of freedom for $20$ bins and values between $8.9$ and $20.8$ on the residue classes of $N$ modulo $3,4,5,8,12$.  If the residue behaved like a uniformly random element of $\Z/N$, one would expect $\sum1/N$ zeros.  For $260<N<3\cdot10^6$ this sum is $0.98$ and the observed count is $2$.  Since $\sum_N1/N$ diverges, the heuristic predicts infinitely many $n$ with $n+1$ prime and $(n+1)\mid\operatorname{cop}(n)$, but only about $0.6$ further zeros among the primes $N$ in $(3\cdot10^6,10^{12}]$, so none is expected within reach of a computation.  Both known zeros have $6\mid n$, which is consistent with chance.

\subsection*{Data for Section~\ref{sec:globalgcd}}
\noindent\textbf{Computations.}  We computed $G_r(n)$ exactly as the gcd of the integers $a_{S_k}(n)$ for $r<k\le n$, and for every $(r,n,p)$ with $p\le r$ and $v_p(G_r(n))>v_p(\Lambda_r(n))$ we ran the exhaustive test of Proposition~\ref{prop:globalgcd}(iv).  The ranges are $r=3$ with $5\le n\le900$, $r=4,5$ with $n\le700$, $r=6$ with $n\le600$, $r=7$ with $n\le500$, $r=8$ with $n\le400$, and $9\le r\le12$ with $n\le300$.  For $r=3$ one finds $g_3(n)\ne G_3(n)$ exactly for $n=111+81j$ with $0\le j\le9$, that is $n\equiv30\pmod{81}$ with $111\le n\le840$, with the violating part sets $\{4,9\},\{4,6,9\},\{6,9,12\},\{9,12\}$ and $v_3(f_{\{4,9\}}(n))=27,45,\dots,189$ as in Proposition~\ref{prop:lead}.  For $4\le r\le12$ there is no violation, and for $r=6,7,8$ no $n$ in the ranges has $v_p(G_r(n))>v_p(\Lambda_r(n))$ at all.  So $(r,n)=(3,111)$ is the smallest counterexample found.  For $n\le90$ the gcds $\Lambda_r(n)$ and $g_r(n)$ agree at all primes for $r=1,2,6$, and for $r=3,4,5$ they differ only at $p=3$, because the competing blocks are $6$ and $9$, the two-block regime of Section~\ref{sec:p3}.

\subsection*{The counts behind Question~\ref{prob:odd}}
Of the $216815$ odd primes up to $3\cdot10^6$, exactly $79656$ are good, a proportion $0.3674$ with $95$ percent interval $[0.3654,0.3694]$, which contains $e^{-1}=0.3679$.  Up to $10^6$ the proportion is $28543/78497=0.3636$, a fluctuation within this range.  For $p\ge1000$ the number $N_p$ of $n\in[3,p+2]$ with $p\mid Q_n$ has mean $1.003$ and variance $1.008$, and its distribution is close to Poisson$(1)$, with $\chi^2=10.0$ on $7$ degrees of freedom when the cells are the values $0$ to $7$ and the tail $N_p\ge8$ is dropped ($\chi^2=7.9$ when the tail is pooled into the last cell).

\subsection*{The classes of $q$ behind Question~\ref{prob:odd}}
For seven of these pairs, namely $(p,n_0)=(19,7),(19,16),(23,12),(23,17),(37,10),(37,27),(41,20)$, we also checked $1\le q\le30000$, and the set of $q$ with $v_p(Q_{n_0+pq})\ge1+j$ is a single residue class modulo $p^j$ for $j=1,2,3$ as long as $p^j\le30000$, for instance $q\equiv9\pmod{19}$, $332\pmod{361}$, $5747\pmod{6859}$ for $(19,7)$, as Theorem~\ref{thm:qstar} predicts.

\subsection*{Data for Remark~\ref{rem:lines}}
The density is not a Chebotarev statement.  A Chebotarev argument needs a fixed polynomial over $\Q$ whose roots modulo $p$ are the objects of interest, and for such a polynomial of degree $d$ with Galois group $S_d$ the density of primes without a root is $\sum_{k\le d}(-1)^k/k!$, which is within $1/(d+1)!$ of $e^{-1}$.  Here the number of conditions is $p$, since for each fixed $K$ only finitely many primes divide some $Q_n$ with $n\le K$, and the interpolating polynomial of $\nu\mapsto q_\nu\bmod p$ has the maximal degree $p-1$ for each of the primes $31,53,101,151,211,307,401,503$ that we tested.  We know no closed form for the point $c(p)$.

\subsection*{The ancillary files of Section~\ref{sec:computations}}
The folder \texttt{anc/interp} contains the scripts and the output behind Section~\ref{sec:interp}, namely the check of the identity of Theorem~\ref{thm:interp} against exact counts and the roots $q^*$ of Theorem~\ref{thm:qstar}.  The searches for $\operatorname{cop}(n)$ up to $3\cdot10^6$ and the gcd scans of Section~\ref{sec:globalgcd} are computations, and the ancillary files contain the program and its output for the ranges quoted here.  An independent implementation reproduced the search for $\operatorname{cop}(n)$ for all $n\le799999$ with $n+1$ prime, and the gcd scans for $r=3$ with $5\le n\le450$ and $800\le n\le850$, for $r=4,5$ up to $n\le230$, for $r=6$ up to $180$, for $r=7$ up to $150$, for $r=8$ up to $130$, for $r=9,10$ up to $110$ and for $r=11,12$ up to $100$.  The folders \texttt{anc/interp/cop} and \texttt{anc/interp/density} contain the scripts and outputs behind Propositions~\ref{prop:copprefix} to~\ref{prop:copsafe} and the search for $\operatorname{cop}(n)$ with the prefix-sum recurrence, and behind Question~\ref{prob:odd} and Remark~\ref{rem:lines}.  The folder \texttt{anc/interp/windows} contains the scripts and outputs behind Theorem~\ref{thm:bad} and Proposition~\ref{prop:cop2q}.  The folder \texttt{anc/interp/composite} covers Section~\ref{sec:composite}, where the formulas for composite moduli were compared with exact values for $25$ sets of cycle lengths and all $m\le300$, and the tail gcds and thresholds of Corollaries~\ref{cor:A057693} to~\ref{cor:A070946} with exact gcds.  The README of \texttt{anc} describes every folder, and the folders \texttt{anc/interp/composite}, \texttt{anc/interp/windows} and \texttt{anc/lean} have their own READMEs.

\end{document}